\newtheorem{introthm}{Theorem}
\newtheorem{theorem}{Theorem}[section]
\newtheorem{lemma}[theorem]{Lemma}
\newtheorem{proposition}[theorem]{Proposition}
\newtheorem{definition}[theorem]{Definition}
\newtheorem{corollary}[theorem]{Corollary}
\newtheorem{claim}[theorem]{Claim}
\newtheorem*{remark*}{Remark}
\newcommand{\NE}{\operatorname{NE}}
\newcommand{\Sing}{\mathrm{Sing}}
\newcommand{\Reg}{\mathrm{Reg}}
\newcommand{\GGG}{\mathcal{G}}
\newcommand{\SSS}{S}
\newcommand{\GS}{G\SSS}
\newcommand{\CCC}{\mathcal{C}}
\newcommand{\eps}{\varepsilon}
\def\diam{\operatorname{diam}}
\title{Local product structure for equilibrium states \\ of geodesic flows and applications}
\author{Benjamin Call, David Constantine, Alena Erchenko,\\ Noelle Sawyer, and Grace Work}
\newcommand{\Addresses}{{
\bigskip
  \footnotesize

  \textsc{Department of Mathematics, Statistics, and Computer Science, University of Illinois at Chicago, Chicago, IL}
	\par\nopagebreak
  \textit{E-mail address:} 
  \texttt{bcall@uic.edu}
  \bigskip
  \footnotesize

   \textsc{Mathematics and Computer Science Department, Wesleyan University, Middletown, CT}
	\par\nopagebreak
  \textit{E-mail address:} 
  \texttt{dconstantine@wesleyan.edu}
  \bigskip
  \footnotesize

        \textsc{Department of Mathematics, Dartmouth College, Hanover, NH}
	\par\nopagebreak
  \textit{E-mail address:} 
  \texttt{alena.erchenko@dartmouth.edu}
  \bigskip
  \footnotesize

    \textsc{Mathematics and Computer Science Department, Southwestern University, Georgetown, TX}
	\par\nopagebreak
  \textit{E-mail address:} 
  \texttt{sawyern@southwestern.edu}
  \bigskip
  \footnotesize

    \textsc{Department of Mathematics, University of Wisconsin-Madison, Madison, WI}
	\par\nopagebreak
  \textit{E-mail address:}
  \texttt{work2@wisc.edu}
  \bigskip
  \footnotesize

}}
\begin{document}

\begin{abstract}
Let $\SSS$ be a compact surface of genus $\geq 2$ equipped with a metric that is flat everywhere except at finitely many cone points with angles greater than $2\pi$. We examine the geodesic flow on $\SSS$ and prove local product structure for a wide class of equilibrium states. Using this, we establish the Bernoulli property for these systems. We also establish local product structure for a similar class of equilibrium states for geodesic flows on rank 1, nonpositively curved manifolds.
\end{abstract}

\maketitle

%
\section{Introduction}\label{sec:intro}

In this paper we develop a technique for obtaining local product structure of equilibrium states using the non-uniform Gibbs property, following an idea of Climenhaga for Anosov diffeomorphisms with the uniform Gibbs property \cite{ClimenhagaGibbs}. This is applied for a wide class of equilibrium states for the geodesic flow on translation surfaces (Section~\ref{subsec:lps}) and rank 1, nonpositively curved manifolds (Appendix~\ref{section: Appendix}). More precisely, we consider equilibrium states for potential functions which are locally constant on a neighborhood of a `singular' set for the flow -- a set of geodesics which do not experience any of the hyperbolic dynamics of the flow. We expect that the techniques used can be extended beyond the discussed settings to some other applications of the Climenhaga-Thompson decomposition machinery \cite{Climenhaga-Thompson}.

Using the classical machinery \cite{OW}, the local product structure allows us to improve the K-property to Bernoulli. We recall that the K-property for these potentials on translation surfaces was shown previously in \cite{CCESW} and for rank 1, nonpositively curved manifolds in \cite{CT19}. Moreover, \cite{ALP} established the Bernoulli property in the latter setting bypassing the establishment of the local product structure of the equilibrium states. Therefore, we provide the details for obtaining the Bernoulli property only for the translation surface case. However, it can be also adapted for the rank 1, nonpositively curved manifolds case. More history on the development of the subject can be found in Section~\ref{subsec:history}.

Towards our main results, we prove that the considered potentials have the global Bowen property, an improvement from the non-uniform Bowen property previously established in \cite{CCESW} and \cite{BCFT} for the respective cases, which may be of independent interest.

%
\subsection{Previous results}\label{subsec:history}

Local product structure of equilibrium states associated to any H\"older continuous potential for Axiom A flows was obtained by Haydn \cite{Haydn} using Markov partitions and symbolic dynamics, by Leplaideur \cite{Leplaideur} using a geometric method that relates the considered equilibrium states to other equilibrium states of some special sub-systems satisfying expansiveness, and by Climenhaga \cite{ClimenhagaDimensionTheory} using dimension theory. Former results in this direction are \cite{Margulis, Hamenstadt, Hasselblatt, Kaimanovich, BowenMarcus, Series}.

The first result showing geodesic flows are Bernoulli was Ornstein and Weiss's proof that geodesic flows on compact, negatively curved surfaces are Bernoulli with respect to the Liouville measure \cite{OW}. This work builds on Ornstein's work providing the first examples of Bernoulli flows in \cite{Ornstein_imbedding}, and provides a framework for showing a flow is Bernoulli that has frequently been followed, including in the present paper. (See section \ref{subsec:outline} for an outline of this argument.) Ratner noted that the Ornstein-Weiss argument could be applied to transitive, $C^2$ Anosov flows with Gibbs measures \cite{ratner}.

One direction in which to generalize these results is to situations with non-uniform hyperbolicity. In \cite{Pesin_geodesic, Pesin_characteristic}, Pesin relaxed the negative curvature assumptions of \cite{OW} to surfaces with no focal points. Burns and Gerber showed that even in some situations with significant positive curvature, specifically for some special metrics on 2-spheres, the geodesic flow is still Bernoulli with respect to the Liouville measure \cite{Burns_Gerber}.  As a corollary of the work of Ledrappier, Lima, and Sarig \cite{LLS}, the measure of maximal entropy is Bernoulli for the geodesic flow on any closed surface whose curvature is not identically zero. Call and Thompson prove that the measure of maximal entropy is Bernoulli for closed, nonpositively curved, rank-1 manifolds \cite{CT19}. Using countable Markov partitions, Araujo, Lima, and Poletti established the Bernoulli property for equilibrium states for H\"older continuous potentials with the pressure gap as in \cite{Climenhaga-Thompson} and some multiples of the geometric potential in the same setting \cite{ALP}.

Chernov and Haskell proved that non-uniformly hyperbolic flows with the $K$-property satisfying a list of properties are Bernoulli \cite{CH}. Similarly, Alansari proves that the $K$-property implies Bernoulli in the setting of hyperbolic measures with local product structure \cite{Alansari}.

A second direction of research relevant to our work here has been geodesic flows on non-manifold spaces. Roblin originally showed the existence of the measure of maximal entropy for geodesic flow in the CAT(-1) setting \cite{roblin}. Constantine, Lafont and Thompson proved the geodesic flow on a compact, locally CAT(-1) space is Bernoulli (or constructed from a Bernoulli shift) for equilibrium states \cite{CLT-strong}. In \cite{BAPP}, Broise-Alamichel, Parkonnen, and Paulin present extensive results on this setting; they significantly weaken the compactness assumption, but do need an assumption on the types of potential function allowed. Recent work in the non-compact direction but without this restriction on the potentials is due to Dilsavor and Thompson \cite{DT}.

%
\subsection{Statement of results}\label{subsec:statements}

Let $\SSS$ be a compact surface of genus $g\geq 2$. Equip $\SSS$ with a flat Riemannian metric away from a finite collection of `cone points,' at each of which the total angle is greater than $2\pi$. (See \S\ref{sec:preliminaries} for precise definitions). Let $g_t$ be the geodesic flow on $G\SSS$, the space of geodesics on $\SSS$.

Given a potential function $\phi:G\SSS\to \mathbb{R}$, one can hope to find equilibrium states for $\phi$. In our previous paper \cite{CCESW}, we proved that if $\phi$ is H\"older and satisfies a pressure gap condition (see \cite[Theorem~A]{CCESW}), then it has a unique equilibrium state $\mu$ that satisfies the $K$-property. We also show (\cite[Theorem~B]{CCESW}) that potentials that are locally constant on some neighborhood of the singular set for the flow satisfy this pressure gap condition, providing a more easily checked condition on $\phi$.

In this paper we improve these results for locally constant potentials by proving more refined properties of the dynamics of $(G\SSS, g_t, \mu).$

We say a measure $\mu$ on $\GS$ has \emph{local product structure} if at small scales, $\mu$ is equivalent to a product measure, constructed using a (geometric) local product structure based on the unstable and weak stable foliations. (See Sections \ref{subsec:stable unstable} and \ref{subsec:lps} for precise definitions.) Our first main theorem is the following:

\begin{introthm}\label{mainthm:LPS}
    Let $(\GS, \mu, g_t)$ be as above, where $\mu$ is the equilibrium state for a H\"older potential which is locally constant on some neighborhood of the singular set for $g_t$. For any $\eps>0$ there is a subset $E$ of $\GS$ with $\mu(E)>1-\eps$ on which the measure $\mu$ has local product structure.
\end{introthm}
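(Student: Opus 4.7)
The plan is to combine the global Bowen property established earlier in this paper with the geometric local product structure given by the unstable and weak stable foliations on the regular part of $\GS$, and thereby show that $\mu$ disintegrates in a product fashion on neighborhoods whose union has measure greater than $1-\eps$.

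First, I would upgrade $\mu$ to a uniform Gibbs measure at a fixed scale $\rho$ on a set of nearly full measure, establishing constants $0 < c_1 \leq c_2$ with
\[
c_1\, e^{-tP(\phi) + S_t\phi(x)} \leq \mu(B_t(x,\rho)) \leq c_2\, e^{-tP(\phi) + S_t\phi(x)}
\]
for all $t>0$ and all $x$ in a large subset, where $B_t(x,\rho)$ denotes the Bowen ball. The upper estimate is standard in the Climenhaga--Thompson framework; the lower estimate uses the global Bowen property together with the specification available on $\CCC$-good orbit segments from \cite{CCESW}.

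Second, I would produce the set $E$: a compact subset of $\GS$ with $\mu(E) > 1-\eps$ on which (i) geodesics stay uniformly away from the cone points over a long time window, (ii) local unstable manifolds $W^u_r(x)$ and weak stable manifolds $W^{ws}_r(x)$ of a uniform size $r$ exist and vary continuously with $x$, (iii) the two foliations meet transversally with a uniform angle, so the bracket $[x,y] := W^u_r(x)\cap W^{ws}_r(y)$ is well-defined and Lipschitz for nearby $x,y \in E$, and (iv) the Gibbs inequality above holds uniformly. The set $E$ is assembled by Lusin/Egorov-type arguments, using that $\mu$ assigns zero mass to the forward and backward orbits of the singular set.

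Around each $x_0 \in E$ I would then build a local rectangle $R(x_0) = \{[u,s] : u \in \Sigma^u,\, s \in \Sigma^{ws}\}$ from thin transversals in $W^u_r(x_0)$ and $W^{ws}_r(x_0)$. Comparing Bowen balls for points $y,y'$ on a common unstable (resp.\ weak stable) slice via the classical Sinai--Leplaideur argument, the Gibbs bounds and global Bowen property force $S_t\phi(y)-S_t\phi(y')$ to converge as $t \to -\infty$ (resp.\ $t\to +\infty$); this asymptotic cocycle identifies the Radon--Nikodym derivative between the conditional measures of $\mu|_{R(x_0)}$ on unstable slices (pushed along weak stable holonomies) as bounded and continuous, giving local product structure on $R(x_0)$. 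The main obstacle is the non-uniform hyperbolicity caused by the cone singularities: local stable and unstable manifolds can shrink arbitrarily for orbits approaching cone points, so the uniform Gibbs estimate at a fixed scale and the Lipschitz bracket must be simultaneously arranged on $E$, and it is precisely the upgrade from the earlier non-uniform Bowen property to the global Bowen property established in this paper that makes this uniformization feasible.
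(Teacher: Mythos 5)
Your overall architecture — upper and lower Gibbs estimates plus a Sinai/Leplaideur comparison of Bowen balls along unstable slices, feeding into a Radon--Nikodym bound for the weak-stable holonomy — is in the same spirit as the paper's proof (which follows Climenhaga's scheme in \cite{ClimenhagaGibbs}). But there is a genuine gap in the first step, and it propagates through the rest of the argument.

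You propose to ``upgrade $\mu$ to a uniform Gibbs measure at a fixed scale $\rho$ on a set of nearly full measure,'' i.e.\ to find a set $E$ of measure $>1-\eps$ and constants $c_1, c_2 > 0$ with $c_1 e^{-tP(\phi)+S_t\phi(x)} \leq \mu(B_t(x,\rho)) \leq c_2 e^{-tP(\phi)+S_t\phi(x)}$ for \emph{all} $t>0$ and all $x\in E$, and later list ``(iv) the Gibbs inequality above holds uniformly'' among the properties of $E$. This is not achievable in this setting, and the obstruction is not merely technical. The lower Gibbs estimate available here (Corollary \ref{cor: unif Lower Gibbs}) has a constant that depends on $\lambda(x)$ \emph{and} $\lambda(g_t x)$, and for a fixed regular $x$ the quantity $\lambda(g_t x)$ can (and typically does) dip arbitrarily close to zero along an unbounded set of times $t$. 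So restricting $x$ to a positive-measure set $E$ of uniformly regular starting points does not give a lower Gibbs bound that is uniform over $t$; the constant $c_1$ would have to degenerate along the bad times. The global Bowen property, which you correctly identify as the new input and which does give the uniform \emph{upper} Gibbs bound, does nothing to repair the \emph{lower} bound's dependence on the regularity of the endpoint $g_t x$.

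The paper's actual resolution of this difficulty is the machinery you do not invoke: the almost-adapted partitions of Proposition \ref{prop: partition construction}, whose elements are Bowen balls $B_{t_i}(x_i,\eps)$ with \emph{element-dependent} lengths $t_i$ chosen so that $\lambda(g_{t_i + r}x_i) \geq c$ in a window around $t_i$. That is, the partition is built so that each element terminates at a recurrence time to the regular set, which is what makes the lower Gibbs estimate of Lemma \ref{lemma: lower Gibbs estimate} applicable element by element with a single constant $Q_2(\eps,c,c)$. Similarly, in Proposition \ref{prop: main prop} the comparison is carried out only along a subsequence of lengths $n$ with $\lambda(g_{-n}x), \lambda(g_{-n}y) \geq c$, which exists because $x, y$ are taken to be generic points and $\mu(\lambda^{-1}([c,\infty))) > 3/4$. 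Without some analogous device to synchronize the lengths at which you compare Bowen balls with recurrence to a region of definite regularity, your Sinai-type cocycle argument cannot close: the two-sided Gibbs inequality you use at each stage is simply not available for arbitrary $t$. So the fix is to replace ``uniform Gibbs on a large set for all $t$'' with ``uniform Gibbs along recurrence times chosen via an adapted partition,'' which is the content of Sections \ref{sec: Partitions} and \ref{sec:lps}.
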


A measure-preserving flow $(X, \mu, \{\Phi_t\})$ is said to be \emph{Bernoulli} if for all $t\neq 0$ the discrete dynamical system $(X, \mu, \Phi_t)$ is isomorphic to a Bernoulli shift. Our second main theorem is the following:

\begin{introthm}\label{mainthm:Bernoulli}
    $(\GS, \mu, g_t)$ as in Theorem \ref{mainthm:LPS} is Bernoulli.
\end{introthm}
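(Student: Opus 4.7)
The plan is to follow the Ornstein-Weiss framework alluded to in the introduction, invoked via Alansari's theorem \cite{Alansari} (or Chernov-Haskell \cite{CH}), which asserts that a $K$-flow preserving a hyperbolic measure with local product structure is Bernoulli. Three inputs are needed: (i) the $K$-property of $(\GS, g_t, \mu)$, which is \cite[Theorem A]{CCESW}; (ii) local product structure for $\mu$, which is Theorem~\ref{mainthm:LPS} above; and (iii) hyperbolicity of $\mu$, i.e., strictly non-zero transverse Lyapunov exponents $\mu$-almost everywhere.

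For (iii), I would show that $\mu$ assigns zero measure to the $g_t$-invariant singular set of geodesics that never hit a cone point. This is forced by the pressure gap underlying \cite[Theorem A]{CCESW}: the topological pressure of $\phi$ restricted to the singular set is strictly less than $P(\phi)$, so no equilibrium state can concentrate there. By the Birkhoff ergodic theorem, $\mu$-a.e.\ geodesic encounters cone points with positive frequency, and since each near-passage to a cone point contributes a definite amount of expansion (respectively, contraction) on the unstable (respectively, stable) direction, this yields strictly non-zero Lyapunov exponents.

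The Bernoulli conclusion then proceeds along standard lines: fix $t>0$, partition $\GS$ into small rectangles adapted to the unstable/weak-stable decomposition using the local product structure, and verify the Very Weak Bernoulli (VWB) condition for this partition under $g_t$. Local product structure lets one compare conditional measures on stable and unstable pieces up to a bounded Radon-Nikodym derivative, while the $K$-property supplies the needed decorrelation of past and future; Ornstein's isomorphism theorem then implies $(G\SSS, g_t, \mu)$ is Bernoulli for each $t \neq 0$, and a further theorem of Ornstein upgrades this to the Bernoulli property for the flow itself.

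The main obstacle is that Theorem~\ref{mainthm:LPS} only provides local product structure on a set $E$ with $\mu(E) > 1 - \eps$ rather than on all of $\GS$. One must verify that the VWB estimates are robust against this $\eps$-sized bad set: since $\eps$ is at our disposal, the contribution of points outside $E$ can be made arbitrarily small in the VWB comparison, provided one controls how the bad set is transported under iterates of $g_t$ via invariance of $\mu$ and Markov-type estimates. A secondary technical issue is the handling of orbits that pass close to cone points within the comparison window, where stable and unstable holonomies degenerate; this is dealt with using the global Bowen property established earlier in the paper.
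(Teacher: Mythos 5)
Your proposal correctly identifies the three high-level ingredients (the $K$-property, local product structure, and positivity of $\mu$ away from $\Sing$), and your bad-set bookkeeping intuition for the $\eps$-sized exceptional set from Theorem~\ref{mainthm:LPS} matches the spirit of the paper's Section~\ref{subsec:5}. However, there is a genuine gap at the very first step. Neither Chernov--Haskell nor Alansari can be invoked as a black box here: both theorems are formulated for (possibly non-uniformly) hyperbolic flows on smooth manifolds, where Pesin's stable manifold theory, Lyapunov exponents, and absolute continuity of holonomies are available. Our $\GS$ is a flow space over a surface with cone singularities, so the flow is not differentiable and ``hyperbolic measure'' in the Pesin sense has no direct meaning; the stable and unstable sets have to be built by hand from Busemann functions and cone-point combinatorics (Section~\ref{subsec:stable unstable}), and the expansion/contraction bounds come from Lemmas~\ref{lemma: expansion u}, \ref{lemma: boundedness cs}, and \ref{lemma: geometry of u and cs}, not from a linearization. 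This is precisely why the paper does not cite those results but instead re-runs the full Ornstein--Weiss scheme, using \cite{CH} only as a template for the presentation. Your statement that one should verify ``strictly non-zero transverse Lyapunov exponents'' is therefore not something one can literally do; the replacement is the geometric/combinatorial regularity supplied by $\lambda$ and $\Reg(c)$.

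Beyond that, the sketch omits the single most substantive new ingredient in the VWB verification: the ``layerwise intersection'' result (Proposition~\ref{prop:layerwise}), which shows that after pushing forward the partition $\xi$ by a large enough time $M$, for $\eps$-almost every atom $A$ there is a subset $S_A$ of proportion $>1-\eps$ whose unstable pieces stretch fully across each rectangle. Establishing this requires Proposition~\ref{prop:good points} (showing a.e.\ orbit picks up a definite turning angle in a bounded future time window), Lemma~\ref{lem:agree} (geodesics close in the unstable direction near such points share an entire backward ray), and the boundary control $\mu(B(\partial\xi,r)) \leq Cr$ from Proposition~\ref{prop:good boundary}, combined with the exponential contraction of Lemma~\ref{lemma: expansion u}. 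Calling this ``Markov-type estimates'' elides the mechanism. You also misattribute the role of the global Bowen property: it enters in establishing the upper Gibbs bound and hence the local product structure itself (Section~\ref{subsec:Gibbs}--\ref{subsec:lps}), not in handling cone-point passages during the VWB comparison — that part is done via the layerwise intersection and the explicit bad-set decomposition $\hat F_1,\dots,\hat F_4$ of Section~\ref{subsec:5}.
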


In the process of proving these theorems, we examine the Bowen property for the potential $\phi$ relative to our flow (see Definition~\ref{defn:global bowen}). The Bowen property for some collection $\mathcal{C}$ of orbit segments says, roughly, for any two segments from $\mathcal{C}$ of equal length which stay close together, the integrals of $\phi$ along the segments are close, uniformly over all of $\mathcal{C}$ and over all lengths of the segments. In \cite{CCESW} we proved this property for a large collection of `good' segments -- those that experience a definite amount of the hyperbolicity of the flow. (See Prop \ref{prop:non-uniform Bowen} for the details.) 

In the present paper we improve this to a \emph{global} Bowen property, where $\mathcal{C}$ can be taken to be all orbit segments. The argument uses the technology of $\lambda$-decompositions, which has been the main technical tool for applications of the Climenhaga-Thompson machinery. Hence, this argument, in particular Proposition~\ref{proposition: global Bowen property}, may be of independent interest in other settings.

\begin{introthm}
For $(\GS, \mu, g_t)$ as above, and for $\phi$ which is H\"older and is locally constant on $B(\Sing,\delta)$ for some $\delta>0$,  $\phi$ has the global Bowen property.
\end{introthm}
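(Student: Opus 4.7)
The plan is to upgrade the non-uniform Bowen property of Proposition~\ref{prop:non-uniform Bowen} to a global one by combining the $\lambda$-decomposition machinery from \cite{CCESW} with the hypothesis that $\phi$ is locally constant on $B(\Sing,\delta)$. The decomposition writes any orbit segment $(x,T)$ as a concatenation $(x,p)\cdot(g_p x,g)\cdot(g_{p+g}x,s)$ in which the good middle $(g_px,g)$ lies in the collection $\GGG$ enjoying the non-uniform Bowen property, while the bad prefix and suffix are orbit segments that stay close to the singular set. The guiding idea is that $\phi$ is exactly constant on connected components of $B(\Sing,\delta)$, so the integral of $\phi$ along a bad segment equals the integral along any sufficiently close bad segment, leaving only the good middle to be controlled.

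First I would extract from the $\lambda$-decomposition of \cite{CCESW} a radius $\delta'<\delta$ such that every bad segment $(z,r)$ returned by the decomposition satisfies $g_s z\in B(\Sing,\delta')$ for all $s\in[0,r]$; this should follow from the characterization of the bad set as segments lacking definite hyperbolic expansion, which in the flat-with-cone-points setting forces the underlying orbit to remain near the non-hyperbolic geodesics in $\Sing$. Fix a Bowen scale $\eps$ smaller than $\delta-\delta'$ and also smaller than the pairwise separation between distinct connected components of $B(\Sing,\delta)$. For any two $\eps$-Bowen close segments $(x,T)$ and $(y,T)$, decompose $(x,T)$ to get lengths $p,g,s$. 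On the prefix $[0,p]$, $g_s x\in B(\Sing,\delta')$ and hence $g_s y\in B(\Sing,\delta)$ in the same component, so $\phi(g_s x)=\phi(g_s y)$ by local constancy, making the two integrals over $[0,p]$ equal; the suffix is identical. On the middle $[p,p+g]$, $(g_px,g)\in\GGG$ and $(g_py,g)$ is $\eps$-Bowen close to it (Bowen closeness is preserved under time-shifting), so Proposition~\ref{prop:non-uniform Bowen} yields a uniform constant $K$ controlling the difference of the middle integrals. Summing the three contributions gives the desired global bound with constant $K$.

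The main obstacle is justifying that the bad set is actually contained in a $\delta'$-tube about $\Sing$ for some $\delta'<\delta$. If the bad set in \cite{CCESW} is defined intrinsically via hyperbolic-rate thresholds rather than spatially, this requires either re-examining the proof of Proposition~\ref{prop:non-uniform Bowen} to extract the spatial containment, or slightly enlarging the bad set at the cost of shrinking the good middles. A related point is ensuring that the non-uniform Bowen constant from \cite{CCESW} is genuinely uniform across all of $\GGG$ rather than dependent on the individual good segment; since $\GGG$ is carved out by a uniform lower bound on hyperbolic expansion, this should be readable from that paper, but it needs to be checked explicitly since the whole argument degenerates without it. Finally, one must verify that at the junction times $p$ and $p+g$ the Bowen orbits remain in the expected regions — a continuity issue that should follow from the triangle inequality given the buffer $\delta-\delta'$.
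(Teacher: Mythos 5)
Your overall strategy matches the paper's proof of Proposition~\ref{proposition: global Bowen property}: decompose an arbitrary orbit segment into a bad prefix, a good middle, and a bad suffix, use local constancy of $\phi$ on the bad pieces and the non-uniform Bowen property on the good piece. The decomposition the paper uses is the simplest possible one: set $t_1=\min\{s\geq 0\mid\lambda(g_sx)\geq c\}$ and $t_2=\max\{s\leq t\mid\lambda(g_sx)\geq c\}$, so the middle $(g_{t_1}x,t_2-t_1)$ lies in $\CCC(c)$ — the set of segments with $\lambda\geq c$ at the two endpoints — and the paper explicitly notes (proof of Proposition~\ref{prop:non-uniform Bowen}) that the argument of \cite{CCESW} yields a uniform Bowen constant on all of $\CCC(c)$, since it only uses the values of $\lambda$ at the endpoints. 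That disposes of your second worry.

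The ``main obstacle'' you flag — whether the bad set lies in a tube about $\Sing$ — is genuine, and your proposal does not resolve it. In fact the containment you want is false as stated: $\lambda(z)$ being small does \emph{not} by itself force $z$ to be near $\Sing$ (a geodesic can turn sharply in the near future while still having small $\lambda$ at time $0$). The paper's resolution is Lemma~\ref{lemma: nbhd of Sing}: for any $\delta>0$ there exist $c>0$ and $T>0$ such that if $\lambda(g_sz)\leq c$ for \emph{all} $s\in[-T,T]$, then $d_{\GS}(z,\Sing)\leq\delta$. This is proved from lower semi-continuity of $\lambda$, the characterization $\Sing=\bigcap_t g_t\lambda^{-1}(0)$, and compactness. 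The price is the time buffer $T$: on the bad prefix $[0,t_1]$, only the interior subinterval $[T,t_1-T]$ is guaranteed to have $g_sx\in B(\Sing,\delta/2)$ and hence (using $\eps<\delta/2$) $g_sy\in B(\Sing,\delta)$, where $\phi$ is locally constant. The two buffer intervals of total length $4T$ contribute at most $4T\lVert\phi\rVert$, which is the uniform constant that appears in the final bound. Without incorporating this two-sided window and the resulting $4T\lVert\phi\rVert$ term, your argument cannot close; this is the idea you would need to add. Once you do, the separation-between-components condition you impose on $\eps$ is unnecessary: you simply take $\eps<\delta/2$, so that $g_sx\in B(\Sing,\delta/2)$ and $d(g_sx,g_sy)<\delta/2$ together force $g_sy\in B(\Sing,\delta)$.
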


In an Appendix, we adapt Theorem A to geodesic flows on nonpositively curved, rank 1 manifolds.

\begin{introthm}\label{thm:rank 1}
Let $M$ be a closed, connected, Riemannian manifold which is rank 1 with nonpositive sectional curvature. Let $(T^1M, \mu, g_t)$ be the geodesic flow on its unit tangent bundle, together with an equilibrium measure $\mu$ for a H\"older continuous potential which is constant on a neighborhood of the singular set for $g_t$. Then for any $\eps>0$ there is a subset $E$ of $\GS$ with $\mu(E)>1-\eps$ on which the measure $\mu$ has local product structure.
\end{introthm}
%
\subsection{An outline of the paper}\label{subsec:paper outline}

The outline of this paper is as follows. 

In \S\ref{sec:preliminaries}, we introduce the necessary geometric background, beginning with some basic geometric lemmas (\S\ref{subsec:geometric lemmas}) as well as developing properties of the stable and unstable sets in our setting (\S \ref{subsec:stable unstable}). In \S \ref{subsec:Bowen brackets} and \S\ref{sec:rectangles}, we introduce the Bowen bracket operations and exploit the local (topological) product structure of $\GS$.

In \S\ref{sec:dynamics} we prove some of our main results on the dynamics of the geodesic flow. In \S\ref{subsec:global Bowen}, we show that the potentials considered in this paper satisfy the Bowen property globally, rather than the non-uniform version. Then, in \S\ref{subsec:Gibbs}, we recall the Gibbs property developed for the Climenhaga-Thompson machinery, and show how it interacts with Bowen balls in our setting. Finally, in \S\ref{subsec:lps}, we establish the local product structure of our unique equilibrium states.

\S\ref{sec:Bernoulli} is devoted to the proof of the Bernoulli property. After establishing some preliminary definitions in \S\ref{subsec:defs2}, we give an outline of the steps of an argument for the Bernoulli property due to Ornstein and Weiss \cite{OW} in \S\ref{subsec:outline}. \S\ref{subsec:1-2} covers the first two steps of this outline, which mainly involve recalling Lemmas from previous papers and the local product structure proved at the end of \S\ref{sec:dynamics}. In \S\ref{subsec:3} we prove a ``layerwise'' intersection property for a partition which is needed in the proof. \S\ref{subsec:4} and \S\ref{subsec:5} conclude the proof.

Appendix \ref{section: Appendix} turns to the smooth setting and presents the proof of Theorem \ref{thm:rank 1}. The proof follows the arguments of \S\ref{sec:dynamics}. In \S\ref{sec:geometric preliminaries} through \S\ref{sec: NPC Bowen} we discuss the tools needed for the proof, and in \S\ref{sec:proof of LPS rank 1} we prove Theorem \ref{thm:rank 1}. Our exposition primarily highlights the adjustments needed to the proof of Theorem \ref{mainthm:LPS} in the rank 1 setting.

%
\section*{Acknowledgements}

We would like to thank the American Institute of Mathematics for their hospitality and support during the SQuaRE ``Thermodynamic formalism for CAT(0) spaces". We would also like to thank Vaughn Climenhaga for helpful comments regarding the presentation of the paper. AE was supported by the Simons Foundation (Grant Number 814268 via the Mathematical Sciences Research Institute, MSRI) and NSF grant DMS-2247230. BC was partially supported by NSF grant DMS-2303333.

%
\section{Preliminaries}\label{sec:preliminaries}

In this section we collect a few preliminaries which will be used throughout the proof.

%
\subsection{Definitions}\label{subsec:defs}

First, we define the main objects of study.

Throughout, $\SSS$ is a flat surface with finitely many large cone angle singularities. That is, $\SSS$ is a compact, connected surface of genus $g\geq 2$ equipped with a metric which is flat except at finitely many points, called \emph{cone points} and denoted by $Con$. The total angle around $p\in Con$ is denoted $\mathcal{L}(p)$ and satisfies $\mathcal{L}(p)>2\pi.$ We denote by $\theta_0$ the minimum of $\mathcal{L}(p)-2\pi$ over all $p\in Con.$

$\tilde \SSS$ is the universal cover of $\SSS$, and is a CAT(0) space. (See \cite[Part II]{bh} for definitions and basic results on CAT(0) spaces.) $d_\SSS$ and $d_{\tilde \SSS}$ are the metrics on $\SSS$ and $\tilde \SSS$, respectively. The unique geodesic segment joining points $\tilde p, \tilde q\in \tilde \SSS$ is denoted $[\tilde p, \tilde q].$

The geodesic flow takes place on the space of geodesics:
\[ GS = \{ \gamma: \mathbb{R}\to \SSS: \gamma \mbox{ is a local isometry}\}\]
\[ G\tilde S = \{ \tilde \gamma: \mathbb{R}\to \tilde \SSS: \tilde \gamma \mbox{ is a local isometry}\}.\]
We endow these spaces with the following metrics:
\[ d_{G\tilde \SSS}(\tilde\gamma_1,\tilde\gamma_2)= \int_{-\infty}^\infty d_{\tilde \SSS}(\tilde \gamma_1(s),\tilde \gamma_2(s))e^{-2|s|}ds, \]
\[ d_{G\SSS}(\gamma_1, \gamma_2) = \inf_{\tilde\gamma_1,\tilde \gamma_2} d_{G\tilde \SSS}(\tilde\gamma_1, \tilde\gamma_2)\]
where the infimum is taken over all lifts of $\gamma_1$ and $\gamma_2$ to $G\tilde \SSS$. The geodesic flow is defined by $g_t(\gamma(s))=\gamma(s+t)$.

The geometric feature that drives the dynamical properties we study is the ability of geodesics to turn with various angles when they encounter cone points. 

\begin{definition}\label{defn:turning angle}
The \emph{turning angle} of $\gamma$ at time $t$ is $\theta(\gamma,t)\in (-\frac{1}{2}\mathcal L(\gamma(t)),\frac{1}{2}\mathcal L(\gamma(t))]$ and is the signed angle between the segments $[\gamma(t-\delta),\gamma(t)]$ and $[\gamma(t),\gamma(t+\delta)]$ (for sufficiently small $\delta>0$). A positive (resp. negative) sign for $\theta$ corresponds to a  counterclockwise (resp. clockwise) rotation with respect to the orientation of  $[\gamma(t-\delta),\gamma(t)]$.
\end{definition}

Geodesics that do not encounter cone points do not experience the hyperbolic dynamics caused by those points and hence are (from a dynamical perspective) singular:

\begin{definition}\label{defn:Sing}
The \emph{singular geodesics} are
\begin{equation*}
\Sing =\{\gamma \in \GS : |\theta(\gamma,t)|=\pi \quad \forall t\in\mathbb R\}.
\end{equation*}
\end{definition}

$\Sing$ is $g_t$-invariant, closed, and, hence, compact. Some, but not all, singular geodesics lie in flat strips:
\begin{definition}\label{defn:flat strip}
A \emph{flat half-strip} in $\tilde\SSS$ is an isometric embedding of $[0,\infty)\times[a,b]$ into $\tilde\SSS$ for some $[a,b]$. A \emph{flat strip} is an isometric embedding of $\mathbb{R}\times[a,b]$ into $\tilde\SSS$ for some $[a,b]$.
\end{definition}

In our previous paper \cite{CCESW}, we defined a function $\lambda: G\SSS \to [0,\infty)$ which played an essential role in studying the dynamics of the geodesic flow. Roughly speaking, $\lambda(\gamma)$ measures the hyperbolicity of the dynamics experienced by $\gamma$: a large value of $\lambda$ means that at some point near time zero, $\gamma$ turns with an angle significantly different from $\pi$ at a cone point. For the details of the definition of $\lambda$, see \cite[\S 3]{CCESW}. Here we note the key properties of $\lambda$ we will need:
\begin{itemize}
    \item $\bigcap_{t\in\mathbb{R}}g_t\lambda^{-1}(0) = \Sing$ (\cite[Cor 3.5]{CCESW}).
    \item $\lambda$ is lower semicontinuous (\cite[Lemma 3.8]{CCESW}).
    \item Let $s_0$ be a positive parameter in the definition of $\lambda$. We recall that it depends only on the geometry of $\SSS$, specifically the minimum distance between cone points in $\SSS$. Then, the following holds.
    \begin{lemma}\label{lemma: properties of lambda}(\cite{CCESW}, Proposition 3.9)
    If $\lambda(x) > c>0$, then either:
    \begin{enumerate}
        \item There exists $t \in (-s_0,s_0)$ such that x turns at $t$ with angle at least $cs_0$, or
        \item There exist $t_1\in (-\frac{\theta_0}{2c},-s_0]$ and $t_2\in [s_0,\frac{\theta_0}{2c})$ such that $x$ first turns at $t_1$ and $t_2$ with angle at least $cs_0$, where $\theta_0$ depends only on the geometry of $\SSS$.
    \end{enumerate}
    \end{lemma}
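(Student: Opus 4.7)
The plan is to invoke the definition of $\lambda$ from \cite[Section~3]{CCESW}, which builds $\lambda(x)$ as an extremum over weighted turning angles at the cone points visited by $x$ near time $0$, and then to extract the stated dichotomy by a case analysis on where these contributions are realized. Recall that the parameter $s_0$ is chosen in terms of the minimum distance between cone points of $\SSS$ so that any orbit segment of length less than $2s_0$ can meet at most one cone point; this guarantees that the turning data on the window $(-s_0,s_0)$ are unambiguous and that ``first turning on either side of $0$'' is well-defined.

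The first step is to unpack the inequality $\lambda(x)>c$ to identify a time $t^{\ast}$ (or a pair of such times) at which the weighted turning exceeds $c$. In \emph{Case (1)}, some witnessing $t^{\ast}$ lies in $(-s_0,s_0)$: since the time-weight used in the definition of $\lambda$ is bounded above by $1/s_0$ on this inner window, the raw turning angle at $t^{\ast}$ must be at least $cs_0$, which is conclusion~(1). In \emph{Case (2)}, no single turning in $(-s_0,s_0)$ alone suffices. Then the only route to $\lambda(x)>c$ is via a balanced contribution from the first turning times $t_1\in(-\infty,-s_0]$ and $t_2\in[s_0,\infty)$ on the two sides of $0$. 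The lower bounds $|t_i|\geq s_0$ come from failing Case (1), while the upper bounds $|t_i|<\theta_0/(2c)$ follow by combining the $1/(2|t|)$-style decay in the definition of the weight with the role of $\theta_0$, the minimum excess angle, as the geometric cap on how much ``useful'' turning a single cone encounter can supply: a contribution exceeding $c$ at time $|t|$ forces $|t|<\theta_0/(2c)$. In both cases, the raw turning at the selected time(s) is at least $cs_0$, matching the statement.

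The main obstacle is making the two weighting regimes---the uniform one inside $(-s_0,s_0)$ and the decaying one outside---line up to produce the symmetric form of Case~(2) with the sharp constants $cs_0$ and $\theta_0/(2c)$. Concretely, one must verify that the ``first turning'' interpretation in the statement coincides with how $\lambda$ selects its witnesses (so that intermediate, smaller turnings between $0$ and $t_i$ do not force a worse bound), and that both $t_1$ and $t_2$ contribute jointly rather than being blamed on a single side. This is essentially careful bookkeeping from the explicit definition of $\lambda$ given in \cite{CCESW}, and once that definition is written out, the dichotomy falls out by a direct calculation.
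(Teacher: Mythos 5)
The paper does not prove this lemma: it is quoted verbatim as \cite[Proposition~3.9]{CCESW} in the preliminaries (\S\ref{subsec:defs}), and the surrounding text explicitly notes that the definition of $\lambda$ and its properties are established in that earlier paper. So there is no proof in this paper to compare your argument against; you are reconstructing both the definition of $\lambda$ (which you did not have access to) and a proof in \cite{CCESW}.

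As a reconstruction, your sketch points in a plausible direction---the split between a ``near window'' $(-s_0,s_0)$ where the weight is a constant $1/s_0$ and a ``far window'' where the weight decays like $1/|t|$ is consistent with the ratio $\theta(\gamma,t)/\max\{s_0,|t|\}$ that appears elsewhere in the paper (e.g.\ in the proof of Proposition~\ref{prop:continuity of lambda on unstable} and Corollary~\ref{cor: lambda bounded below}). But there is a genuine gap in how you justify the constant $\theta_0/(2c)$. You describe $\theta_0$ as ``the geometric cap on how much `useful' turning a single cone encounter can supply.'' However $\theta_0$ is defined as the \emph{minimum} of $\mathcal{L}(p)-2\pi$ over cone points, not a maximum; individual cone points can turn by arbitrarily more. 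For $\theta_0/(2c)$ to bound $|t_i|$, the definition of $\lambda$ must explicitly truncate the angle contribution at (something like) $\theta_0/2$ rather than use the raw turning angle. Your argument asserts a decay rate without confronting this, so the derivation of the upper bound $|t_i|<\theta_0/(2c)$ does not follow from what you wrote. Since this is exactly where the sharp constant in conclusion~(2) comes from, you would need to consult the explicit formula for $\lambda$ in \cite[\S3]{CCESW} before this could be called a proof.
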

\end{itemize}

Geodesics not in $\Sing$ are \emph{regular}, and the value of $\lambda$ measures their regularity:

\begin{definition}\label{defn: regular}
For all $c > 0$, let $$\operatorname{Reg}(c) := \{\gamma\in GS\mid \lambda(\gamma) \geq c\}.$$    
\end{definition}

As we will see below, for many of our arguments about geodesic segments, only their level of regularity at the beginning and end are important.

\begin{definition}\label{defn: good start and end}
Let $\CCC(c) = \{(x,t) \in \GS \times \mathbb{R} \mid \lambda(x) \geq c \text{ and } \lambda(g_tx) \geq c \}$ be the set of orbit segments with large values of $\lambda$ at their beginning and end.
\end{definition}

$\partial_\infty\tilde\SSS$ is the boundary at infinity of $\tilde \SSS$ (see \cite[\S II.8]{bh}). If $\tilde\gamma\in G\tilde \SSS$, then $\tilde \gamma(\pm\infty)$ are its forward and backwards endpoints at infinity. Geodesics that share an endpoint at infinity either bound a flat half-strip or are asymptotic in the following sense:

\begin{definition}\label{def: cone point asymptotic}
We say that geodesics $\tilde\gamma_1,\tilde\gamma_2\in\tilde\SSS$ are \emph{$*$-cone point asymptotic}, where $*\in\{+,-\}$, if $\tilde\gamma_1(*\infty)=\tilde\gamma_2(*\infty)=\xi$ and there is a cone point $p$ such that the ray defined by $p$ and $\xi$ is contained in both $\tilde\gamma_1$ and $\tilde\gamma_2$.
\end{definition}

%
\subsection{Geometric lemmas}\label{subsec:geometric lemmas}

It is useful to relate $d_{\GS}$ to $d_\SSS$, the metric on the surface itself. First, if two geodesics are close in $\GS$, then they are close in $\SSS$ at time zero.

\begin{lemma}[\cite{CLT}, Lemma 2.8]\label{lem:closeness in S and GS}
For all $\gamma_1, \gamma_2\in \GS$, 
\[ d_\SSS(\gamma_1(0), \gamma_2(0)) \leq 2d_{\GS}(\gamma_1,\gamma_2).\]
Furthermore, for $s, t \in \mathbb R$, $d_\SSS(\gamma_1(s),\gamma_2(t)) \leq 2 d_{\GS}(g_s\gamma_1,g_t \gamma_2).$
\end{lemma}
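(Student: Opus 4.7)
The plan is to pass to the universal cover and exploit the CAT(0) convexity of the distance function between two geodesics. Fix arbitrary lifts $\tilde\gamma_1, \tilde\gamma_2\in G\tilde\SSS$ of $\gamma_1,\gamma_2$. Since $\tilde\gamma_i(0)$ is a lift of $\gamma_i(0)$,
\[ d_\SSS(\gamma_1(0),\gamma_2(0)) \leq d_{\tilde\SSS}(\tilde\gamma_1(0),\tilde\gamma_2(0)) =: D, \]
so it will suffice to show $D\leq 2\, d_{G\tilde\SSS}(\tilde\gamma_1,\tilde\gamma_2)$ for every such pair of lifts; taking the infimum over lifts on the right then yields the stated inequality for $d_{G\SSS}$.

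To bound the integral defining $d_{G\tilde\SSS}$ from below, I would use that $\tilde\SSS$ is CAT(0), so the function $f(s):= d_{\tilde\SSS}(\tilde\gamma_1(s),\tilde\gamma_2(s))$ is convex on $\mathbb{R}$. Here the point is that in a CAT(0) space local geodesics are global geodesics, so the $\tilde\gamma_i$ are honest unit-speed geodesics on all of $\mathbb{R}$, and the standard convexity of the distance between two geodesics (Bridson--Haefliger II.2.2) applies on each bounded interval, hence on all of $\mathbb{R}$. Convexity at $\pm s$ gives $f(s)+f(-s)\geq 2f(0)=2D$, so
\[ d_{G\tilde\SSS}(\tilde\gamma_1,\tilde\gamma_2) = \int_0^\infty \bigl(f(s)+f(-s)\bigr)\,e^{-2s}\,ds \;\geq\; 2D\int_0^\infty e^{-2s}\,ds \;=\; D, \]
which is in fact stronger than the factor-of-two bound claimed in the lemma.

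The second inequality follows immediately by applying the first to the flowed geodesics $g_s\gamma_1$ and $g_t\gamma_2$, since $(g_s\gamma_1)(0)=\gamma_1(s)$ and $(g_t\gamma_2)(0)=\gamma_2(t)$. I do not anticipate a serious obstacle here: the substantive input is the CAT(0) convexity of the distance function, and the remainder is bookkeeping on the integral defining $d_{G\tilde\SSS}$ together with the standard lift-and-infimum procedure.
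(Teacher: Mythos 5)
Your proof is correct, and in fact gives a stronger bound than the stated lemma (factor $1$ in place of $2$), since
\[
d_{G\tilde\SSS}(\tilde\gamma_1,\tilde\gamma_2) = \int_0^\infty\bigl(f(s)+f(-s)\bigr)e^{-2s}\,ds \ \geq\ 2f(0)\int_0^\infty e^{-2s}\,ds \ =\ f(0),
\]
and the midpoint-convexity inequality $f(s)+f(-s)\geq 2f(0)$ is exactly what the CAT(0) convexity of $s\mapsto d_{\tilde\SSS}(\tilde\gamma_1(s),\tilde\gamma_2(s))$ provides (one also correctly uses that local geodesics in a CAT(0) space are global geodesics, so the $\tilde\gamma_i$ are genuine unit-speed geodesics on all of $\mathbb{R}$). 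The reduction to arbitrary lifts, followed by infimizing on the right, is the standard maneuver and is handled correctly, as is the deduction of the second inequality from the first by replacing $\gamma_i$ with $g_s\gamma_1$, $g_t\gamma_2$. Note that Lemma \ref{lem:closeness in S and GS} is quoted here from \cite{CLT}, so the present paper does not supply its own proof; your CAT(0)-convexity argument is nevertheless the natural one and establishes the statement (and more). The factor of $2$ in the cited form is simply a looser constant and causes no issue.
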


Conversely, if two geodesics are close in $\SSS$ for a significant interval of time surrounding zero, then they are close in $\GS$:

\begin{lemma}[\cite{CLT}, Lemma 2.11]\label{lem:shadow in S}
Let $\eps$ be given and $a<b$ arbitrary. There exists $T=T(\eps)>0$ such that if $d_\SSS(\gamma_1(t),\gamma_2(t))<\eps/2$ for all $t\in[a-T,b+T]$, then $d_{\GS}(g_t\gamma_1,g_t\gamma_2)<\eps$ for all $t\in[a,b]$. For small $\eps$, we can take $T(\eps)= -  \log(\eps)$.
\end{lemma}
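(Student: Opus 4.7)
The plan is to lift everything to the universal cover, where the metric $d_{G\tilde\SSS}$ is given by an exponentially weighted integral of pointwise distances between lifts. Once we choose lifts $\tilde\gamma_1,\tilde\gamma_2$ that realize the surface distance throughout the window $[a-T,b+T]$, the bound on $d_{\GS}(g_t\gamma_1,g_t\gamma_2)$ reduces to splitting this integral into a central part (where the hypothesis applies directly) and two tails (where the exponential weight dominates any linear growth of the pointwise distance).

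For the lift step, by compactness of $\SSS$ there is $\iota>0$ such that for every $p\in\SSS$ and every lift $\tilde p\in\tilde\SSS$, the covering projection restricts to an isometry $B(\tilde p,\iota)\to B(p,\iota)$. Assuming $\eps$ is small enough that $\eps/2<\iota$, fix any base time $t_0\in[a-T,b+T]$ and pick lifts $\tilde\gamma_1,\tilde\gamma_2$ with $d_{\tilde\SSS}(\tilde\gamma_1(t_0),\tilde\gamma_2(t_0))=d_\SSS(\gamma_1(t_0),\gamma_2(t_0))<\eps/2$. The set of $t\in[a-T,b+T]$ where the chosen lifts continue to realize the surface distance is open (uniqueness of the nearest lift inside each injectivity ball) and closed (continuity), hence equals the whole interval, so $d_{\tilde\SSS}(\tilde\gamma_1(t),\tilde\gamma_2(t))<\eps/2$ throughout $[a-T,b+T]$.

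With the lifts fixed, for any $t\in[a,b]$ the change of variables $u=s+t$ gives
\[
d_{\GS}(g_t\gamma_1,g_t\gamma_2)\leq \int_{-\infty}^{\infty} d_{\tilde\SSS}(\tilde\gamma_1(u),\tilde\gamma_2(u))\,e^{-2|u-t|}\,du.
\]
On $[a-T,b+T]$ the integrand is at most $(\eps/2)e^{-2|u-t|}$, which integrates to at most $\eps/2$. For a point $u$ in either tail, let $v$ denote the distance from $u$ to $[a-T,b+T]$: a two-sided triangle-inequality argument applied to the unit-speed geodesics $\tilde\gamma_i$ gives $d_{\tilde\SSS}(\tilde\gamma_1(u),\tilde\gamma_2(u))\leq \eps/2+2v$, and since $t\in[a,b]$ one has $|u-t|\geq T+v$, whence $e^{-2|u-t|}\leq e^{-2T}e^{-2v}$. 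The contribution from each tail is therefore at most
\[
e^{-2T}\int_0^{\infty}\bigl(\tfrac{\eps}{2}+2v\bigr)e^{-2v}\,dv = e^{-2T}\bigl(\tfrac{\eps}{4}+\tfrac12\bigr).
\]
Taking $T=-\log\eps$ makes $e^{-2T}=\eps^2$, so the total bound is $\eps/2+O(\eps^2)$, which is less than $\eps$ for all sufficiently small $\eps$. The main subtlety is the lift argument, which transfers the surface-level hypothesis to the universal cover by a consistent choice of lifts; once that is in place, the rest is a direct integration exploiting the exponential weight in $d_{G\tilde\SSS}$.
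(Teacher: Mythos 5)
This lemma is cited from \cite{CLT} (Lemma 2.11) and not proved in the present paper, so there is no in-paper argument to compare against; but your proof is a correct, self-contained argument and almost certainly mirrors what the reference does.

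Your steps all check out: the consistent-lift argument (open-and-closed on $[a-T,b+T]$, using a uniform injectivity radius $\iota$ and the assumption $\eps/2<\iota$) legitimately transfers the surface-distance hypothesis to the universal cover; the bound $d_{\GS}(g_t\gamma_1,g_t\gamma_2)\le\int d_{\tilde\SSS}(\tilde\gamma_1(u),\tilde\gamma_2(u))e^{-2|u-t|}du$ is exactly right since $d_{\GS}$ is an infimum over lifts; the central piece contributes at most $\eps/2$ because $\int e^{-2|u-t|}\,du=1$; and the tail estimate $d_{\tilde\SSS}(\tilde\gamma_1(u),\tilde\gamma_2(u))\le\eps/2+2v$ with $|u-t|\ge T+v$ (valid because $t\in[a,b]$) gives each tail a contribution of $e^{-2T}(\eps/4+1/2)$. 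With $T=-\log\eps$ the total is $\eps/2+\eps^2(\eps/2+1)$, which is strictly less than $\eps$ for all small $\eps$. The only thing worth flagging is that you handle only the "small $\eps$" regime (needed both for $\eps/2<\iota$ and for the explicit $T(\eps)=-\log\eps$); that matches the lemma's final sentence and is all the paper ever uses, but a fully general $\eps$ would need a separate choice of $T$ and a slightly different lift argument.
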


We will need a few refinements of Lemma \ref{lem:shadow in S} throughout the paper, which we record here.

\begin{lemma}[\cite{CCESW}, Lemma 2.13]\label{lem:exponentially close}
Suppose that $d_\SSS(\gamma_1(t),\gamma_2(t))=0$ for all $t\in[a,b]$. Then, for all $t\in[a,b]$, $d_{\GS}(g_t\gamma_1, g_t\gamma_2)\leq e^{-2\min\{ |t-a|,|t-b| \}}$.
\end{lemma}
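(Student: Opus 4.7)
The plan is to unfold the definition of $d_{\GS}$ and estimate the resulting integral directly. Since the two geodesics coincide on $[a,b]$ (they contribute $0$ to the integrand there) and move apart only outside $[a,b]$, where linear growth comes from arclength parameterization, the exponential weight in the definition of the metric forces the quoted decay rate.

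First I would lift to the universal cover: because $d_\SSS(\gamma_1(t),\gamma_2(t))=0$ for every $t\in[a,b]$, we may choose lifts $\tilde\gamma_1,\tilde\gamma_2\in G\tilde\SSS$ that literally agree on $[a,b]$. Then I would change variables $u=s+t$ in the integral defining $d_{G\tilde\SSS}$, getting
\[
d_{G\tilde\SSS}(g_t\tilde\gamma_1,g_t\tilde\gamma_2)=\int_{-\infty}^{\infty} d_{\tilde\SSS}\bigl(\tilde\gamma_1(u),\tilde\gamma_2(u)\bigr)\,e^{-2|u-t|}\,du,
\]
and note that $d_{\GS}(g_t\gamma_1,g_t\gamma_2)$ is bounded above by this quantity. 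By the choice of lifts, the integrand vanishes on $[a,b]$, so only the pieces over $(-\infty,a)$ and $(b,\infty)$ survive.

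Next I would estimate the integrand on those pieces using the triangle inequality in $\tilde\SSS$ together with the fact that geodesics are arclength-parameterized: for $u<a$,
\[
d_{\tilde\SSS}\bigl(\tilde\gamma_1(u),\tilde\gamma_2(u)\bigr)\leq d_{\tilde\SSS}\bigl(\tilde\gamma_1(u),\tilde\gamma_1(a)\bigr)+d_{\tilde\SSS}\bigl(\tilde\gamma_2(a),\tilde\gamma_2(u)\bigr)=2(a-u),
\]
and symmetrically $\leq 2(u-b)$ for $u>b$. Substituting these bounds, the two remaining integrals each have the form $\int_0^{\infty}2v\,e^{-2(|t-a|+v)}\,dv$ (and analogously for $b$), which evaluate in closed form to $\tfrac{1}{2}e^{-2|t-a|}$ and $\tfrac{1}{2}e^{-2|t-b|}$. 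Adding these gives $\tfrac{1}{2}\bigl(e^{-2|t-a|}+e^{-2|t-b|}\bigr)\leq e^{-2\min\{|t-a|,|t-b|\}}$, which is exactly the required bound.

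There is no real obstacle here; the argument is a direct computation. The only care needed is (i) choosing the lifts so that the coincidence on $[a,b]$ in $\SSS$ lifts to equality on $[a,b]$ in $\tilde\SSS$, so that the triangle inequality in the CAT(0) cover is available, and (ii) being slightly careful with the constant so that the two tail integrals combine to at most $e^{-2\min\{|t-a|,|t-b|\}}$ rather than something larger.
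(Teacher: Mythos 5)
Your proof is correct and is the natural direct computation that this lemma calls for; since the paper cites this result from \cite{CCESW} rather than reproving it, there is no in-text proof to compare against, but your argument (lift so the geodesics literally agree on $[a,b]$, use arclength parametrization to bound the integrand by $2(a-u)$ and $2(u-b)$ on the two tails, and evaluate the resulting $\int v e^{-2v}$ integrals to get $\tfrac12 e^{-2|t-a|}+\tfrac12 e^{-2|t-b|}\le e^{-2\min\{|t-a|,|t-b|\}}$) is precisely the expected one and all steps, including the choice of compatible lifts via uniqueness of path lifting, are sound.
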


For $t>0$, the time-$t$ geodesic flow is $e^{2t}$-Lipschitz (see, e.g., \cite[Lemma 2.14]{CCESW}). The next two lemmas give similar results in specific geometric situations.

\begin{lemma}\label{lemma: expansion u}
Suppose that $\gamma(t) = \eta(t)$ for all $t \leq r$ for some $r\in \mathbb{R}$. Then there exists $C := C(r) > 0$ such that $d_{GS}(g_t\gamma,g_t\eta) \leq Ce^{2t}d_{GS}(\gamma,\eta)$ for $t \leq 0$. If $r\geq 0$, then $C=1$.
\end{lemma}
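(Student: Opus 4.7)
The plan is to lift the situation to the universal cover $\tilde\SSS$ and work directly with the integral definition of $d_{\GtS}$. Since $\gamma(s) = \eta(s)$ for $s \leq r$, I will choose coherent lifts $\tilde\gamma, \tilde\eta \in \GtS$ that agree pointwise on $(-\infty, r]$: extend the common lift $\tilde\gamma|_{(-\infty, r]}$ (which lifts both $\gamma|_{(-\infty, r]}$ and $\eta|_{(-\infty, r]}$) to a full lift $\tilde\eta$ of $\eta$. By compactness of $\SSS$ and positivity of the systole, any competing lift of $\eta$ differs from $\tilde\eta$ by a nontrivial deck transformation, which makes $d_{\tilde\SSS}(\tilde\gamma(s),\cdot)$ bounded below on $(-\infty, r]$ and so strictly enlarges the defining integral. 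Hence the agreement pair realizes $d_{\GS}(\gamma,\eta) = d_{\GtS}(\tilde\gamma,\tilde\eta)$, and similarly for $g_t\gamma, g_t\eta$.

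With these lifts fixed, I will substitute $u = s + t$ in the integral and use the vanishing of the integrand for $u \leq r$ to obtain
\[
d_{\GtS}(g_t\tilde\gamma, g_t\tilde\eta) = \int_{r}^{\infty} d_{\tilde\SSS}(\tilde\gamma(u), \tilde\eta(u))\, e^{-2|u-t|}\, du, \qquad e^{2t} d_{\GtS}(\tilde\gamma, \tilde\eta) = \int_{r}^{\infty} d_{\tilde\SSS}(\tilde\gamma(u), \tilde\eta(u))\, e^{2t - 2|u|}\, du.
\]
It then suffices to prove the pointwise inequality $e^{-2|u-t|} \leq C(r)\, e^{2t - 2|u|}$ for all $u \geq r$ and $t \leq 0$, or equivalently to bound $f(u,t) := 2|u| - 2|u-t| - 2t$ from above by $\log C(r)$.

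A short case analysis finishes the argument. When $u \geq 0$ (and $t \leq 0$), both $|u| = u$ and $|u - t| = u - t$, so $f(u,t) = 0$. When $r \leq u < 0$ with $t \leq u$, the identity $|u-t| = u - t$ gives $f(u,t) = -4u \leq -4r$. When $r \leq u < 0$ with $u < t \leq 0$, the identity $|u-t| = t - u$ gives $f(u,t) = -4t \leq -4r$ (using $t > u \geq r$). Consequently $\sup f \leq -4\min(r, 0)$, so $C(r) = e^{-4\min(r,0)}$ works; when $r \geq 0$ only the first case occurs and one may take $C = 1$.

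The main obstacle will be the initial lift-selection step---specifically, justifying that the agreement pair achieves the infimum defining $d_{\GS}$---since the integral comparison itself is routine once the substitution is in place. If the minimization claim feels too heavy to invoke globally, an alternative is to fix an almost-minimizing lift pair, note that nontrivial deck displacements on $(-\infty, r]$ force the only nearby lift to be the agreement lift, and run the same integral comparison on that pair.
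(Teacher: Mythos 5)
Your proof is correct and follows essentially the same route as the paper's: the paper writes ``for appropriately chosen lifts,'' performs the same change of variable $s\mapsto s-t$, drops the integral over $(-\infty,r]$ using agreement there, and then invokes the pointwise bound $e^{-2|s-t|}\leq e^{\max\{-4r,0\}}e^{2t}e^{-2|s|}$ for $s\geq r$, $t\leq 0$, which is exactly what your case analysis establishes. The only place you go beyond the paper is in trying to justify that the agreeing lifts realize $d_{\GS}(\gamma,\eta)$; note that your claim that a competing lift ``strictly enlarges the defining integral'' is not airtight as stated, since a deck-translated lift could in principle be closer on $(r,\infty)$ and compensate for the positive gap on $(-\infty,r]$ --- the paper glosses over this same point, and in the applications (Corollary~\ref{cor:u expansion}) the geodesics are close enough that the issue does not arise.
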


\begin{proof}
By definition, for appropriately chosen lifts $\tilde \gamma$ and $\tilde \eta$,
\begin{align*}
d_{GS}(g_t\gamma,g_t\eta) &= \int_{-\infty}^{\infty}e^{-2|s|}d_{\tilde{S}}(\tilde{\gamma}(s + t), \tilde{\eta}(s + t))\,ds.
\\
&= \int_{-\infty}^{\infty}e^{-2|s-t|}d_{\tilde{S}}(\tilde{\gamma}(s), \tilde{\eta}(s))\,ds
\end{align*}
By our assumption on $\gamma$ and $\eta$, 
\begin{align*}
\phantom{d_{GS}(g_t\gamma,g_t\eta)} \qquad \ \  &= \int_{r}^{\infty} e^{-2|s-t|}d_{\tilde{S}}(\tilde{\gamma}(s),\tilde{\eta}(s))\,ds
\\
&\leq e^{\max\{-4r,0\}}e^{2t} \int_{r}^{\infty} e^{-2|s|}d_{\tilde{S}}(\tilde{\gamma}(s),\tilde{\eta}(s))\,ds
\\
&= e^{\max\{-4r,0\}}e^{2t}d_{GS}(\gamma,\eta).\qedhere
\end{align*}
\end{proof}

\begin{lemma}\label{lemma: boundedness cs}
Suppose that $\gamma(t) = \eta(t+\delta)$ for all $t \geq r$ for some $r\in \mathbb{R}$. Then there exists $D := D(r,\delta) > 0$ such that $d_{GS}(g_t\gamma,g_t\eta) \leq De^{-2t}d_{GS}(\gamma,\eta)+2\delta$ for $t \geq 0$. 
\end{lemma}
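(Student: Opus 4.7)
The plan is to adapt the integration argument of Lemma \ref{lemma: expansion u} to this ``weak stable'' setting, splitting the integral defining $d_{GS}$ at the time $u=r$ beyond which $\gamma$ and $\eta$ coincide up to the shift $\delta$. Choose lifts $\tilde\gamma,\tilde\eta$ of $\gamma,\eta$ so that $\tilde\gamma(u)=\tilde\eta(u+\delta)$ for all $u\geq r$; such a choice realizes $d_{GS}(\gamma,\eta)$ as $d_{G\tilde S}(\tilde\gamma,\tilde\eta)$, as in the previous lemma. After substituting $u=s+t$ in the integral definition,
\[
d_{GS}(g_t\gamma,g_t\eta)=\int_{-\infty}^{\infty}e^{-2|u-t|}\,d_{\tilde{S}}(\tilde\gamma(u),\tilde\eta(u))\,du,
\]
and I would split this integral at $u=r$.

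The ``matching'' piece $u\geq r$ is handled immediately by the hypothesis: $d_{\tilde{S}}(\tilde\gamma(u),\tilde\eta(u))=d_{\tilde{S}}(\tilde\eta(u+\delta),\tilde\eta(u))\leq\delta$ since both points lie on the geodesic $\tilde\eta$ at parameters differing by $\delta$. Combined with $\int_{\mathbb{R}}e^{-2|u-t|}\,du=1$, this contributes at most $\delta$, well within the $2\delta$ slack in the statement.

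The remaining piece $u\leq r$ is where the exponential contraction lives. I would establish the pointwise estimate
\[
e^{-2|u-t|}\leq D\,e^{-2t}e^{-2|u|}\quad\text{for all } u\leq r,\ t\geq 0,
\]
with $D=e^{4\max(r,0)}$, by a short case analysis: when $u\leq 0$ the inequality is an equality with constant $1$; when $0\leq u\leq r$, the subcases $u\leq t$ and $u\geq t$ yield the ratio $e^{4u}$ and $e^{4t}$ respectively, each bounded by $e^{4r}$ (in the second subcase, $t\leq u\leq r$). Multiplying by $d_{\tilde{S}}(\tilde\gamma(u),\tilde\eta(u))$, integrating over $u\leq r$, and extending back to all of $\mathbb{R}$ on the right gives
\[
\int_{-\infty}^{r}e^{-2|u-t|}\,d_{\tilde{S}}(\tilde\gamma(u),\tilde\eta(u))\,du\leq D\,e^{-2t}\,d_{GS}(\gamma,\eta).
\]

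Summing the two pieces yields the claimed bound. The step requiring the most care -- and the only place the argument departs materially from Lemma \ref{lemma: expansion u} -- is the case analysis for $0\leq u\leq r$: because $e^{-2|u-t|}$ is not monotone in $|u|$ when $t$ is comparable to $u$, one must separately verify that the exponential ratio remains bounded by a constant depending only on $r$, rather than growing with $t$.
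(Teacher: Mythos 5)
Your proof is correct and follows the same overall strategy as the paper's: choose lifts that merge (up to the time shift) on $[r,\infty)$, split the integral defining $d_{GS}(g_t\gamma,g_t\eta)$ at $u=r$, and bound the kernel ratio $e^{-2|u-t|}/e^{-2|u|}$ by a constant times $e^{-2t}$ on the unmatched piece. Your execution is in fact a bit cleaner than the paper's. The paper applies the triangle inequality twice (once to replace $\tilde\eta(s)$ by $\tilde\eta(s+\delta)$, once to replace $\tilde\gamma(s)$ by $\tilde\gamma(s+\delta)$) before a change of variables and the exponential estimate; this costs an extra $\delta$ in the additive error and an extra factor $e^{2|\delta|}$ in the multiplicative constant. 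By estimating $e^{-2|u-t|}\,d_{\tilde{S}}(\tilde\gamma(u),\tilde\eta(u))$ directly on $u\leq r$ and using $d_{\tilde{S}}(\tilde\gamma(u),\tilde\eta(u))=|\delta|$ on the matched tail $u\geq r$, you obtain $D=e^{4\max(r,0)}$ (independent of $\delta$) and an additive term of $\delta$ rather than $2\delta$; either set of constants is adequate for the downstream use in Corollary~\ref{cor: cs bound}. Your case analysis for the pointwise bound is right, including the $0\leq u\leq r$, $u>t$ subcase where the observation $t<u\leq r$ is what keeps the ratio bounded by $e^{4r}$ rather than growing with $t$.
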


\begin{proof}
Observe that there exist lifts $\tilde{\gamma},\tilde{\eta}$ of $\gamma$ and $\eta$ such that 

\begin{align*}
d_{GS}(g_t\tilde{\gamma},g_t\tilde{\eta}) &= \int_{-\infty}^{\infty}e^{-2|s|}d_{\tilde{S}}(\tilde{\gamma}(s + t), \tilde{\eta}(s + t))\,ds
\\
&= \int_{-\infty}^{\infty}e^{-2|s-t|}d_{\tilde{S}}(\tilde{\gamma}(s), \tilde{\eta}(s))\,ds
\\
&\leq \int_{-\infty}^{\infty}e^{-2|s-t|}d_{\tilde{S}}(\tilde{\gamma}(s), \tilde{\eta}(s+\delta))\,ds+\int_{-\infty}^{\infty}e^{-2|s-t|}d_{\tilde{S}}(\tilde{\eta}(s+\delta), \tilde{\eta}(s))\,ds
\\
&=\int_{-\infty}^{\infty}e^{-2|s-t|}d_{\tilde{S}}(\tilde{\gamma}(s), \tilde{\eta}(s+\delta))\,ds+\int_{-\infty}^{\infty}e^{-2|s-t|}\delta\,ds
\\
&= \int_{-\infty}^{r} e^{-2|s-t|}d_{\tilde{S}}(\tilde{\gamma}(s),\tilde{\eta}(s+\delta))\,ds+\delta
\\
&\leq \int_{-\infty}^{r} e^{-2|s-t|}d_{\tilde{S}}(\tilde{\gamma}(s+\delta),\tilde{\eta}(s+\delta))\,ds+2\delta
\\
&=  \int_{-\infty}^{r+\delta} e^{-2|s-t-\delta|}d_{\tilde{S}}(\tilde{\gamma}(s),\tilde{\eta}(s))\,ds+2\delta
\\
&\leq e^{\max\{4r,0\}}e^{2|\delta|}e^{-2t}\int_{-\infty}^{r+\delta} e^{-2|s|}d_{\tilde{S}}(\tilde{\gamma}(s),\tilde{\eta}(s))\,ds+2\delta
\\
&= e^{\max\{4r,0\}}e^{2|\delta|}e^{-2t}d_{G\tilde{S}}(\tilde{\gamma},\tilde{\eta})+2\delta.\qedhere
\end{align*}
\end{proof}

Note that in the sense of Definition \ref{def: cone point asymptotic}, the geodesics in Lemma \ref{lemma: expansion u} are (a special case of) $-$-cone point asymptotic, and those in Lemma \ref{lemma: boundedness cs} are $+$-cone point asymptotic.

%
\subsection{Local stable and unstable sets}\label{subsec:stable unstable}

The local stable and unstable sets, analogues of the local stable and unstable manifolds from the smooth setting, play an essential role in our arguments. We define them here using Busemann functions; the definition given agrees with the usual one in the smooth setting.

\begin{definition}\label{defn: busemann}
For a geodesic $\tilde{\gamma}\in G\tilde{S}$, the \emph{Busemann function} determined by $\tilde{\gamma}$ is
$$B_{\tilde{\gamma}}(\cdot) := B_{\tilde{\gamma}(0)}(\cdot, \tilde{\gamma}(+\infty)),$$
where $B_{\tilde{\gamma}(0)}(\cdot, \tilde{\gamma}(+\infty)) : \tilde{S}\to \mathbb{R}$ is the function
$$x\mapsto \lim_{t\to\infty}(d_{\tilde{S}}(x,\tilde{\gamma}(t)) - t).$$
\end{definition}

Using this, we can define natural analogues of the local stable and unstable manifolds.

\begin{definition}\label{def: stable unstable}
We define the \emph{local strong unstable set} at $\gamma\in GS$ by 
$$W^{u}(\gamma,\delta) = \pi\{\tilde{\eta}\in G\tilde{S} \mid \tilde{\eta}(-\infty) = \tilde{\gamma}(-\infty), B_{-\tilde{\gamma}}(\tilde{\eta}(0)) = 0, \text{ and }d_{G\tilde{S}}(\tilde{\eta},\tilde{\gamma}) < \delta\},$$
where $\pi : G\tilde{S}\to GS$ is projection.

The \emph{strong unstable set} at $\gamma \in G\SSS$ is
$$W^{u}(\gamma) = \pi\{\tilde{\eta}\in G\tilde{S} \mid \tilde{\eta}(-\infty) = \tilde{\gamma}(-\infty), B_{-\tilde{\gamma}}(\tilde{\eta}(0)) = 0\}.$$
\end{definition}

The (local) strong stable set is defined similarly, replacing $-\infty$ with $+\infty$ and $-\tilde \gamma$ with $\tilde \gamma$. We will also need the local center stable set, which is defined to allow some ``mismatch of times''.

\begin{definition}\label{defn: center stable}
The \emph{local center stable set} at $\gamma$ is defined by
$$W^{cs}(\gamma,\delta) = \pi\{\tilde{\eta}\in G\tilde{S} \mid \tilde{\eta}(+\infty) = \tilde{\gamma}(+\infty) \text{ and }d_{G\tilde{S}}(\tilde{\eta},\tilde{\gamma}) < \delta\}.$$
\end{definition}

The stable and unstable sets are invariant under the geodesic flow. We prove this here for the 
local strong unstable, as we will need it later. Similar proofs for the stable and center stable can easily be obtained in the same way. We begin with some comparison geometry lemmas.

\begin{lemma}\label{lemma: distance as t tends to infinity}
Let $\tilde\gamma,\tilde\eta\in G\tilde {S}$ with $\tilde\eta(+\infty)=\tilde\gamma(+\infty)=\xi$. Fix $\tau>0$. Then,
$$\left|d_{\tilde S}(\tilde\eta(0),\tilde\gamma(t))-(d_{\tilde S}(\tilde\eta(0), \tilde\eta(\tau))+d_{\tilde S}(\tilde\eta(\tau),\tilde\gamma(t)))\right|\rightarrow 0 \quad\text{as}\quad t\rightarrow\infty.$$
\end{lemma}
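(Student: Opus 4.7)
The plan is to use the Busemann function formalism of Definition~\ref{defn: busemann}. Since $\tilde\eta$ is a unit-speed geodesic, $d_{\tilde S}(\tilde\eta(0),\tilde\eta(\tau))=\tau$, so the claim reduces to showing
\[
d_{\tilde S}(\tilde\eta(0),\tilde\gamma(t)) - d_{\tilde S}(\tilde\eta(\tau),\tilde\gamma(t)) \longrightarrow \tau \quad \text{as } t\to\infty.
\]
This is a perfect fit for Busemann functions: subtracting two distances to $\tilde\gamma(t)$ cancels the unbounded $t$ term, so by the very definition of $B_{\tilde\gamma}$ the left side converges to $B_{\tilde\gamma}(\tilde\eta(0)) - B_{\tilde\gamma}(\tilde\eta(\tau))$.

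The remaining task is to evaluate this difference, which is now purely algebraic. Because $\tilde\gamma$ and $\tilde\eta$ share the same forward endpoint $\xi\in\partial_\infty\tilde S$, a classical CAT(0) fact (see, e.g., \cite[II.8.20]{bh}) is that $B_{\tilde\gamma}$ and $B_{\tilde\eta}$ differ by a constant on $\tilde S$. Therefore
\[
B_{\tilde\gamma}(\tilde\eta(0)) - B_{\tilde\gamma}(\tilde\eta(\tau)) = B_{\tilde\eta}(\tilde\eta(0)) - B_{\tilde\eta}(\tilde\eta(\tau)) = 0 - (-\tau) = \tau,
\]
using the elementary identity $B_{\tilde\eta}(\tilde\eta(s))=-s$, which follows by plugging $\tilde\eta(s)$ directly into the defining limit.

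I do not anticipate a substantive obstacle; the argument is essentially a two-line computation. The only nontrivial input is the CAT(0) fact that Busemann functions associated to the same boundary point differ by a constant. If one preferred to avoid quoting it, one could establish it inline by combining the standard comparison-geometry observation that $t\mapsto d_{\tilde S}(\tilde\eta(t),\tilde\gamma(t))$ is convex and bounded (hence nonincreasing for large $t$) with the triangle inequality, but invoking the standard result from \cite{bh} keeps the proof short.
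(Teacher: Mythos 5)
Your proof is correct, but it takes a genuinely different route from the paper's. You recast the claim (after using $d_{\tilde S}(\tilde\eta(0),\tilde\eta(\tau))=\tau$) as the statement that $d_{\tilde S}(\tilde\eta(0),\tilde\gamma(t))-d_{\tilde S}(\tilde\eta(\tau),\tilde\gamma(t))\to\tau$, recognize the left side as converging to the Busemann cocycle $B_{\tilde\gamma}(\tilde\eta(0))-B_{\tilde\gamma}(\tilde\eta(\tau))$, and then evaluate this by importing the CAT(0) fact that Busemann functions based at the same boundary point differ by a constant, which lets you swap $B_{\tilde\gamma}$ for $B_{\tilde\eta}$ and compute $B_{\tilde\eta}(\tilde\eta(s))=-s$ directly. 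The paper instead proves the estimate from scratch by comparison geometry: it forms the geodesic triangle $\Delta(\tilde\eta(0),\tilde\eta(t),\tilde\gamma(t))$, picks the point $p$ on $[\tilde\eta(0),\tilde\gamma(t)]$ at proportional distance $\tau/t$ from $\tilde\eta(0)$, uses the CAT(0) comparison with a similar-triangles estimate to get $d_{\tilde S}(\tilde\eta(\tau),p)\leq C\tau/t$, and closes with two triangle inequalities. The paper's route requires nothing beyond the CAT(0) comparison axiom and is entirely self-contained; your route is slicker but outsources the real geometric content to the ``Busemann functions to a common $\xi\in\partial_\infty\tilde S$ differ by a constant'' fact, whose standard proof in \cite{bh} is itself essentially the paper's comparison-triangle estimate. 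Describing the argument as ``essentially a two-line computation'' slightly undersells the content of the imported fact, and you may want to double-check the exact numbering of the \cite{bh} citation, but the argument is logically sound and non-circular, since that fact is independent of the present lemma.
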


\begin{proof}
Since $\tilde\eta(+\infty)=\tilde\gamma(+\infty)$, there exists $C>0$ such that $d_{\tilde S}(\tilde\gamma(t),\tilde\eta(t))\leq C$ for all $t\geq 0$. Consider a geodesic triangle $\Delta$ with vertices $\tilde\eta(0), \tilde\eta(t),\tilde\gamma(t)$ for $t\gg \tau$. Pick a point $p$ on the side $[\tilde\eta(0),\tilde\gamma(t)]$ such that $$\frac{d_{\tilde S}(\tilde\eta(0),p)}{d_{\tilde S}(\tilde\eta(0),\tilde\gamma(t))}=\frac{\tau}{t}.$$ 
By the properties of similar triangles for the comparison Euclidean triangle for $\Delta$ and the fact that $\tilde S$ is CAT(0), 
\begin{equation}\label{eq:distance to a segment}
d_{\tilde S}(\tilde\eta(\tau),p)\leq C\frac{\tau}{t}.
\end{equation}
Then, using the triangle inequality for various triangles,
$$\left[d_{\tilde S}(\tilde\eta(0), \tilde\eta(\tau))-d_{\tilde S}(\tilde\eta(\tau),p)\right]+[d_{\tilde S}(\tilde\eta(\tau),\tilde\gamma(t))-d_{\tilde S}(\tilde\eta(\tau),p)]\leq d_{\tilde S}(\tilde\eta(0),\tilde\gamma(t))\leq d_{\tilde S}(\tilde\eta(0), \tilde\eta(\tau))+d_{\tilde S}(\tilde\eta(\tau),\tilde\gamma(t))$$
Combining the above inequality with \eqref{eq:distance to a segment} gives the statement.
\end{proof}

\begin{lemma}
Let $\tilde\gamma,\tilde\eta\in G\tilde {S}$ with $\tilde\eta(+\infty)=\tilde\gamma(+\infty)$. If $B_{\tilde\gamma}(\tilde\eta(0))=0$ then $B_{g_\tau\tilde\gamma}(g_\tau\tilde\eta(0))=0$
for all $\tau$.
\end{lemma}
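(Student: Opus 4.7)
The plan is to reduce the conclusion to two simple cocycle identities for the Busemann function. Writing $g_\tau\tilde\eta(0) = \tilde\eta(\tau)$ and unpacking the definition with the substitution $s = t + \tau$ in the defining limit gives, for any $x \in \tilde S$ and any $\tau \in \mathbb{R}$,
\[ B_{g_\tau\tilde\gamma}(x) \;=\; \lim_{t\to\infty}\bigl(d_{\tilde S}(x,\tilde\gamma(t+\tau)) - t\bigr) \;=\; \tau + B_{\tilde\gamma}(x). \]
Hence the conclusion reduces to showing that $B_{\tilde\gamma}(\tilde\eta(\tau)) = -\tau$ whenever $B_{\tilde\gamma}(\tilde\eta(0)) = 0$.

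For $\tau > 0$, this is essentially immediate from Lemma \ref{lemma: distance as t tends to infinity}: since $d_{\tilde S}(\tilde\eta(0),\tilde\eta(\tau)) = \tau$, subtracting $t$ and passing to the limit in the Lemma yields
\[ B_{\tilde\gamma}(\tilde\eta(0)) \;=\; \tau + B_{\tilde\gamma}(\tilde\eta(\tau)), \]
so $B_{\tilde\gamma}(\tilde\eta(\tau)) = -\tau$. For $\tau < 0$, I would apply the same Lemma to the reparametrized geodesic $\tilde\eta'(s) := \tilde\eta(s+\tau)$, which still has $\tilde\gamma(+\infty)$ as its forward endpoint, with the positive parameter $-\tau$ playing the role of $\tau$; this swaps the roles of $\tilde\eta(0)$ and $\tilde\eta(\tau)$ in the split and produces $B_{\tilde\gamma}(\tilde\eta(\tau)) = -\tau + B_{\tilde\gamma}(\tilde\eta(0)) = -\tau$ as well. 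The case $\tau = 0$ is trivial.

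Combining, $B_{g_\tau\tilde\gamma}(g_\tau\tilde\eta(0)) = \tau + B_{\tilde\gamma}(\tilde\eta(\tau)) = \tau - \tau = 0$, as required. There is no serious obstacle here; the only minor subtlety is carrying out the sign bookkeeping for $\tau < 0$, since Lemma \ref{lemma: distance as t tends to infinity} as stated requires a positive parameter. Everything else is a direct manipulation of the defining limit and the triangle-inequality approximation supplied by the preceding lemma.
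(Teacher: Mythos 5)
Your proof is correct and uses essentially the same approach as the paper's: the decisive ingredient is Lemma \ref{lemma: distance as t tends to infinity}, and the rest is a change of variables in the defining limit. Your explicit treatment of $\tau<0$ tidies up a point the paper glosses over, namely that the literal identity $d_{\tilde S}(\tilde\eta(0),\tilde\eta(\tau))=\tau$ and Lemma \ref{lemma: distance as t tends to infinity} as stated both presuppose $\tau>0$.
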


\begin{proof}
Using Definition \ref{defn: busemann},
\begin{align*}
0&=B_{\tilde\gamma}(\tilde\eta(0))=\lim\limits_{t\rightarrow\infty}(d_{\tilde S}(\tilde\eta(0),\tilde\gamma(t))-t)\\&=\lim\limits_{t\rightarrow\infty}(d_{\tilde S}(\tilde\eta(0), \tilde\eta(\tau))+d_{\tilde S}(\tilde\eta(\tau),\tilde\gamma(t))-t)\qquad\text{ by Lemma~\ref{lemma: distance as t tends to infinity}}\\&=\lim\limits_{t\rightarrow\infty}(\tau+d_{\tilde S}(g_\tau\tilde\eta(0),g_{\tau}\tilde\gamma(t-\tau))-t)\\
&=\lim\limits_{u\rightarrow\infty}(d_{\tilde S}(g_\tau\tilde\eta(0),g_\tau\tilde\gamma(u))-u)=B_{g_\tau\tilde\gamma}(g_\tau\tilde\eta(0)).\qedhere
\end{align*}
\end{proof}

\begin{corollary}\label{corollary: busemann under flow}
    Let $\tilde\gamma,\tilde\eta\in G\tilde {S}$ with $\tilde\eta(-\infty)=\tilde\gamma(-\infty)$. If $B_{-\tilde\gamma}(\tilde\eta(0))=0$ then $B_{g_\tau(-\tilde\gamma)}(g_\tau\tilde\eta(0))=0$
for all $\tau$.
\end{corollary}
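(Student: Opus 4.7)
The plan is to reduce Corollary~\ref{corollary: busemann under flow} to the unlabeled lemma preceding it, which establishes exactly this flow-invariance for Busemann functions of \emph{forward-asymptotic} geodesics. The corollary treats the backward-asymptotic case, and the natural move is to absorb the change of direction into the reverse-orientation operation.

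Concretely, I would set $\tilde\alpha := -\tilde\gamma$ and $\tilde\beta := -\tilde\eta$. Reversing orientation swaps $+\infty$ and $-\infty$, so the hypothesis $\tilde\eta(-\infty) = \tilde\gamma(-\infty)$ is exactly $\tilde\beta(+\infty) = \tilde\alpha(+\infty)$, which is the hypothesis of the previous lemma. Moreover $(-\tilde\eta)(0) = \tilde\eta(0)$, so the assumed normalization $B_{-\tilde\gamma}(\tilde\eta(0)) = 0$ reads $B_{\tilde\alpha}(\tilde\beta(0)) = 0$. The previous lemma then yields $B_{g_\sigma\tilde\alpha}(g_\sigma\tilde\beta(0)) = 0$ for all $\sigma \in \mathbb{R}$, i.e.
\[
B_{g_\sigma(-\tilde\gamma)}\bigl(g_\sigma(-\tilde\eta)(0)\bigr) = 0 \quad\text{for all } \sigma.
\]

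To land on the desired conclusion $B_{g_\tau(-\tilde\gamma)}(g_\tau\tilde\eta(0)) = 0$, I would specialize to $\sigma = -\tau$ and rewrite the flowed reversed geodesics in terms of the original ones. Using $g_t(\gamma)(s) = \gamma(s+t)$ and $(-\gamma)(s) = \gamma(-s)$, one checks that $g_{-\tau}(-\tilde\eta)(0) = \tilde\eta(\tau) = g_\tau\tilde\eta(0)$ and that $g_{-\tau}(-\tilde\gamma)$ and $-(g_\tau\tilde\gamma)$ determine the same oriented geodesic (both have basepoint $\tilde\gamma(\tau)$ at time $0$ and forward endpoint $\tilde\gamma(-\infty)$), so their Busemann functions coincide. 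This gives the required identity.

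There is no real obstacle here beyond bookkeeping: the hypothesis of the preceding lemma transfers verbatim under reversal, and the only subtlety is to align the sign conventions for $g_\tau$ acting on $-\tilde\gamma$ so that the flowed reversed geodesic in the conclusion of the previous lemma is the one appearing in the statement of the corollary. Once that alignment is made explicit, the corollary follows immediately.
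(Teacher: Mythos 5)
Your proof is correct and spells out exactly what the paper treats as immediate: apply the preceding lemma to the reversed pair $(-\tilde\gamma,-\tilde\eta)$ and translate back. Your remark about aligning sign conventions is exactly the right thing to flag --- the notation $g_\tau(-\tilde\gamma)$ in the corollary has to be read as $-(g_\tau\tilde\gamma)$ (equivalently $g_{-\tau}(-\tilde\gamma)$) for the stated identity to hold, and this is both what your computation produces and what the application in Corollary~\ref{cor: flow preserves unstable} requires.
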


\begin{corollary}\label{cor: flow preserves unstable}
Let $\gamma \in \GS$. Then
    $$g_\tau(W^u(\gamma,\delta))\subset W^u(g_\tau\gamma, e^{2\tau}\delta)\qquad \text{for}\quad \tau>0$$
    and
    $$g_\tau(W^u(\gamma,\delta))\subset W^u(g_\tau\gamma, \delta) \qquad \text{for}\quad \tau\leq 0.$$
\end{corollary}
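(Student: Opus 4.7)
The plan is to verify the three defining conditions of $W^u(g_\tau\gamma, \delta')$---with $\delta' = e^{2\tau}\delta$ when $\tau>0$ and $\delta' = \delta$ when $\tau\leq 0$---on the image $g_\tau\eta$ of an arbitrary $\eta\in W^u(\gamma,\delta)$. After lifting to $\tilde\gamma,\tilde\eta\in G\tilde{S}$, the endpoint condition $(g_\tau\tilde\eta)(-\infty) = (g_\tau\tilde\gamma)(-\infty)$ is immediate since $g_\tau$ is a reparametrization, and the Busemann equality $B_{-(g_\tau\tilde\gamma)}((g_\tau\tilde\eta)(0))=0$ is exactly Corollary \ref{corollary: busemann under flow}. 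The whole statement thus reduces to bounding $d_{G\tilde{S}}(g_\tau\tilde\eta, g_\tau\tilde\gamma)$.

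To get this bound, I would set $f(u) := d_{\tilde{S}}(\tilde\gamma(u), \tilde\eta(u))$ and, via the substitution $u = s + \tau$ in the integral defining $d_{G\tilde{S}}$, rewrite
$$d_{G\tilde{S}}(g_\tau\tilde\eta, g_\tau\tilde\gamma) = \int_{-\infty}^{\infty} e^{-2|u-\tau|} f(u)\,du, \qquad d_{G\tilde{S}}(\tilde\eta, \tilde\gamma) = \int_{-\infty}^{\infty} e^{-2|u|} f(u)\,du.$$
For $\tau>0$, an elementary case check on the sign of $u$ and $u-\tau$ gives $e^{-2|u-\tau|}\leq e^{2\tau}e^{-2|u|}$, so $d_{G\tilde{S}}(g_\tau\tilde\eta, g_\tau\tilde\gamma) \leq e^{2\tau} d_{G\tilde{S}}(\tilde\eta,\tilde\gamma) < e^{2\tau}\delta$, as required. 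This is essentially the general Lipschitz bound on the geodesic flow.

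For $\tau\leq 0$, the substitution $v = u-\tau$ turns the first integral into $\int e^{-2|v|} f(v+\tau)\,dv$, and the desired bound reduces to the pointwise inequality $f(v+\tau)\leq f(v)$, i.e.\ to the monotonicity of $f$. I would establish this from two standard facts: since $\tilde{S}$ is CAT(0), the distance between two unit-speed geodesics is a convex function, so $f$ is convex; and since $\tilde\eta(-\infty) = \tilde\gamma(-\infty)$, the two rays are asymptotic, which by CAT(0) comparison forces $f$ to be bounded as $u \to -\infty$. A convex function on $\mathbb{R}$ bounded above on $(-\infty,0]$ must have non-negative derivative everywhere---otherwise its non-decreasing derivative would stay negative as $u\to-\infty$ and drive $f$ to infinity---and so is non-decreasing. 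The integral inequality then yields $d_{G\tilde{S}}(g_\tau\tilde\eta, g_\tau\tilde\gamma) \leq d_{G\tilde{S}}(\tilde\eta,\tilde\gamma) < \delta$.

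The main obstacle I anticipate is deducing monotonicity of $f$ cleanly from the CAT(0) structure. A more hands-on alternative would be to split into the two geometric configurations relevant to flat surfaces (either $\tilde\gamma$ and $\tilde\eta$ bound a flat half-strip, making $f$ constant, or they are $-$-cone point asymptotic, in which case $f$ vanishes before the shared cone point and grows thereafter), but the convexity argument handles both cases uniformly and is shorter to write.
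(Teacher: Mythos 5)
Your proof is correct and follows essentially the same route as the paper's: Corollary~\ref{corollary: busemann under flow} handles the Busemann and endpoint conditions, convexity of the CAT(0) distance function plus boundedness as $t\to-\infty$ yields monotonicity of $t\mapsto d_{\tilde S}(\tilde\gamma(t),\tilde\eta(t))$ and hence the contracting estimate for $\tau\le 0$, and the $e^{2\tau}$-Lipschitz bound on $g_\tau$ gives the case $\tau>0$. You spell out the convexity-implies-monotonicity step in more detail than the paper does, but the argument is the same.
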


\begin{proof}
If $\tilde\eta(-\infty)=\tilde\gamma(-\infty)$ then $g_\tau\tilde\eta(-\infty)=g_\tau\tilde\gamma(-\infty)$. Moreover, by Corollary~\ref{corollary: busemann under flow}, if $B_{-\tilde\gamma}(\tilde\eta(0))=0$ then $B_{g_\tau(-\tilde\gamma)}(g_\tau\tilde\eta(0))=0$. To complete the proof, we just need to bound the distance from $g_\tau \tilde \gamma$.

Let $\gamma\in \GS$, and let $\eta\in W^u(\gamma,\delta)$. Since $\tilde{\gamma}(-\infty) = \tilde{\eta}(-\infty)$, we know that $d_{G\tilde{S}}(\tilde{\gamma}(t),\tilde{\eta}(t))$ remains bounded as $t\to -\infty$. By convexity of the distance function, it is decreasing as $t\to -\infty$ and bounded above by $\delta$ for $t\leq 0$. Therefore, for $\tau \leq 0$, we have

\begin{align*}
d_{G\tilde{S}}(g_{\tau}\tilde{\eta},g_{\tau}\tilde{\gamma}) &= \int_{-\infty}^{\infty}e^{-2|t|}d_{\tilde{S}}(\tilde{\eta}(t + \tau),\tilde{\gamma}(t + \tau))\,dt
\\
&\leq \int_{-\infty}^{\infty}e^{-2|t|}d_{\tilde{S}}(\tilde{\eta}(t),\tilde{\gamma}(t))\,dt
\\
&= d_{G\tilde{S}}(\tilde{\eta},\tilde{\gamma}).
\end{align*}
Hence, 
$$g_{\tau}(W^u(\gamma,\delta))\subset W^u(g_{\tau}\gamma,\delta) \quad \text{for}\quad \tau\leq 0.$$

Finally, if $\tau \geq 0$, using that $g_\tau$ is $e^{2\tau}$-Lipschitz,
$$d_{GS}(g_{\tau}\eta,g_{\tau}\gamma) \leq e^{2\tau}d_{GS}(\eta,\gamma) < e^{2\tau}\delta.$$

Thus, we obtain the desired statement.
\end{proof}

The following lemmas and corollaries make the relationship between our systems and nonuniformly hyperbolic systems evident.

\begin{lemma}\label{lemma: geometry of u and cs}
Assume $\gamma\in\operatorname{Reg}(c)$. Then there exists $\delta$ such that if $\tilde\eta\in W^u(\tilde{\gamma},\delta)$, then $\tilde \gamma$ and $\tilde \eta$ are $-$-cone point asymptotic, and if $\tilde{\eta}\in W^{cs}(\tilde{\gamma},\delta)$, then $\tilde \gamma$ and $\tilde \eta$ are  $+$-cone point asymptotic.
\end{lemma}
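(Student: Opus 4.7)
The plan is to prove the $W^u$ statement by contradiction; the $W^{cs}$ case is handled symmetrically via the forward endpoint. Suppose $\tilde\eta\in W^u(\tilde\gamma,\delta)$ but $\tilde\gamma$ and $\tilde\eta$ are not $-$-cone point asymptotic. Since they share the endpoint $\tilde\gamma(-\infty)$ in the CAT(0) space $\tilde\SSS$, the flat strip theorem produces an isometric embedding $\phi\colon(-\infty,T]\times[0,d]\to\tilde\SSS$ with $\tilde\gamma(t)=\phi(t,0)$ for $t\leq T$, the opposite side parameterizing $\tilde\eta$, and width $d>0$.

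Restricted to the flat strip, the Busemann function $B_{-\tilde\gamma}$ is just the linear coordinate along the strip's long axis, so its level sets within the strip are the transverse segments $\{t\}\times[0,d]$. Hence the condition $B_{-\tilde\gamma}(\tilde\eta(0))=0$ forces $\tilde\eta(t)=\phi(t,d)$ for $t\leq T$. When $T\geq 0$, Lemma~\ref{lem:closeness in S and GS} at $t=0$ gives $d=d_{\tilde\SSS}(\tilde\gamma(0),\tilde\eta(0))\leq 2\delta$. When $T<0$, I would first flow by $\tau=T\leq 0$: Corollaries~\ref{cor: flow preserves unstable} and~\ref{corollary: busemann under flow} preserve both the $W^u(\cdot,\delta)$-bound and the Busemann condition, so applying the same argument to $g_T\tilde\gamma$ and $g_T\tilde\eta$ again yields $d\leq 2\delta$.

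To derive the contradiction, use $\gamma\in\operatorname{Reg}(c)$: Lemma~\ref{lemma: properties of lambda} provides a cone point $p=\tilde\gamma(t_0)$ with $|t_0|\leq\theta_0/(2c)$ at which $\tilde\gamma$ turns by at least $cs_0$, and we must have $T\leq t_0$ since the strip's $\tilde\gamma$-boundary is a straight geodesic. The main obstacle, which I expect to require the most careful work, is the geometric analysis at the strip's end. In the typical case $T=t_0$, the opposite corner $\phi(t_0,d)$ sits at distance exactly $d$ from $p$ in $\tilde\SSS$. Past $t_0$ one of two things happens: either $\tilde\eta$ turns at $\phi(t_0,d)$, in which case $\phi(t_0,d)$ is a cone point distinct from $p$ and so $d\geq D_{\min}$, the minimum distance between cone points in $\tilde\SSS$; or $\tilde\eta$ does not turn there, in which case the directions of $\tilde\gamma$ and $\tilde\eta$ differ by at least $cs_0$ and over a small definite interval past $t_0$ (determined by the geometry of $\SSS$) the resulting linear growth of $d_{\tilde\SSS}(\tilde\gamma(t),\tilde\eta(t))$ combined with $|t_0|\leq\theta_0/(2c)$ gives $d_{G\tilde\SSS}(\tilde\gamma,\tilde\eta)\geq C(c,\SSS)>0$. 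The case $T<t_0$ is handled analogously by examining the obstruction on $\tilde\eta$'s side of the strip. Taking $\delta<\min\{D_{\min}/2,\,C(c,\SSS)/2\}$ contradicts $d_{G\tilde\SSS}(\tilde\gamma,\tilde\eta)<\delta$, so $\tilde\gamma$ and $\tilde\eta$ must coincide on a ray through a common cone point to $\tilde\gamma(-\infty)$, giving $-$-cone point asymptotic.
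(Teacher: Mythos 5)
Your approach is a genuine departure from the paper's. The paper proves the lemma directly in three steps: by Proposition~3.9 of \cite{CCESW}, $\tilde\gamma$ turns with angle bounded away from $\pm\pi$ at some cone point $\tilde p$; for $\delta$ small enough, any $\tilde\eta$ within $\delta$ of $\tilde\gamma$ must also pass through $\tilde p$ (this is the key geometric rigidity fact, cf.\ Figure~2 of \cite{CCESW}); and then uniqueness of geodesic rays from $\tilde p$ to $\tilde\gamma(-\infty)$ (Proposition~II.8.2 of \cite{bh}) forces $\tilde\gamma$ and $\tilde\eta$ to share that ray. Your contradiction argument via a flat half-strip is aiming at the same geometric insight from the opposite direction, but it routes through considerably more machinery.

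Two gaps in your argument are worth flagging. First, the dichotomy you invoke — that two rays sharing a backward endpoint in $\tilde\SSS$ either eventually coincide along a ray through a cone point or bound a flat half-strip — is true in the translation surface setting but does not follow immediately from the CAT(0) flat strip theorem, which applies to bi-infinite parallel geodesics. It needs a separate justification (typically via the convexity and eventual monotonicity of $t\mapsto d(\tilde\gamma(t),\tilde\eta)$ combined with the discreteness of cone points). Second, and more significantly, your case analysis at the strip's end is incomplete. The (maximal) strip need not terminate because of a cone point on the $\tilde\gamma$- or $\tilde\eta$-boundary: it can terminate because a cone point $q=\phi(T,y)$ sits in the \emph{interior} of the end edge, $0<y<d$, with $T<t_0$ and with neither $\tilde\gamma(T)$ nor $\tilde\eta(T)$ a cone point. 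In that case neither the $D_{\min}$ bound (which requires two cone points on the end edge) nor the divergence estimate at $\tilde\gamma$'s turn (which requires $T=t_0$) applies, and the maximal flat region between the rays is no longer a simple half-strip past $T$. You acknowledge the end analysis as "the main obstacle," and this is precisely where the remaining work lives. Subcase~2's lower bound $d_{G\tilde\SSS}(\tilde\gamma,\tilde\eta)\geq C(c,\SSS)$ is correct but should be stated with the factor $e^{-2(\theta_0/(2c)+1)}$ from the exponential weight made explicit, so it is clear that $C$ degenerates as $c\to 0$ (which is permitted, since $\delta$ is allowed to depend on $c$).

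The paper's direct approach avoids all of this by never constructing the strip; it simply "catches" $\tilde\eta$ at the cone point $\tilde p$, and uniqueness of rays does the rest. If you want to salvage the contradiction route you would need to either rule out interior-cone-point terminations or handle them by recursing on a sub-strip, but the direct argument is cleaner.
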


\begin{proof}
Since $\gamma\in\operatorname{Reg}(c)$, by \cite[Proposition 3.9]{CCESW}, $\tilde{\gamma}$ turns with angle greater than $\pi$ at some $\tilde{p}\in \tilde{S}$. There exists $\delta > 0$ small enough so that $\tilde{\eta}$ passes through the cone point $\tilde{p}$ \cite[Figure 2]{CCESW}. Now, \cite[Proposition 8.2]{bh} tells us that there are unique geodesic rays from $\tilde{p}$ that are asymptotic to $\tilde{\gamma}(+\infty)$ and $\tilde{\gamma}(-\infty)$. This completes our proof.
\end{proof}

Lemmas~\ref{lemma: geometry of u and cs} and~\ref{lemma: expansion u} combine to give us the following corollary.

\begin{corollary}\label{cor:u expansion}
For all $c > 0$, there exist $\delta, C > 0$ such that for all $\gamma\in \operatorname{Reg}(c)$, we have the following property. For all $\eta\in W^u(\gamma,\delta)$, for all $t\leq 0$,
$$d_{GS}(g_t\gamma,g_t\eta) \leq Ce^{2t}d_{GS}(\gamma,\eta).$$
\end{corollary}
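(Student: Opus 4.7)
The plan is to directly combine the two referenced lemmas, but with careful attention to uniformity. Let me outline the steps.

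First, I would invoke Lemma~\ref{lemma: geometry of u and cs}: for $\gamma\in\operatorname{Reg}(c)$ and $\delta$ sufficiently small, every $\eta\in W^u(\gamma,\delta)$ has the property that (lifts of) $\tilde{\gamma}$ and $\tilde{\eta}$ are $-$-cone point asymptotic, sharing a ray from some common cone point $\tilde{p}$ toward $\tilde{\gamma}(-\infty)=\tilde{\eta}(-\infty)$. For the needed uniformity of $\delta$ across $\operatorname{Reg}(c)$, I would lean on Lemma~\ref{lemma: properties of lambda}: the turning angle at the key cone point of $\tilde\gamma$ is at least $cs_0$, which gives a uniform angular "forcing region" around $\tilde p$ that any nearby $\tilde\eta$ must pass through, hence a uniform lower bound on $\delta$ depending only on $c$ and the geometry of $\SSS$.

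Next I would upgrade "cone point asymptotic" to the parametrized equality $\tilde{\gamma}(t)=\tilde{\eta}(t)$ for all $t\leq r$, where $r=t_p$ is the $\tilde\gamma$-time coordinate of $\tilde p$. A priori, sharing the backward ray only says $\tilde{\gamma}(s)=\tilde{\eta}(s+a)$ for $s\leq t_p$ and some shift $a$; but the Busemann condition $B_{-\tilde\gamma}(\tilde\eta(0))=0$ built into the definition of $W^u(\gamma,\delta)$ pins down the parametrization. A short computation along the shared ray gives $B_{-\tilde\gamma}(\tilde\eta(0))=-a$, so $a=0$. (This is essentially the arc-length sync between the two geodesics via the horosphere through $\tilde\gamma(0)$.)

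Then, by Lemma~\ref{lemma: properties of lambda}, the cone point $\tilde p$ where $\tilde\gamma$ turns by at least $cs_0$ occurs at time $t_p\in(-\max\{s_0,\theta_0/(2c)\},\max\{s_0,\theta_0/(2c)\})$, so we get a uniform lower bound $r=t_p\geq -\max\{s_0,\theta_0/(2c)\}$ depending only on $c$. Applying Lemma~\ref{lemma: expansion u} with this $r$ yields
\[ d_{GS}(g_t\gamma,g_t\eta)\leq C(r)\,e^{2t}d_{GS}(\gamma,\eta)\qquad\text{for }t\leq 0,\]
with $C(r)=e^{\max\{-4r,0\}}\leq e^{4\max\{s_0,\theta_0/(2c)\}}$, a constant depending only on $c$ and the geometry of $\SSS$.

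The main obstacle is the uniformity, since Lemma~\ref{lemma: geometry of u and cs} by itself produces $\delta$ and (implicitly) $r$ depending on the individual geodesic $\gamma$. Both uniformizations hinge on Lemma~\ref{lemma: properties of lambda}: the hypothesis $\gamma\in\operatorname{Reg}(c)$ guarantees a definite turning angle at a cone point whose time is trapped in an interval depending only on $c$, converting per-$\gamma$ estimates into bounds uniform over all of $\operatorname{Reg}(c)$. The rest is a direct assembly of the two lemmas.
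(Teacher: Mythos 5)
Your argument is correct and follows the paper's approach, which simply asserts that Lemmas~\ref{lemma: geometry of u and cs} and~\ref{lemma: expansion u} combine to give the corollary. You have correctly identified and supplied the uniformization details the paper leaves implicit — namely that Lemma~\ref{lemma: properties of lambda} bounds the time of the relevant cone point (hence the $r$ in Lemma~\ref{lemma: expansion u}) uniformly over $\operatorname{Reg}(c)$, giving uniform $\delta$ and $C$, and that the Busemann condition $B_{-\tilde\gamma}(\tilde\eta(0))=0$ built into the definition of $W^u(\gamma,\delta)$ kills the time-shift $a$ so that the parametrized hypothesis $\gamma(t)=\eta(t)$ for $t\leq r$ of Lemma~\ref{lemma: expansion u} is actually met.
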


Similarly, Lemmas~\ref{lemma: geometry of u and cs} and ~\ref{lemma: boundedness cs} combine to give us an analogous corollary on center stable sets.

\begin{corollary}\label{cor: cs bound}
For all $c > 0$, there exists $\delta_0, D > 0$ such that for all $\gamma\in \operatorname{Reg}(c)$, we have the following property. For all $\eta\in W^{cs}(\gamma,\delta)$ with $0<\delta<\delta_0$, for all $t\geq 0$,
$$d_{GS}(g_t\gamma,g_t\eta) \leq D\delta.$$
\end{corollary}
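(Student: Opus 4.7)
The plan is to combine Lemma \ref{lemma: geometry of u and cs} with Lemma \ref{lemma: boundedness cs}, as suggested in the sentence preceding the statement. First choose $\delta_0 > 0$ (depending on $c$) small enough for Lemma \ref{lemma: geometry of u and cs} to apply, and consider any $\gamma \in \operatorname{Reg}(c)$ and $\eta \in W^{cs}(\gamma, \delta)$ with $0 < \delta < \delta_0$. Pick lifts $\tilde\gamma, \tilde\eta$ with $d_{G\tilde\SSS}(\tilde\gamma, \tilde\eta) < \delta$. By Lemma \ref{lemma: geometry of u and cs}, $\tilde\gamma$ and $\tilde\eta$ are $+$-cone point asymptotic: they share a ray from some common cone point $\tilde p = \tilde\gamma(s_1) = \tilde\eta(s_2)$ out to $\tilde\gamma(+\infty)$. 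Setting $r := s_1$ and $s := s_2 - s_1$ gives $\tilde\gamma(t) = \tilde\eta(t+s)$ for all $t \geq r$, which is exactly the hypothesis of Lemma \ref{lemma: boundedness cs}.

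The key step is to control both $r$ and $s$ in terms of $c$ and $\delta$. For $r$, use Lemma \ref{lemma: properties of lambda}: since $\gamma \in \operatorname{Reg}(c)$, the cone point $\tilde p$ at which $\tilde\gamma$ turns with large angle lies at parameter $|s_1| \leq R(c) := \max\{s_0, \theta_0/(2c)\}$ along $\tilde\gamma$, so $|r| \leq R(c)$. For the shift $s$, observe that along the shared ray, for every $t \geq r$ we have
\[
d_{\tilde\SSS}(\tilde\gamma(t), \tilde\eta(t)) = d_{\tilde\SSS}(\tilde\eta(t+s), \tilde\eta(t)) = |s|.
\]
Substituting this into the integral defining $d_{G\tilde\SSS}$ yields
\[
\delta > d_{G\tilde\SSS}(\tilde\gamma, \tilde\eta) \geq \int_r^\infty e^{-2|t|} |s|\, dt \geq A(c)\,|s|,
\]
where $A(c) := \tfrac{1}{2} e^{-2R(c)}$ depends only on $c$ (this is a uniform lower bound on $\int_r^\infty e^{-2|t|}\,dt$ whenever $|r| \leq R(c)$). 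Hence $|s| \leq \delta/A(c)$.

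With these bounds, applying Lemma \ref{lemma: boundedness cs} gives, for $t \geq 0$,
\[
d_{GS}(g_t\gamma, g_t\eta) \leq D(r, s)\, e^{-2t}\, d_{GS}(\gamma, \eta) + 2|s| \leq D(r,s)\,\delta + \tfrac{2}{A(c)}\,\delta.
\]
The explicit form $D(r,s) = e^{\max\{4r, 0\}} e^{2|s|}$ extracted from the proof of Lemma \ref{lemma: boundedness cs} is bounded by $e^{\max\{4R(c),\, 0\}}\, e^{2\delta_0/A(c)}$, a constant depending only on $c$. Hence the right-hand side is at most $D\,\delta$ for a suitable $D = D(c)$. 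The only conceptual subtlety — and the main obstacle — is noticing that regularity of $\gamma$ is used precisely to bound $r$, which is what allows the integral estimate to translate $d_{G\tilde\SSS}$-closeness into the linear bound $|s| \lesssim \delta$; everything after that is straightforward bookkeeping.
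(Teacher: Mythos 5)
Your proof is correct and follows exactly the approach the paper indicates (combining Lemma~\ref{lemma: geometry of u and cs} with Lemma~\ref{lemma: boundedness cs}). The one non-trivial detail the paper's one-line proof elides -- that the reparametrization shift $|s|$ must be bounded linearly in $\delta$, since the error term in Lemma~\ref{lemma: boundedness cs} is $2|s|$ rather than $2\delta$ -- you correctly supply via the integral estimate, using that $|r|\le R(c)$ gives a uniform lower bound on $\int_r^\infty e^{-2|t|}\,dt$.
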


We noted above that the function $\lambda:\GS \to [0,\infty)$ is lower semi-continuous. On local strong unstable sets we in fact have full continuity.

\begin{proposition}\label{prop:continuity of lambda on unstable}
Let $\lambda\colon GS\rightarrow [0,+\infty)$ be as in \cite[Definition 3.3]{CCESW} with parameter $s_0$ and $\gamma\in GS$. Then for all $\eps>0$ there exists $\delta>0$ such that for all $\xi\in W^u(\gamma,\delta)$, we have $|\lambda(\gamma)-\lambda(\xi)|<\eps.$
\end{proposition}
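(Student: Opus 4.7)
The plan is to combine the lower semicontinuity of $\lambda$, which is already established, with an upper semicontinuity argument that exploits the backward-asymptotic structure of $W^u(\gamma,\delta)$. Since $\lambda(\gamma)$ is determined by turning angles at cone points inside an explicit finite time window (see Lemma~\ref{lemma: properties of lambda}), the task reduces to showing that, for $\xi \in W^u(\gamma, \delta)$ with $\delta$ small, the cone-point encounters of $\xi$ inside that window essentially match those of $\gamma$, yielding $\lambda(\xi) \leq \lambda(\gamma) + \eps$.

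First I would fix $T > 0$ depending on $\eps$ and $\lambda(\gamma)$ so that the window $[-T, T]$ captures all turning times of $\gamma$ relevant to $\lambda(\gamma)$, as well as every turning time that could potentially contribute more than $\eps$ to any $\lambda(\xi)$ for $\xi$ near $\gamma$. Using the forward expansion estimate for $W^u$ (Corollary~\ref{cor: flow preserves unstable}) together with Lemma~\ref{lem:closeness in S and GS}, I would obtain $d_{\tilde S}(\tilde\xi(t), \tilde\gamma(t)) \leq 2 e^{2T} \delta$ for $t \in [-T, T]$, so shrinking $\delta$ far below $e^{-2T}$ makes $\tilde\xi$ and $\tilde\gamma$ uniformly $C^0$-close on the window. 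When $\lambda(\gamma) > 0$ and $\gamma \in \operatorname{Reg}(c)$, Lemma~\ref{lemma: geometry of u and cs} additionally guarantees that $\tilde\xi$ and $\tilde\gamma$ share a backward ray past a common cone point, so backward turning angles in the window coincide exactly.

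Next I would apply a cone-point dichotomy inside the window. Each cone point $p$ encountered by $\xi$ at a time $t \in [-T,T]$ is either (i) also encountered by $\gamma$, in which case the turning angles agree by the shared backward ray (backward portion) or by $C^0$-proximity combined with local cone-point geometry (forward portion), so the contributions to $\lambda$ differ by at most $\eps$; or (ii) missed by $\gamma$, in which case $\gamma$ passes within distance $2e^{2T}\delta$ of $p$. In case (ii), the $C^0$-proximity of $\tilde\xi$ and $\tilde\gamma$ just after time $t$ forces $\xi$'s turning angle at $p$ to be within $O(e^{2T}\delta / \text{(time to propagate)})$ of $\pi$, so it contributes negligibly to $\lambda(\xi)$. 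A symmetric argument treats cone points of $\gamma$ missed by $\xi$. Summing the contributions within the finite window yields $\lambda(\xi) \leq \lambda(\gamma) + \eps$, which combined with lower semicontinuity gives the desired continuity.

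The main obstacle I anticipate is the quantitative form of case (ii) in the cone-point dichotomy: turning a $C^0$-proximity bound into a bound on the deviation of the turning angle from $\pi$. This step relies on using the positive excess-angle bound $\theta_0$ and on carefully propagating the $\tilde S$-distance estimate through the cone-point encounter, so that a turning angle far from $\pi$ is geometrically incompatible with continued proximity of $\tilde\xi$ to $\tilde\gamma$ on $[-T,T]$.
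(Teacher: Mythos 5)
Your approach is genuinely different from the paper's. The paper's proof is a case analysis split on whether $\lambda(\gamma)>0$ or $\lambda(\gamma)=0$, and its core structural ingredient is an \emph{exact} backward coincidence: whenever $\gamma$ passes through a cone point at some time $u$, the unstable-leaf conditions $\tilde\xi(-\infty)=\tilde\gamma(-\infty)$ and $B_{-\tilde\gamma}(\tilde\xi(0))=0$ force $\xi(t)=\gamma(t)$ for all $t\le u$. This makes the backward turning data of $\xi$ literally equal to those of $\gamma$, so the only continuity argument needed is for the turning angle at a single shared cone point, together with a separate treatment of the flat-strip subcase. Your scheme instead works with $C^0$-proximity on a window $[-T,T]$ and a quantitative cone-point dichotomy. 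This is a uniform strategy that trades structural exactness for estimates; it is more robust in spirit but requires more delicate geometric input.

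The main gaps in your sketch are the following. First, the dichotomy's case (ii) — which you correctly identify as the obstacle — is genuinely nontrivial and not proved: you need a flat-cone computation showing that if $\xi$ hits a cone point $p$ at time $t$ that $\gamma$ passes within $r\approx e^{2T}\delta$ of without hitting, then $|\theta(\xi,t)-\pi|\lesssim r/(T-|t|)$, and you must then verify that the contribution $\theta(\xi,t)/\max\{|t|,s_0\}$ to $\lambda(\xi)$ stays below $\eps$ even when $|t|$ is near $T$ (this works because $T$ is chosen of order $\theta_0/\eps$, but the bookkeeping needs to be done). Second, your invocation of Lemma~\ref{lemma: geometry of u and cs} only applies when $\gamma\in\Reg(c)$, so the case $\lambda(\gamma)=0$, where the paper devotes most of its effort, is not really addressed. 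When $\gamma$ is in the interior of a flat strip, there are \emph{no} cone points on $\gamma$ and the paper instead argues that for small $\delta$ the geodesic $\xi$ is also in the strip's interior; your window argument does not obviously recover this, since you would need to rule out $\xi$ escaping the strip and hitting a cone point with large turning angle. When $\lambda(\gamma)=0$ and $\gamma$ does pass through cone points (with $|\theta|=\pi$), you would need the backward coincidence or an adequate substitute to compare turning angles at shared cone points; the paper obtains this coincidence directly from the Busemann condition and the uniqueness of the ray $[\gamma(u),\tilde\gamma(-\infty))$, which is an argument you would need to reproduce.
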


\begin{proof}
Assume that $\gamma\in\operatorname{Reg}(c)$. Then, we have two possibilities:
\begin{enumerate}
    \item There exists $u\in(-s_0,s_0)$ so that $\gamma$ turns with angle larger than $\pi$ at the cone point $\gamma(u)$. 
    
    In this case, $\lambda$ is continuous at $\gamma$ by Case 1 in \cite[Proof of Lemma 3.8]{CCESW} so we have the statement if we only consider $\xi\in W^u(\gamma,\delta)$.
    
    \item There exist $u_1\in\left[-\frac{\theta_0}{2c},-s_0\right]$ and $u_2\in\left[s_0,\frac{\theta_0}{2c}\right]$ such that $\gamma$ has turning angles at least $cs_0$ away from $\pm\pi$ at $\gamma(u_1)$ and $\gamma(u_2)$. 
    
    Following Case 2 in \cite[Proof of Lemma 3.8]{CCESW}, for sufficiently small $\delta$ (depending on $\eps$ and $S$), $\gamma$ and $\xi$ share the segment $\gamma[u_1,u_2]$. Moreover, since $\xi\in W^u(\gamma,\delta)$, they actually share $\gamma[-\infty,u_2]$ and $\gamma(0)=\xi(0)$. Thus, for such a choice of $\delta$, $\gamma$ and $\xi$ meet cone points with turning angle larger than $\pi$ at the same time and turn there with close angles as can be seen in Case 2 of \cite[Proof of Lemma 3.8]{CCESW}. Thus, $|\lambda(\gamma)-\lambda(\xi)|<\eps$.
\end{enumerate}

Assume that $\lambda(\gamma)=0$. Then, by \cite[Proposition 3.4]{CCESW}, either $\lambda(g_t\gamma)=0$ for all $t\leq 0$ or for all $t\geq 0$. Therefore, we have the following possibilities:
\begin{enumerate}
    \item $\lambda(g_t\gamma)=0$ for all $t\leq 0$ (so $\gamma$ does not turn with angle larger than $\pi$ on $\gamma[-\infty,0]$), and there exists $u>0$ such that $\gamma$ turns with angle larger than $\pi$ at $\gamma(u)$ and has turning angles $\pm\pi$ on $\gamma[0,u)$. 
    
    In this case, for sufficiently small $\delta$ (depending on $\gamma$ and $\eps$), $\xi\in W^u(\gamma,\delta)$ has to pass through $\gamma(u)$ and turn with angle close to the turning angle of $\gamma$ at $\gamma(u)$ by \cite[Lemmas 2.11 and 2.14]{CCESW}. Moreover, since $\xi(-\infty)=\gamma(-\infty)$ and $B_{-\tilde\gamma}(\tilde\xi(0))=0$, we have $\xi(t)=\gamma(t)$ for $t\leq u$. Thus, $\gamma$ and $\xi$ see the same first cone point at the same time and turn with close angles so $\lambda(\xi)$ and $\lambda(\gamma)$ are close.

    \item $\lambda(g_t\gamma)=0$ for all $t\geq 0$ (so $\gamma$ does not turn with angle larger than $\pi$ on $\gamma[0,\infty]$), and there exists $u<0$ such that $\gamma$ turns with angle larger than $\pi$ at $\gamma(u)$ and has turning angles $\pm\pi$ on $\gamma(u,0]$. 
    
    By choosing $\delta$ small enough (depending on $\gamma$ and $\eps$), we can guarantee that for $\xi\in W^u(\gamma,\delta)$ we have that $\xi(t)=\gamma(t)$ for $t\leq u$ and $\xi$ does not turn with angle larger than $\pi$ on $\xi\left(u,2\max\left\{\frac{\theta_0}{\lambda^{ss}(\gamma)},\frac{\theta_0}{\eps}\right\}\right]$. Then, $|\lambda(\xi)-\lambda(\gamma)|<\eps$.
    
    \item $\lambda(g_t\gamma)=0$ for all $t$ (so $\gamma$ does not turn with angle larger than $\pi$ at any point). 
    
    If $\tilde\gamma$ is in the interior of a flat strip of $\tilde S$, then for sufficiently small $\delta$, $\tilde\xi$ is also in the interior of a flat strip of $\tilde S$ for $\xi\in W^u(\gamma,\delta)$. Thus, $\lambda(\xi)=0$. 
    
    If $\tilde\gamma$ is not in the interior of a flat strip, then $\tilde\gamma$ passes through cone points but turns with angle $\pm\pi$ at those points. If $\lambda(\xi)=0$ there is nothing to show. If there exists $u> 0$ such that $\gamma(u)$ is a cone point, then for sufficiently small $\delta$, $\xi(t)=\gamma(t)$ for $t\leq u$ so $\lambda(\xi)=0$. Assume that $\lambda(\xi)>0$ and $\gamma(0,+\infty)$ does not contain cone points. Let $u\leq 0$ be the closest number to $0$ such that $\gamma(u)$ is a cone point. Then, by taking sufficiently small $\delta$, we can ensure that $\xi(t)=\gamma(t)$ for $t\leq u$ and $\xi$ turns with a small angle at $\xi(u)$. Specifically, we can ensure that $\frac{\theta(\xi,u)}{\max\{s_0,|u|\}}$ is less than $\eps$. Then, $|\lambda(\xi)-\lambda(\gamma)|<\eps$. \qedhere
\end{enumerate}
\end{proof}

\begin{corollary}
Let $\lambda\colon GS\rightarrow [0,+\infty)$ be as in \cite[Definition 3.3]{CCESW} and $\gamma\in GS$. Then, for any $a>0$ and $t\in\mathbb{R}$, $\lambda\circ g_t|_{W^u(\gamma,a)}$ is continuous.
\end{corollary}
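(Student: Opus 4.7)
The plan is to apply Proposition~\ref{prop:continuity of lambda on unstable} at the point $g_t\xi$ rather than at $g_t\gamma$, and to use Corollary~\ref{cor: flow preserves unstable} to transfer $GS$-proximity within $W^u(\gamma,a)$ to proximity inside a small local unstable set around $g_t\xi$. The key observation is that every pair of geodesics in $W^u(\gamma,a)$ lies in a common local strong unstable set of each other, with radius controlled by their mutual $G\tilde S$-distance. This is what lets us "move the basepoint" of the unstable set from $\gamma$ to an arbitrary $\xi\in W^u(\gamma,a)$, and thereby enlarge $a$ to any positive number at the cost of shifting to a different (but smaller) local unstable set around $\xi$.

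Fix $\xi\in W^u(\gamma,a)$ and $\eps>0$, and set $c_t:=\max\{1,e^{2t}\}$. Proposition~\ref{prop:continuity of lambda on unstable} applied at $g_t\xi$ furnishes $\delta_1>0$ such that $|\lambda(\zeta)-\lambda(g_t\xi)|<\eps$ whenever $\zeta\in W^u(g_t\xi,\delta_1)$. I claim that for every $\eta\in W^u(\gamma,a)$ sufficiently close to $\xi$ in $GS$, one has $\eta\in W^u(\xi,\delta_1/c_t)$; Corollary~\ref{cor: flow preserves unstable} then yields $g_t\eta\in W^u(g_t\xi,\delta_1)$, and hence $|\lambda(g_t\eta)-\lambda(g_t\xi)|<\eps$, which is exactly continuity of $\lambda\circ g_t$ at $\xi$.

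To verify the claim, choose the distinguished lifts $\tilde\xi,\tilde\eta$ with $\tilde\xi(-\infty)=\tilde\eta(-\infty)=\tilde\gamma(-\infty)$ and $B_{-\tilde\gamma}(\tilde\xi(0))=B_{-\tilde\gamma}(\tilde\eta(0))=0$. Two Busemann functions at the same boundary point of a $\mathrm{CAT}(0)$ space agree up to an additive constant; evaluating $B_{-\tilde\gamma}-B_{-\tilde\xi}$ at $\tilde\xi(0)$ shows that this constant vanishes, so $B_{-\tilde\xi}(\tilde\eta(0))=0$ and $\eta\in W^u(\xi,r)$ for any $r>d_{G\tilde S}(\tilde\eta,\tilde\xi)$. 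The one technical point is that $d_{G\tilde S}(\tilde\eta,\tilde\xi)\to 0$ as $d_{GS}(\eta,\xi)\to 0$ within $W^u(\gamma,a)$; this is the main potential obstacle, and it is resolved by observing that the covering projection $G\tilde S\to GS$ is a local isometry, so that in a sufficiently small $GS$-neighborhood of $\xi$ the lift $\tilde\eta$ satisfying our unstable normalization is uniquely determined and close to $\tilde\xi$ — any alternative candidate would be a deck translate of $\tilde\eta$, separated from $\tilde\xi$ by the discreteness of the deck action.
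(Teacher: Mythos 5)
Your argument is correct and rests on the same two ingredients as the paper's proof, namely Proposition~\ref{prop:continuity of lambda on unstable} and Corollary~\ref{cor: flow preserves unstable}. The paper's version is a terse two-line reduction that leaves implicit the re-centering step (applying Proposition~\ref{prop:continuity of lambda on unstable} at each $g_t\xi$ rather than only at $g_t\gamma$) together with the Busemann-normalization and unique-lift observations showing that $W^u(\gamma,a)$ sits in the same horospherical leaf as $W^u(\xi,\cdot)$; you have made exactly these points explicit, which is what is needed to turn the paper's sketch into a complete proof.
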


\begin{proof}
Let $a > 0$ and $t\in\mathbb{R}$. Then $g_tW^u(\gamma, a) \subset W^u(g_t\gamma, e^{2t}a)$ by Corollary \ref{cor: flow preserves unstable}. Consequently, $\lambda|_{g_tW^u(\gamma,a)}$ is continuous by Proposition \ref{prop:continuity of lambda on unstable}.
\end{proof}

%
\subsection{Bowen brackets and their behavior}\label{subsec:Bowen brackets}

\begin{definition}\label{def: Bowen bracket}
Let $\delta>0$.  Consider $\gamma,\eta\in \Reg$ such that $d_{GS}(\gamma,\eta) \leq \delta$. We define the Bowen bracket as
	\[ [ \gamma, \eta] := W^{cs}(\gamma,\delta) \cap W^{u}(\eta,\delta).\]
\end{definition}

\begin{definition}\label{def: su-Bowen bracket}
Let $\delta > 0$. Consider $\gamma,\eta\in \Reg$ such that $d_{GS}(\gamma,\eta) \leq \delta$. We define the $su$-Bowen bracket as
	\[ \langle \gamma, \eta\rangle := W^{s}(\gamma,\delta) \cap W^{u}(\eta,\delta).\]
\end{definition}

\begin{remark*}
Note that if either of these brackets contains a regular geodesic, then the bracket consists of at most one point, as otherwise, we would have two geodesics $\xi_1,\xi_2$ with $\xi_1(\pm\infty) = \xi_2(\pm\infty)$. This is only possible if $\xi_1 = g_t\xi_2$ for some $t$; however, this would contradict both $\xi_1$ and $\xi_2$ belonging to $W^u(\eta,\delta)$.
\end{remark*}

A priori, $[\gamma, \eta]$ and $\langle \gamma, \eta\rangle$ might be empty. But using our $\lambda$, we can precisely describe situations where these brackets exist and consist of a single, easily described geodesic.

\begin{lemma}\label{lemma: bracket_su geometry}
Let $c > 0$. Suppose $x$ is chosen with $\lambda(g_{-s_0}x),\lambda(g_{s_0}x) > c$. Then there exists $\delta := \delta(c) > 0$ such that if $\gamma\in W^{u}(x,\delta),\eta\in W^{s}(x,\delta)$, then $\langle\gamma,\eta\rangle = \xi$, where $\xi(r) = \gamma(r)$ for $r\geq 0$ and $\xi(r) = \eta(r)$ for $r\leq 0$.
\end{lemma}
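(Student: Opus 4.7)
The plan is to use $\lambda(g_{\pm s_0}x)>c$ to force $\tilde\gamma$ and $\tilde\eta$ to coincide with $\tilde x$ on a neighborhood of $0$, so that the prescribed concatenation $\xi$ is automatically a well-defined geodesic. Specifically, the hypothesis supplies significant-turn cone points of $\tilde x$ on both sides of $0$, and Lemma~\ref{lemma: geometry of u and cs}, combined with the flow-invariance in Corollary~\ref{cor: flow preserves unstable}, pins the backward ray of $\tilde\gamma$ and the forward ray of $\tilde\eta$ from infinity all the way past time $0$.

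First I would use Lemma~\ref{lemma: properties of lambda}, applied to $g_{-s_0}x$ and $g_{s_0}x$, to produce cone points $\tilde q_{\pm}=\tilde x(t^{\pm})$ with $t^{-}<0<t^{+}$ at which $\tilde x$ turns with angle exceeding $\pi$ (quantitatively in $c$, after citing the relevant Prop.~3.9 of \cite{CCESW}). The strictness $t^{\pm}\neq 0$ is immediate from the interval descriptions in the two cases of Lemma~\ref{lemma: properties of lambda} after undoing the time shift by $\mp s_0$. Now, given $\gamma\in W^{u}(x,\delta)$, Corollary~\ref{cor: flow preserves unstable} places $g_{s_0}\tilde\gamma\in W^{u}(g_{s_0}\tilde x, e^{2s_0}\delta)$. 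For $\delta=\delta(c)$ small enough, Lemma~\ref{lemma: geometry of u and cs} applied to $g_{s_0}x\in\operatorname{Reg}(c)$ forces $g_{s_0}\tilde\gamma$ to be $-$-cone point asymptotic to $g_{s_0}\tilde x$ through $\tilde q_{+}$. The Busemann condition $B_{-g_{s_0}\tilde x}(g_{s_0}\tilde\gamma(0))=0$, evaluated by a short CAT(0) computation (writing $d_{\tilde S}(g_{s_0}\tilde\gamma(0), g_{s_0}\tilde x(-t))-t$ via the hinge path through $\tilde q_{+}$ and letting $t\to\infty$), forces matching parameter values at $\tilde q_{+}$, so that $\tilde\gamma(t^{+})=\tilde q_{+}$ and hence $\tilde\gamma(r)=\tilde x(r)$ for all $r\leq t^{+}$. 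In particular $\tilde\gamma$ agrees with $\tilde x$ on $(-\infty,0]$. A symmetric argument, using the $W^{cs}$ half of Lemma~\ref{lemma: geometry of u and cs} (together with $W^{s}\subseteq W^{cs}$ and the analog of Corollary~\ref{cor: flow preserves unstable} for the strong stable, e.g.\ by time reversal) applied to $g_{-s_0}x$, gives $\tilde\eta(r)=\tilde x(r)$ for all $r\geq t^{-}$.

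With $\tilde\gamma$ and $\tilde\eta$ both coinciding with $\tilde x$ on $[t^{-},t^{+}]$, the concatenation $\tilde\xi$ defined by the stated formula is unambiguous at $r=0$ and is in fact equal to $\tilde x$ on the entire overlap. It is a local isometry: the only potential corners are at $\tilde q_{\pm}$, where $\tilde\xi$ agrees with the geodesic $\tilde\eta$ or $\tilde\gamma$ respectively, so the hinge condition is inherited. Membership $\xi\in W^{s}(\gamma,\delta)\cap W^{u}(\eta,\delta)$ is now routine: the endpoint conditions at $\pm\infty$ are built in, the Busemann conditions reduce to $B_{\tilde\gamma}(\tilde\gamma(0))=B_{-\tilde\eta}(\tilde\eta(0))=0$, and the distance bounds follow from the cancellations on $[t^{-},t^{+}]$: for instance $d_{G\tilde S}(\tilde\xi,\tilde\gamma)$ reduces to an integral of $|\tilde\eta-\tilde x|$ on $(-\infty,t^{-}]$ and is therefore $\leq d_{G\tilde S}(\tilde\eta,\tilde x)<\delta$, and analogously for $\eta$. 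Finally, $\langle\gamma,\eta\rangle=\{\xi\}$ (uniqueness of the bracket) follows from the remark after Definition~\ref{def: su-Bowen bracket}, since $\xi$ lies close to the regular geodesic $x$ and is therefore itself regular.

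The main obstacle will be quantifying Step~2: making the choice $\delta=\delta(c)$ truly uniform over all $x$ satisfying the hypothesis, by tracing through the proof of Lemma~\ref{lemma: geometry of u and cs} and showing that the threshold it requires depends only on the turning angle at $\tilde q_{\pm}$ (which exceeds $\pi$ by an amount controlled by $c$) and on the locations $|t^{\pm}|$ (bounded in terms of $s_0$ and $\theta_0/c$), both of which are functions of $c$ alone via Lemma~\ref{lemma: properties of lambda}.
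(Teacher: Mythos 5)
Your argument follows the same core strategy as the paper's proof: apply Lemma \ref{lemma: properties of lambda} at $g_{\pm s_0}x$ to locate cone points with turning angle bounded away from $\pi$ at strictly negative and positive times, shrink $\delta$ so that $\gamma$ and $\eta$ are forced to coincide with $x$ on an interval containing $0$, and then read off that the stated concatenation is a geodesic with the required distance and Busemann properties. Where you diverge is in how you force this coincidence. The paper is more direct: it chooses $\delta$ small enough that any geodesic within $\delta$ of $x$ must itself turn at both $x(t_1)$ and $x(t_2)$, and concludes $\gamma(r)=\eta(r)=x(r)$ on $[t_1,t_2]$ (the Busemann matching of parameters that you spell out is left implicit there). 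You route instead through Lemma \ref{lemma: geometry of u and cs} via Corollary \ref{cor: flow preserves unstable}, and this is where a small gap appears: Lemma \ref{lemma: geometry of u and cs} only asserts that $g_{s_0}\tilde\gamma$ and $g_{s_0}\tilde x$ share \emph{some} cone point on a common backward ray, not that it is $\tilde q_{+}$. In case (2) of Lemma \ref{lemma: properties of lambda} applied to $g_{s_0}x$, the first turning cone point can lie at time $\leq -s_0$ for $g_{s_0}x$, i.e.\ at time $\leq 0$ for $x$, so the shared ray delivered by Lemma \ref{lemma: geometry of u and cs} alone need not reach across $r=0$. To close this you still want the paper's direct step of shrinking $\delta$ so that $\tilde\gamma$ also passes through $\tilde q_{+}$; once both $\tilde\gamma$ and $\tilde x$ pass through $\tilde q_{+}$ with matching Busemann parameter, CAT(0) uniqueness of geodesic segments gives agreement through $t^{+}$. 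Your uniformity concern about $\delta$ is legitimate and is resolved exactly as you indicate: the turning angles ($\geq cs_0$) and cone-point times ($|t^{\pm}|\leq 2s_0+\theta_0/(2c)$) are bounded by quantities depending only on $c$, so the choice of $\delta$ is genuinely $\delta(c)$.
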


\begin{proof}
First, if $\lambda(g_{s_0}x),\lambda(g_{-s_0}x) > c$, then the first cone points $x$ turns at are at times $-2s_0-\frac{\theta_0}{2c}<t_1<0$ and $0<t_2<2s_0+\frac{\theta_0}{2c}$ and with angle at least $s_0c$ by Lemma~\ref{lemma: properties of lambda}. Then choose $\delta$ small enough so that given $y$ with $d_{GS}(y,x) \leq \delta$, then $y$ turns at $x(t_1)$ and $x(t_2)$ with angle at least $\frac{s_0c}{2}$. In particular, if $\gamma\in W^u(x,\delta)$ and $\eta\in W^s(x,\delta)$, this implies that $\gamma(r)=\eta(r)=x(r)$ for $r\in[t_1,t_2]$, which in turn guarantees that $\xi$ as constructed is a geodesic. Furthermore, we have $d_{GS}(\xi,\gamma) = d_{GS}(\eta,x)$, and similarly $d_{GS}(\xi,\eta) = d_{GS}(\gamma,x)$. Finally, by our choice of $\delta$, $\langle\gamma,\eta\rangle$ consists of a unique geodesic, because $\xi$ is bounded away from a flat strip.
\end{proof}

\begin{corollary}\label{cor: bracket distance}
Assume we are in the setting of Lemma~\ref{lemma: bracket_su geometry}. Then, $d_{GS}(\xi,x)=d_{GS}(x,\gamma)+d_{GS}(x,\eta)$.
\end{corollary}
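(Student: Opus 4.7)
The plan is to prove the identity at the level of the universal cover by decomposing the integrals defining the $d_{G\tilde\SSS}$-distances, exploiting the fact that $\gamma$, $\eta$, and $\xi$ each coincide with $x$ on a large subset of $\mathbb{R}$.

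First, I would sharpen the geometric picture established in the proof of Lemma~\ref{lemma: bracket_su geometry}. That proof already gives $\gamma(r)=\eta(r)=x(r)$ for $r\in[t_1,t_2]$. Since $\gamma\in W^u(x,\delta)$, Lemma~\ref{lemma: geometry of u and cs} says $\tilde\gamma$ and $\tilde x$ are $-$-cone point asymptotic; the only cone point where they can converge is the one at which $\gamma$ turns with angle $\geq s_0c/2\neq\pi$, namely $x(t_1)$. Hence for suitable lifts $\tilde\gamma(r)=\tilde x(r)$ for all $r\leq t_1$, and combined with the interval $[t_1,t_2]$ we obtain $\tilde\gamma(r)=\tilde x(r)$ for all $r\leq t_2$. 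Symmetrically, $\tilde\eta(r)=\tilde x(r)$ for all $r\geq t_1$.

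Now I fix these lifts $\tilde\gamma,\tilde\eta,\tilde x$ (all mutually compatible on $[t_1,t_2]$), and take the corresponding lift $\tilde\xi$ with $\tilde\xi(r)=\tilde\gamma(r)$ for $r\geq 0$ and $\tilde\xi(r)=\tilde\eta(r)$ for $r\leq 0$; the construction of $\xi$ in Lemma~\ref{lemma: bracket_su geometry} makes this consistent. The integrand $d_{\tilde\SSS}(\tilde\xi(r),\tilde x(r))$ then vanishes on $[t_1,t_2]$, agrees with $d_{\tilde\SSS}(\tilde\eta(r),\tilde x(r))$ on $(-\infty,t_1]$, and agrees with $d_{\tilde\SSS}(\tilde\gamma(r),\tilde x(r))$ on $[t_2,\infty)$. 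Splitting the integral defining $d_{G\tilde\SSS}(\tilde\xi,\tilde x)$ at $t_1$ and $t_2$ and using that $\tilde\eta=\tilde x$ on $[t_1,\infty)$ and $\tilde\gamma=\tilde x$ on $(-\infty,t_2]$ gives
\[
d_{G\tilde\SSS}(\tilde\xi,\tilde x) \;=\; \int_{-\infty}^{t_1}\! d_{\tilde\SSS}(\tilde\eta(r),\tilde x(r))e^{-2|r|}\,dr \;+\; \int_{t_2}^{\infty}\! d_{\tilde\SSS}(\tilde\gamma(r),\tilde x(r))e^{-2|r|}\,dr \;=\; d_{G\tilde\SSS}(\tilde\eta,\tilde x)+d_{G\tilde\SSS}(\tilde\gamma,\tilde x).
\]

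The last step is to check that each of the three $d_{G\tilde\SSS}$-distances equals the corresponding $d_{G\SSS}$-distance, i.e.\ that the chosen lifts realize the infimum over lifts. Since $x$ is regular (by the hypothesis $\lambda(g_{\pm s_0}x)>c$) and $\delta$ is taken small enough, no other deck-translate lift can give a smaller integral: $\tilde\gamma$ and $\tilde\xi$ share the backward endpoint at infinity with $\tilde x$, $\tilde\eta$ shares the forward endpoint, and in each case the other endpoint lies close by in the appropriate boundary sense. I expect this minimizing-lift verification to be the only subtle point; the additivity of distances then follows immediately from the display above.
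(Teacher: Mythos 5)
Your proof is correct, and it fills in exactly what the paper's one-line proof (``Straightforward from the definition of $d_{GS}$ and $\xi$'') leaves implicit. The key observation -- that after lifting, $\tilde\gamma=\tilde x$ on $(-\infty,t_2]$, $\tilde\eta=\tilde x$ on $[t_1,\infty)$, and $\tilde\xi$ agrees with $\tilde\eta$ below $t_1$, with $\tilde x$ on $[t_1,t_2]$, and with $\tilde\gamma$ above $t_2$, so the integral defining the distance splits additively -- is the intended argument. One small simplification: you do not need the ``only cone point where they can converge'' remark or the full force of Lemma~\ref{lemma: geometry of u and cs}; once Lemma~\ref{lemma: bracket_su geometry} gives $\tilde\gamma(r)=\tilde x(r)$ on $[t_1,t_2]$ and you know $\tilde\gamma(-\infty)=\tilde x(-\infty)$, uniqueness of the CAT(0) ray from $\tilde x(t_1)$ to $\tilde x(-\infty)$ already forces agreement on $(-\infty,t_1]$. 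Your caution about lifts realizing the infimum is reasonable but is handled uniformly throughout the paper by working at scales below the injectivity radius (cf.\ \cite[Lemmas 2.16--2.17]{CCESW}), so that $d_{GS}$ and $d_{G\tilde S}$ agree for the geodesics in play.
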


\begin{proof}
    Straightforward from the definition of $d_{GS}$ and $\xi$.
\end{proof}

\begin{corollary}\label{cor: lambda bounded below}
Assume we are in the setting of Lemma \ref{lemma: bracket_su geometry}. Given $\xi = \langle \gamma,\eta\rangle$, then $\lambda(g_{r}\xi) > \frac{cs_0}{2\left(3s_0 + \frac{\theta_0}{2c}\right)}$ for $-s_0 \leq r \leq s_0$
\end{corollary}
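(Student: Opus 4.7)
The plan is to use the explicit turning structure of $\xi$ established in the proof of Lemma~\ref{lemma: bracket_su geometry}, translate it by $g_r$, and then read off $\lambda(g_r\xi)$ from the definition. From that earlier proof we know that $\xi$ agrees with $x$ on $[t_1,t_2]$, where $t_1\in(-2s_0-\frac{\theta_0}{2c},0)$ and $t_2\in(0,2s_0+\frac{\theta_0}{2c})$ are the first cone-point turns of $x$ on each side of $0$. Both $\gamma$ and $\eta$ (and hence $\xi$) turn at $x(t_1)$ and $x(t_2)$ with angular deviation at least $\frac{s_0c}{2}$, and $\xi$ has no other turns in $(t_1,t_2)$.

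Next I would translate. For $r\in[-s_0,s_0]$, the geodesic $g_r\xi$ inherits turns at the times $t_1-r$ and $t_2-r$, carrying angular deviations at least $\frac{s_0c}{2}$. The triangle inequality gives
$$|t_i-r|\le |t_i|+s_0\le 3s_0+\frac{\theta_0}{2c}.$$
Moreover, because $t_1,t_2$ are consecutive turning cone-point times of $\xi$, the times $t_1-r$ and $t_2-r$ are consecutive turning cone-point times of $g_r\xi$ in a neighborhood of $0$. In the principal case $r\in[t_1,t_2]$ (which already covers $[-s_0,s_0]$ whenever $t_1\le -s_0\le s_0\le t_2$), the two translated turns straddle $0$; combining the angle lower bound $\frac{s_0c}{2}$ with the displacement upper bound $3s_0+\frac{\theta_0}{2c}$ and feeding the data into the definition of $\lambda$ from \cite[Definition~3.3]{CCESW} (matching the configuration of case~(2) of Lemma~\ref{lemma: properties of lambda}) yields the advertised lower bound
$$\lambda(g_r\xi)\ge \frac{s_0c/2}{3s_0+\theta_0/(2c)}=\frac{cs_0}{2(3s_0+\theta_0/(2c))}.$$
In the boundary regime $r\notin[t_1,t_2]$, which is only possible when one of $|t_1|,|t_2|$ is smaller than $s_0$, one of the translated turns lands inside $(-s_0,s_0)$ with angle still at least $\frac{s_0c}{2}$, contributing a value of order $\frac{c}{2}$ via the case~(1) clause in the definition of $\lambda$, which strictly exceeds $c'$.

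The main obstacle is making the last step rigorous: one has to verify that the definition of $\lambda$ in \cite{CCESW} can be applied uniformly to all $r\in[-s_0,s_0]$, and in particular that in the boundary case either the interior turn alone is enough or the second of $t_1-r,t_2-r$ (whose displacement from $0$ is still at most $3s_0+\frac{\theta_0}{2c}$) plays the role of the required turn on the opposite side of $0$. Once this bookkeeping is carried out, the inequality $\lambda(g_r\xi)>\frac{cs_0}{2(3s_0+\theta_0/(2c))}$ holds throughout $[-s_0,s_0]$.
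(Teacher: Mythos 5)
Your proposal is correct and takes essentially the same approach as the paper. The case division is phrased differently — you split on $r\in[t_1,t_2]$ versus $r\notin[t_1,t_2]$ while the paper splits on $t_1-r<0<t_2-r$ versus $t_1-r\geq 0$ versus $t_2-r\leq 0$ — but these are equivalent, and the bookkeeping you flag as "the main obstacle" is carried out in the paper exactly as you sketch it: the straddling case gives $\lambda(g_r\xi)\geq \frac{cs_0/2}{\max\{|t_1-r|,|t_2-r|,s_0\}}$, and in the boundary case the translated turn landing in $(-s_0,s_0)$ gives $\lambda(g_r\xi)>c/2$, which dominates the stated lower bound.
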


\begin{proof}
Recall that by our choice of $\delta$, $\xi$ first turns at a cone point at times $-\left(2s_0 + \frac{\theta_0}{2c}\right) < t_1 < 0 < t_2 < 2s_0 + \frac{\theta_0}{2c}$, with turning angle at least $\frac{cs_0}{2}$. Now, observe that since $\xi$ turns with angle at least $\frac{cs_0}{2}$ at both $t_1$ and $t_2$, we have
$$\lambda(\xi) \geq \frac{\frac{cs_0}{2}}{\max\{|t_1|,|t_2|,s_0\}} \geq \frac{cs_0}{2\left(2s_0 + \frac{\theta_0}{2c}\right)}.$$
If we let $r\in [-s_0,s_0]$ and consider $g_r\xi$, there are three possibilities. We could have $t_1 - r < 0 < t_2 - r$, in which case $g_r\xi$ first turns at a cone point at times $t_1 - r < 0 < t_2 - r$, and it turns with angle at least
$\frac{cs_0}{2}$, or $t_1 - r \geq 0$ or $t_2 - r \leq 0$. In the first case, then
$$\lambda(g_r\xi) \geq \frac{cs_0}{2\max\{|t_1 - r|,|t_2 - r|,s_0\}} \geq \frac{cs_0}{2\left(2s_0 + \frac{\theta_0}{2c} + r\right)}.$$
In the second case, $t_1 - r \in [0,s_0)$, and so $g_r\xi$ turns at a cone point with angle at least $\frac{cs_0}{2}$ at some time less than $s_0$ away from $0$. Thus, $\lambda(g_r\xi) > \frac{c}{2}$. The analysis for the third case is similar.
\end{proof}

\subsection{Rectangles and flow boxes}\label{sec:rectangles}

We now exploit the local \emph{topological} product structure of $\GS$ to build rectangles and flow boxes. A rectangle is a transversal to the flow direction with `rectangular' structure given by stable and unstable directions, and a flow box is a rectangle thickened in the flow direction by a small amount. We now construct these sets at small scale around regular geodesics and prove a few properties of their geometry.

\begin{definition}\label{definition: su-rectangle}
Let $x$ and $\delta$ be as in Lemma~\ref{lemma: bracket_su geometry}. We then define an $su$-rectangle centered at $x$ with $\diam(R_x(\eps))<4\eps<\delta$ in the following way:
\begin{equation}\label{def rectangle}
R_x(\eps)=\{\langle\gamma,\eta\rangle\,|\,\gamma\in W^{u}(x,\eps),\eta\in W^{s}(x,\eps)\}.
\end{equation}

Notice that for $\eps<s_0$ (in particular, smaller than the injectivity radius of $S$) we have $R_x(\eps)$ is a transversal to the flow. That is, for any $\gamma \in GS$ within $\epsilon$ of $R_x(\eps)$, $g_{[-5\epsilon,5\epsilon]}\gamma \cap R_x(\eps)$ is either empty or a single point.    
\end{definition}

\begin{lemma}
For $\eps < \delta/4$, the su-Bowen bracket is defined and equal to a single geodesic on any pair of geodesics in $R_x(\eps)$.
\end{lemma}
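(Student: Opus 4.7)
The plan is to construct an explicit element of $\langle \xi_1,\xi_2\rangle$ for arbitrary $\xi_1,\xi_2\in R_x(\eps)$ and then appeal to the remark after Definition~\ref{def: su-Bowen bracket} for uniqueness. Write $\xi_i=\langle\gamma_i,\eta_i\rangle$ with $\gamma_i\in W^u(x,\eps)$ and $\eta_i\in W^s(x,\eps)$. Since $4\eps<\delta$, the proof of Lemma~\ref{lemma: bracket_su geometry} applies simultaneously to each $\gamma_i$ and $\eta_i$ and yields: each $\gamma_i$ agrees with $x$ on $(-\infty,t_2]$, each $\eta_i$ agrees with $x$ on $[t_1,\infty)$, and consequently all four geodesics $\gamma_1,\gamma_2,\eta_1,\eta_2$ (and hence $\xi_1,\xi_2$) coincide with $x$ on the central segment $[t_1,t_2]$.

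I would then define the ``crossed'' candidate $\xi_3(r):=\eta_2(r)$ for $r\le 0$ and $\xi_3(r):=\gamma_1(r)$ for $r\ge 0$. Because $\eta_2$ and $\gamma_1$ both coincide with $x$ on a neighborhood of $0$, $\xi_3$ is a well-defined local isometry and hence an element of $GS$; indeed $\xi_3=\langle\gamma_1,\eta_2\rangle\in R_x(\eps)$.

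To check $\xi_3\in W^s(\xi_1,\delta)\cap W^u(\xi_2,\delta)$, note that $\xi_3$ coincides with $\xi_1$ on $[t_1,\infty)$ (their forward halves use the same $\gamma_1$, and on $[t_1,0]$ both equal $x$). Hence they share a forward endpoint at infinity, the Busemann condition at $0$ is immediate, and the defining integral reduces to
\[
d_{G\tilde S}(\tilde\xi_3,\tilde\xi_1)=d_{G\tilde S}(\tilde\eta_2,\tilde\eta_1)\le d_{GS}(\eta_2,x)+d_{GS}(x,\eta_1)\le 2\eps<\delta,
\]
giving $\xi_3\in W^s(\xi_1,\delta)$. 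The symmetric computation, using that $\xi_3$ coincides with $\xi_2$ on $(-\infty,t_2]$, produces $d_{G\tilde S}(\tilde\xi_3,\tilde\xi_2)=d_{G\tilde S}(\tilde\gamma_1,\tilde\gamma_2)\le 2\eps<\delta$, so $\xi_3\in W^u(\xi_2,\delta)$.

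Finally, Corollary~\ref{cor: lambda bounded below} applied to $\xi_3\in R_x(\eps)$ shows $\lambda(\xi_3)>0$, so $\xi_3$ is regular, and the remark after Definition~\ref{def: su-Bowen bracket} yields $\langle\xi_1,\xi_2\rangle=\{\xi_3\}$. There is no substantial obstacle: the whole argument hinges on keeping the geometric picture of the four ingredient geodesics straight, and the margin $\eps<\delta/4$ is precisely what guarantees they all share $x$'s central arc, making the concatenation a geodesic and routing every distance estimate cleanly through $x$.
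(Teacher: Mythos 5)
Your proof is correct and follows essentially the same approach as the paper's: both identify $\langle\xi_1,\xi_2\rangle$ as the ``crossed'' bracket $\langle\gamma_1,\eta_2\rangle$ by exploiting the fact that $\eps<\delta/4$ forces all constituent geodesics to share the central arc $x|_{[t_1,t_2]}$. The paper is terser, routing the distance estimate through Corollary~\ref{cor: bracket distance} and leaving the regularity/uniqueness point implicit, whereas you expand the $d_{G\tilde S}$ computations directly and invoke Corollary~\ref{cor: lambda bounded below} plus the remark after Definition~\ref{def: su-Bowen bracket} explicitly; the content is the same.
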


\begin{proof}
Let $y,z\in R_x(\eps)$. Then, $y=\langle\gamma_y,\eta_y\rangle$ and $z=\langle\gamma_z,\eta_z\rangle$ where $\gamma_y,\gamma_z\in W^u(x,\eps)$ and $\eta_y,\eta_z\in W^s(x,\eps)$. By Corollary~\ref{cor: bracket distance}, $d_{GS}(x,y),d_{GS}(x,z)<2\eps<\delta$ and so $y(r)=z(r)=x(r)$ for $r\in[t_1,t_2]$ for $t_1,t_2$ as in Lemma~\ref{lemma: bracket_su geometry}. Thus, $\langle y,z\rangle=\langle\gamma_y,\eta_z\rangle\in R_x(\eps)$.  
\end{proof}

\begin{lemma}\label{lem: bracket and flow interaction}
Given a rectangle $R_x(\eps)$, there exists $t_R := t_R(c)$ such that for all $\langle\gamma,\eta\rangle\in R_x(\eps)$ and $t\geq t_R$,
$$\lambda(g_t\langle \gamma,\eta\rangle) = \lambda(g_t\gamma) \text{ and } \lambda(g_{-t}\langle \gamma,\eta\rangle) = \lambda(g_{-t}\eta).$$
\end{lemma}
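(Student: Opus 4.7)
The plan is to use the explicit description of the bracket from Lemma~\ref{lemma: bracket_su geometry}: if $\xi := \langle\gamma,\eta\rangle$, then $\xi(r) = \gamma(r)$ for all $r \geq 0$ and $\xi(r) = \eta(r)$ for all $r \leq 0$. Consequently, in shifted time coordinates, $g_t\xi$ and $g_t\gamma$ coincide pointwise on $[-t, +\infty)$, and symmetrically $g_{-t}\xi$ and $g_{-t}\eta$ coincide on $(-\infty, t]$. It then suffices to show that for $t \geq t_R(c)$, every cone point relevant to computing $\lambda(g_t\xi)$ lies in this region of agreement with $\gamma$, and analogously for $\eta$ at negative time.

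The essential geometric input is that, as established in the proof of Lemma~\ref{lemma: bracket_su geometry}, every $\xi \in R_x(\eps)$ passes through the cone points $x(t_1)$ and $x(t_2)$ of $x$ with $-(2s_0 + \theta_0/(2c)) < t_1 < 0 < t_2 < 2s_0 + \theta_0/(2c)$, turning with angle at least $cs_0/2$ at each. In particular, $\xi$ experiences significant turning in each time direction from $0$, located in a $c$-dependent but bounded window. I would then invoke Lemma~\ref{lemma: properties of lambda} together with the definition of $\lambda$ in \cite[Def.~3.3]{CCESW} to argue that the value $\lambda(g_t\xi)$ is governed by the cone points at which $g_t\xi$ turns with significant angle nearest to time $0$ in each direction.

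Setting $t_R := 2s_0 + \theta_0/(2c)$ (or a slightly larger constant if needed), for $t \geq t_R$ the backward-significant cone point of $g_t\xi$ occurs at $g_t\xi$-time at most $t_2 - t \leq 0$, corresponding to $\xi$-time at least $t_2 > 0$ where $\xi = \gamma$, and the forward-significant cone point is likewise in $\xi$'s positive-time region. Thus the cone points and turning angles governing $\lambda(g_t\xi)$ match those governing $\lambda(g_t\gamma)$, yielding the desired equality; the analogous argument with $\eta$ in place of $\gamma$ proves the negative-time statement. The main obstacle is verifying that the computation of $\lambda$ never ``reaches past'' the backward-significant cone point into the region where $\xi$ and $\gamma$ disagree, which requires carefully unpacking \cite[Def.~3.3]{CCESW} to confirm its reliance only on the nearest significant cone points in each direction.
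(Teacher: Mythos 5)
Your proposal is correct and follows essentially the same route as the paper: both use the explicit description of $\langle\gamma,\eta\rangle$ from Lemma~\ref{lemma: bracket_su geometry} to see that $g_t\langle\gamma,\eta\rangle$ and $g_t\gamma$ coincide on a half-line containing the first significant cone points in each direction once $t\geq t_R$, and then conclude via the fact that $\lambda$ is governed by those nearest cone points. The paper chooses $t_R > 2s_0 + \theta_0/(2\min\{\lambda(g_{s_0}x),\lambda(g_{-s_0}x)\})$, which under the standing hypothesis $\lambda(g_{\pm s_0}x)>c$ is bounded above by your choice $2s_0+\theta_0/(2c)$, so both yield the same dependence on $c$; the subtlety you flag (that $\lambda$ does not ``reach past'' the first significant cone point) is the same implicit appeal to the structure of \cite[Def.~3.3]{CCESW} that the paper makes.
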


\begin{proof}
Choose $t_R > 2s_0 + \frac{\theta_0}{2\min\{\lambda(g_{s_0}x),\lambda(g_{-{s_0}}x\}}$, where, again, $x$ is the point at which $R_x(\eps)$ is centered. The proof of Lemma \ref{lemma: bracket_su geometry} gives us that for any $\langle \gamma,\eta\rangle\in R_x(\eps)$, then $\gamma$ and $\langle\gamma,\eta\rangle$ turn at the same first cone point with the same angle, and that that cone point occurs in the interval $(0,t_R)$. Furthermore, by Lemma \ref{lemma: bracket_su geometry}, for any $\langle \gamma,\eta\rangle\in R_x(\eps)$, $g_{t_R}\langle \gamma,\eta\rangle(t) = g_{t_R}\gamma(t)$ for $t\geq -t_R$. Therefore, $g_{t_R}\langle\gamma,\eta\rangle$ and $g_{t_R}\gamma$ agree at least up to the first cone point in negative time and for all cone points in positive time, and so $\lambda(g_{t + t_R}\langle \gamma,\eta\rangle) = \lambda(g_{t + t_R}\gamma)$ for all $t\geq 0$. A similar proof holds for $\eta$ by reversing the time direction.
\end{proof}

\begin{definition}\label{definition: flow box}
    Let $R_x(\eps)$ be as in Definition~\ref{definition: su-rectangle}.
    For $n\in\mathbb N$ such that $n>\frac{4}{\delta}$, we call $\mathcal{B}(n,\eps)=g_{\left[-\frac{1}{n},\frac{1}{n}\right]}(R_x(\eps))$ the $(n,\eps)$-flow box centered at $x$.
\end{definition}

\begin{lemma}\label{lem: bowen bracket in box}
There exists $n_0=\max\left\{\frac{8}{\delta},5\right\}$, such that for all $n\geq n_0$, the Bowen bracket is defined on any pair of geodesics in $\mathcal{B} (n,\eps)$ as in Definition~\ref{definition: flow box}.
\end{lemma}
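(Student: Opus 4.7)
The plan is to construct the Bowen bracket explicitly, by cross-pairing the $u$- and $s$-components of the points' preimages under the flow. Given $y,z \in \mathcal{B}(n,\eps)$, write $y = g_{t_y}\xi_y$ and $z = g_{t_z}\xi_z$ with $\xi_y,\xi_z \in R_x(\eps)$ and $|t_y|,|t_z| \leq 1/n$. Since $R_x(\eps)$ is an $su$-rectangle, $\xi_y = \langle \gamma_y,\eta_y\rangle$ and $\xi_z = \langle \gamma_z,\eta_z\rangle$ with $\gamma_\bullet \in W^u(x,\eps)$ and $\eta_\bullet \in W^s(x,\eps)$. By Lemma~\ref{lemma: bracket_su geometry} the "swapped" $su$-bracket $w_0 := \langle \gamma_y,\eta_z\rangle$ is a single regular geodesic in $R_x(\eps)$, with $w_0(r) = \gamma_y(r)$ for $r \geq 0$ and $w_0(r) = \eta_z(r)$ for $r \leq 0$. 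My candidate for $[y,z]$ is $w := g_{t_z} w_0$; the shift by $t_z$ is dictated by the need to align parametrizations with $z$ on the shared past ray.

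First I would verify $w \in W^u(z,\delta)$. By construction, $w(r) = \eta_z(r + t_z) = z(r)$ for all $r \leq -|t_z|$, so $\tilde w$ and $\tilde z$ share a ray to $-\infty$; this immediately gives both $\tilde w(-\infty) = \tilde z(-\infty)$ and $B_{-\tilde z}(\tilde w(0)) = 0$. For the distance, since $w_0$ and $\xi_z$ agree on $(-\infty,0]$, a direct computation like the one in Lemma~\ref{lemma: expansion u} yields $d_{GS}(g_{t_z}w_0,g_{t_z}\xi_z) \leq e^{2/n} \cdot d_{GS}(w_0,\xi_z) \leq 2 e^{2/n}\eps$, which is below $\delta$ once $4\eps < \delta$ and $n \geq 5$.

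Next I would check $w \in W^{cs}(y,\delta)$. Since $w_0$ and $\xi_y$ both agree with $\gamma_y$ on $[0,\infty)$, we have $\tilde w(+\infty) = \tilde y(+\infty)$, matching the endpoint condition of $W^{cs}$ (no Busemann normalization required). For the distance, I split via the triangle inequality
\[
d_{GS}(w,y) \leq d_{GS}(g_{t_z}w_0, g_{t_z}\xi_y) + d_{GS}(g_{t_z}\xi_y, g_{t_y}\xi_y),
\]
bounding the first term by $e^{2/n}\cdot d_{GS}(w_0,\xi_y) \leq 2 e^{2/n} \eps$ (again exploiting that $w_0$ and $\xi_y$ share the future ray, via a computation analogous to Lemma~\ref{lemma: expansion u}), and the second by $|t_z - t_y| \leq 2/n$ directly from the integral definition of $d_{GS}$. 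With $n \geq 8/\delta$ giving $2/n \leq \delta/4$, and $4\eps < \delta$, this sum fits under $\delta$.

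Uniqueness is then automatic from the remark following Definition~\ref{def: su-Bowen bracket}: $w$ is regular since $w_0$ sits in $R_x(\eps)$, whose elements enjoy the uniform lower bound on $\lambda$ supplied by Corollary~\ref{cor: lambda bounded below}, so the intersection $W^{cs}(y,\delta) \cap W^u(z,\delta)$ cannot contain two distinct points. The delicate step I anticipate is the $W^{cs}$ distance bound: the constants $4\eps < \delta$ and $1/n \leq \delta/8$ leave little slack, so I expect that securing $d_{GS}(w,y) < \delta$ will require carefully exploiting the \emph{shared positive ray} of $w_0$ and $\xi_y$ (rather than relying on the crude $e^{2|t|}$-Lipschitz bound on $g_t$) to absorb the $e^{2/n}$ factor.
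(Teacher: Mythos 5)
Your proof is correct and takes essentially the same approach as the paper's: the candidate bracket $g_{t_z}\langle\gamma_y,\eta_z\rangle$, the two distance estimates ($\leq 2e^{2/n}\eps$ on the unstable side and $\leq 2e^{2/n}\eps + 2/n$ on the center-stable side, using that the swapped bracket shares the appropriate ray with $\xi_z$ and $\xi_y$ respectively), and the regularity-based uniqueness argument all match the paper's proof. Your closing worry is unfounded: the crude Lipschitz factor $e^{2|t_z|}\leq e^{2/n}\leq e^{2/5}<3/2$ combined with $2\eps<\delta/2$ and $2/n\leq\delta/4$ already gives $e^{2/n}\cdot 2\eps+2/n<\delta$, which is exactly how the paper closes the estimate.
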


\begin{proof}
Let $y,z\in \mathcal{B}(n,\eps)$ then $y=g_{t_y}\langle\gamma_y,\eta_y\rangle$ and $z=g_{t_z}\langle\gamma_z,\eta_z\rangle$ for $\gamma_y, \gamma_z\in W^u(x,\eps)$, $\eta_y,\eta_z\in W^s(x,\eps)$, and $t_y,t_z\in\left[-\frac{1}{n},\frac{1}{n}\right]$. We claim that $[y,z]=g_{t_z}\langle\gamma_y,\eta_z\rangle.$

We have
\begin{align*}
d_{\GS}(g_{t_z}\langle\gamma_y,\eta_z\rangle,y)&=d_{\GS}(g_{t_z}\langle\gamma_y,\eta_z\rangle,g_{t_y}\langle\gamma_y,\eta_y\rangle)\\&\leq\int_{-\infty}^{+\infty}d_{\tilde S}\left(\langle\gamma_y,\eta_z\rangle(t),\langle\gamma_y,\eta_y\rangle(t)\right)e^{-2|t-t_z|}\,dt+|t_z-t_y|\\&=\int_{-\infty}^0d_{\tilde S}\left(\eta_z(t),\eta_y(t)\right)e^{-2|t-t_z|}\,dt+|t_z-t_y|\\&\leq e^{\frac{2}{n}}\left(d_{\GS}(\eta_z,x)+d_{\GS}(\eta_y,x)\right)+\frac{2}{n}\\&\leq e^{\frac{2}{n}}2\eps+\frac{2}{n}<\delta.
\end{align*}

Similarly,
\begin{align*}
 d_{\GS}\left(g_{t_z}\langle\gamma_y,\eta_z\rangle,z\right)&=\int_0^{+\infty}d_{\tilde S}\left(\gamma_y(t),\gamma_z(t)\right)e^{-2|t-t_z|}\,dt\\&\leq e^{2|t_z|}\left(d_{\GS}(\gamma_y,x)+d_{\GS}(\gamma_z,x)\right)=e^{\frac{2}{n}}2\eps<\delta
\end{align*}

Thus, by the construction, we have that $g_{t_z}\langle\gamma_y,\eta_z\rangle\in W^{cs}(y,\delta)\cap W^{u}(z,\delta)$. Moreover, it is regular, so  $[y,z]=g_{t_z}\langle\gamma_y,\eta_z\rangle$.
\end{proof}

We record a Lemma here which we will need later.

\begin{lemma}\label{lem:zero msr bdry}
For almost every small $\eps$, $\mu(\mathcal{B}(n,\eps)\setminus \operatorname{Int}\mathcal{B}(n,\eps))=0$ for all $n$ large enough (depending on the injectivity radius).
\end{lemma}

\begin{proof}
The boundary of $\mathcal{B}(n, \eps)$ is the union of three pieces. The first two are $g_{\pm\frac{1}{n}}R_x(\eps)$. Since $\mu$ is $g_t$-invariant and for all small $t$, $g_{\pm\frac{1}{n}+t}R(\eps)$ are disjoint, these boundary portions must have zero measure, or else $\mu$ will have infinite total mass.

The third piece is $g_{\left[-\frac{1}{n},\frac{1}{n}\right]}\partial R(\eps)$. If for a positive measure set of small $\eps$ (indeed, even if for an uncountable set of small $\eps$ these have positive measure) again $\mu$ will have infinite total mass.
\end{proof}

%
\section{Dynamical results}\label{sec:dynamics}

\begin{definition}\label{def: Bowen balls}
Given $\gamma \in GS$, 
\begin{itemize}
    \item The \emph{stable Bowen ball of length $n$ and radius $\eps$} is 
$$B_{n}^{s}(\gamma,\epsilon) := \{\eta \in W^{s}(\gamma,\eps) \mid d_{GS}(g_t\gamma,g_t\eta) \leq \epsilon \text{ for } -n\leq t \leq 0\}.$$
    \item The \emph{center stable Bowen ball} is
$$B_{n}^{cs}(\gamma,\epsilon) := \{\eta \in W^{cs}(\gamma,\eps) \mid d_{GS}(g_t\gamma,g_t\eta) \leq \epsilon \text{ for } -n\leq t \leq 0\}.$$
    \item The \emph{unstable Bowen ball} $B^u_{m}(\gamma,\epsilon)$ is defined similarly:
$$B_m^u(\gamma,\eps) := \{\eta \in W^{u}(\gamma,\eps) \mid d_{GS}(g_t\gamma,g_t\eta) \leq \eps \text{ for } 0\leq t \leq m\}$$
except the shadowing occurs over the interval $0\leq t \leq m$.
    \item The \emph{two-sided Bowen ball} is
$$B_{[-n,m]}(\gamma,\eps) := \{\eta \in GS\mid d_{GS}(g_t\gamma,g_t\eta) \leq \eps \text{ for } -n\leq t \leq m\}.$$ We will also use the following notation $B_t(\gamma,\eps):=B_{[0,t]}(\gamma,\eps).$
\end{itemize} 
\end{definition}

%
\subsection{The global Bowen property}\label{subsec:global Bowen}

The Bowen property for a potential plays a key role in many of the techniques used for studying thermodynamic formalism of systems with some hyperbolicity.

\begin{definition}\label{defn:global bowen}
A potential $\phi : GS \to \mathbb{R}$ has the \emph{global Bowen property} if there is some $\varepsilon > 0$ for which there exists a constant $K > 0$ such that for all $x,y \in GS$ and $t > 0$
$$\sup\left\{\left|\int_0^t\phi(g_rx) - \phi(g_ry)\, dr\right| : d_{GS}(g_rx,g_ry) \leq \varepsilon \text{ for } 0 \leq r \leq t\right\} \leq K.$$
\end{definition}

In \cite{CCESW}, we proved a non-uniform version of the Bowen property for any H\"older potential $\phi$, which we state explicitly below (Proposition \ref{prop:non-uniform Bowen}). In this section we provide an argument which upgrades this non-uniform Bowen property to the global property for H\"older $\phi$ that are locally constant on a neighborhood of the singular set, using properties of $\lambda$. Using the $\lambda$-decomposition machinery we have developed already, the proof here is relatively short, and may be useful in other settings that use a $\lambda$-decomposition.

Before providing that proof, however, we note that it is possible to leverage the particularly nice geometry of $\SSS$ to prove the global Bowen property without invoking the $\lambda$-decomposition machinery. The heart of the argument lies in studying the geometry of Bowen balls and how they interact with cone points. The figures below provide two illustrative examples of the possible geometry.

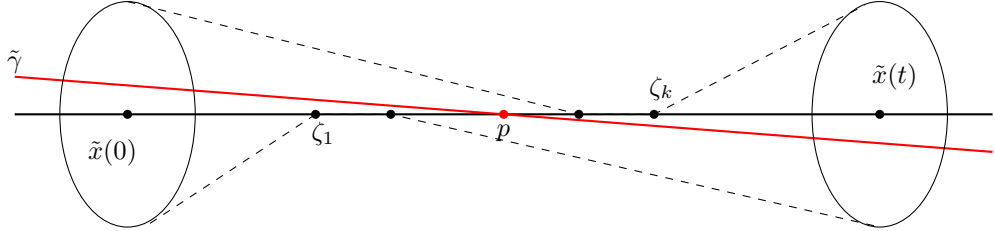
\begin{figure}[ht]
\centering
\begin{tikzpicture}[scale=1]

\draw[thick] (-6.5,0) -- (6.5,0);
\draw[thick, red] (-6.5,.5) -- (6.5,-.5);


\fill (-5,0) circle(.06);
\fill (5,0) circle(.06);

\node at (-5.2,-.5) {$\tilde x(0)$} ;
\node at (5.2,.5) {$\tilde x(t)$} ;

\draw (-5,0) ellipse (.9cm and 1.5cm);
\draw (5,0) ellipse (.9cm and 1.5cm);

\draw [dashed] (-4.7,-1.45) -- (-2.5,0) -- (-1.5,0) -- (5,-1.5);
\draw [dashed] (-5,1.5) -- (1,0) -- (2,0) -- (4.8,1.45);




\node at (-2.4,-.25) {$\zeta_1$} ;
\node at (2.1,.35) {$\zeta_k$} ;

\fill[red] (0,0) circle(.06);

\node at (0,-.25) {$p$};
\node at (-6.5, .7) {$\tilde \gamma$};

\fill (-2.5,0) circle(.06);
\fill (-1.5,0) circle(.06);

\fill (2,0) circle(.06);
\fill (1,0) circle(.06);

\end{tikzpicture}
\caption{The boundary of $H$ intersects $x$ in two disjoint intervals.}\label{fig:CHcase2}
\end{figure}

\begin{figure}[h]
\centering
\begin{tikzpicture}[scale=1]

\draw[thick] (-6.5,0) -- (6.5,0);


\fill (-5,0) circle(.06);
\fill (5,0) circle(.06);

\node at (-5.2,-.5) {$\tilde x(0)$} ;
\node at (5.2,.5) {$\tilde x(t)$} ;

\draw (-5,0) ellipse (.9cm and 1.5cm);
\draw (5,0) ellipse (.9cm and 1.5cm);

\draw [dashed] (-4.7,-1.45) -- (-2.5,0) -- (2,0) -- (4.8,- 1.5);
\draw [dashed] (-4.7,1.45) -- (-2.5,0) -- (2,0) -- (4.7,1.4);




\node at (-2.4,-.25) {$\zeta_1$} ;
\node at (2.1,.35) {$\zeta_k$} ;

\fill (-2.5,0) circle(.06);
\fill (-1.5,0) circle(.06);

\fill (2,0) circle(.06);
\fill (1,0) circle(.06);

\end{tikzpicture}
\caption{The boundary of $H$ intersects $x$ in $[\zeta_1,\zeta_k]$.}\label{fig:CHcase3}
\end{figure}
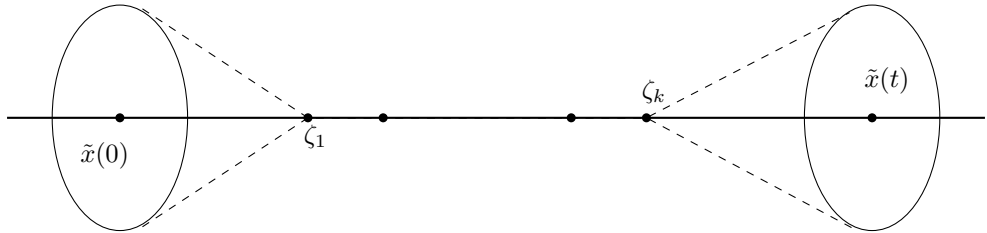

Working in the universal cover, let $H$ be the convex hull of the balls of radius $\eps$ around $\tilde x(0)$ and $\tilde x(t)$. Any geodesic $\tilde y$ in $B_t(\tilde x,\eps)$ must traverse $H$. If $[\tilde x(0), \tilde x(t)]$ contains no cone points, for $r\in [T,t-T]$ where $T$ is as in Lemma \ref{lem:shadow in S}, $g_r\tilde x$ is close to $\Sing$ and the fact that $\phi$ is constant near $\Sing$ can be used to prove the bound on $|\int_0^t \phi(g_r x)-\phi(g_r y) \ dr|.$ Otherwise, the cone points on $[\tilde x(0), \tilde x(t)]$ have a strong impact on the geometry of $H$ -- its boundary must pass through any cone points on $[\tilde x(T), \tilde x(t-T)].$ Figures \ref{fig:CHcase2} and \ref{fig:CHcase3} show two possible ways this can happen (cone points are marked by $\zeta_i$). In Figure \ref{fig:CHcase2}, one can construct a singular geodesic $\tilde \gamma$ through $H$ and very near to both $\tilde x$ and $\tilde y$ using the cone-point-free `line of sight' between the $\eps$-balls around $\tilde x(0)$ and $\tilde x(t)$. Using again that $\phi$ is locally constant near $\Sing$, the desired bound follows. In Figure \ref{fig:CHcase3}, no such line of sight exists. But as $\tilde y$ traverses $H$ between $B_\eps(\tilde x(0))$ and $\zeta_1$ and between $\zeta_k$ and $\tilde x(t)$ there are clear lines of sight and nearby singular geodesics. Between $\zeta_1$ and $\zeta_k$, $\tilde x$ and $\tilde y$ must overlap. Then Lemma \ref{lem:exponentially close} proves (after a small reparametrization) $g_r\tilde x$ and $g_r\tilde y$ are exponentially close, and the H\"older property of $\phi$ can be used to complete the result.

We now prove the global Bowen property using the $\lambda$-function.

\begin{lemma}\label{lem: general version of BCFT 3.4} (compare to \cite[Proposition 3.4]{BCFT})\label{lemma: nbhd of Sing}
If $\lambda$ is lower semicontinuous, then for all $\delta > 0$, there exist $c>0$ and $T > 0$ such that if $\lambda(g_tx) \leq c$ for $t\in [-T,T]$, then $d_{GS}(x,\Sing) \leq \delta$.
\end{lemma}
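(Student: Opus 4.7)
The plan is to argue by contradiction using a straightforward compactness argument, leaning on the three bullet-point properties of $\lambda$ recalled in Section~\ref{subsec:defs}: lower semicontinuity, the identity $\bigcap_{t \in \mathbb{R}} g_t \lambda^{-1}(0) = \Sing$, and the compactness of $GS$. The idea is that any sequence of counterexamples must converge to a geodesic on which $\lambda$ vanishes everywhere along the orbit, hence a singular geodesic.

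More precisely, I would fix $\delta > 0$ and suppose for contradiction that the conclusion fails. Choosing the parameters $c_n = 1/n$ and $T_n = n$, this produces a sequence $x_n \in GS$ with $\lambda(g_t x_n) \leq 1/n$ for every $t \in [-n, n]$ but $d_{GS}(x_n, \Sing) > \delta$. Since $GS$ is compact, I can pass to a subsequence $x_{n_k} \to x_\infty$, and since $\Sing$ is closed the limit still satisfies $d_{GS}(x_\infty, \Sing) \geq \delta > 0$.

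The next step is to show $x_\infty \in \Sing$, giving the contradiction. Fix any $t \in \mathbb{R}$. For all sufficiently large $k$, we have $|t| \leq n_k$, so $\lambda(g_t x_{n_k}) \leq 1/n_k$. Continuity of the geodesic flow gives $g_t x_{n_k} \to g_t x_\infty$, and lower semicontinuity of $\lambda$ then yields
\[ \lambda(g_t x_\infty) \leq \liminf_{k \to \infty} \lambda(g_t x_{n_k}) \leq \liminf_{k \to \infty} \frac{1}{n_k} = 0. \]
Since $t$ was arbitrary, $x_\infty \in \bigcap_{t \in \mathbb{R}} g_t \lambda^{-1}(0) = \Sing$, contradicting $d_{GS}(x_\infty, \Sing) \geq \delta$.

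I do not anticipate any real obstacle here: the only ingredients are compactness of $GS$ (standard in this setting), continuity of the flow $g_t$ on $GS$ for fixed $t$, and the already-cited properties of $\lambda$. The only mildly subtle point is making sure the lower semicontinuity step is applied correctly to a sequence where the bound $1/n_k$ on $\lambda(g_t x_{n_k})$ is only valid once $k$ is large enough that $|t| \leq n_k$; this is handled automatically by passing to a tail of the subsequence before taking the $\liminf$.
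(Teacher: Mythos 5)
Your proof is correct and essentially equivalent to the paper's argument: both rest on compactness, lower semicontinuity of $\lambda$, and the identity $\Sing = \bigcap_{t}g_t\lambda^{-1}(0)$. The only difference is presentational — you argue by sequential compactness and contradiction, while the paper builds an open cover of $\{x : d_{GS}(x,\Sing)\geq\delta\}$ by the open sets $A(c)=\{x : \lambda(g_tx)>c \text{ for some } |t|\leq 1/c\}$ and extracts a finite subcover to produce the uniform $c$ and $T$ directly.
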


\begin{proof}
For all $c > 0$, let $A(c) = \left\{x\in GS\mid \lambda(g_tx) > c \text{ for some } t\in \left[\frac{-1}{c},\frac{1}{c}\right]\right\}$. Notice that by definition, $A(c_2)\subset A(c_1)$ if $0<c_1<c_2$.

We will show that $A(c)$ is open using lower semi-continuity of $\lambda$. Let $x\in A(c)$, and suppose that $\lambda(g_{t^*}x) > c$ for some $t^*\in [-c^{-1},c^{-1}]$. Using lower semi-continuity, let $\epsilon_1 > 0$ be chosen small enough so that $\lambda(y) > c$ whenever $y\in B(g_{t^*}x,\epsilon_1)$. By continuity of the flow, let $\epsilon_2 > 0$ be chosen such that if $y\in B(x,\epsilon_2)$, then $g_{t^*}y\in B(g_{t^*}x,\epsilon_1)$. Then $B(x,\epsilon_2)\subset A(c)$, and so $A(c)$ is open.

By \cite[Corollary 3.5]{CCESW}, $\Sing^c=\bigcup\limits_{c>0}A(c)$. Consider $K=\{x\in GS\, |\, d_{GS}(x,\Sing)\geq \delta \}$. $\{A(c)\}_{c>0}$ is an open cover for the compact set $K$. Thus, there exists a finite subcover $\{A(c_j)\}_{j=1,\ldots,k}$ for $K$. Let $c=\min\{c_1,\ldots,c_k\}$. Since $A(c_j)\subset A(c)$ for $j=1,\ldots,k$, we find that $K\subset A(c)$ and the lemma holds.
\end{proof}

Recall that $\CCC(c)$ is the set of orbit segments with $\lambda\geq c$ at their beginning and end (Definition \ref{defn: good start and end}).

\begin{definition}\label{def: Bowen property}
Given a potential $\phi : \GS\to\mathbb{R}$, we say that $\phi$ has the Bowen property on $\CCC(c)$ if there is some $\eps > 0$ for which there exists a constant $K > 0$ such that
\begin{equation*}
    \sup\left\{\left|\int_0^t \phi(g_rx) - \phi(g_ry)\,dr\right| : (x,t)\in \CCC(c) \text{ and } d_{\GS}(g_ry,g_rx) \leq \eps \text{ for } 0\leq r \leq t\right\} \leq K.
\end{equation*}
\end{definition}

\begin{proposition}\label{prop:non-uniform Bowen}
Any H\"older $\phi:G\SSS\to \mathbb{R}$ has the Bowen property on $\CCC(c)$ for all $c>0$.
\end{proposition}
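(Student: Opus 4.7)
The plan is to exploit the geometric rigidity forced by $\lambda(x), \lambda(g_tx) \geq c$: the hypothesis pins down cone points on $x$ (within controlled distance of times $0$ and $t$) at which $x$ turns with angle bounded away from $\pi$, and the shadowing condition forces $y$ to pass through these same cone points. Between them, $y$ must then coincide with a small time-translate of $x$, and Lemma~\ref{lem:exponentially close} gives exponential closeness on the interior of $[0,t]$ to which the H\"older estimate applies directly.

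First, apply Lemma~\ref{lemma: properties of lambda} at times $0$ and $t$ to locate cone points $p_0 = x(t_0)$ and $p_t = x(t_t)$ with $|t_0|, |t_t - t| \leq s_0 + \theta_0/(2c)$, at each of which $x$ turns with angle at least $cs_0$. Choose $\eps$ small enough in terms of $c$, $s_0$, $\theta_0$, and the minimum cone point spacing of $S$, so that the shadowing hypothesis $d_{GS}(g_rx, g_ry) \leq \eps$ on $[0,t]$ — combined with the $e^{2|r|}$-Lipschitz property of $g_r$ for $r$ in the bounded range relevant to $p_0$ or $p_t$, and with Lemma~\ref{lem:closeness in S and GS} — forces $y$ to pass through $p_0$ and $p_t$. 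The geometric point is that a unit-speed geodesic shadowing $x$ past a cone point where $x$ turns non-trivially has no other nearby cone point available, and cannot proceed straight through while staying close. Denote by $t_0', t_t'$ the times at which $y$ visits these cone points; the pointwise closeness gives $|t_0 - t_0'|, |t_t - t_t'| = O(\eps)$. On $[t_0', t_t']$, uniqueness of the geodesic segment between consecutive cone points yields $y(r) = x(r + \delta)$ with $\delta := t_0 - t_0'$ and $|\delta| = O(\eps)$.

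Next, split the integral as $\int_0^{t_0'} + \int_{t_0'}^{t_t'} + \int_{t_t'}^t$. The two outer pieces are over intervals of total length at most $2(s_0 + \theta_0/(2c) + O(\eps))$ and are bounded by $O(\|\phi\|_\infty / c)$. For the middle piece, add and subtract $\int_{t_0'}^{t_t'} \phi(g_{r+\delta}x)\,dr$: the change of variable $u = r+\delta$ collapses $\int_{t_0'}^{t_t'}\phi\circ g_r x\,dr - \int_{t_0'}^{t_t'}\phi\circ g_{r+\delta}x\,dr$ to two integrals over intervals of length $|\delta|$, bounded by $2|\delta|\|\phi\|_\infty = O(\eps\|\phi\|_\infty)$. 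For the remaining difference, apply Lemma~\ref{lem:exponentially close} to $y$ and $g_\delta x$, which coincide on $[t_0', t_t']$, to obtain $d_{GS}(g_r y, g_{r+\delta}x) \leq e^{-2\min\{|r-t_0'|, |r-t_t'|\}}$. H\"older continuity of $\phi$ with exponent $\alpha$ then gives an integrand bounded by $C e^{-2\alpha \min\{|r-t_0'|, |r-t_t'|\}}$, whose integral over $[t_0', t_t']$ is uniformly bounded by $C/\alpha$, independent of $t$.

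The main obstacle is the cone-point rigidity — rigorously establishing that the shadowing forces $y$ through precisely the distinguished cone points supplied by Lemma~\ref{lemma: properties of lambda}. This requires quantitatively choosing $\eps$ in terms of $c$ so as to exclude both the possibility that $y$ turns at some different nearby cone point and the possibility that $y$ fails to turn near $p_0$ or $p_t$ while $x$ turns substantially. Case~(2) of Lemma~\ref{lemma: properties of lambda}, in which the cone points may lie up to $\theta_0/(2c)$ from the endpoints and potentially just outside $[0,t]$, does not pose essential difficulty: the $e^{2|r|}$-Lipschitz bound converts $d_{GS}(x,y) \leq \eps$ into pointwise closeness $O(e^{\theta_0/c}\eps)$ at those times, which is arbitrarily small once $\eps$ is chosen small enough in terms of $c$.
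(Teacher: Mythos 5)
Your argument is correct in its essentials. The paper does not give a self-contained proof of this proposition: it simply cites \S 6 of \cite{CCESW} and remarks that the proof there (for the smaller collection $\mathcal{G}(c)$) already only uses the values of $\lambda$ at the two endpoints of the segment, so it extends verbatim to $\mathcal{C}(c)$. You have instead written out a direct geometric argument. Its content matches the heuristic the paper sketches a few paragraphs later (for the \emph{global} Bowen property, Figures~\ref{fig:CHcase2}--\ref{fig:CHcase3}): the hypotheses at the endpoints pin down cone points with definite turning angle, the shadowing geodesic is forced through the same cone points, the two geodesics agree up to a small time-shift between those cone points, and Lemma~\ref{lem:exponentially close} plus H\"older continuity give a uniformly summable estimate on the middle. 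Your restriction to $\mathcal{C}(c)$ means you only need the analogue of the paper's Figure~\ref{fig:CHcase3} configuration, which is appropriate; the ``line-of-sight'' case in Figure~\ref{fig:CHcase2} only arises in the global statement where no endpoint regularity is assumed.

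The one place your write-up is genuinely compressed rather than complete is the claim that the shadowing forces $y$ through precisely the cone points $p_0$ and $p_t$ (you flag this yourself). This is the step that requires care, and the argument needed is exactly the one the paper carries out in full in Lemma~\ref{lem:agree}: one converts $d_{GS}$-closeness at the two times flanking the cone point into $d_S$-closeness of the corresponding points in $\tilde S$ via Lemma~\ref{lem:closeness in S and GS}, and then uses the CAT(0) comparison to see that the concatenation of the segments $[\tilde y(\cdot),\tilde p]$ and $[\tilde p, \tilde y(\cdot)]$ makes angle $>\pi$ on both sides at $\tilde p$, hence is itself geodesic, hence $\tilde y$ passes through $\tilde p$. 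Provided you fill that in (or cite the paper's Lemma~\ref{lem:agree} as a black box), your proof is complete. Two further small points to be tidied in a final write-up: when $t_0'<0$ or $t_t'>t$ your integral decomposition should clip the middle interval to $[\max(t_0',0), \min(t_t',t)]$, and the short segments with $t\le 2(s_0+\theta_0/(2c))$ should be disposed of trivially before the cone-point argument begins, so that the two endpoint cone points are distinct and the interval between them is nonempty.
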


\begin{proof}
This is proved in \S6 of \cite{CCESW}. In that paper, we prove the Bowen property for a collection of orbit segments $\mathcal{G}(c)$ which is smaller than $\CCC(c)$; segments in $\mathcal{G}(c)$ have average value of $\lambda$ greater than $c$ over any initial and terminal segment, not just at times 0 and $t$. However, the proof only uses the values of $\lambda$ at $x$ and $g_t x$, and hence applies to $\CCC(c).$
\end{proof}

We now update this non-uniform result to the global Bowen property under the additional assumption that $\phi$ is locally constant on a neighborhood of $\Sing$.

\begin{proposition}\label{proposition: global Bowen property}
If $\lambda$ is lower semicontinuous, $\phi$ is locally constant on $B(\Sing,\delta)$ for some $\delta>0$ and has the Bowen property on $\CCC(c)$ for all $c > 0$, then $\phi$ has the global Bowen property.
\end{proposition}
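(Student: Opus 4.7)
\emph{Setup.} The hypothesis that $\phi$ is locally constant on $B(\Sing,\delta)$ combined with compactness of $\Sing$ yields (via a Lebesgue-number argument applied to a finite cover of $\overline{B(\Sing,\delta/2)}$ by local-constancy balls) some $\delta_0\in(0,\delta)$ such that $\phi(p)=\phi(q)$ whenever $p,q\in B(\Sing,\delta)$ and $d_{GS}(p,q)<\delta_0$. Apply Lemma~\ref{lemma: nbhd of Sing} with threshold $\delta_0$ to obtain $c>0$ and $T>0$ so that $\lambda(g_rx)\leq c$ throughout $[s-T,s+T]$ forces $g_sx\in B(\Sing,\delta_0)$. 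Let $\eps_B>0$ and $K_0>0$ be the constants in the Bowen property on $\CCC(c)$ supplied by the hypothesis, and set the candidate global Bowen radius to be $\eps:=\min(\eps_B,\delta_0)$.

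\emph{Decomposition.} Fix arbitrary $(x,t)$ and $y$ with $d_{GS}(g_rx,g_ry)\leq\eps$ for $r\in[0,t]$, and consider the set $J:=\{r\in[0,t]:\lambda(g_rx)>c\}$, which is open in $[0,t]$ by lower semicontinuity of $\lambda$. When $J=\emptyset$, the Lemma gives $g_sx\in B(\Sing,\delta_0)$ for every $s\in[T,t-T]$; since $d(g_sx,g_sy)\leq\eps<\delta_0$ and $g_sy\in B(\Sing,2\delta_0)\subseteq B(\Sing,\delta)$, the choice of $\delta_0$ yields $\phi(g_sx)=\phi(g_sy)$, so only the two end-slabs of length $T$ contribute, giving at most $4T\|\phi\|_\infty$. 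When $J\neq\emptyset$, let $A=\inf J$ and $B=\sup J$, and pick $a^*\in J\cap[A,A+1]$ and $b^*\in J\cap[B-1,B]$ with $a^*\leq b^*$ (taking $a^*=b^*$ equal to any element of $J$ if $B-A<2$). Then split
\[ \int_0^t(\phi\circ g_r x-\phi\circ g_r y)\,dr=\int_0^{a^*}+\int_{a^*}^{b^*}+\int_{b^*}^t. \]

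\emph{Estimating each piece.} Since $\lambda(g_{a^*}x),\lambda(g_{b^*}x)>c$, the middle segment $(g_{a^*}x,b^*-a^*)$ lies in $\CCC(c)$, and the Bowen property on $\CCC(c)$ bounds its contribution by $K_0$ (using $\eps\leq\eps_B$). On $[0,a^*]$, every $r\in[0,A)$ satisfies $\lambda(g_rx)\leq c$, so for $s\in[T,A-T-1]$ the closed interval $[s-T,s+T]\subseteq[0,A-1]$ lies in $\{\lambda\leq c\}$; Lemma~\ref{lemma: nbhd of Sing} then gives $g_sx\in B(\Sing,\delta_0)$, and the same argument as in the previous paragraph yields $\phi(g_sx)=\phi(g_sy)$ on this set. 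The leftover portion of $[0,a^*]$, namely $[0,T]\cup[A-T-1,a^*]$, has total length at most $2T+2$, so contributes at most $(4T+4)\|\phi\|_\infty$; the symmetric analysis handles $[b^*,t]$. Summing all pieces yields a uniform bound $K\leq K_0+(8T+8)\|\phi\|_\infty$ independent of $(x,y,t)$.

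\emph{Main obstacle.} The one delicate point is that lower semicontinuity of $\lambda$ makes $\{\lambda\leq c\}$ closed but allows $\{\lambda\geq c\}$ to be merely $G_\delta$, so the infimum and supremum of the ``good-time'' set need not be attained. Working throughout with the open superlevel set $\{\lambda>c\}$ and allowing a unit-length slack when selecting $a^*,b^*$ circumvents this cleanly, with the slack absorbed into the $O(T\|\phi\|_\infty)$ boundary term. Beyond this, the proof is a matter of packaging: the Bowen property on $\CCC(c)$ handles the hyperbolic middle, the local constancy of $\phi$ near $\Sing$ handles the two flanks, and Lemma~\ref{lemma: nbhd of Sing} is the geometric bridge asserting that long orbit arcs with small $\lambda$ must live near $\Sing$.
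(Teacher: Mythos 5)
Your proof follows essentially the same route as the paper's: apply Lemma~\ref{lemma: nbhd of Sing} to get $c$ and $T$, then split $[0,t]$ into a middle arc whose endpoints have $\lambda > c$ (handled by the Bowen property on $\CCC(c)$) and two flank arcs where $\lambda$ stays small (handled, via Lemma~\ref{lemma: nbhd of Sing}, by local constancy of $\phi$ near $\Sing$). The only differences are technical refinements that make your argument slightly more careful than the paper's: you work with the open set $\{\lambda > c\}$ and allow a unit slack when choosing $a^*, b^*$, sidestepping the fact that $\min/\max$ of $\{s : \lambda(g_sx)\ge c\}$ need not exist for a merely lower semicontinuous $\lambda$ (the paper asserts them without comment), and you convert ``locally constant'' into a quantitative statement via a Lebesgue-number argument, where the paper silently reads it as ``constant.'' Neither change alters the structure of the argument.
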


\begin{proof}
Suppose that $\phi$ is constant on $B(\Sing,\delta)$ for some $\delta > 0$. Let $c$ and $T$ be as in Lemma~\ref{lemma: nbhd of Sing} for $\frac{\delta}{2}$. Furthermore, assume $\phi$ has the Bowen property on $\CCC(c)$ for all $\epsilon < \epsilon_0$. Let $x\in G\SSS$, let $\epsilon < \min\{\delta/2, \epsilon_0\}$, and let $t > 0$. Suppose that $d_{G\SSS}(g_sy,g_sx)\leq \epsilon$ for $0\leq s\leq t$.

We have four cases to consider, beginning with a trivial one.

\

\noindent\textbf{Case 0:} $t\leq 2T$.

Trivially,
\[ \left| \int_0^t \phi(g_sx) - \phi(g_sy)\right| \leq 4T\Vert\phi\Vert,\]
where $\Vert\phi\Vert$ is the maximum value of $|\phi|.$ 

\

Henceforth we can assume $t>2T$.

\

\noindent\textbf{Case 1:} $\lambda(g_sx)<c$ for all $s\in [0,t]$.

By Lemma \ref{lemma: nbhd of Sing}, for $s\in [T, t-T]$, we have $g_sx\in B(\Sing,\delta/2)$. Since $y\in B_t(x,\epsilon)$ and $\epsilon<\delta/2$, $g_sy\in B(\Sing,\delta)$ for $s\in [T, t-T].$ Then as $\phi$ is locally constant on $B(\Sing,\delta)$,
\[ \left| \int_0^t \phi(g_sx) - \phi(g_sy) ds\right| \leq 4T\Vert\phi\Vert\]
as $\left|\int_T^{t-T}\phi(g_sx) - \phi(g_sy) ds\right|=0$. 

\

Henceforth, we can assume $\lambda$ takes on a value greater than or equal to $c$ at some point along the orbit segment $(x,t)$. Let $t_1 = \min\{s\geq 0\mid \lambda(g_sx) \geq c\}$ and $t_2 = \max\{s\leq t\mid \lambda(g_sx)\geq c\}$. If $t_1=t_2$, then the simpler version of the estimates below work, so we assume $t_2>t_1$. Then $(g_{t_1}x,t_2 - t_1) \in \CCC(c)$. Furthermore, for all $s\in [0,t_1]\cup[t_2,t]$, we have that $\lambda(g_sx) \leq c$. The proof is completed by applying our final two cases to the intervals $[0,t_1]$ and $[t_2, t]$ as appropriate.

\

\noindent\textbf{Case 2:} $t_1\leq2T$ and $t-t_2\leq 2T$.

We present the situation where both of these inequalities hold. Applying the non-uniform Bowen property (Prop \ref{prop:non-uniform Bowen}) for $(g_{t_1}x,t_2-t_1)$, we have
\begin{align*} 
    \left| \int_0^t \phi(g_sx) - \phi(g_sy) ds\right| & \leq \left| \int_0^{t_1} \phi(g_sx) - \phi(g_sy) ds\right| +\left| \int_{t_1}^{t_2} \phi(g_sx) - \phi(g_sy) ds\right|  + \left| \int_{t_2}^t \phi(g_sx) - \phi(g_sy) ds\right| \\
    &\leq 4T\Vert\phi\Vert + K + 4T\Vert\phi\Vert,
\end{align*}
where $K$ is given by the non-uniform Bowen property.

\

\noindent\textbf{Case 3:} $t_1>2T$ or $t-t_2>2T$.

We present the argument in a situation where both of these hold as, otherwise, we can use an estimate as in Case 0 for $\left| \int_0^{t_1} \phi(g_sx) - \phi(g_sy) ds\right|$ or $\left| \int_{t_2}^t \phi(g_sx) - \phi(g_sy) ds\right|$.
By Lemma~\ref{lemma: nbhd of Sing}, for $s\in [T,t_1 - T]$, we have $g_sx\in B(\Sing,\frac{\delta}{2})$, and similarly for $s\in [t_2 + T, t - T]$. Then if $y\in B_t(x,\epsilon)$, because $\epsilon < \delta/2$, it follows that $g_sy\in B(\Sing,\delta)$ for $s\in [T,t_1 - T]$ and $s\in [t_2 + T, t - T]$. Therefore, because $\phi$ is locally constant on $B(\Sing,\delta)$, we have that
$$\left|\int_{0}^{t_1}\phi(g_sx) - \phi(g_sy)\,ds\right| \leq \left|\int_{0}^{T}\phi(g_sx) - \phi(g_sy)\,ds\right| + 0 + \left|\int_{t_1 - T}^{t_1}\phi(g_sx) - \phi(g_sy)\,ds\right| \leq 4T\lVert\phi\rVert.$$
Similarly, we can bound
$$\left|\int_{t_2}^{t}\phi(g_sx) - \phi(g_sy)\,ds\right| \leq 4T\lVert\phi\rVert.$$
Now, since $y\in B_t(x,\epsilon)$, $g_{t_1}y\in B_{t_2 - t_1}(g_{t_1}x,\epsilon)$. By Prop \ref{prop:non-uniform Bowen}, there exists $K > 0$ (independent of $x,y,t_1,t_2$) such that
$$K\geq \left|\int_{0}^{t_1 - t_2}\phi(g_s(g_{t_1}x)) - \phi(g_s(g_{t_1}y))\,ds\right| = \left|\int_{t_1}^{t_2} \phi(g_sx) - \phi(g_sy)\,ds\right|.$$
Combining this with our previous inequalities, we have
\begin{align*}
\left|\int_{0}^{t}\phi(g_sx) - \phi(g_sy)\,ds\right| &\leq \left|\int_{0}^{t_1}\phi(g_sx) - \phi(g_sy)\,ds\right| + \left|\int_{t_1}^{t_2}\phi(g_sx) - \phi(g_sy)\,ds\right| + \left|\int_{t_2}^{t}\phi(g_sx) - \phi(g_sy)\,ds\right|
\\
&\leq 4T\lVert\phi\rVert + K + 4T\lVert\phi\rVert.
\end{align*}
As our choice of $T$ and $K$ were independent of $x$, $t$, and $y$, this implies the Bowen property.
\end{proof}

%
\subsection{Upper and lower Gibbs properties}\label{subsec:Gibbs}

\subsubsection{Lower Gibbs}\label{sec:lower gibbs}

One of the results of \cite{Climenhaga-Thompson} shows that for any system satisfying the conditions of \cite[Theorem A]{Climenhaga-Thompson}, the unique equilibrium state has a type of lower Gibbs property. In \cite{CCESW}, we showed that in the setting of Theorems \ref{mainthm:LPS} and \ref{mainthm:Bernoulli} with $\phi$ H\"older continuous and locally constant on $\Sing$, $(\GS,g_t,\mu,\phi)$ satisfy conditions sufficient to apply \cite{Climenhaga-Thompson}. In particular, we show that the set of orbit segments $\CCC(c) := \{(x,t)\mid \lambda(x),\lambda(g_tx)\geq c\}$ satisfies the specification property in the proof of \cite[\S 4 \text{ and }\S 5]{CCESW}. Consequently, we have the following corollary.

\begin{corollary}\label{cor: unif Lower Gibbs}
Let $(\GS, \mu, g_t)$ be as in Theorems \ref{mainthm:LPS} and \ref{mainthm:Bernoulli}, and let $\phi$ be H\"older continuous and locally constant on $\Sing$. Then $\mu$ has the following lower Gibbs property for a collection of orbit segments.

Let $\rho,\eta>0$. If $\{(x_i, t_i)\}$ is a collection of orbit segments in $\GS\times [0,\infty)$ such that for all $i$, $\lambda(x_i), \lambda(g_{t_i} x_i)\geq \eta$, then there exists a constant $Q_2(\rho, \eta)$ depending only on $\rho$ and $\eta$ such that for all $i$,
\[ \mu(B_{t_i}(x_i, \rho)) \geq Q_2(\rho,\eta) e^{-t_i P(\phi) + \int_{0}^{t_i}\phi(g_sx_i)\,ds}.\]
\end{corollary}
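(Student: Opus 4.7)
The plan is to invoke the Climenhaga--Thompson lower Gibbs bound directly, leveraging the verifications carried out in \cite{CCESW}. Recall that the $\lambda$-decomposition machinery developed there places $(\GS, g_t, \phi)$ squarely within the framework of \cite{Climenhaga-Thompson}: the collection $\CCC(c)$ satisfies the specification property (shown in \cite[\S4,\S5]{CCESW}), the Bowen property holds on $\CCC(c)$ by Proposition~\ref{prop:non-uniform Bowen} (and even globally by Proposition~\ref{proposition: global Bowen property}), and the constants controlling both depend only on $c$ and the intrinsic geometry of $\SSS$. Existence of the unique equilibrium state $\mu$, together with the full Climenhaga--Thompson apparatus, is therefore available.

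The first step is to fix $c = \eta$. Because each hypothesized segment satisfies $\lambda(x_i), \lambda(g_{t_i}x_i) \geq \eta$, we have $(x_i, t_i) \in \CCC(\eta)$ with no prefix or suffix piece required in the decomposition. Next I would quote the standard lower Gibbs inequality that underlies the Climenhaga--Thompson proof of uniqueness: for any orbit segment in a collection enjoying both specification (with gap $\tau$) and the Bowen property (with constant $K$ at some scale $\eps$), the equilibrium state satisfies
\[ \mu(B_t(x, \rho)) \;\geq\; Q\, e^{-tP(\phi) + \Phi(x,t)} \]
for any $\rho > 0$, with $Q$ determined by $\rho$, $\tau$, $K$, and $\eps$. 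Applied to $(x_i, t_i) \in \CCC(\eta)$ at scale $\rho$, this gives precisely the form of the stated bound.

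The only step requiring care is tracking the dependence of constants. Since both the specification gap and the Bowen constant for $\CCC(\eta)$ depend only on $\eta$ and $\SSS$, the resulting $Q$ can be written as $Q_2(\rho, \eta)$, uniform over the family $\{(x_i, t_i)\}$. The main obstacle I anticipate is essentially bookkeeping: one must check that invoking the Climenhaga--Thompson lower bound for $\CCC(\eta)$ specifically (rather than for the reference collection in the existence argument, which uses a smaller threshold) does not introduce spurious dependence on other parameters. This is a matter of reading off the constants from the relevant proposition in \cite{Climenhaga-Thompson} and confirming that they factor through specification/Bowen data alone. Once that is done, the corollary follows immediately, and no new dynamical input is needed beyond what \cite{CCESW} already provides.
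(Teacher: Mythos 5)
Your proposal takes essentially the same route as the paper: the corollary is presented there as an immediate consequence of the Climenhaga--Thompson lower Gibbs bound, applied using the fact (from \cite{CCESW}) that $\CCC(c)$ satisfies specification at all scales and that $\phi$ has the Bowen property on $\CCC(c)$, with constants depending only on $c$. Your parameter bookkeeping — setting $c=\eta$, noting $(x_i,t_i)\in\CCC(\eta)$, and tracking that the specification/Bowen constants depend only on $\eta$ so that $Q_2$ depends only on $(\rho,\eta)$ — is exactly the reasoning the paper leaves implicit.
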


This gives us the following useful result.

\begin{corollary}\label{cor: zero measure Sing}
Let $(\GS, \mu, g_t)$ be as in Theorems \ref{mainthm:LPS} and \ref{mainthm:Bernoulli}. Then $\mu(\lambda^{-1}(0)) = 0$. In particular, $\mu(\Sing) = 0$.
\end{corollary}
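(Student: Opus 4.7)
The plan is to prove $\mu(\lambda^{-1}(0)) = 0$ in two stages: first show that $\mu$ assigns no mass to the invariant set $\Sing$ by combining ergodicity of $\mu$ with the pressure gap from \cite{CCESW}, then exploit the structural dichotomy for $\lambda$-null orbits given by \cite[Prop.~3.4]{CCESW} together with Poincar\'e recurrence to push this bound up to the non-invariant set $\lambda^{-1}(0)$.

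For the first stage, $\mu$ has the $K$-property by \cite[Theorem~A]{CCESW}, hence is ergodic. Since $\Sing$ is closed and $g_t$-invariant, $\mu(\Sing) \in \{0,1\}$. If $\mu(\Sing) = 1$, then $\mu$ is supported on $\Sing$ and the variational principle applied to $g_t|_{\Sing}$ yields $h_\mu(g) + \int \phi\,d\mu \leq P(\Sing,\phi)$. But \cite[Theorem~B]{CCESW} provides the pressure gap $P(\Sing,\phi) < P(\phi)$, contradicting the fact that $\mu$ is an equilibrium state for $\phi$.

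For the second stage, set $A := \lambda^{-1}(0)$, which is closed since $\lambda$ is lower semicontinuous. By \cite[Prop.~3.4]{CCESW}, every $x \in A$ satisfies $\lambda(g_t x) = 0$ for all $t \leq 0$ or $\lambda(g_t x) = 0$ for all $t \geq 0$. Accordingly introduce the $g_t$-invariant sets
\[ \tilde A^{\pm} := \{\, x \in \GS : \lambda(g_t x) = 0 \text{ for all sufficiently large } \pm t \,\}, \]
and observe that $A \subset \tilde A^{+} \cup \tilde A^{-}$. It therefore suffices to show $\mu(\tilde A^{+}) = 0$, the case of $\tilde A^{-}$ being symmetric. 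By ergodicity $\mu(\tilde A^{+}) \in \{0,1\}$; assume for contradiction $\mu(\tilde A^{+}) = 1$. Applying Poincar\'e recurrence, $\mu$-a.e.\ $x$ admits a sequence $t_n \to \infty$ with $g_{t_n} x \to x$; for $x \in \tilde A^{+}$ and $n$ large we have $g_{t_n} x \in A$, and closedness of $A$ forces $x \in A$. Hence $\mu(A) = 1$ and $\operatorname{supp}(\mu) \subset A$; since $\operatorname{supp}(\mu)$ is $g_t$-invariant it lies in the maximal invariant subset $\bigcap_{t\in\mathbb{R}} g_t A = \Sing$, giving $\mu(\Sing) = 1$ and contradicting the first stage.

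The main subtlety is bridging from the invariant set $\Sing$ to the non-invariant set $\lambda^{-1}(0)$, and \cite[Prop.~3.4]{CCESW} is exactly the tool that does this by trapping every $\lambda$-null orbit inside $A$ on at least one side. Once that dichotomy is in hand, the rest is routine ergodic theory, and we note that the lower Gibbs property of Corollary~\ref{cor: unif Lower Gibbs} plays no direct role in the argument.
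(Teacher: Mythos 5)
Your proof is correct, but it takes a genuinely different route from the paper's. You first establish $\mu(\Sing)=0$ via the pressure gap ($P(\Sing,\phi)<P(\phi)$ from \cite[Theorem~B]{CCESW}, combined with the variational principle), and then lift this to $\mu(\lambda^{-1}(0))=0$ using the dichotomy of \cite[Prop.~3.4]{CCESW}, Poincar\'e recurrence, closedness of $\lambda^{-1}(0)$, and the identity $\bigcap_t g_t\lambda^{-1}(0)=\Sing$. The paper instead argues in the reverse order: it uses the lower Gibbs property (Corollary~\ref{cor: unif Lower Gibbs}) to exhibit a positive-measure ball inside $\{\lambda>0\}$ (via lower semicontinuity of $\lambda$), then uses ergodicity to force a.e.\ orbit to visit that ball infinitely often in both time directions, and finally invokes the same \cite[Prop.~3.4]{CCESW} dichotomy to conclude $\mu(\lambda^{-1}(0))=0$, with $\mu(\Sing)=0$ as an afterthought since $\Sing\subset\lambda^{-1}(0)$. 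Both arguments rely on ergodicity and \cite[Prop.~3.4]{CCESW}; the difference lies in the mechanism that prevents $\mu$ from concentrating where $\lambda$ vanishes. Your version leans on thermodynamic input (the pressure gap) and is arguably more portable to settings without a Gibbs estimate, while the paper's version is more direct and also highlights the lower Gibbs property, which is used heavily elsewhere in Section~\ref{subsec:lps}; your closing observation that Corollary~\ref{cor: unif Lower Gibbs} is not needed for this corollary is accurate for your route but not for the paper's.
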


\begin{proof}
Let $x\in \GS$ be such that $\lambda(x) > 0$. Then, because $\lambda$ is lower semi-continuous, there exists a ball $B(x,\rho)$ such that $\lambda|_{B(x,\rho)} > 0$.  As the lower Gibbs property tells us that $\mu(B(x,\rho)) > 0$, this implies that $\mu$-a.e. $y\in \GS$ visits $B(x,\rho)$ infinitely often in both forwards and backwards time. Thus, for almost every $y$, $\lambda(y) \neq 0$, as if $\lambda(y) = 0$, then $\lambda(g_ty) = 0$ either for all $t\geq 0$ or all $t \leq 0$ \cite[Proposition 3.4]{CCESW}. As $\Sing \subset \lambda^{-1}(0)$, it follows that $\mu(\Sing) = 0$ as well.
\end{proof}

%

\subsubsection{Upper Gibbs}\label{sec:upper gibbs}

The upper Gibbs bound is improved from \cite{Climenhaga-Thompson} by our proof that $\phi$ has the Bowen property globally.

\begin{proposition}\label{prop: Upper Gibbs}
Let $(\GS, \mu, g_t)$ be as in Theorems \ref{mainthm:LPS} and \ref{mainthm:Bernoulli}. Then, $\mu$ has the upper Gibbs property. That is, for all $\rho > 0$ sufficiently small, there exists $Q := Q(\rho) > 0$ such that 
$$\mu(B_t(x,\rho)) \leq Qe^{-tP(\phi) + \int_{0}^{t}\phi(g_sx)\,ds}.$$
\end{proposition}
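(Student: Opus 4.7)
The plan is to invoke the upper Gibbs estimate coming out of the Climenhaga--Thompson framework (whose hypotheses for $(\GS, g_t, \phi)$ were verified in \cite{CCESW}) and then use the global Bowen property (Proposition~\ref{proposition: global Bowen property}) to eliminate the residual dependence on the choice of representative point in the Bowen ball.

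First, I would recall the standard pressure-covering argument: for any sufficiently small $\rho$, there is a constant $Q_1 = Q_1(\rho) > 0$ such that for every $(x,t) \in \GS \times [0,\infty)$,
\[
\mu(B_t(x,\rho)) \leq Q_1(\rho) \exp\!\Bigl(-tP(\phi) + \sup_{y \in B_t(x,\rho)} \Phi(y,t)\Bigr).
\]
This step requires no Bowen property; it only uses that $P(\phi)$ is the topological pressure computed via $(t,\rho)$-separated sets, together with a Vitali-type covering of $\GS$ by Bowen balls, as in \cite{Climenhaga-Thompson}. Given the framework setup established in \cite{CCESW} this can be cited essentially as a black box.

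Second, let $\eps>0$ be the scale at which Proposition~\ref{proposition: global Bowen property} provides the global Bowen constant $K$, and choose $\rho < \eps/2$ (or whatever comparable scale is needed so that the relevant shadowing for the Bowen property is satisfied on $B_t(x,\rho)$). Then every $y \in B_t(x,\rho)$ satisfies $d_{\GS}(g_r x, g_r y) \leq \eps$ for all $0 \leq r \leq t$, and the global Bowen property gives
\[
\bigl|\Phi(y,t) - \Phi(x,t)\bigr| \leq K
\]
uniformly in $(x,t)$ and $y$. Combining with the previous inequality and setting $Q(\rho) := Q_1(\rho) e^K$ yields the desired uniform bound.

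The main point (and the reason the global Bowen property matters here) is that without Proposition~\ref{proposition: global Bowen property}, the same route would only deliver the conclusion for $(x,t) \in \CCC(c)$, with a constant depending on $c$ and blowing up as $c \to 0$, since the Bowen-type control on $\Phi$ would only be available at the good endpoints. Proposition~\ref{proposition: global Bowen property} is precisely what allows us to absorb $\sup_{y \in B_t(x,\rho)} \Phi(y,t)$ into $\Phi(x,t) + O(1)$ uniformly in $(x,t)$, regardless of how close $x$ or $g_t x$ is to $\Sing$. I do not anticipate any serious obstacle: the only delicate part is matching the scale $\rho$ in the pressure-covering argument to the scale $\eps$ of the Bowen property, and both thresholds depend only on the geometry of $\SSS$.
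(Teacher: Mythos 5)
Your proposal matches the paper's proof: both apply the Climenhaga--Thompson upper bound (the paper cites \cite[Proposition 4.21]{Climenhaga-Thompson}, noting that specification at all scales from \cite[\S5]{CCESW} supplies the needed hypothesis) to get $\mu(B_t(x,\rho)) \leq Q' e^{-tP(\phi) + \sup_{y\in B_t(x,\rho)}\Phi(y,t)}$, and then invoke Proposition~\ref{proposition: global Bowen property} to replace the supremum by $\Phi(x,t) + K$. Your remarks about scale-matching and why the \emph{global} Bowen property is what removes the dependence on $\CCC(c)$ are accurate and reflect the same reasoning implicit in the paper's remark following the proposition.
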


\begin{remark*}
We note that the upper bound on $\rho$ is less than half of the injectivity radius of $S$ (see \cite[Lemmas 2.16 and 2.17]{CCESW}) and small enough that $\phi$ has the Bowen property.
\end{remark*}

\begin{proof}
Because in our setting we have specification at all scales \cite[\S5]{CCESW}, applying \cite[Proposition 4.21]{Climenhaga-Thompson}, for all sufficiently small $\rho > 0$, there exists $Q' > 0$ such that
$$\mu(B_t(x,\rho)) \leq Q'e^{-tP(\phi) + \sup\limits_{y\in B_t(x,\rho)}\int_{0}^{t}\phi(g_sy)\,ds(y,t)}.$$
Since $\phi$ has the Bowen property at scale $\rho$ by Proposition \ref{proposition: global Bowen property}, we have that
$$\sup_{y\in B_t(x,\rho)}\int_{0}^{t}\phi(g_sy)\,ds(y,t) \leq K + \int_{0}^{t}\phi(g_sx)\,ds(x,t).$$
Thus, it follows that
$$\mu(B_t(x,\rho)) \leq Q'e^{K}e^{-tP(\phi) + \int_{0}^{t}\phi(g_sx)\,ds(x,t)}.$$
\end{proof}

%
\subsection{Local product structure for equilibrium states}\label{subsec:lps}

A key tool in our proof of the Bernoulli property for $(\GS, g_t, \mu)$ is local product structure of the equilibrium measure $\mu$. Roughly speaking, the stable, unstable, and flow directions provide a \emph{topological} product structure near any regular geodesic. The equilibrium state $\mu$ has local product structure when it is absolutely continuous with respect to a product measure built using this topological local product structure (see Definition \ref{defn:lps} for the precise definition).

As usual, the non-uniformly hyperbolic nature of our system only allows us to prove this structure near regular geodesics, with the constants involved depending on the level of regularity.

Our approach to the proof follows that of \cite{ClimenhagaGibbs}, in which Climenhaga shows that the Gibbs property can be used to deduce local product structure for Anosov diffeomorphisms. Our modifications to Climenhaga's scheme revolve around adjusting the argument to flows instead of maps (replacing the stable leaves with center-stable leaves and using the invariance of $\mu$ under the flow) and (a much more significant change) accounting for the non-uniformity of our lower Gibbs bound (Corollary \ref{cor: unif Lower Gibbs}).

Sections \ref{sec: Partitions} and \ref{subsec: relations bowen} provide, respectively, a technical construction of special partitions related to the Bowen balls, and preliminary results on Bowen balls and their relation to the topological local product structure. Section \ref{sec:lps} is the heart of the matter, dedicated to proving Proposition \ref{prop: LPS} and Corollary \ref{cor: ae lps}. The local product structure of $\mu$ follows from these.

%

\subsubsection{Partition Construction}\label{sec: Partitions}

Since the construction below might be of the independent interest, we formulate it in a considerably more general setting. Let $(X,d)$ be a compact metric space with the metric $d$, and let $\mathcal F=(f_t)_{t\in\mathbb R}\colon X\rightarrow X$ be a continuous flow. In this section, we construct an ``almost adapted'' partition of a compact subset of $X$ with a special property (see Proposition~\ref{prop: partition construction}). We call this partition almost adapted as each element of the partition contains and is contained in a Bowen ball (see \cite[Definition 4.1]{ClimenhagaGibbs} for the definition of adapted partitions). However, in our case the time parameter of the Bowen balls depends on the element of the partition. We begin with a lemma that we will eventually use to construct our partitions.

\begin{lemma}\label{lemma: disjoint balls}
Suppose that a set $A$ has the following separation property: for each $x\in A$, there exists $t_x > 0$ such that for any $x\neq y\in A$, $\max\{d(f_tx,f_ty)\,|\,t\in[0,\min\{t_x,t_y\}]\}\geq\eps$. Then the collection $\{B_{t_x}(x,\frac{\eps}{2})\}_{x\in A}$ is pairwise disjoint.
\end{lemma}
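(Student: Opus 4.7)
The plan is a short proof by contradiction using only the triangle inequality along orbit segments. Assume for contradiction that some distinct $x,y\in A$ satisfy $B_{t_x}(x,\eps/2)\cap B_{t_y}(y,\eps/2)\neq\emptyset$, and pick $z$ in this intersection. The key observation is that both Bowen balls enforce closeness of the orbit of $z$ to the orbits of $x$ and of $y$ on the common interval $I=[0,\min\{t_x,t_y\}]$ (since this interval is contained in each of $[0,t_x]$ and $[0,t_y]$).

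Next, for every $t\in I$, I would apply the triangle inequality in $X$ to obtain
\[
d(f_tx,f_ty)\leq d(f_tx,f_tz)+d(f_tz,f_ty)\leq \tfrac{\eps}{2}+\tfrac{\eps}{2}=\eps.
\]
Taking the supremum over $t\in I$ then yields $\max\{d(f_tx,f_ty):t\in I\}\leq \eps$, which contradicts the hypothesized separation property that guarantees this maximum is at least $\eps$ (interpreting the bound strictly, as is standard when comparing closed Bowen balls of radius $\eps/2$ against an $\eps$-separation condition; equivalently, one may shrink the Bowen ball radius infinitesimally or read the separation as strict). Hence no such $z$ exists, and the collection is pairwise disjoint.

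There is essentially no substantive obstacle here: the content is purely the two-step use of the triangle inequality together with the observation that the smaller of the two Bowen times governs both constraints simultaneously. The only mild care needed is the closed-versus-open issue at the boundary value $d=\eps$, which is the reason the separation is posed with $\eps$ while the Bowen radii are $\eps/2$; this allows the contradiction to come through cleanly as soon as one interprets one of the two inequalities strictly.
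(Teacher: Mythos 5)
Your proof is correct and takes essentially the same route as the paper's: assume a common point $z$, apply the triangle inequality over the interval $[0,\min\{t_x,t_y\}]$, and contradict the separation hypothesis. You also rightly flag the closed-versus-open issue at $d=\eps$, which the paper's displayed inequality glosses over by writing a strict $<$ in the last step.
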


\begin{proof}
Let $x\neq y\in A$. Without loss of generality, assume $t_x\leq t_y$. Towards a contradiction, suppose $z\in B_{t_x}(x,\frac{\eps}{2})\cap B_{t_y}(y,\frac{\eps}{2})$. Then, we have 
\begin{align*}
\eps\leq\max\{d(f_tx,f_ty)\,|\,t\in[0,t_x]\}&\leq \max\{d(f_tx,f_tz)\,|\,t\in[0,t_x]\}+\max\{d(f_tz,f_ty)\,|\,t\in[0,t_x]\}\\&\leq \max\{d(f_tx,f_tz)\,|\,t\in[0,t_x]\}+\max\{d(f_tz,f_ty)\,|\,t\in[0,t_y]\}\\&<\frac{\eps}{2}+\frac{\eps}{2}=\eps,
\end{align*}
which is not possible. Therefore, $B_{t_x}\left(x,\frac{\eps}{2}\right)\cap B_{t_y}\left(y,\frac{\eps}{2}\right)=\emptyset$.
\end{proof}

\begin{definition}
    Let $A\subset X$. For $T,\eps>0$ and $E\subset A$, we say that $E$ is a \emph{$(T,\eps)$-separated subset of $A$} if $\max\{d(f_tx,f_ty)\,|\,t\in[0,T]\}\geq \eps$ for all distinct $x,y\in E$.
\end{definition}

\begin{proposition}\label{prop: partition construction}
	Consider a compact subset $K$ of $X$. Let $\lambda : X\to [0,\infty)$ be a non-negative function and assume $\lambda \circ f_t: K\to [0,\infty)$ is a continuous function for all $t\in \mathbb{R}$. Let $T_0,c > 0$. Further, suppose there exists $\kappa > 0$ such that for all $x\in K$, $\lambda(f_{T_0 + n\kappa + r}x)\geq c$ for all $|r|\leq \kappa$ for infinitely many $n\in\mathbb{N}$. Then for all $\eps>0$, $m\in\mathbb N$ there exists a finite partition $\xi_m$ of $K$ such that for each $A\in \xi_m$, there exist $x\in K$ and $t\geq T_0+m\kappa$ with $\lambda(f_{t+r}x)\geq c$ for $r\in [-\kappa,\kappa]$ and
	$$K\cap B_{t}\left(x,\frac{\eps}{2}\right) \subset A\subset K\cap B_t(x,\eps).$$
	Further, for all $n\in\mathbb{N}$, we can choose $(x,t)$ so that $\lambda(f_{T_0 + r_i\kappa} x)\geq c$ for $r_1 < r_2 < \cdots < r_n$ with $r_{i+1}-r_i\geq 2$ and $t \geq T_0 + r_n\kappa$.
\end{proposition}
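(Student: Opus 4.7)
The plan is to adapt the classical adapted-partition construction (\emph{cf.}~\cite[Def.~4.1]{ClimenhagaGibbs}) to the present setting, where each partition element carries its own time parameter. I proceed in four steps. First, for each $x \in K$ I use the hypothesis to choose a witness time $t_x = T_0 + n_x\kappa$ with $n_x \geq m$ and $\lambda(f_{t_x+r}x) \geq c$ for all $|r| \leq \kappa$; for the ``further'' clause, since each $x$ admits infinitely many such good $n$, I may insist $t_x$ be large enough that $n$ of them already lie in $[T_0,t_x]$. Second, joint continuity of the flow on compact time intervals implies $B_{t_x}(x,\eps/2)$ is open in $X$, so $\{B_{t_x}(x,\eps/2)\cap K\}_{x\in K}$ is an open cover of the compact set $K$; I extract a finite subcover with centers $x_1,\ldots,x_N$ and times $t_1,\ldots,t_N$.

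Third, I prune to a Bowen-separated subfamily using Lemma~\ref{lemma: disjoint balls}. Order the centers so that $t_1 \leq t_2 \leq \cdots \leq t_N$ and iteratively form $E$ by including $x_j\in E$ iff for every previously included $x_i$ (which therefore has $t_i\leq t_j$) one has $\max\{d(f_sx_i,f_sx_j):s\in[0,t_i]\}\geq\eps$. Then Lemma~\ref{lemma: disjoint balls} guarantees that $\{B_{t_i}(x_i,\eps/2)\}_{x_i\in E}$ is pairwise disjoint, while for each omitted $x_j$ there exists $x_i\in E$ with $t_i\leq t_j$ and $x_j\in B_{t_i}(x_i,\eps)$ (by failure of the selection criterion). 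Fourth, I assemble the partition by first placing each $y\in B_{t_i}(x_i,\eps/2)\cap K$ into $A_i$ (unambiguous by the disjointness), then sending any remaining $y\in K$ to some $A_i$ with $y\in B_{t_i}(x_i,\eps)$: such an $i$ exists because $y$ lies in some $B_{t_j}(x_j,\eps/2)$ from the finite subcover, and if $x_j\notin E$ then the matched $x_i\in E$ absorbs a suitable Bowen ball around $x_j$ into the $\eps$-ball around $x_i$ via the triangle inequality on $[0,t_i]$. The containments $B_{t_i}(x_i,\eps/2)\cap K\subset A_i\subset B_{t_i}(x_i,\eps)\cap K$ then follow, and finiteness holds since $E\subset\{x_1,\ldots,x_N\}$.

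The main obstacle will be a constants-balancing act in Steps~3--4. Disjointness of $\{B_{t_i}(x_i,\eps/2)\}_{x_i\in E}$ forces Bowen-separation of centers on scale $\eps$, while absorption of each omitted $B_{t_j}(x_j,\cdot)$ into the union $\bigcup_{x_i\in E}B_{t_i}(x_i,\eps)$ forces a strictly smaller scale in the initial cover of Step~2. The natural fix is to begin from a finer cover (with radius $\eps/4$, say) in Step~2, keep the pruning threshold appropriately below $\eps$ in Step~3, and rescale at the end; the additional subtlety in the non-uniform setting is that the triangle inequality in the absorption step controls $d(f_sy,f_sx_i)$ only on the shorter interval $[0,\min(t_i,t_j)]$, so the comparison must be carried out there (this is exactly why the selection criterion in Step~3 is phrased in terms of $[0,t_i]$ with $t_i\leq t_j$). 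Once these constants are correctly arranged, the desired sandwich holds, and the ``further'' clause on $r_1<\cdots<r_n$ is handled entirely within Step~1 via the infinitude of good times for each $x$.
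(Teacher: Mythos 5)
The central difficulty you flag — the ``constants-balancing act'' in Steps~3--4 — is not a balancing act that can be won; it is a genuine obstruction to the cover-then-prune strategy. To have the inner inclusion $K\cap B_{t_i}(x_i,\eps/2)\subset A_i$, the $\eps/2$-Bowen balls around the surviving centers must be pairwise disjoint, and by Lemma~\ref{lemma: disjoint balls} this forces the pruning threshold to be at least $\eps$. But then the absorption estimate for an omitted center $x_j$ only gives: $y\in B_{t_j}(x_j,\rho_0)$ and $\max_{s\in[0,t_i]}d(f_s x_j,f_s x_i)<\eps$ implies $d(f_s y,f_s x_i)<\rho_0+\eps$ on $[0,t_i]$. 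Since $\rho_0>0$, this is strictly larger than $\eps$, so you cannot conclude $y\in B_{t_i}(x_i,\eps)$, and the $\eps$-balls around $E$-centers need not cover $K$ — meaning the claimed $\xi_m$ need not be a partition of $K$ at all. Starting with a finer initial cover ($\rho_0=\eps/4$, say) does not escape this: the two constraints $\rho_0+s\leq\eps$ (absorption) and $s\geq\eps$ (disjointness of $\eps/2$-balls) are jointly infeasible for any $\rho_0>0$, and rescaling $\eps$ is a homothety that leaves the inner/outer ratio unchanged. Any cover-then-prune scheme of the type you describe yields a ratio strictly greater than $2$, whereas the proposition asserts exactly $\eps/2$ vs.\ $\eps$.

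The paper avoids this by not passing through an auxiliary $\rho_0$-cover at all. It works stage by stage in $i$, taking at each stage a maximal $(T+i\kappa,\eps)$-separated subset $A_i$ of a compact sublevel set $F_i$ (here the time $T+i\kappa$ is \emph{uniform} within stage $i$), and obtains covering of $F_i$ by the $\eps$-Bowen balls around $A_i$ directly from maximality — with no constant loss. It then carves out these $\eps$-balls from the set of candidates before passing to the next stage $i+1$, which both guarantees the eventual covering of $K$ and yields the separation property (Claim~\ref{claim: separation}) across stages from the carving rather than from a pruning pass. Your instinct to use Lemma~\ref{lemma: disjoint balls} and to order by time is right, and the ``further'' clause via the infinitude of good return times is handled the same way as in the paper; the missing idea is the uniform-time, stage-by-stage maximal-separated-set construction that makes the $\eps$-ball cover come ``for free'' from maximality rather than from an already-lossy finite subcover.
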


We call the partitions  $\xi_m$ given by Proposition \ref{prop: partition construction} \textit{almost adapted}.

\begin{remark*}
    The essence of what we want from the points $x$ in this proposition is that their orbits continually return to $\{x\in X\,|\, \lambda(x) \geq c\}$ and hence experience the hyperbolicity of the flow. The precise conditions on $x$ in this proposition (in particular, remaining in $\{x\in X\,|\, \lambda(x) \geq c\}$  over the interval $[t-\kappa, t+\kappa]$) are imposed to make later arguments slightly simpler (in particular, allowing us to remove some cases from a future argument). However, as we note below in Lemma \ref{lemma: recurring lambda}, while the condition we impose on $x$ may seem complicated, it is not too difficult to check.
\end{remark*}

\begin{proof}
	Let $T=T_0+m\kappa$. First, we define $K_0 := \{x\in K\mid \lambda(f_{T + r}x)\geq c \text{ for } |r|\leq \kappa\}$. We note that $K_0$ is compact using the facts that $\lambda\circ f_T$ is continuous and $K$ is compact. Additionally, for each $x\in K$ and $i\in \mathbb{N}$, let $\Psi_i(x)$ be the number of $p\in\mathbb{N}$ such that both $T_0 + p\kappa \leq T + i\kappa$ and $\lambda(f_{T_0 + p\kappa}x)\geq c$. Then, defining
	$$F_0 := \{x\in K_0\mid \Psi_0(x)\geq n\},$$
	we have that $F_0$ is compact, as the set $\{x\in K \mid \Psi_0(x)\geq n\}$ is compact (it is a finite union of compact sets). Let $A_0$ be a maximal $(T,\eps)$-separated subset of $F_0$ that exists because $F_0$ is compact. Then, for all $i\in\mathbb N$, we define $K_i$ and $F_i$ inductively in the following way. We set
	$$K_i := \{x\in K\mid \lambda(f_{T+i\kappa + r}x)\geq c \text{ for } |r|\leq \kappa\}\setminus \bigcup_{j=0}^{i-1}\bigcup_{x\in A_j}(K\cap B_{T+j\kappa}(x,\eps)),$$
	let
	$$F_i := \{x\in K_i\mid \Psi_i(x)\geq n\},$$
	and let $A_i$ be a maximal $(T + i\kappa,\eps)$-separated subset of $F_i$. We note that $F_i$ is compact for all $i\in\mathbb N$ so $A_i$ exists. Observe from this construction that $\{B_{T+i\kappa}(x,\eps)\mid 0\leq i \leq N, x\in A_i\}$ is an open cover of $\bigcup_{i=0}^{N} F_i$, as otherwise, there would be some $0\leq i \leq N$ and $y\in F_i$ for which $A_i\cup\{y\}$ is $(T + i\kappa,\eps)$-separated. Since for all $x\in K$, $\lambda(f_{T+i\kappa}x)\geq c$ for infinitely many $i\in\mathbb{N}$, there exists some $N$ for which $g_N(x)\geq n$ and so either $x\in F_N$ or $x\in \bigcup_{i=0}^{N-1}\bigcup_{x\in A_i}(K\cap B_{T+i\kappa}(x,\eps))$. Thus, $\{B_{T+n\kappa}(x,\eps)\mid n\in\mathbb{N},x\in A_n\}$ is an open cover of $K$.
	
	Since $K$ is compact, we can choose a minimal, finite subcover $\{B_{T+n_i\kappa}(x_i,\eps)\}_{i=1}^k$. The minimality guarantees that $x_i\neq x_j$ for $i\neq j$.

	\begin{claim}\label{claim: separation}
		$\max\{d(f_tx_i,f_tx_j)\,|\, t\in [0,T+\min\{n_i,n_j\}\kappa]\} \geq \eps$ for $i\neq j$.
	\end{claim}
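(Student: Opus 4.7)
The plan is to unpack the iterative construction directly and argue by cases on the relationship between the indices $n_i$ and $n_j$. Without loss of generality assume $n_i \leq n_j$, so that $T + \min\{n_i, n_j\}\kappa = T + n_i \kappa$. The observation that makes everything work is that the construction is designed precisely so that, at each stage, the new centers chosen are $\eps$-separated from each other (within the current set $F_{n_i}$) and the set $K_{n_j}$ at later stages explicitly excludes Bowen balls around all previously chosen centers.

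First I would handle the case $n_i = n_j$. Here both $x_i$ and $x_j$ lie in the same $A_{n_i}$, which was defined to be a $(T + n_i\kappa, \eps)$-separated subset of $F_{n_i}$. The conclusion is then immediate from the definition of $(T + n_i\kappa, \eps)$-separation.

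Next I would handle the case $n_i < n_j$. Since $x_j \in A_{n_j} \subset F_{n_j} \subset K_{n_j}$, and the definition of $K_{n_j}$ removes $\bigcup_{j' < n_j}\bigcup_{x \in A_{j'}} (K \cap B_{T + j'\kappa}(x, \eps))$, in particular $x_j \notin B_{T + n_i\kappa}(x_i, \eps)$ because $x_i \in A_{n_i}$ and $n_i < n_j$. Unpacking the definition of the Bowen ball, there exists $t \in [0, T + n_i\kappa]$ with $d(f_t x_i, f_t x_j) > \eps$, and hence $\max\{d(f_t x_i, f_t x_j) : t \in [0, T + n_i \kappa]\} > \eps \geq \eps$, which is the desired conclusion.

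The argument is essentially just bookkeeping of the construction, so I do not expect a real obstacle; the only subtlety is that the statement is symmetric in $i$ and $j$ while the construction is ordered by the $n_i$, which is resolved by the WLOG reduction above. The claim will then be used (in the rest of the proof of Proposition~\ref{prop: partition construction}) together with Lemma~\ref{lemma: disjoint balls} to produce a pairwise disjoint family of half-sized Bowen balls, which can then be refined into the desired almost adapted partition $\xi_m$.
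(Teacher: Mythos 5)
Your proof is correct and takes essentially the same route as the paper's: a WLOG reduction to $n_i \leq n_j$, then two cases, using the $(T+n_i\kappa,\eps)$-separation of $A_{n_i}$ when $n_i = n_j$ and the fact that $x_j \in K_{n_j}$ excludes the Bowen ball $B_{T+n_i\kappa}(x_i,\eps)$ when $n_i < n_j$.
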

	
	\begin{proof}
		Without loss of generality, we can assume $n_i\leq n_j$. If $n_i=n_j$, the claim follows from the fact that $A_{n_i}$ is $(T+n_i\kappa, \eps)$-separated. If $n_i<n_j$, then $x_j\notin B_{T+n_i\kappa}(x_i,\eps)$ by construction (as $x_j\in K_{n_j}$), completing our proof. 
	\end{proof}

	By Claim~\ref{claim: separation} and Lemma \ref{lemma: disjoint balls}, $\{B_{T+n_i\kappa}(x_i,\frac{\eps}{2})\}_{i=1}^k$ are pairwise disjoint. 
	Finally, we construct the partition $\xi_m=\{P_1,\ldots, P_k\}$ in the following way by induction.  Let $P_1=B_{T+n_1\kappa}(x_1,\eps)\cap K$. Then, for $i=2,\ldots, k$, we have
	$P_i=K\cap\left(B_{T+n_i\kappa}(x_i,\eps)\setminus\bigcup_{j=1}^{i-1}P_j\right)$.
\end{proof}

\begin{lemma}\label{lemma: recurring lambda}
Let $\kappa > 0$, $x\in X$, and suppose there exists a sequence $t_k\to \infty$ for which $\lambda(f_{t_k + s}x)\geq c$ for all $|s|\leq 2\kappa$. Then for all $T_0 > 0$, $\lambda(f_{T_0 + n\kappa + s}x) \geq c$ for all $|s|\leq \kappa$ and infinitely many $n$.
\end{lemma}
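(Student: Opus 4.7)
The plan is to exploit the large window length $[t_k-2\kappa, t_k+2\kappa]$ given in the hypothesis, which has length $4\kappa$, to fit inside it an arithmetic window $[T_0+n\kappa-\kappa, T_0+n\kappa+\kappa]$ of length $2\kappa$ by choosing $n$ appropriately. Note that the statement has a minor typo: the $z$ in the hypothesis should be $x$, i.e.\ the same point as in the conclusion. I will assume this.

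First, for each $k$, I would choose an integer $n_k$ so that $T_0+n_k\kappa$ lands within $\kappa$ of $t_k$; concretely, take
\[
n_k := \left\lfloor \frac{t_k - T_0}{\kappa} \right\rfloor,
\]
so that $T_0 + n_k\kappa \in [t_k - \kappa,\, t_k]$ and hence $|T_0 + n_k\kappa - t_k| \leq \kappa$. Because $t_k\to\infty$, the sequence $n_k$ tends to infinity, so in particular $\{n_k\}$ contains infinitely many distinct values.

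Second, for any such $n_k$ and any $s$ with $|s|\leq \kappa$, the triangle inequality gives
\[
|(T_0 + n_k\kappa + s) - t_k| \leq |T_0 + n_k\kappa - t_k| + |s| \leq 2\kappa,
\]
so $T_0 + n_k\kappa + s \in [t_k - 2\kappa,\, t_k + 2\kappa]$. The hypothesis then yields $\lambda(f_{T_0 + n_k\kappa + s}x)\geq c$, as required. There is no real obstacle; the argument is just a careful matching of arithmetic-progression times to the given windows, relying only on the fact that the hypothesis window ($4\kappa$) is wide enough to contain a shift of the conclusion window ($2\kappa$) no matter how $T_0$ and $\kappa$ relate.
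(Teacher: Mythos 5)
Your proof is correct and takes essentially the same approach as the paper: you choose $n_k = \lfloor (t_k - T_0)/\kappa\rfloor$, which is precisely the paper's choice of writing $t_k = T_0 + n_k\kappa + r$ with $0\leq r<\kappa$, and the triangle-inequality step matches the paper's observation that $|s-r|\leq 2\kappa$. You are also right that the $z$ in the hypothesis is a typo for $x$.
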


\begin{proof}
Let $T_0 > 0$. Then for any $t_k \geq T_0$, write $t_k = T_0 + n_k\kappa + r$ for some $0 \leq r < \kappa$. Then, observe that for all $|s| \leq \kappa$, we have that $T_0 + n_k\kappa + s = t_k + s - r$ and $|s - r| \leq 2\kappa$. By assumption on $t_k$, we have completed our proof.
\end{proof}

\begin{corollary}\label{cor: small diameter partition}
Assume we are in the setting of Proposition~\ref{prop: partition construction}. If $K$ has an additional property that for all $x\in K$, $\lim\limits_{t\rightarrow\infty}\diam \left(B_t(x,\eps)\right)=0$, then there exists an almost adapted partition $\xi$ of $K$ with arbitrarily small diameter.
\end{corollary}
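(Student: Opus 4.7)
The plan is to invoke Proposition~\ref{prop: partition construction} with the parameter $m$ chosen large enough to force each element of the resulting partition to be small. Since every element $A$ of $\xi_m$ satisfies $A \subset K \cap B_t(x, \eps)$ for some $x \in K$ and some $t \geq T_0 + m\kappa$, it suffices to promote the hypothesis to the following \emph{uniform} statement: for every $\delta > 0$ there exists $T^\ast > 0$ such that $\diam(K \cap B_t(x, \eps)) < \delta$ for all $x \in K$ and all $t \geq T^\ast$. Granted such a $T^\ast$, one simply chooses $m$ with $T_0 + m\kappa \geq T^\ast$, and then every element of $\xi_m$ has diameter less than $\delta$.

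The main obstacle is precisely this passage from pointwise to uniform convergence, and it is here that compactness of $K$ together with continuity of the flow enter. I would argue by contradiction: suppose no such $T^\ast$ exists. Then one can find $\delta_0 > 0$, $x_n \in K$, $t_n \to \infty$, and $y_n, z_n \in K \cap B_{t_n}(x_n, \eps)$ with $d(y_n, z_n) \geq \delta_0 / 2$. By compactness of $K$, pass to subsequences so that $x_n \to x^\ast$, $y_n \to y^\ast$, $z_n \to z^\ast$, all in $K$, with $d(y^\ast, z^\ast) \geq \delta_0 / 2$. For any fixed $T > 0$ we have $t_n \geq T$ eventually, so $d(f_s y_n, f_s x_n) \leq \eps$ for every $s \in [0, T]$; continuity of $f_s$ then yields $d(f_s y^\ast, f_s x^\ast) \leq \eps$, hence $y^\ast \in B_T(x^\ast, \eps)$, and likewise $z^\ast \in B_T(x^\ast, \eps)$. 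Since $T$ was arbitrary, both $y^\ast$ and $z^\ast$ lie in $\bigcap_{T > 0} (K \cap B_T(x^\ast, \eps))$. But the pointwise hypothesis applied at $x^\ast$ forces this intersection to have diameter $0$, contradicting $d(y^\ast, z^\ast) \geq \delta_0 / 2$.

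Finally, combining the uniform bound with Proposition~\ref{prop: partition construction} produces the desired partitions. Given $\delta > 0$, choose $T^\ast$ as above, pick $m \in \mathbb{N}$ with $T_0 + m\kappa \geq T^\ast$, and let $\xi_m$ be the almost adapted partition of $K$ produced by Proposition~\ref{prop: partition construction}. Each $A \in \xi_m$ is contained in some $K \cap B_t(x, \eps)$ with $t \geq T_0 + m\kappa \geq T^\ast$, hence $\diam(A) < \delta$. This gives almost adapted partitions of $K$ of arbitrarily small diameter.
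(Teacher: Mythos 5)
Your proof is correct. You and the paper both reduce the claim to the same intermediate statement — that $\diam(K\cap B_t(x,\eps))\to 0$ uniformly in $x\in K$ as $t\to\infty$ — and then both finish identically by choosing $m$ large enough that $T_0+m\kappa$ exceeds the resulting threshold and invoking Proposition~\ref{prop: partition construction}. The difference is in how the uniform convergence is obtained. The paper appeals to Dini's theorem, which requires (and the paper asserts, without proof) that $x\mapsto\diam(B_t(x,\eps)\cap K)$ be continuous; in fact a priori this map is only upper semicontinuous (lower semicontinuity can fail because a point at exactly distance $\eps$ need not perturb nicely), though the u.s.c.\ version of Dini for a decreasing sequence with a continuous limit still applies, so the paper's proof does go through. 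Your argument instead proves the uniform bound directly by contradiction, extracting convergent subsequences $x_n\to x^\ast$, $y_n\to y^\ast$, $z_n\to z^\ast$ from compactness of $K$ and using only that the Bowen-ball condition $d(f_sy_n,f_sx_n)\leq\eps$ is closed and passes to the limit under continuity of each $f_s$. This is a mild but genuine advantage: it sidesteps the regularity claim about $x\mapsto\diam(B_t(x,\eps)\cap K)$ entirely, using only compactness of $K$, continuity of the flow, and the defining $\leq\eps$ inequality — a somewhat more elementary and more robust route to the same conclusion.
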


\begin{proof}
Let $\alpha > 0$ be arbitrary. By Dini's Theorem, $\diam \left(B_t(x,\eps)\right)\to 0$ as $t\rightarrow\infty$ uniformly, because $x\mapsto \diam B_t(x,\eps)$ is continuous. Then, choose $m\in\mathbb N$ in Proposition \ref{prop: partition construction} large enough so that $\sup\limits_{x\in K}\diam B_t(x,\eps) \leq \alpha$ for all $t\geq T_0+m\kappa$.
\end{proof}

We have created this partition to have this ``almost adapted'' property so that we can later obtain good measure estimates on the partition elements using the Gibbs property. However, because the compact set $K$ ``cuts into'' the Bowen balls, estimates can sometimes be more difficult. However, after construction, this partition can be extended to fill out the inner Bowen balls in the following manner.

\begin{corollary}\label{cor: fill out partition}
	Let $\xi = \{A_1,\cdots, A_n\}$ be a partition constructed as in Proposition~\ref{prop: partition construction}. Then there exists a partition $\eta = \{B_0, B_1,\cdots, B_n\}$ of $X$ such that for $1\leq i \leq n$, each partition element $B_i\in \eta$ satisfies
	$$B_{t_i}\left(x_i,\frac{\eps}{2}\right) \subset B_i\subset B_{t_i}(x_i,\eps).$$
	Further, in the setting of Corollary \ref{cor: small diameter partition}, $\eta$ can be constructed so that $B_1,\cdots, B_n$ have arbitrarily small diameter.
\end{corollary}

\begin{proof}
	By construction, $\{B_{t_i}(x_i,\frac{\eps}{2})\}_{i=1}^n$ are pairwise disjoint, and so we can simply construct $B_1 := B_{t_1}(x_1,\eps)$ and $B_i = B_{t_i}(x_i,\eps)\setminus \bigcup_{j=1}^{i-1}B_i$. We then take $B_0 = X\setminus \bigcup_{i=1}^n B_i$. (See Figure \ref{fig:partitions} part (a) for a schematic illustration of this construction.)
\end{proof}

%

\subsubsection{A refining sequence of almost adapted partitions}

We continue in the same general framework as the previous subsection. In order to estimate conditional measures, we will need to work with a refining sequence of partitions. However, we need each of the partitions in this sequence to have the adapted property discussed above, or else we won't be able to obtain measure estimates on the partition elements. While Proposition \ref{prop: partition construction} lets us obtain a sequence of almost adapted partitions with diameter going to $0$, they are not refining. Consequently, constructing such a sequence with both is nontrivial. However, by omitting a subset of arbitrarily small measure, we are able to construct such a refining sequence.

\begin{proposition}\label{prop: refining sequence}
Assume we are in the setting of Proposition \ref{prop: partition construction} and Corollary \ref{cor: small diameter partition}, and that $\mu$ is a Radon probability measure on $X$. Then for all $\eps \in (0,\frac{1}{2})$, there exists a refining sequence $\{\xi_i\}_{i\in\mathbb{N}}$ of partitions of $X$ with $\operatorname{diam}\xi_i \to 0$ such that there exists $E\subset X$ with the following properties:
	\begin{itemize}
	\item $\mu(E) > \mu(K) - \eps$,
	\item For all $x\in E$, for all $i\in\mathbb{N}$, there exists $y\in X$ and $t\geq 0$ such that $\lambda(f_ty)\geq c$ and
	$$B_t\left(y,\frac{\eps}{2}\right) \subset \xi_i(x)\subset B_t(y,\eps),$$
	where $\xi_i(x)$ is the partition element of $\xi_i$ containing $x$.
\end{itemize}
\end{proposition}

\begin{proof}
Let $\eps \in (0,\frac{1}{2})$ be arbitrary. Then let $\zeta_1 = \{A_0,A_1,\cdots, A_n\}$ be a partition constructed as in Corollary \ref{cor: fill out partition} with $\operatorname{diam}A_i \leq \frac{1}{2}$ for $1\leq i\leq n$. (See Figure \ref{fig:partitions} for a schematic illustration of this whole construction.)

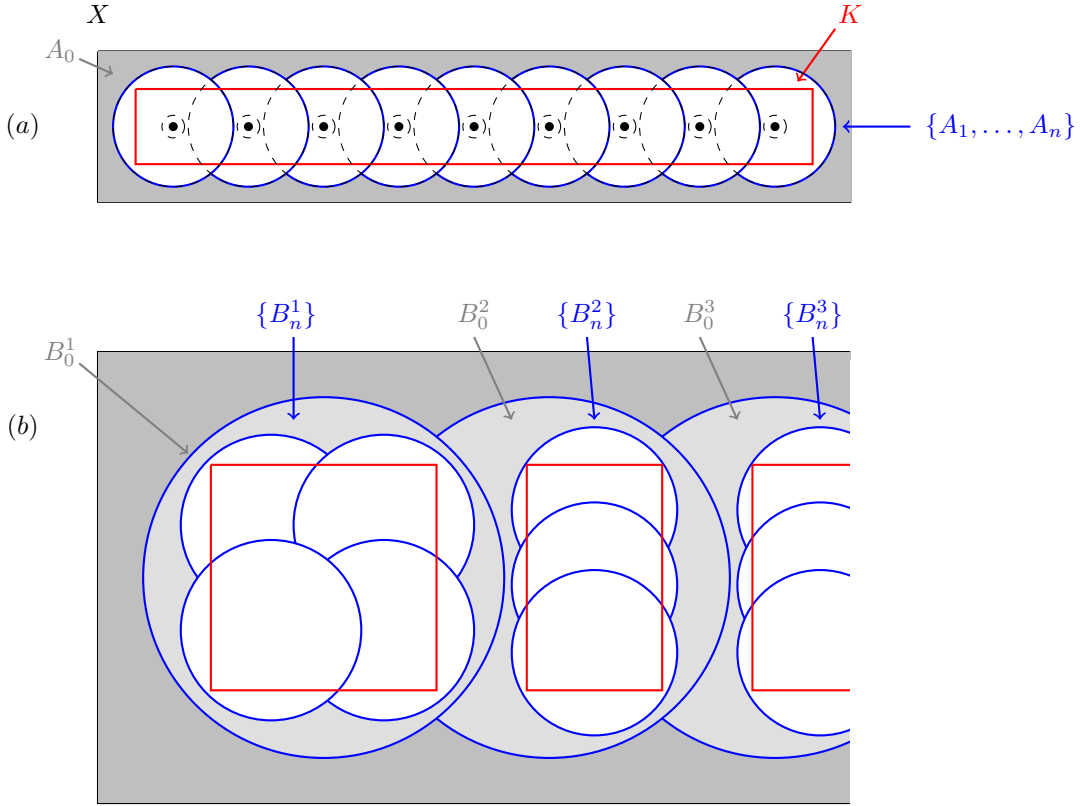
\begin{figure}[h]
\centering
\begin{tikzpicture}[scale=1]

\draw[thick] (-5,1) -- (-5,-1) -- (5,-1) -- (5,1) -- (-5,1) ;

\fill [gray!50!white] (-5,1) -- (-5,-1) -- (5,-1) -- (5,1) -- (-5,1) ;
\fill [white] (4,0) circle(.8) ;
\draw[thick, blue] (4,0) circle(.8) ;
\fill [white] (3,0) circle(.8) ;
\draw[thick, blue] (3,0) circle(.8) ;
\fill [white] (2,0) circle(.8) ;
\draw[thick, blue] (2,0) circle(.8) ;
\fill [white] (1,0) circle(.8) ;
\draw[thick, blue] (1,0) circle(.8) ;
\fill [white] (0,0) circle(.8) ;
\draw[thick, blue] (0,0) circle(.8) ;
\fill [white] (-1,0) circle(.8) ;
\draw[thick, blue] (-1,0) circle(.8) ;
\fill [white] (-2,0) circle(.8) ;
\draw[thick, blue] (-2,0) circle(.8) ;
\fill [white] (-3,0) circle(.8) ;
\draw[thick, blue] (-3,0) circle(.8) ;
\fill [white] (-4,0) circle(.8) ;
\draw[thick, blue] (-4,0) circle(.8) ;

\node at (-5,1.5) {$X$} ;
\node[red] at (5,1.5) {$K$} ;
\draw[thick, red, ->] (4.8,1.3) -- (4.3,.6) ;
\node at (-6,0) {$(a)$} ;
\node[blue] at (7,0) {$\{A_1, \ldots, A_n\}$} ;
\draw[thick, blue, ->] (5.8,0) -- (4.9,0) ;
\node[gray] at (-5.5,1) {$A_0$} ;
\draw[thick, gray, ->] (-5.25,.9) -- (-4.8,.7) ;

\draw[thick,red] (-4.5,.5) -- (-4.5,-.5) -- (4.5,-.5) -- (4.5,.5) -- (-4.5,.5) ;

\fill (-4,0) circle(.06);
\fill (-3,0) circle(.06);
\fill (-2,0) circle(.06);
\fill (-1,0) circle(.06);
\fill (0,0) circle(.06);
\fill (1,0) circle(.06);
\fill (2,0) circle(.06);
\fill (3,0) circle(.06);
\fill (4,0) circle(.06);

\draw[dashed] (-4,0) circle(.8) ;
\draw[dashed] (-4,0) circle(.15) ;
\draw[dashed] (-3,0) circle(.8) ;
\draw[dashed] (-3,0) circle(.15) ;
\draw[dashed] (-2,0) circle(.8) ;
\draw[dashed] (-2,0) circle(.15) ;
\draw[dashed] (-1,0) circle(.8) ;
\draw[dashed] (-1,0) circle(.15) ;
\draw[dashed] (0,0) circle(.8) ;
\draw[dashed] (0,0) circle(.15) ;
\draw[dashed] (1,0) circle(.8) ;
\draw[dashed] (1,0) circle(.15) ;
\draw[dashed] (2,0) circle(.8) ;
\draw[dashed] (2,0) circle(.15) ;
\draw[dashed] (3,0) circle(.8) ;
\draw[dashed] (3,0) circle(.15) ;
\draw[dashed] (4,0) circle(.8) ;
\draw[dashed] (4,0) circle(.15) ;


\node at (-6,-4) {$(b)$} ;

\draw[thick] (5,-3) -- (-5,-3) -- (-5,-9) -- (5,-9) ;
\fill [gray!50!white] (5,-3) -- (-5,-3) -- (-5,-9) -- (5,-9) -- (5,-3) ;

\fill [gray!25!white] (4,-6) circle(2.4) ;
\draw[thick, blue] (4,-6) circle(2.4) ;
\fill [gray!25!white] (1,-6) circle(2.4) ;
\draw[thick, blue] (1,-6) circle(2.4) ;
\fill [gray!25!white] (-2,-6) circle(2.4) ;
\draw[thick, blue] (-2,-6) circle(2.4) ;

\fill [white] (-2.7,-5.3) circle(1.2) ;
\draw [thick, blue] (-2.7,-5.3) circle(1.2) ;
\fill [white] (-1.2,-5.3) circle(1.2) ;
\draw [thick, blue] (-1.2,-5.3) circle(1.2) ;
\fill [white] (-1.2,-6.7) circle(1.2) ;
\draw [thick, blue] (-1.2,-6.7) circle(1.2) ;
\fill [white] (-2.7,-6.7) circle(1.2) ;
\draw [thick, blue] (-2.7,-6.7) circle(1.2) ;

\fill [white] (1.6,-5.1) circle(1.1) ;
\draw [thick, blue] (1.6,-5.1) circle(1.1) ;
\fill [white] (1.6,-6.1) circle(1.1) ;
\draw [thick, blue] (1.6,-6.1) circle(1.1) ;
\fill [white] (1.6,-7) circle(1.1) ;
\draw [thick, blue] (1.6,-7) circle(1.1) ;

\fill [white] (4.6,-5.1) circle(1.1) ;
\draw [thick, blue] (4.6,-5.1) circle(1.1) ;
\fill [white] (4.6,-6.1) circle(1.1) ;
\draw [thick, blue] (4.6,-6.1) circle(1.1) ;
\fill [white] (4.6,-7) circle(1.1) ;
\draw [thick, blue] (4.6,-7) circle(1.1) ;

\draw [thick, red] (-3.5,-4.5) -- (-.5,-4.5) -- (-.5,-7.5) -- (-3.5,-7.5) -- (-3.5,-4.5) ;
\draw [thick, red] (.7,-4.5) -- (2.5,-4.5) -- (2.5,-7.5) -- (.7,-7.5) -- (.7,-4.5) ;
\draw [thick, red] (3.7,-4.5) -- (5.1,-4.5) -- (5.1,-7.5) -- (3.7,-7.5) -- (3.7,-4.5) ;

\fill [white] (5,-3.1) -- (5,-9.1) -- (8.1,-9.1) -- (8.1,-3.1) -- (5,-3.1);

\node[gray] at (-5.5,-3) {$B^1_0$} ;
\draw [thick, gray, ->] (-5.25, -3.15) -- (-3.8,-4.35) ;
\node[blue] at(-2.5,-2.5) {$\{ B^1_n \}$} ;
\draw [thick, blue, ->] (-2.4, -2.8) -- (-2.4,-3.9) ;
\node[gray] at (0,-2.5) {$B_0^2$} ;
\draw [thick, gray, ->] (0, -2.8) -- (.5,-3.9) ;
\node[blue] at(1.5,-2.5) {$\{ B^2_n \}$} ;
\draw [thick, blue, ->] (1.5, -2.8) -- (1.6,-3.9) ;
\node[gray] at (3,-2.5) {$B_0^3$} ;
\draw [thick, gray, ->] (3, -2.8) -- (3.5,-3.9) ;
\node[blue] at(4.5,-2.5) {$\{ B^3_n \}$} ;
\draw [thick, blue, ->] (4.5, -2.8) -- (4.6,-3.9) ;

\end{tikzpicture}
\caption{Construction of the refining and nearly-everywhere, almost adapted sequence of partitions $\xi_i$. In (a), the partition $\zeta_1$ is constructed, following Corollary \ref{cor: fill out partition}. The compact set $K$ is chosen to provide room between for the Bowen balls in the construction to fit entirely inside $X$. The Bowen balls are drawn in dashed lines, and show the almost adapted nature of (most of) the partition elements (i.e., those outlined in blue). Note that, in contrast to the simplified case of this schematic, the radii of these Bowen balls may vary.
In (b), we zoom in and see the next iteration of the construction. Within each $A_i$ ($i>0$) Corollary \ref{cor: fill out partition} is applied again to build $\{B^i_n\}.$ The gray shaded regions are the index-zero elements of the partitions, and collectively have small measure. In the final step of the construction of $\xi_i$, these elements are cut up into small diameter pieces.}\label{fig:partitions}
\end{figure}

By construction, $\mu(\bigcup_{i=1}^nA_i) \geq \mu(K)$. Further, using lower semi-continuity of $\lambda$, this partition can be constructed so that $\mu(\partial A_i) = 0$ for $1\leq i \leq n$, as $\partial A_i$ is made up of the boundaries of Bowen balls. Now, for $1\leq i \leq n$, take a compact subset $K_i\subset \operatorname{int}(A_i)\cap K$ with measure $\mu(K_i)\geq (1-\eps^2)\mu(A_i\cap K)$. Applying results of the previous section, we construct partitions $\{B^i_0,B^i_1,\ldots, B^i_{m_i}\}$ of each $A_i$, for which each $B^i_j$ has the desired adapted property, $\operatorname{diam}(B^i_j) < \max\{\frac{1}{4},d(\partial A_i,K_i)\}$, and $\mu(B^i_0\cap K) \leq \eps^2\mu(A_i\cap K)$. We then define a partition $\xi_2$ of $X$ by
$$\zeta_2 := \{A_0,B^1_0,\ldots,B^1_{m_1},\ldots,B^n_0,\ldots, B^n_{m_n}\}.$$
By our assumption on the diameter, we have that $\zeta_2$ is a refinement of $\zeta_1$, as $B^i_j\subset A_i$. Further, the only partition elements without the desired adapted property have measure
$$\mu(A_0\cup B^1_0\cup\cdots \cup B^n_0) \leq \mu(X\setminus K) + \sum_{i=1}^n\mu(B^i_0\cap K) \leq 1-\mu(K) + \sum_{i=1}^n\eps^2\mu(A_i\cap K) \leq 1-\mu(K) + \eps^2.$$
We can again ensure that $\mu(\partial B^i_j) = 0$ for all $i,j$, and to obtain a refinement $\zeta_3$, we repeat this process to obtain partitions of $B^i_j$ for $1\leq i \leq n$ and $j\geq 1$, for which all elements but a set of measure $\eps^3\mu(B^i_j\cap K)$ have the desired adapted property with diameter at most $\frac{1}{8}$. Consequently, at this stage, the partition elements of $\zeta_3$ which do not have the adapted property have measure at most
$$1-\mu(K) + \eps^2 + \sum_{i=1}^n\sum_{j=1}^{m_i}\eps^3\mu(B^i_j\cap K) \leq 1-\mu(K) + \eps^2 + \eps^3.$$
Continuing this process for all $i\in\mathbb{N}$, we see that the union of elements in any partition $\zeta_i$ without the adapted property has measure at most
$$1-\mu(K) + \sum_{i=2}^{\infty}\eps^i = 1-\mu(K) + \eps\frac{\eps}{1-\eps} < 1-\mu(K) + \eps.$$

This proves our result, except for the fact that the diameter of the partitions does not go to $0$ on these ``bad'' sets. To remedy this, note that because $X$ is compact, we can construct an arbitrary refining sequence of finite partitions $\{\eta_i\}_{i\in\mathbb{N}}$ whose diameters go to $0$. Let $\mathscr{G}_i\subset \zeta_i$ be the set of good adapted elements of our partitions $\zeta_i$ (i.e., $\{B^i_j\mid 1\leq i \leq n, j\geq 1\}$ in the case of $\zeta_2$). Then, writing
$$\xi_i := \{A \mid A\in\mathscr{G}_i\}\cup \{A\cap B\mid A\in \zeta_i\setminus \mathscr{G}_i, B\in \eta_i\}.$$
Then $\lim\limits_{i\to\infty}\operatorname{diam}\xi_i = 0$ and $\{\xi_i\}_{i\in \mathbb{N}}$ satisfies the rest of our desired properties.
\end{proof}

%
\subsubsection{Interaction of Bowen balls and Bowen brackets}\label{subsec: relations bowen}

Throughout this section we use the following notations. Let $z_0\in \GS$ be such that $\lambda(z_0) \geq \frac{3}{4}\lVert\lambda\rVert_{\infty}$, and, using lower semicontinuity of $\lambda$, let $\kappa > 0$ be chosen so that $\lambda|_{B(z_0,3\kappa)} \geq \frac{2}{3}\lambda(z_0)$. Consider a parametrized geodesic $x_0\in \Reg$ and small enough $\delta$ as in Definition~\ref{def: su-Bowen bracket}, $\eps < \frac{1}{4}\{s_0,\delta\}$, and $n_0=\max\{\frac{8}{\delta}, 5, \frac{1}{s_0},\frac{4}{\kappa},\frac{1}{\eps}\}$. Let $\mathcal B(n_0,\eps)$ be the corresponding $(n_0,\eps)$-flow box centered at $x_0$ (see Definition~\ref{definition: flow box}). 

The following notation will prove useful throughout the remainder of the paper.

\begin{definition}\label{defn: local leaves}
Given a flow box $\mathcal{B}(n_0,\eps)$ and a point $x\in \mathcal{B}(n_0,\eps)$, let 
\[V_x^{cs} = W^{cs}(x,\delta) \cap \mathcal{B}(n_0,\eps) \ \mbox{ and } \ V_x^u = W^u(x,\delta) \cap \mathcal{B}(n_0,\eps)\]
be the local center stable and unstable leaves induced on $\mathcal{B}(n_0,\eps)$.
\end{definition}

\begin{lemma}\label{lem:ratio of flow boxes}
For all $-2\eps \leq a<b\leq 2\eps$ and $n \geq 4s_0^{-1}$, and for any measurable $A\subset V^u_x$ and $B\subset V^s_x$,
$$\frac{\mu\left(g_{\left[-\frac{1}{n},\frac{1}{n}\right]}\langle A,B\rangle\right)}{\mu\left(g_{[a,b]}\langle A,B\rangle\right)} = \frac{2}{n(b-a)}.$$
\end{lemma}

\begin{proof}
We prove the more general statement, where $D\subset G\SSS$ is such that for all $t\neq s$ with $|t|,|s|$ sufficiently small, $g_tD\cap g_sD = \emptyset$. By flow invariance of $\mu$, for all small $t_1,t_2,s$, we have $\mu\left(g_{[t_1,t_2]}D\right) = \mu\left(g_{[s + t_1,s + t_2]} D\right)$. 
For $-2\eps \leq a < b\leq 2\eps$ and $n \geq 4s_0^{-1}$, we have the necessary disjointness property above for $|t_i|\leq \max\left\{|a|,|b|,\frac{1}{n}\right\}$, as $|a|,|b|,$ and $\frac{1}{n}$ are less than one quarter the injectivity radius of $S$. Thus, consider $g_{\left[\frac{a}{n},\frac{b}{n}\right]}D$, and observe that 
$$ \frac{1}{n}\mu\left(g_{[a,b]}D\right) = \frac{b-a}{2}\mu\left(g_{\left[\frac{-1}{n},\frac{1}{n}\right]}D\right) = \mu\left(g_{\left[\frac{a}{n},\frac{b}{n}\right]}D\right).$$
\end{proof}

\begin{lemma}\label{lemma: bowen ball contains bracket}
Suppose $w,z\in g_{\tau}R(\eps)$ for some $\tau\in\left[-\frac{1}{n_0},\frac{1}{n_0}\right]$. Then, for all $n,m\geq0$,
$$g_{\left[-\frac{1}{n_0},\frac{1}{n_0}\right]}\langle B^u_m(w,\varepsilon),B_n^{s}(z,\varepsilon)\rangle \subset B_{[-n,m]}(\langle w,z\rangle,4\eps).$$ 
\end{lemma}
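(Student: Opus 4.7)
The plan is to fix $u\in B^u_m(w,\eps)$, $v\in B^s_n(z,\eps)$, and $\sigma\in[-1/n_0,1/n_0]$, take $\zeta\in\langle u,v\rangle$, set $\xi:=\langle w,z\rangle$, and show that $d_{GS}(g_{t+\sigma}\zeta,g_t\xi)\leq 4\eps$ for every $t\in[-n,m]$.

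The first step is to identify the two brackets as single geodesics with the piecewise description of Lemma~\ref{lemma: bracket_su geometry}. Because $w,z\in g_\tau R(\eps)$ both lie close to the regular geodesic $g_\tau x_0$, they share a common segment through cone points on an interval containing $0$; the same then holds for $u$ and $v$, since $d_{GS}(u,w),d_{GS}(v,z)\leq\eps$ and $\eps$ is small relative to $s_0$ and $\delta$. The argument of Lemma~\ref{lemma: bracket_su geometry} then produces $\xi$ and $\zeta$, with lifts $\tilde\xi$ and $\tilde\zeta$ satisfying $\tilde\xi(r)=\tilde w(r)$, $\tilde\zeta(r)=\tilde u(r)$ for $r\geq 0$ and $\tilde\xi(r)=\tilde z(r)$, $\tilde\zeta(r)=\tilde v(r)$ for $r\leq 0$. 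These are also the lifts witnessing $d_{G\tilde S}(\tilde u,\tilde w)\leq\eps$ and $d_{G\tilde S}(\tilde v,\tilde z)\leq\eps$, since each lift pair is pinned down by passage through a common lifted cone point.

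Next, by the triangle inequality together with the estimate $d_{GS}(g_{t+\sigma}\zeta,g_t\zeta)\leq|\sigma|\leq 1/n_0\leq\eps$, it suffices to bound $d_{GS}(g_t\zeta,g_t\xi)$ by $3\eps$ for $t\in[-n,m]$. Substituting $r=s+t$ in the integral defining $d_{G\tilde S}$ and using the piecewise structure of $\tilde\zeta,\tilde\xi$:
$$d_{GS}(g_t\zeta,g_t\xi)\leq \int_{-\infty}^0 e^{-2|r-t|}d_{\tilde S}(\tilde v(r),\tilde z(r))\,dr+\int_0^\infty e^{-2|r-t|}d_{\tilde S}(\tilde u(r),\tilde w(r))\,dr.$$
For $t\in[0,m]$, on $r\leq 0$ we have $|r-t|=t+|r|$, so the first integral is at most $e^{-2t}d_{G\tilde S}(\tilde v,\tilde z)\leq\eps$; the second integral is bounded above by the full integral $d_{G\tilde S}(g_t\tilde u,g_t\tilde w)\leq\eps$, using $u\in B^u_m(w,\eps)$ at time $t\leq m$. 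The case $t\in[-n,0]$ is symmetric, with the roles swapped and the bound using $v\in B^s_n(z,\eps)$. Either way $d_{GS}(g_t\zeta,g_t\xi)\leq 2\eps$, and so $d_{GS}(g_{t+\sigma}\zeta,g_t\xi)\leq 3\eps<4\eps$.

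The main obstacle is the first step: confirming that the lifts used to express $\tilde\zeta$ and $\tilde\xi$ as concatenations of half-geodesics of $\tilde u,\tilde v$ and $\tilde w,\tilde z$ are the same lifts for which the unstable/stable distance bounds hold in $\tilde S$. This uses the cone-point-asymptotic picture of Lemma~\ref{lemma: geometry of u and cs}: it forces the relevant lifts of $u,w$ (respectively $v,z$) to pass through a common lifted cone point, thereby pinning down a unique lift pair and making the $GS$ and $G\tilde S$ distances agree at these scales. Once this compatibility is secured, the integral estimate above is mechanical.
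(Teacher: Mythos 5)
Your argument follows the paper's approach: unwind the $G\tilde S$-distance as an integral, split it into a stable contribution and an unstable contribution, bound each piece using the Bowen-ball hypotheses on $v,z$ and $u,w$ respectively, and absorb the flow-shift $|\sigma|\le 1/n_0\le\eps$ at the end (the paper does the same computation via the intermediate geodesic $\langle w',z\rangle$ and two triangle inequalities, but the integrals that appear are the ones you write). The only imprecision is that the transition between the stable and unstable halves of $\tilde\zeta$ and $\tilde\xi$ occurs at $r=-\tau$ rather than $r=0$, since $w,z\in g_\tau R(\eps)$; the paper accordingly splits the integral at $-(k+\tau)$, and your estimates go through unchanged once you shift the split point to match, because both halves are still dominated by $d_{G\tilde S}(g_t\tilde v,g_t\tilde z)$ and $d_{G\tilde S}(g_t\tilde u,g_t\tilde w)$ respectively.
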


\begin{proof}
Let $w'\in B^u_m(w,\varepsilon)$, $z'\in B_n^{s}(z,\varepsilon)$. Then, write $\xi := \langle w',z\rangle$, $\eta := \langle w',z'\rangle$, and $\zeta := \langle w,z\rangle$. Then, for $-n\leq k \leq 0$, we have
\begin{align*}
d_{GS}(g_k\eta,g_k\xi) &= \int_{-\infty}^{+\infty}e^{-2|t|}d_{\tilde{S}}(g_k\tilde{\xi}(t),g_k\tilde{\eta}(t))\,dt
\\
&= \int_{-\infty}^{+\infty}e^{-2|t|}d_{\tilde{S}}(\tilde{\xi}(t+k),\tilde{\eta}(t+k))\,dt
\\
&= \int_{-\infty}^{-(k+\tau)}e^{-2|t|}d_{\tilde{S}}(\tilde{\xi}(t+k),\tilde{\eta}(t+k))\,dt+\int_{-(k+\tau)}^{+\infty}e^{-2|t|}d_{G\tilde{S}}(\tilde{\xi}(t+k),\tilde{\eta}(t+k))\,dt
\\
&= \int_{-\infty}^{-(k+\tau)}e^{-2|t|}d_{\tilde{S}}(\tilde{z}(t+k),\tilde{z'}(t+k))\,dt
\\
&\leq \int_{-\infty}^{+\infty}e^{-2|t|}d_{\tilde{S}}(\tilde{z}(t+k),\tilde{z'}(t+k))\,dt
\\
&\leq d_{G\tilde{\SSS}}(g_k\tilde{z},g_k\tilde{z'})
\\
&\leq \eps.
\end{align*}
Furthermore, as $\eta\in W^s(\xi,\eps)$, it follows that $d_{GS}(g_k\eta,g_k\xi) \leq \eps$ for all $k\geq 0$ as well.

Now, using a similar argument, we see that for $0\leq k \leq m$ (and indeed, for all $k\leq 0$), we have
\begin{align*}
d_{GS}(g_k\zeta,g_k\xi) &= \int_{-\infty}^{+\infty}e^{-2|t|}d_{\tilde{S}}(g_k\tilde{\xi}(t),g_k\tilde{\zeta}(t))\,dt
\\
&= \int_{-\infty}^{+\infty}e^{-2|t|}d_{\tilde{S}}(\tilde{\xi}(t+k),\tilde{\zeta}(t+k))\,dt
\\
&= \int_{-\infty}^{-(k+\tau)}e^{-2|t|}d_{\tilde{S}}(\tilde{\xi}(t+k),\tilde{\zeta}(t+k))\,dt+\int_{-(k+\tau)}^{+\infty}e^{-2|t|}d_{G\tilde{S}}(\tilde{\xi}(t+k),\tilde{\zeta}(t+k))\,dt
\\
&= \int_{-(k+\tau)}^{+\infty}e^{-2|t|}d_{\tilde{S}}(\tilde{w'}(t+k),\tilde{w}(t+k))\,dt
\\
&\leq \int_{-\infty}^{+\infty}e^{-2|t|}d_{\tilde{S}}(\tilde{w'}(t+k),\tilde{w}(t+k))\,dt
\\
&\leq d_{G\tilde{S}}(g_k\tilde{w'},g_k\tilde{w})
\\
&\leq \eps.
\end{align*}

Therefore, we see that for $-n\leq k \leq m$, we have
$$d_{GS}(g_k\eta,g_k\zeta)\leq d_{GS}(g_k\eta,g_k\xi)+d_{GS}(g_k\xi,g_k\zeta) \leq 2\eps.$$
Consequently, for $s\in [-\frac{1}{n_0},\frac{1}{n_0}]$ and $-n\leq k\leq m$, we have
$$d_{GS}(g_sg_k\eta,g_k\zeta) \leq d_{GS}(g_sg_k\eta,g_k\eta) + d_{GS}(g_k\eta,g_k\zeta) \leq |s| + 2\eps \leq 4\eps.$$
\end{proof}

\begin{lemma}\label{lemma: bracket contains bowen ball}
Suppose $w,z\in g_{\tau}R(\eps)$ for some $\tau\in\left[-\frac{1}{n_0},\frac{1}{n_0}\right]$. Then for all $n,m \geq 0$, we have
$$B_{[-n,m]}(\langle w,z\rangle,\eps) \subset g_{\left[-\eps + \frac{1}{n_0},\eps + \frac{1}{n_0}\right]}\langle B_m^u(w,3\eps),B_n^s(z,3\eps)\rangle.$$
\end{lemma}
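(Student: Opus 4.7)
The plan is, for each $\eta \in B_{[-n,m]}(\langle w,z\rangle, \eps)$, to produce a small flow shift $s$ and points $w'$, $z'$ such that $\eta = g_s\langle w',z'\rangle$, then verify the two Bowen-ball memberships by propagating the shadowing hypothesis.

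First I put $\eta$ onto the transversal containing $\langle w,z\rangle$. Since $d_{GS}(\eta,\langle w,z\rangle)\le\eps$ and $\langle w,z\rangle\in g_\tau R(\eps)$ with $|\tau|\le 1/n_0$, the transversality of $R(\eps)$ to the flow (Definition~\ref{definition: su-rectangle}) yields a unique shift $s$, lying in the interval specified in the statement, such that $g_{-s}\eta\in g_\tau R(\eps)$. The local product structure of the $su$-rectangle then writes $g_{-s}\eta = \langle w',z'\rangle$ uniquely, where $w'$ lies on the local unstable leaf through $w$ and $z'$ lies on the local stable leaf through $z$. By Lemma~\ref{lemma: bracket_su geometry}, $w'$ agrees with $g_{-s}\eta$ on $[0,\infty)$ (in fact out to the first cone point in negative time), while $z'$ agrees with $g_{-s}\eta$ on $(-\infty,0]$; symmetrically, $w'$ agrees with $\langle w,z\rangle$ and with $w$ past a positive cone point, and similarly for $z'$ in the backward direction.

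Next, I verify $w'\in B_m^u(w,3\eps)$. For $0\le t\le m$, I apply the triangle inequality
\[
d_{GS}(g_tw', g_tw) \le d_{GS}(g_tw', g_{t-s}\eta) + d_{GS}(g_{t-s}\eta, g_t\langle w,z\rangle) + d_{GS}(g_t\langle w,z\rangle, g_tw).
\]
The first term is controlled using the forward agreement of $w'$ and $g_{-s}\eta$ together with Lemma~\ref{lemma: boundedness cs}. The second term is bounded by the Bowen-ball hypothesis, after absorbing the flow shift $s$ via Lemma~\ref{lem:closeness in S and GS}. The third term uses that $\langle w,z\rangle$ and $w$ are $+$-cone-point asymptotic (Lemma~\ref{lemma: geometry of u and cs}), so Lemma~\ref{lemma: boundedness cs} furnishes a bound that decays in $t$ and is at most $\eps$ at $t=0$. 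Summing gives $w'\in B_m^u(w,3\eps)$. A symmetric argument on $[-n,0]$, using Corollary~\ref{cor:u expansion} in place of Lemma~\ref{lemma: boundedness cs} and the backward cone-point asymptoticity of $\langle w,z\rangle$ with $z$, yields $z'\in B_n^s(z,3\eps)$.

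The main obstacle is the bookkeeping in this triangle-inequality step: one must check that each of the three contributions genuinely sits below $\eps$ (so that the sum does not exceed $3\eps$), and in particular that the extra flow shift $s$ does not inflate the middle term. This is where the refined estimates in Lemmas~\ref{lemma: expansion u} and~\ref{lemma: boundedness cs} for cone-point-asymptotic pairs are essential: they replace the crude $e^{2|t|}$-Lipschitz bound for the flow by controls that stay bounded (or decay exponentially) along the relevant time intervals, and so honestly allow the geodesic $w'$ to shadow $w$ forward for the entire interval of length $m$, and $z'$ to shadow $z$ backward for the interval of length $n$.
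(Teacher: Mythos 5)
Your overall construction --- time-shift $\eta$ by a small $s$ to land on the rectangle, decompose $g_{-s}\eta$ via the bracket as $\langle w',z'\rangle$, and verify the two Bowen-ball memberships --- is the same as the paper's; your $w'$ and $z'$ are exactly $\langle g_{-s}\eta, w\rangle$ and $\langle z, g_{-s}\eta\rangle$. The gap is in the estimate. The three-term triangle inequality in $d_{GS}$ cannot deliver the $3\eps$ bound. The middle term alone is only bounded by $|s|+\eps\le(\tfrac{1}{n_0}+\eps)+\eps$, which already exhausts the $3\eps$ budget, and the first and third terms are genuinely nonzero: at $t=0$ the first is the backward part of $d_{GS}(w,g_{-s}\eta)$ and the third is the backward part of $d_{GS}(z,w)$, each of order $\diam R(\eps)$, i.e.\ up to roughly $4\eps$. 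They decay exponentially in $t$, as you say, but that does not help at small $t$, so the sum can be well over $3\eps$. Your worry that ``each of the three contributions genuinely sits below $\eps$'' is therefore well-founded --- they do not.

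The structural fact your decomposition throws away, and which makes the paper's computation tight, is that $w'=\langle g_{-s}\eta,w\rangle$ \emph{agrees with $w$ in backward time}; that is precisely what puts $w'\in W^u(w)$. Hence in
\[
d_{GS}(g_tw',g_tw)=\int_{-\infty}^{\infty}e^{-2|u|}\,d_{\tilde S}\bigl(\tilde w'(u+t),\tilde w(u+t)\bigr)\,du
\]
the integrand vanishes identically for $u+t\le 0$; there is no backward contribution at all to account for. The paper then applies the triangle inequality \emph{inside the integral, in $d_{\tilde S}$}, only over the surviving forward range, comparing $\tilde w'(u+t)=\tilde\eta(u+t-s)$ to $\tilde\eta(u+t)$ (total cost $|s|$) and then to $\tilde w(u+t)=\tilde\xi(u+t)$, the latter integral being dominated by $d_{GS}(g_t\eta,g_t\xi)\le\eps$. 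Working at the $d_{GS}$ level instead, as you propose, re-introduces the backward halves of all three distances (they do not cancel), and also drags in Lemmas~\ref{lemma: expansion u} and~\ref{lemma: boundedness cs}, which are not needed here. To repair the argument, replace the $d_{GS}$ triangle inequality by this pointwise-in-$S$ computation over the forward half of the integral; the rest of your outline then goes through.
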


\begin{proof}
First, for ease of notation, set $\xi := \langle w,z\rangle$. Now let $x\in B_{[-n,m]}(\xi,\eps)$. We will show that there exists $s\in \left[-\eps + \frac{1}{n_0},\eps + \frac{1}{n_0}\right]$ such that $g_sx\in R(\eps)$. By our choice of $\eps \leq \frac{\delta}{4}$, $x$ shares a geodesic segment around time $-\tau$ with $\xi$ (see the proof of Lemma \ref{lemma: bracket_su geometry}). Consequently, there exists $s\in\mathbb{R}$ such that $x(s) = \xi(-\tau)$, and by assumption on $x$, we know that $s\in [-\tau -\eps,-\tau + \eps]$.

With that $s$ fixed, we will show that $g_{-s}x\in \langle B_m^u(w,3\eps),B_n^s(z,3\eps)\rangle$. A key observation will be that for all $k\in\mathbb{N}$, we have

\begin{align*}
d_{GS}(g_kx,g_k\xi) &= \int_{-\infty}^{+\infty}e^{-2|t|}d_{\tilde{S}}(\tilde{x}(t + k),\tilde{\xi}(t + k))\,dt
\\
&= \int_{-\infty}^{-k}e^{-2|t|}d_{\tilde{S}}(\tilde{x}(t + k),\tilde{z}(t + k))\,dt + \int_{-k}^{+\infty}e^{-2|t|}d_{\tilde{S}}(\tilde{x}(t + k),\tilde{w}(t + k))\,dt.
\end{align*}

Now, set $\eta := \langle g_{-s}x,w\rangle$ and $\zeta := \langle z,g_{-s}x\rangle$. Let $0\leq k \leq m$. Then observe

\begin{align*}
d_{GS}(g_k\eta,g_kw) &= \int_{-\infty}^{+\infty} e^{-2|t|}d_{\tilde{S}}(\tilde{\eta}(t + k), \tilde{w}(t + k))\,dt
\\
&= \int_{-k}^{+\infty} e^{-2|t|}d_{\tilde{S}}(g_{-s}\tilde{x}(t + k),\tilde{w}(t+k))\,dt
\\
&\leq \int_{-k}^{+\infty}e^{-2|t|}s\,dt + \int_{-k}^{+\infty}e^{-2|t|}d_{\tilde{S}}(\tilde{x}(t + k),\tilde{w}(t + k))\,dt
\\
&\leq s + d_{GS}(g_kx,g_k\xi)
\\
&\leq 3\eps.
\end{align*}
A similar argument holds for $-n\leq k\leq 0$.
\end{proof}

We can use our previous lemmas to provide estimates using the Gibbs properties.

\begin{lemma}\label{lemma: upper Gibbs estimate}
Let $x,w\in g_{\tau}R(\eps)$ for some $\tau\in \left[-\frac{1}{n_0},\frac{1}{n_0}\right]$. There exists $Q_1 := Q_1(\eps)$ such that for all $a<b \in \left[-\frac{1}{n_0},\frac{1}{n_0}\right]$ and for all $n,m > 0$,
$$\mu\left(g_{[a,b]}\langle B_m^u(w,\eps),B_n^s(x,\eps)\rangle\right) \leq \frac{n_0(b-a)}{2}Q_1e^{-(n+m)P(\phi) + \int_{-n}^m\phi(g_sw)\,ds}.$$
\end{lemma}

\begin{proof}
By Lemma \ref{lem:ratio of flow boxes}, it suffices to estimate $\mu(g_{\left[-\frac{1}{n_0},\frac{1}{n_0}\right]}\langle B_m^u(w,\eps),B_n^s(x,\eps)\rangle)$. Then, appealing to Lemma \ref{lemma: bowen ball contains bracket} and the upper Gibbs property (Proposition \ref{prop: Upper Gibbs}),
\begin{align*}
\mu\left(g_{[a,b]}\langle B_m^u(w,\eps),B_n^s(x,\eps)\rangle\right) &= \frac{n_0(b-a)}{2}\mu\left(g_{\left[-\frac{1}{n_0},\frac{1}{n_0}\right]}\langle B_m^u(w,\eps),B_n^s(x,\eps)\rangle\right)
\\
&\leq \frac{n_0(b-a)}{2}\mu\left(B_{[-n,m]}(\langle w,x\rangle, 4\eps)\right)
\\
&\leq \frac{n_0(b-a)}{2}Q_1e^{-(n+m)P(\phi) + \int_{-n}^m\phi(g_s\langle w,x\rangle)\,ds}.
\end{align*}
\end{proof}

While the estimate on the upper bound was straightforward, the lower bound is slightly more complicated, due to the fact that the lower Gibbs property of $\mu$ is non-uniform.

\begin{lemma}\label{lemma: lower Gibbs estimate}
Let $x,w\in R(\eps)$. Then for all $a<b\in \left[-\frac{1}{n_0},\frac{1}{n_0}\right]$ and $n,m \geq t_R$ as in Lemma \ref{lem: bracket and flow interaction}, there exists $Q_2 := Q_2(\eps,\lambda(g_{-n}w),\lambda(g_{m}w))$ such that 
$$\mu\left(g_{[a,b]}\langle B_m^u(w,\eps),B_n^s(x,\eps)\rangle\right) \geq \frac{n_0(b-a)}{2(n_0\eps + 1)}Q_2(\eps,\lambda(g_{-n}w),\lambda(g_m w))e^{-(n+m)P(\phi) + \int_{-n}^m\phi(g_sw)\,ds}.$$
\end{lemma}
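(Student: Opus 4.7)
The plan is to parallel the proof of Lemma~\ref{lemma: upper Gibbs estimate}, but with the inclusion of Lemma~\ref{lemma: bracket contains bowen ball} (Bowen ball contained in a thickened bracket set) replacing Lemma~\ref{lemma: bowen ball contains bracket}, Corollary~\ref{cor: unif Lower Gibbs} replacing the upper Gibbs bound, and the global Bowen property (Proposition~\ref{proposition: global Bowen property}) used at the end to transfer the Birkhoff integral from the bracket geodesic back to $w$.

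First I would apply Lemma~\ref{lemma: bracket contains bowen ball} at scale $\eps/3$ to obtain
\[ B_{[-n,m]}(\langle w,x\rangle, \eps/3) \subset g_{[-\eps/3+1/n_0,\,\eps/3+1/n_0]}\langle B_m^u(w,\eps), B_n^s(x,\eps)\rangle. \]
Since $1/n_0 \le \eps$ by the standing assumption on $n_0$, I may enlarge the flow interval on the right to $[-(\eps+1/n_0), \eps+1/n_0]$. Then Lemma~\ref{lem:ratio of flow boxes} with $a = \eps + 1/n_0 \le 2\eps$ gives
\[ \mu\bigl(g_{[-1/n,1/n]}\langle B_m^u(w,\eps),B_n^s(x,\eps)\rangle\bigr) \geq \frac{1}{n(\eps+1/n_0)}\,\mu\bigl(B_{[-n,m]}(\langle w,x\rangle,\eps/3)\bigr), \]
and the prefactor simplifies to $\tfrac{n_0}{n(n_0\eps+1)}$, matching the combinatorial factor in the claim.

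Setting $\xi := \langle w,x\rangle$, the next step is to bound $\mu(B_{[-n,m]}(\xi,\eps/3))$ from below via Corollary~\ref{cor: unif Lower Gibbs} applied to the forward orbit segment $(g_{-n}\xi, n+m)$. For this I need a uniform lower bound on $\lambda$ at its endpoints. By Lemma~\ref{lem: bracket and flow interaction}, for $n,m \geq t_R$ one has $\lambda(g_m\xi) = \lambda(g_m w)$ and $\lambda(g_{-n}\xi) = \lambda(g_{-n}x)$. Since $w\in W^u(x,\delta)$, Lemma~\ref{lemma: geometry of u and cs} makes $w$ and $x$ $-$-cone point asymptotic, so $g_{-n}w = g_{-n}x$ for large $n$ and hence $\lambda(g_{-n}x)=\lambda(g_{-n}w)$. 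With $\eta := \min\{\lambda(g_{-n}w),\lambda(g_m w)\}$ this gives
\[ \mu\bigl(B_{[-n,m]}(\xi,\eps/3)\bigr) \geq Q_2'(\eps/3, \eta)\,e^{-(n+m)P(\phi)+\int_{-n}^m\phi(g_s\xi)\,ds}. \]
The finitely many short cases with $n<t_R$ or $m<t_R$ contribute at most a bounded multiplicative error (absorbed into $Q_2$), since over such bounded windows the integrals differ by at most a constant multiple of $\|\phi\|_\infty$.

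The final step is to replace $\int_{-n}^m\phi(g_s\xi)\,ds$ by $\int_{-n}^m\phi(g_s w)\,ds$ via Proposition~\ref{proposition: global Bowen property}. For $s\ge 0$, $\xi\in W^s(w,\delta)$ and Corollary~\ref{cor: cs bound} yield $d_{GS}(g_s\xi,g_s w)\le D\delta$. For $s\le 0$, since both $\xi$ and $w$ lie in $W^u(x,\delta)$, they lie on a common local unstable leaf; combining Lemma~\ref{lemma: geometry of u and cs} with Corollary~\ref{cor:u expansion} then gives $d_{GS}(g_s\xi,g_s w)\le C\delta$ (with exponential contraction toward $-\infty$). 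Shrinking $\delta$ so that $\max\{C,D\}\delta$ is below the global Bowen scale, Proposition~\ref{proposition: global Bowen property} yields $\bigl|\int_{-n}^m(\phi(g_s\xi)-\phi(g_s w))\,ds\bigr|\le K$ uniformly; setting $Q_2(\eps,\lambda(g_{-n}w),\lambda(g_m w)) := Q_2'(\eps/3,\eta)\,e^{-K}$ then gives the claim. The main obstacle is the closeness of $\xi$ and $w$ on the backward half $[-n,0]$: the forward half is immediate from $\xi\in W^s(w,\delta)$, but the backward half requires noticing that $\xi,w$ share a common local unstable leaf through the regular point $x$, and invoking cone-point asymptoticity to promote the raw inclusion $\xi\in W^u(x,\delta)$ into a uniform pointwise bound on $d_{GS}(g_s\xi,g_sw)$.
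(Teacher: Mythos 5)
Your outline (Lemma~\ref{lemma: bracket contains bowen ball} at scale $\eps/3$, then Lemma~\ref{lem:ratio of flow boxes} with $a=\eps+1/n_0$, then Corollary~\ref{cor: unif Lower Gibbs}) is exactly the paper's route, and the prefactor bookkeeping is right. However, the entire second half of your argument—invoking Lemma~\ref{lem: bracket and flow interaction}, cone-point asymptoticity, Corollary~\ref{cor: cs bound}, Corollary~\ref{cor:u expansion}, and the global Bowen property to compare $\xi := \langle w,x\rangle$ with $w$, plus the worry about ``finitely many short cases with $n<t_R$''—is unnecessary. Since $w\in V_x^u\subset W^u(x,\delta)$ and trivially $w\in W^s(w,\delta)$, the point $w$ lies in $W^s(w,\delta)\cap W^u(x,\delta)=\langle w,x\rangle$, and the $su$-bracket of two regular geodesics is a single point, so $\langle w,x\rangle = w$. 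Consequently $\lambda(g_{-n}\xi)=\lambda(g_{-n}w)$, $\lambda(g_m\xi)=\lambda(g_m w)$, and $\int_{-n}^m\phi(g_s\xi)\,ds=\int_{-n}^m\phi(g_sw)\,ds$ hold exactly, with no Bowen comparison and no exceptional small $n,m$ to handle; Corollary~\ref{cor: unif Lower Gibbs} applied to the segment $(g_{-n}w,\,n+m)$ at scale $\eps/3$ gives the bound immediately. (This is also why $Q_2$ depends only on $\eps$ and the two $\lambda$-values, with no residual $e^{-K}$ factor in the paper's statement.)

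One substantive error in the write-up: you assert ``$g_{-n}w = g_{-n}x$ for large $n$,'' citing $-$-cone point asymptoticity. As geodesics these are never equal — $w$ and $x$ diverge in forward time — so $g_{-n}w\neq g_{-n}x$ for every $n$. What is true (and is what you need) is that $w(r)=x(r)$ for all $r\leq t_2$ for some $t_2>0$ (from the proof of Lemma~\ref{lemma: bracket_su geometry}), so that $g_{-n}w$ and $g_{-n}x$ coincide on $(-\infty,\,t_2+n]$; since $\lambda$ is determined by the geodesic's behavior in a bounded window around time $0$, this gives $\lambda(g_{-n}w)=\lambda(g_{-n}x)$ once $n$ is large enough that the window is contained in $(-\infty,\,t_2+n]$. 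In any case, because $\langle w,x\rangle=w$, this step is moot.

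Net assessment: the proposal reaches the right conclusion but by a longer and partly mis-stated path; the key simplification $\langle w,x\rangle=w$ collapses the latter half of the argument.
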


\begin{proof}
By our choice of $n,m\geq t_R$ and Lemma \ref{lem: bracket and flow interaction} we have $\lambda(g_{-n}\langle w,x\rangle) = \lambda(g_{-n}x)$ and $\lambda(g_m\langle w,x\rangle) = \lambda(g_mw)$. Then, we again use Lemma \ref{lem:ratio of flow boxes}, together with Lemma \ref{lemma: bracket contains bowen ball} and Corollary \ref{cor: unif Lower Gibbs} to obtain:
\begin{align*}
\mu\left(g_{[a,b]}\langle B_m^u(w,\eps),B_n^s(x,\eps)\rangle\right) &= \frac{n_0(b-a)}{2(n_0\eps + 1)}\mu\left(g_{\left[-\left(\eps + \frac{1}{n_0}\right),\eps + \frac{1}{n_0}\right]}\langle B_m^u(w,\eps),B_n^s(x,\eps)\rangle\right)
\\
&\geq \frac{n_0(b-a)}{2(n_0\eps + 1)}\mu\left(B_{[-n,m]}\left(\langle w,x\rangle, \frac{\eps}{3}\right)\right)
\\
&\geq \frac{n_0(b-a)}{2(n_0\eps + 1)}Q_2(\eps,\lambda(g_{-n}w),\lambda(g_m w))e^{-(n+m)P(\phi) + \int_{-n}^m\phi(g_sw)\,ds}.
\end{align*}
\end{proof}

 We now prove a few more helpful lemmas.

\begin{definition}\label{defn:pi maps}
Given $y\in \mathcal{B}(n_0,\eps)$ let $\pi_y: \mathcal{B}(n_0,\eps) \to V_y^u$ be defined by $\pi_y(z) = [z,y]$. For any $x \in \mathcal{B}(n_0,\eps)$, we define $\pi_{x,y}: V^u_x \to V^u_y$ be the restriction of $\pi_y$ to $V^u_x.$
\end{definition}

\begin{lemma}\label{lem: continuity of bracket}
The function $\pi_{x,y}$ is Lipschitz continuous with constant $e^{\frac{4}{n_0}}$. Furthermore, there exists $|\rho|\leq \frac{2}{n_0}$ such that $\pi_{x,y}(z) := \langle g_{\rho}z,y\rangle$ for all $z\in V^u_x$, and $\pi_{y,x}(z) := \langle g_{-\rho}z,x\rangle.$
\end{lemma}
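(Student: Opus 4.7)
The plan is to work in the explicit $(\gamma,\eta,t)$-coordinates on $\mathcal{B}(n_0,\eps)=g_{[-1/n_0,1/n_0]}R_{x_0}(\eps)$. Write $x=g_{t_x}\langle\gamma_x,\eta_x\rangle$ and $y=g_{t_y}\langle\gamma_y,\eta_y\rangle$ with $\gamma_{\cdot}\in W^u(x_0,\eps)$, $\eta_{\cdot}\in W^s(x_0,\eps)$, and $t_x,t_y\in[-1/n_0,1/n_0]$. The first step is to identify $V_x^u$ in these coordinates: flow-invariance of the strong unstable foliation (Corollary~\ref{cor: flow preserves unstable}) together with a Busemann function calculation at $-\tilde x$ of the type carried out in Corollary~\ref{corollary: busemann under flow} shows that any $z\in V_x^u$ can be written $z=g_{t_x}\langle\gamma_z,\eta_x\rangle$ for some $\gamma_z\in W^u(x_0,\eps)$, so $V_x^u=\{g_{t_x}\langle\gamma,\eta_x\rangle : \gamma\in W^u(x_0,\eps)\}$. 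I expect this to be the main technical wrinkle: a priori the unstable leaf could wiggle in the flow direction inside the box, and what rules this out is the Busemann condition along $-\tilde x$, which pins the flow parameter down to $t_x$ (and similarly forces the stable coordinate to be $\eta_x$).

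With this in hand, set $\rho:=t_y-t_x$; this depends only on $x$ and $y$ and satisfies $|\rho|\leq 2/n_0$. For $z=g_{t_x}\langle\gamma_z,\eta_x\rangle\in V_x^u$, Lemma~\ref{lem: bowen bracket in box} gives $\pi_{x,y}(z)=[z,y]=g_{t_y}\langle\gamma_z,\eta_y\rangle$. On the other hand $g_\rho z=g_{t_y}\langle\gamma_z,\eta_x\rangle$ shares its forward ray $\gamma_z$ with $g_{t_y}\langle\gamma_z,\eta_y\rangle$, while $y=g_{t_y}\langle\gamma_y,\eta_y\rangle$ shares its backward ray $\eta_y$ with it, so (arguing as in Lemma~\ref{lem: bowen bracket in box}) $\langle g_\rho z,y\rangle=g_{t_y}\langle\gamma_z,\eta_y\rangle=\pi_{x,y}(z)$. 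Exchanging the roles of $x$ and $y$ gives $\pi_{y,x}(z)=\langle g_{-\rho}z,x\rangle$.

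For the Lipschitz estimate, take $z_1,z_2\in V_x^u$, so $z_i=g_{t_x}\langle\gamma_{z_i},\eta_x\rangle$ and $\pi_{x,y}(z_i)=g_{t_y}\langle\gamma_{z_i},\eta_y\rangle$. Both pairs of geodesics agree on $(-\infty,0]$ and differ only on $[0,\infty)$, so choosing compatible lifts and setting $F(u):=d_{\tilde\SSS}(\tilde\gamma_{z_1}(u),\tilde\gamma_{z_2}(u))$, the integral definition of $d_{G\SSS}$ reduces to
\[d_{G\SSS}(z_1,z_2)=\int_0^{\infty} e^{-2|u-t_x|}F(u)\,du \quad\text{and}\quad d_{G\SSS}(\pi_{x,y}(z_1),\pi_{x,y}(z_2))=\int_0^{\infty} e^{-2|u-t_y|}F(u)\,du.\]
The pointwise estimate $e^{-2|u-t_y|}\leq e^{2|t_x-t_y|}e^{-2|u-t_x|}\leq e^{4/n_0}e^{-2|u-t_x|}$ then yields the claimed Lipschitz constant $e^{4/n_0}$, and the analogous argument with $x$ and $y$ exchanged gives the same bound for $\pi_{y,x}$.
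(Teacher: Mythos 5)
Your proof is correct and takes essentially the same approach as the paper. The paper identifies $\rho$ as $r-s$ where $g_rx,g_sy\in R(\eps)$ (i.e., $\rho=t_y-t_x$ in your notation), pins it down via the Busemann-invariance corollary and uniqueness of the bracket, and then derives the Lipschitz bound from the cut-and-paste description of $\langle\cdot,\cdot\rangle$ together with the $e^{2|\rho|}$-Lipschitz property of $g_\rho$; your version reaches the same two integrals $\int_0^\infty e^{-2|u-t_x|}F(u)\,du$ and $\int_0^\infty e^{-2|u-t_y|}F(u)\,du$ directly in coordinates, which is an equivalent computation. Your explicit identification $V_x^u=\{g_{t_x}\langle\gamma,\eta_x\rangle\}$ via the Busemann pinning of the flow parameter is a worthwhile clarification that the paper leaves somewhat implicit (it gets there via the observation that $[x,y]$ and $\langle g_rx,g_sy\rangle$ share endpoints, hence differ by a flow time that Corollary~\ref{corollary: busemann under flow} determines), but it is the same underlying mechanism.
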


\begin{proof}

We begin by showing the second part of this lemma. First, we know that there exists $r,s\in \left[-\frac{1}{n_0},\frac{1}{n_0}\right]$ such that $g_rx,g_sy\in R(\eps)$. Consequently, $\langle g_rx,g_sy\rangle$ is defined and we have that $\langle g_rx,g_sy\rangle (\pm\infty) = [x,y](\pm\infty)$. As $[x,y]$ consists of only one geodesic (because it contains a regular geodesic), it follows that there exists $\tau$ such that $g_{\tau}\langle g_rx,g_sy\rangle = [x,y]$. Because we know that $[x,y]\in W^u(y,d_{GS}(x,y))$ and $\langle g_rx,g_sy\rangle\in W^u(g_sy,d_{GS}(g_sy,\langle g_rx,g_sy\rangle)$, by Corollary \ref{corollary: busemann under flow}, it follows that $\tau = -s$, and so we have that $[x,y] = g_{-s}\langle g_rx,g_sy\rangle$. Now, it follows that $[x,y] = \langle g_{r-s}x,y\rangle$ by Corollary \ref{corollary: busemann under flow}, and the fact that $d_{GS}(\langle g_{r-s}x,y\rangle, y) \leq d_{GS}(g_{r-s}x,y)$.  Thus, taking $\rho = r - s$, we have $|\rho| \leq \frac{2}{n_0}$, and complete the proof of the second part of the lemma.

For the first, observe that
	\begin{align*}
		d_{GS}([w,y],[z,y]) &= d_{GS}(\langle g_\rho w,y\rangle, \langle g_\rho z,y\rangle)
		\\
		&\leq d_{GS}(g_\rho w,g_\rho z)
		\\
		&\leq e^{2|\rho|}d_{GS}(w,z).
	\end{align*}
The first inequality follows from the description of how $\langle-,-\rangle$ works within $\mathcal{B}(n_0,\eps)$ given by Lemma \ref{lemma: bracket_su geometry}. Specifically, $\langle g_\rho w,y\rangle(r)=\langle g_\rho w,y\rangle(r)=y(r)$ for all $r\leq 0$ but for $r\geq 0$, $\langle g_\rho w,y\rangle(r) = g_\rho w(r)$ and $\langle g_\rho z,y\rangle(r) = g_\rho z(r)$.
\end{proof}

\begin{corollary}\label{cor: continuity of bracket}
Given $x\in \mathcal{B}(n_0,\eps)$, $\pi_x$ is Lipschitz continuous, with constant $e^{\frac{2}{n_0}}$.
\end{corollary}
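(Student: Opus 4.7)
The plan is to mimic the argument in Lemma~\ref{lem: continuity of bracket}, but exploit the fact that the target leaf $V^u_x$ is fixed, so only the flow coordinate of $x$ (rather than the difference of two flow coordinates) enters into the Lipschitz constant. Concretely, given $w,z\in\mathcal{B}(n_0,\eps)$, I would use the flow-box decomposition from Lemma~\ref{lem: bowen bracket in box} to write $w=g_{t_w}\langle\gamma_w,\eta_w\rangle$, $z=g_{t_z}\langle\gamma_z,\eta_z\rangle$, and $x=g_{t_x}\langle\gamma_x,\eta_x\rangle$ with $|t_w|,|t_z|,|t_x|\leq 1/n_0$, so that $[w,x]=g_{t_x}\langle\gamma_w,\eta_x\rangle$ and $[z,x]=g_{t_x}\langle\gamma_z,\eta_x\rangle$.

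The key observation is that $[\cdot,x]$ depends on the first argument only through its unstable component $\gamma$: since $W^{cs}$ is flow-invariant, $[g_s w,x]=[w,x]$ for all small $s$. This is what allows the improvement from $e^{4/n_0}$ to $e^{2/n_0}$ versus Lemma~\ref{lem: continuity of bracket}; there the constant comes from $|\rho|=|r-s|\leq 2/n_0$, but here only $|t_x|\leq 1/n_0$ will appear because the $t_w$-coordinate has been absorbed by the flow invariance in the first slot.

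The computation itself I would carry out exactly as in Lemma~\ref{lem: continuity of bracket}: since $\langle\gamma_w,\eta_x\rangle$ and $\langle\gamma_z,\eta_x\rangle$ both equal $\eta_x$ on $(-\infty,0]$, the integrated distance reduces to an integral over $[0,\infty)$ of $d_{\tilde S}(\tilde\gamma_w(v),\tilde\gamma_z(v))$ against the weight $e^{-2|v-t_x|}$, and the pointwise estimate $e^{-2|v-t_x|}\leq e^{2|t_x|}e^{-2|v|}\leq e^{2/n_0}e^{-2|v|}$ produces the factor $e^{2/n_0}$.

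The main obstacle will be to show that the resulting integral $\int_0^\infty e^{-2|v|}d_{\tilde S}(\tilde\gamma_w(v),\tilde\gamma_z(v))\,dv$ is bounded by $d_{GS}(w,z)$. This is where I would use that, inside the flow box, $w$ and $\gamma_w$ are $+$-cone-point asymptotic with time shift $t_w$ (so $\tilde w(v-t_w)=\tilde\gamma_w(v)$ for $v$ past the shared cone point), and similarly for $z$ and $\gamma_z$, to identify the positive-time part of the $\gamma$-integral with the corresponding contribution to $d_{GS}(w,z)$ after a change of variables; choosing the lifts of $w$ and $z$ that minimize $d_{G\tilde S}(\tilde w,\tilde z)$ then gives the required inequality.
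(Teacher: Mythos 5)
The approach you take is genuinely different from the paper's. The paper reduces to the already-proved Lemma~\ref{lem: continuity of bracket} via the bracket identity $[y,x]=[[y,z],x]$ together with $d_{GS}(z,[y,z])\leq d_{GS}(y,z)$, so that the two arguments fed to the Lipschitz map $\pi_{z,x}$ both lie on the single unstable leaf $V^u_z$; the time--mismatch issue then never arises because the paper handles it once, inside Lemma~\ref{lem: continuity of bracket}. You instead try to repeat the Lemma~\ref{lem: continuity of bracket} computation directly on arbitrary $w,z\in\mathcal{B}(n_0,\eps)$. Your reduction to $d_{GS}([w,x],[z,x])=\int_0^\infty e^{-2|v-t_x|}\,d_{\tilde S}(\tilde\gamma_w(v),\tilde\gamma_z(v))\,dv$ and the pointwise bound $e^{-2|v-t_x|}\leq e^{2/n_0}e^{-2|v|}$ are both correct.

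The gap is in your step showing $\int_0^\infty e^{-2|v|}\,d_{\tilde S}(\tilde\gamma_w(v),\tilde\gamma_z(v))\,dv\leq d_{GS}(w,z)$, i.e.\ $d_{GS}(\gamma_w,\gamma_z)\leq d_{GS}(w,z)$. This is false. Already in the friendly case $w=g_t\gamma_w$, $z=g_t\gamma_z$ with $t_w=t_z=t<0$, the explicit computation (using that $\gamma_w,\gamma_z$ coincide up to the shared cone time $t_2>0\geq|t|$) gives $d_{GS}(w,z)=d_{GS}(g_t\gamma_w,g_t\gamma_z)=e^{2t}\,d_{GS}(\gamma_w,\gamma_z)$, so $d_{GS}(\gamma_w,\gamma_z)=e^{2|t|}\,d_{GS}(w,z)>d_{GS}(w,z)$; the flow coordinate $t_w$ is not ``absorbed by flow invariance'' as you claim, it contributes a factor $e^{2|t_w|}$. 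When $t_w\neq t_z$ there is a further problem: $\tilde\gamma_w(v)=\tilde w(v-t_w)$ while $\tilde\gamma_z(v)=\tilde z(v-t_z)$, so no single change of variable rewrites $d_{\tilde S}(\tilde\gamma_w(v),\tilde\gamma_z(v))$ as $d_{\tilde S}(\tilde w(s),\tilde z(s))$, and the lifts are already fixed by the flow-box structure, so ``choosing minimizing lifts'' gives no extra freedom. Repairing these two points re-introduces a second factor of $e^{2/n_0}$ (at least), so the direct computation yields at best $e^{4/n_0}$. This is in fact what the paper's own displayed chain of inequalities produces; the stated constant $e^{2/n_0}$ in the corollary appears to be a typo for $e^{4/n_0}$, so aiming for $e^{2/n_0}$ was never a realistic target.
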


\begin{proof}
Let $y,z\in \mathcal{B}(n_0,\eps)$. Then, since $[y,x] = [[y,z],x]$ and $d_{GS}(z,[y,z]) \leq d_{GS}(y,z)$, we have that
$$d_{GS}([y,x],[z,x]) = d_{GS}(\pi_{z,x}([y,z]),\pi_{z,x}(z)) \leq e^{\frac{2}{n_0}}d_{GS}([y,z],z) \leq e^{\frac{4}{n_0}}d_{GS}(y,z).$$
\end{proof}

\subsubsection{Proof of the local product structure}\label{sec:lps}

In this section, we continue the same choices of $R(\eps)$ and $\mathcal{B}(n_0,\eps)$. The local product structure we want from $\mu$ is the following.

\begin{definition}\label{defn:lps}
A measure $\mu$ has \emph{local product structure} on $\mathcal{B}(n,\eps)$ if there exist $x\in \mathcal{B}(n,\eps)$ and measures $\nu^{cs,u}$ on $V_x^{cs,u}$ such that $\mu|_{\mathcal{B}(n,\eps)} \ll \nu^u \otimes \nu^{cs}$ where $\nu^u \otimes \nu^{cs}$ is the measure on $\mathcal{B}(n,\eps)$ given by pushing forward the product measure $\nu^u \times \nu^{cs}$ under $[-,-] \colon V^u_x \times V^{cs}_x \to \mathcal{B}(n,\eps)$.
\end{definition}

The measures $\nu^{cs,u}$ will be conditional measures induced by $\mu$ on $V_x^{cs,u}$, and proving this can be reduced to understanding how conditional measures on the local unstable leaves behave under the holonomy map, which maps from one unstable leaf to another along the center-stable leaves. The key result of this section is Proposition \ref{prop: main prop}. From this Proposition, we can easily prove Proposition \ref{prop: LPS} and Corollary \ref{cor: ae lps}. The local product structure is deduced from these in a straightforward fashion -- see, e.g., \S3 in \cite{ClimenhagaGibbs}.

We begin by showing how to obtain a compact subset of $V_x^u$ which satisfies the necessary properties to apply the results of \S\ref{sec: Partitions} while still allowing us to carry out the necessary estimates later.

For future ease of notation, let $$G_{\mu} := \{x\in \GS\mid g_{t_k}x\in B(z_0,\kappa) \text{ for a sequence } \{t_k\}_{k\in\mathbb{Z}} \text{ which tends to }\pm\infty \text{ as }k\to\pm\infty\},$$
and
$$G_{\mu}^{\pm} := \{x\in \GS\mid g_{t_k}x\in B(z_0,2\kappa) \text{ for a sequence } \{t_k\}_{k\in\mathbb{N}} \text{ which tends to }\pm\infty \text{ as }k\to\pm\infty\}.$$
As we will make use of various properties of these sets, we outline them below.

\begin{lemma}
For any $x\in \mathcal{B}(n_0,\eps)$, if $x\in G_{\mu}$, then $V^{cs}_x\subset G_{\mu}^+$ and $V^{cu}_x\subset G_{\mu}^{-}$. Additionally, $\mu(G_{\mu}) = 1$.
\end{lemma}

\begin{proof}
First, suppose $x\in G_{\mu}$. Then there exists $T$ such that for all $t\geq T$, $\operatorname{diam}(g_tV^s_x) \leq \kappa$. Therefore, if $g_{t_k}x\in B(z_0,\kappa)$ for some $t_k\geq T$, then $g_{t_k}V^s_x\subset B(z_0,2\kappa)$. As $G_{\mu}^{\pm}$ are flow-invariant, it follows that $V^{cs}_x\subset G_{\mu}^{+}$. A similar proof shows the desired result for $V^{cu}_x$. That $\mu(G_{\mu}) = 1$ follows from Poincar\'e recurrence and ergodicity of $\mu$, as well as the fact that $\mu(B(z_0,\kappa)) > 0$.
\end{proof}

\begin{lemma}\label{lem: generic times}
Let $c\in \left(0,\frac{\|\lambda\|_{\infty}}{4}\right)$, and recall that we've chosen $z_0\in\GS$ such that $\lambda(z_0) > 3c$. Letting $\kappa = \kappa(z_0)$ be as in Section~\ref{subsec: relations bowen} (see the first paragraph), for $z\in G_{\mu}$, there exists an increasing sequence $\{t_k\}_{k\in\mathbb{Z}}$ which tends to $\pm\infty$ as $k\to\pm\infty$, respectively, such that $\lambda(g_{t_k+s}z)\geq 2c$ for all $|s|\leq \kappa$
\end{lemma}

\begin{proof}
This follows directly from the fact that $\kappa$ is small enough so that $\lambda|_{B(z_0,3\kappa)} \geq \frac{2}{3}\lambda(z_0) > 2c$.
\end{proof}

\begin{proposition}\label{prop: Creation of compact}
Let $c \in \left(0,\frac{\|\lambda\|_{\infty}}{4}\right)$ and recall that we've chosen $z_0\in\GS$ such that $\lambda(z_0) > 3c$. Let $x\in\mathcal{B}(n_0,\eps)$. Then for all $|a|,|b| \leq \frac{1}{n_0}$ and $t\geq 0$, and for all $\bar\delta > 0$, we can find a compact subset $K\subset V^u_x\cap \operatorname{int}\mathcal{B}(n_0,\eps)$ such that:
 \begin{itemize} 
 \item[(a)] Let $\kappa = \kappa(z_0)$ be as in Section~\ref{subsec: relations bowen} (see the first paragraph). Then, for all $z\in K$, there exists a sequence $t_{k}\to\infty$ such that $\lambda(g_{t_k+s}z)\geq 2c$ for all $|s|\leq \kappa$;
 \item[(b)] $\mu(g_{[a,b]}\langle K, B_{t}^s(x,\eps)\rangle)\geq (1-\bar\delta)\mu(g_{[a,b]}\langle V_x^u, B_{t}^s(x,\eps)\rangle)$.
\end{itemize}
\end{proposition}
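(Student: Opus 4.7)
The strategy is to produce a full-$\mu$-measure set $A$ of geodesics satisfying condition~(a); to introduce two natural finite Borel measures $m_x, m_y$ on $V_x^u$ recording the ``tube mass'' as seen from $x$ and from $y$; to observe that $V_x^u\setminus A$ is negligible under both measures; and to extract $K$ by inner regularity of $m_x+m_y$.

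For the first step, consider the function $F(z):=\inf_{|s|\leq\kappa}\lambda(g_sz)$. It is lower semi-continuous as an infimum of the lower semi-continuous functions $\lambda\circ g_s$, so $\{F>2c\}$ is open. One checks nonemptiness by a short construction: using the positive minimum cone-angle excess $\theta_0$, build a closed geodesic in $\SSS$ that turns with angle bounded away from $\pm\pi$ at cone points densely enough for every window of length $2\kappa$ to contain such a turn, so that its entire orbit lies in $\{\lambda\geq 2c\}$. The lower Gibbs bound (Corollary~\ref{cor: unif Lower Gibbs}) then forces $\mu\{F>2c\}>0$. Since $(\GS,g_t,\mu)$ is ergodic by the $K$-property of~\cite{CCESW}, Birkhoff's theorem gives a full-$\mu$-measure set $A$ of points whose forward orbit visits $\{F>2c\}$ along an unbounded sequence $t_k$; these are precisely the points satisfying~(a).

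For the remaining steps, first note that by Lemma~\ref{lem: continuity of bracket}, $\pi_{x,y}(z)=\langle g_\rho z,y\rangle$ with $|\rho|\leq 2/n_0\leq\kappa/2$ (using $n_0\geq 4/\kappa$), and by Lemma~\ref{lem: bracket and flow interaction}, $\lambda(g_t\pi_{x,y}(z))=\lambda(g_{t+\rho}z)$ for $t\geq t_R$. A shift by $\rho$ transfers~(a) between $z$ and $\pi_{x,y}(z)$, so $\pi_{x,y}(A\cap V_x^u)=A\cap V_y^u$. The same lemma applied to points on the weak-stable leaves through $B_n^s(x,\eps)$ shows that every point of $g_{[-1/n,1/n]}\langle V_x^u\setminus A,B_n^s(x,\eps)\rangle$ is forward-asymptotic (up to a short flow) to a point of $V_x^u\setminus A$ and hence itself fails (a); the analogous statement holds for the $y$-tube. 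Defining
\[m_x(E):=\mu\bigl(g_{[-1/n,1/n]}\langle E,B_n^s(x,\eps)\rangle\bigr),\qquad m_y(E):=\mu\bigl(g_{[-1/n,1/n]}\langle \pi_{x,y}(E),B_n^s(y,\eps)\rangle\bigr),\]
we obtain finite Borel measures on $V_x^u$ (countable additivity uses injectivity of the bracket and short flow at scale~$\eps$, available since $n_0>1/s_0$) with $m_x(V_x^u\setminus A)=m_y(V_x^u\setminus A)=0$. Normalizing to the probability measure $\nu:=\tfrac{1}{2}m_x/m_x(V_x^u)+\tfrac{1}{2}m_y/m_y(V_x^u)$ on $V_x^u$, which is concentrated on $A\cap V_x^u$, and invoking inner regularity on the Polish space $V_x^u$, we select a compact $K\subset A\cap V_x^u\cap\operatorname{int}\mathcal{B}(n_0,\eps)$ with $\nu(V_x^u\setminus K)<\bar\delta/2$. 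This forces $m_x(V_x^u\setminus K)<\bar\delta\,m_x(V_x^u)$ and $m_y(V_x^u\setminus K)<\bar\delta\,m_y(V_x^u)$, yielding (b) and (c); property (a) is immediate from $K\subset A$.

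The main obstacle is the nonemptiness of $\{F>2c\}$ in the first step: this requires producing a single geodesic whose entire orbit over a window of length $2\kappa$ lies in $\{\lambda\geq 2c\}$, not merely one that visits $\{\lambda\geq 2c\}$. Lower semi-continuity of $\lambda$ is not enough on its own, and a modest geometric construction in $\SSS$ (possibly combined with further shrinking of $\kappa$ in the setup of Section~\ref{subsec: relations bowen}) is required. Once this is in hand, the ergodic and measure-theoretic steps above proceed routinely.
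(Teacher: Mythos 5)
Your overall architecture (find a full-measure set of good points, build finite ``tube'' measures on $V_x^u$ seen from both $x$ and $y$, extract $K$ by inner regularity) is sound and mirrors the paper's proof, which takes a compact $\mathcal{K}$ in the tube and projects it to $V_x^u$ via $[\,\cdot\,,x]$. However, there are two real problems in the first step, and the second of them is the one you flagged as the ``main obstacle'' --- which is in fact not an obstacle at all.

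First, the claim that $F(z)=\inf_{|s|\le\kappa}\lambda(g_s z)$ is lower semi-continuous ``as an infimum of lower semi-continuous functions'' is false: the \emph{supremum} of a family of l.s.c.\ functions is l.s.c.; the infimum need not be. So you cannot conclude that $\{F>2c\}$ is open from this argument, and you cannot apply the lower Gibbs bound to an open ball inside it unless you first produce such a ball by other means.

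Second, and more importantly, the ``nonemptiness of $\{F>2c\}$'' is not a gap requiring a new closed-geodesic construction --- it is immediate from the setup in Section~\ref{subsec: relations bowen} that the statement explicitly points you to. There, $z_0$ was chosen with $\lambda(z_0)\geq\frac34\lVert\lambda\rVert_\infty>3c$, and $\kappa$ was chosen (via lower semi-continuity of $\lambda$) so that $\lambda>2c$ on $B(z_0,3\kappa)$. Since $d_{GS}(g_s w,w)\le|s|$, for any $w\in B(z_0,\kappa)$ and $|s|\le\kappa$ we have $g_s w\in B(z_0,2\kappa)\subset B(z_0,3\kappa)$, hence $\lambda(g_s w)>2c$. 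Thus $B(z_0,\kappa)\subset\{F\geq 2c\}$, and $B(z_0,\kappa)$ is an open ball of positive measure by the lower Gibbs property. This both gives the positive measure you need and makes your proposed geometric construction of a specially-turning closed geodesic unnecessary. The paper exploits exactly this: generic points recur to $B(z_0,\kappa)$, which is all that is needed. Once the first step is repaired in this way, your measure-theoretic extraction of $K$ via $m_x$, $m_y$, and inner regularity goes through, and is a minor reorganization of the paper's argument (the paper shrinks inside the tube and projects, you shrink directly on $V_x^u$; both rely on the bracket maps being continuous and on the boundary of $\mathcal{B}(n_0,\eps)$ being null). You should also be a little more careful in asserting that the tube over $V_x^u\setminus A$ is $\mu$-null: the precise reason is that a point $w$ in that tube is forward $+$-cone-point asymptotic (after a bounded time shift) to a point $z\in V_x^u\setminus A$, so $\lambda(g_tw)$ and $\lambda(g_{t+r}z)$ eventually coincide, hence $w$ itself is not in $A$.
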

\begin{proof}

Let $n\geq n_0$ and $\bar\delta > 0$ be arbitrary. By inner regularity of $\mu$, let $\mathcal{K}\subset \operatorname{int}(\mathcal{B}(n_0,\eps))\cap G_\mu\cap g_{[a,b]}\langle V_x^u, B_t^s(x,\eps)\rangle$ be a compact subset of relative measure at least $1-\bar\delta$; that is,
$$\mu(\mathcal{K}) \geq (1-\bar\delta)\mu(G_\mu\cap g_{[a,b]}\langle V_x^u, B_t^s(x,\eps)\rangle) = (1-\bar\delta)\mu(g_{[a,b]}\langle V_x^u, B_t^s(x,\eps)\rangle).$$
Then, define $K_x := [\mathcal{K},x]$ to be the projection of $\mathcal{K}$ to $V_x^u$ under the bracket operation. As $\pi_x : \mathcal{B}(n_0,\eps)\to V_x^u$ is continuous by Corollary \ref{cor: continuity of bracket}, $K_x$ is compact. Now, $g_{\left[\frac{-1}{n},\frac{1}{n}\right]}\langle K_x,B_n^s(x,\eps)\rangle \supset \mathcal{K}$, and so has relative measure at least $1-\bar\delta$. Furthermore, it preserves the necessary recurrence properties. This is because any element $w\in g_{\left[\frac{-1}{n},\frac{1}{n}\right]}\langle K_x,B_n^s(x,\eps)\rangle$ is in $W^{cs}(z)$ for some $z\in G_\mu$. Therefore, using Lemma \ref{lem: generic times}, there exists some $r\in\mathbb{R}$ such that for all large enough $t$ and for all $|s|\leq \kappa$, $d(g_{r+t + s}w,g_tz) \leq 2\kappa$, and consequently, $\lambda(g_{r+t_{z,k} + s}w) \geq 2c$ for the tail of the sequence $t_{z,k}\to\infty$.
\end{proof}

The local unstable leaves $V_x^u$ form a measurable partition of $\mathcal{B}(n_0,\eps)$ as discussed in \cite{ClimenhagaGibbs} or \cite{CPZ}. (The proof of Proposition \ref{prop: integral_limit} below justifies the measurability.) Hence (see, e.g., \cite[Lemma 3.1]{ClimenhagaGibbs}), there is a unique system of conditional measures $\mu_x^u$ on the leaves $V_x^u$. The space of leaves $\{V_x^u\}_{x\in\mathcal{B}(n_0,\eps)}$ can be identified, using the (topological) local product structure, with $V^{cs}_x$ for any $x\in\mathcal{B}(n_0,\eps)$. For such an $x$ we define $\mu^{cs}_x$ to be the factor measure: $\mu^{cs}_x(E):=\mu\left(\bigcup_{y\in E}V^u_y\right)$.

\begin{remark*}
$(V^{cs}_x,\mu^{cs}_x)$ is (after renormalization) a Lebesgue space, as $V^{cs}_x$ is a complete, separable metric space. We note that from the definition of $\mu^{cs}_x$ and the flow-invariance of $\mu$, $\mu^{cs}_x$ has no atoms. Indeed, if $\mu^{cs}_x(\{y^*\})>0$ for some $y^*\in V^{cs}_x$, then for all small $t$,  $\mu(g_t V^u_{y^*}) = \mu(V^u_{y^*})=\mu^{cs}_x(\{y^*\})>0$. But for sufficiently small $t, s$, $g_t V^u_{y^*}$ and $g_s V^u_{y^*}$ are disjoint. Therefore, for small $\delta>0$, $\mu\left(\bigcup_{t\in(-\delta,\delta)} g_t V^u_{y^*}\right) = \infty$, a contradiction.
\end{remark*}

\begin{proposition}\label{prop: integral_limit}
Let $c\in \left(0,\frac{\lVert\lambda\rVert_{\infty}}{4}\right)$ and let $x_0\in R(\eps)\cap G_{\mu}$. There exists $R'\subset \mathcal B(n_0,\eps)$ with $\mu(B(n_0,\eps)\setminus R') = 0$ such that for all $x,y\in R'$, there exists a sequence of partitions of $\mathcal{B}(n_0,\eps)$, denoted $\{\xi_n\}_{n\in\mathbb{N}}$, such that for every continuous $\psi:\mathcal{B}(n_0,\eps) \to \mathbb{R}$,
$$\int_{V^u_x} \psi\,d\mu^u_x = \lim_{n\to\infty}\frac{1}{\mu(\xi_n(x))}\int_{\xi_n(x)}\psi\,d\mu,$$
and similarly for $y$. Furthermore, for all sufficiently large $n$, we have that
$$g_{\left(\frac{k}{2^n},\frac{k+1}{2^n}\right)}\left\langle V^u_{x_0},B^s_t\left(z,\frac{\eps}{2}\right)\right\rangle\subset \xi_n(x) \subset g_{\left(\frac{k}{2^n},\frac{k+1}{2^n}\right)}\langle V^u_{x_0},B^s_t(z,\eps)\rangle$$
for some $z\in V^s_{x_0}$, $\frac{-2^n}{n_0}\leq k \leq \frac{2^n}{n_0}-1$, and $t\geq 0$ with $\lambda(g_{-t}z)\geq c$, and similarly for $\xi_n(y)$.
\end{proposition}

\begin{proof}
Let $n\in\mathbb{N}$ be arbitrary. For convenience, define a probability measure $\nu$ on $V^s_{x_0}$ by $\nu(A) = \mu|_{\mathcal{B}(n_0,\eps)}\left(\{V^{cu}_z\mid z\in A\}\right)$. Observe that $\nu(G_{\mu}) = 1$, and so, $\nu(V^s_{x_0}\cap G_{\mu}^{-}) = 1$. Now, take $K_n\subset V^s_{x_0}\cap G_{\mu}^{-}\cap \operatorname{int}(\mathcal{B}(n_0,\eps))$ to be compact with $\nu(K_n)\geq 1-\frac{1}{n^3}$. This is possible because the boundary of the flow box has zero $\mu$-measure by Lemma \ref{lem:zero msr bdry}. Using Lemma \ref{lem: generic times}, Proposition \ref{prop: refining sequence} gives us a refining sequence of partitions of $V^s_{x_0}$, denoted $\{\zeta^n_i\}_{i\in\mathbb{N}}$, for which there exists a set $E^n\subset V^s_{x_0}$ with $\nu(V^s_{x_0}\setminus E^n) < \frac{1}{n^2}$ and for all $x\in E^n$ and all $i\in\mathbb{N}$, $\zeta_i(x)$ has the adapted-like properties outlined in Proposition \ref{prop: refining sequence}. Since $\sum_{n=1}^{\infty}\nu(V^s_{x_0}\setminus E^n) < \infty$, by Borel-Cantelli, we know that $\nu(\bigcap_{n=1}^{\infty}\bigcup_{k=n}^{\infty}V^s_{x_0}\setminus E^k) = 0$. Therefore, for a full $\nu$-measure subset $E'\subset V^s_{x_0}$, we have that every $x\in E'$ belongs to $E^n$ for all but finitely many $n\in\mathbb{N}$. Consequently, every $x,y\in E'$ both belong to $E^n$ for infinitely many $n\in\mathbb{N}$.

Using $\{\zeta^n_i\}_{i\in\mathbb{N}}$, we will now define a sequence of partitions of $\mathcal{B}(n_0,\eps)$. For each $n,i\in\mathbb{N}$, define a new partition by
$$\xi^n_i := \left\{g_{\left(\frac{m}{2^i},\frac{m+1}{2^i}\right)}\langle V^u_{x_0},\zeta^n_i\rangle \mid \frac{-2^i}{n_0} \leq m \leq \frac{2^i}{n_0} - 1\right\}.$$
This is a partition (up to measure zero) of $\mathcal{B}(n_0,\eps)$.
For each $n\in\mathbb{N}$, $\bigvee_{i=1}^{\infty}\xi^n_i = \{V^u_z\mid z\in \mathcal{B}(n_0,\eps)\}$. Therefore, by \cite[Proposition 8.2]{CPZ}, for all $n\in\mathbb{N}$, there exists a full measure set $R^n\subset \mathcal{B}(n_0,\eps)$ for which for all $x\in R^n$,
$$\int_{V^u_x}\psi\,d\mu^u_x = \lim\limits_{i\to\infty}\frac{1}{\mu(\xi^n_i(x))}\int_{\xi^n_i(x)}\psi\,d\mu$$
for all continuous $\psi : \mathcal{B}(n_0,\eps)\to \mathbb{R}$. Then we have that $\bigcap_{n=1}^{\infty} R^n$ is a full measure set for which this formula holds for the sequence of partitions $\{\xi^n_i\}_{i\in\mathbb{N}}$ for all $n\in\mathbb{N}$. Recalling our definition of $E'$, we define
$$R' := \bigcap_{n=1}^{\infty} R^n\cap g_{\left[-\frac{1}{n_0},\frac{1}{n_0}\right]}\langle V^u_{x_0}, E'\rangle\setminus \bigsqcup_{m=-2^n(n_0)^{-1}}^{2^n(n_0)^{-1}}g_{\frac{m}{2^n}}R(\eps),$$
which is a full measure set by construction.

Now, let $\gamma_1,\gamma_2\in R'$ be arbitrary. Then $\gamma_1 = g_{r_1}\langle x_1,y_1 \rangle$ and $\gamma_2 = g_{r_2}\langle x_2,y_2\rangle$ for $y_1,y_2\in E'$, $x_1,x_2\in V^u_{x_0}$, and $r_1,r_2\neq \frac{k}{2^n}$ for any $k\in\mathbb{N}$. Then, by our choice of $E'$, there exists some $n^*$ for which $\gamma_1,\gamma_2\in E^{n^*}$. Therefore, we see that for all $i\in\mathbb{N}$, we have that there exists $t_1,t_2\geq 0$ and $z_1,z_2\in V^s_{x_0}$ such that $\lambda(g_{-t_1}z_1)\geq c$, $\lambda(g_{-t_2}z_2)\geq c$, and
$$B^s_{t_1}\left(z_1,\frac{\eps}{2}\right)\subset \zeta^{n^*}_i(y_1)\subset B^s_{t_1}\left(z_1,\frac{\eps}{2}\right),$$
and similarly for $\zeta^{n^*}_i(y_2)$. Taking our sequence of partitions to be $\{\xi^{n^*}_i\}_{i\in\mathbb{N}}$ provides our desired properties.
\end{proof}

\begin{proposition}\label{prop: main prop}
There exists $K > 0$ such that for almost every $x,y\in \mathcal B(n_0,\eps)$ and for every uniformly continuous $\psi\colon \mathcal{B}(n_0,\eps) \to (0,\infty)$,
$$\int_{V^u_y}\psi\,d(\pi_{x,y})_*\mu_x^u \leq K\int_{V^u_y}\psi\,d\mu_y^u.$$
\end{proposition}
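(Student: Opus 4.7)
The plan is to adapt Climenhaga's Gibbs-to-local-product-structure argument from \cite{ClimenhagaGibbs}, with two modifications tailored to our setting. The first is cosmetic: stable leaves are replaced by center--stable leaves, with flow invariance of $\mu$ absorbing the extra flow direction. The second is more substantial: the lower Gibbs bound (Lemma~\ref{lemma: lower Gibbs estimate}) is nonuniform in the regularity of the endpoints, and so we must restrict the argument to the compact exhaustions produced by Proposition~\ref{prop: Creation of compact}.

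By Proposition~\ref{prop: integral_limit}, after a change of variables the two sides of the desired inequality are realized as $n\to\infty$ limits of weighted averages of $\psi$ against $\mu$ on the flow-thickened sets $g_{[-\frac{1}{n},\frac{1}{n}]}\langle V_x^u,B_n^s(x,\eps)\rangle$ and $g_{[-\frac{1}{n},\frac{1}{n}]}\langle V_y^u,B_n^s(y,\eps)\rangle$. Fix a tolerance $\bar\delta > 0$ and generic points $x,y$ at which these limits hold. Proposition~\ref{prop: Creation of compact} furnishes a compact $K\subset V_x^u$ such that $K$ and $\pi_{x,y}(K)$ carry conditional mass at least $1-\bar\delta$ on their respective leaves and every point of $K\cup\pi_{x,y}(K)$ has values of $\lambda$ bounded below by $2c$ along a sequence of times going to infinity. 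Apply Proposition~\ref{prop: partition construction} and Corollary~\ref{cor: small diameter partition} to partition $K$ into finitely many small-diameter pieces $P_i$ with $K\cap B^u_{t_i}(w_i,\eps/2)\subset P_i\subset K\cap B^u_{t_i}(w_i,\eps)$, where $t_i$ is selected so that $\lambda(g_{\pm t_i+r}w_i)\geq c$ for $|r|\leq\kappa$; the analogous regularity at $\pm t_i$ on the $y$-side is inherited by $\pi_{x,y}w_i$, since on the forward side it is $+$-cone point asymptotic to $w_i$ and on the backward side it shares the backward orbit of $y$, whose regularity times we built into $K$.

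With the partition in hand, I compare the two pieces
\begin{equation*}
\mu\bigl(g_{[-\frac{1}{n},\frac{1}{n}]}\langle P_i, B^s_n(x,\eps)\rangle\bigr) \quad \text{and} \quad \mu\bigl(g_{[-\frac{1}{n},\frac{1}{n}]}\langle \pi_{x,y}P_i, B^s_n(y,\eps)\rangle\bigr)
\end{equation*}
by applying the upper Gibbs bound (Lemma~\ref{lemma: upper Gibbs estimate}) on the $x$-side and the lower Gibbs bound (Lemma~\ref{lemma: lower Gibbs estimate}) on the $y$-side. The $P(\phi)$ terms cancel, the multiplicative constants $Q_1,Q_2$ are now uniform in $i$ and $n$ because we are on $K$, and the ratio reduces to the exponential of $\int_{-n}^{t_i}\phi(g_s w_i)-\phi(g_s\pi_{x,y}w_i)\,ds$. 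Since $w_i$ and $\pi_{x,y}w_i$ differ by a short flow shift $|\rho|\leq 2/n_0$ (Lemma~\ref{lem: continuity of bracket}) and then lie on the same center--stable leaf with bounded forward separation (Corollary~\ref{cor: cs bound}) and coincident backward orbits, the global Bowen property (Proposition~\ref{proposition: global Bowen property}) bounds this exponential by a universal constant. Using uniform continuity of $\psi$ to replace $\psi$ by $\psi(w_i)$, $\psi(\pi_{x,y}w_i)$ on each piece, summing over $i$, normalizing, and passing to the limit first in the partition diameter and then in $n$ (and finally in $\bar\delta$), Proposition~\ref{prop: integral_limit} delivers the inequality with the asserted universal constant $K$.

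The principal obstacle is the nonuniformity of the lower Gibbs bound: the $Q_2$ of Lemma~\ref{lemma: lower Gibbs estimate} degenerates when $\lambda(g_{-n}w)$ or $\lambda(g_{t_i}w)$ is small, which is precisely why $K$ and the partition times $t_i$ must be engineered around returns to $\{\lambda\geq c\}$. A secondary nuisance is that $\pi_{x,y}P_i$ is not literally a bracketed copy of $P_i$ but involves a small flow shift $\rho$, so the inclusions of Bowen balls on the two sides require slight inflation or deflation via Lemmas~\ref{lem: flow of Bowen balls} and~\ref{lem: bracket of Bowen balls}; these modifications cost only fixed multiplicative constants that can be absorbed into $K$.
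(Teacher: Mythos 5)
Your overall framework matches the paper's: use Proposition~\ref{prop: integral_limit} to realize the two conditional integrals as limits of weighted averages over flow-thickened brackets, build compact sets and almost-adapted partitions via Propositions~\ref{prop: Creation of compact} and~\ref{prop: partition construction}, and feed the resulting pieces into the Gibbs estimates of Lemmas~\ref{lemma: upper Gibbs estimate} and~\ref{lemma: lower Gibbs estimate}. The problem is in how you combine the Gibbs estimates.

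You claim that the ratio $\mu\bigl(g_{[-\frac{1}{n},\frac{1}{n}]}\langle P_i, B^s_n(x,\eps)\rangle\bigr)\big/\mu\bigl(g_{[-\frac{1}{n},\frac{1}{n}]}\langle \pi_{x,y}P_i, B^s_n(y,\eps)\rangle\bigr)$ is uniformly bounded because the exponential of $\int_{-n}^{t_i}\phi(g_s w_i)-\phi(g_s\pi_{x,y}w_i)\,ds$ is controlled by the global Bowen property, citing that $w_i$ and $\pi_{x,y}w_i$ have ``coincident backward orbits.'' This is false, and in fact contradicts your own earlier (correct) observation that $\pi_{x,y}w_i$ shares its backward orbit with $y$. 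By the bracket formula $\pi_{x,y}w_i = \langle g_\rho w_i, y\rangle$, the point $\pi_{x,y}w_i$ follows $y$'s orbit for $r\leq 0$, whereas $w_i\in V^u_x$ follows $x$'s orbit for $r\leq 0$. Since $x,y$ are essentially arbitrary in $\mathcal{B}(n_0,\eps)$ and in particular lie on distinct strong stable leaves, their backward orbits diverge, and so $\int_{-n}^{0}\phi(g_sw_i)-\phi(g_s\pi_{x,y}w_i)\,ds$ differs from $\int_{-n}^{0}\phi(g_sx)-\phi(g_sy)\,ds$ only by Bowen-controlled error, which is not bounded uniformly in $n$. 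Your direct piece-by-piece comparison of measures therefore fails.

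The paper avoids this by never comparing $\mu(\cdots x\cdots)$ to $\mu(\cdots y\cdots)$ directly. It bounds the \emph{normalized} quantities $\int\psi\circ\pi_{x,y}\,d\mu\big/\mu(\cdots x\cdots)$ from above and $\int\psi\,d\mu\big/\mu(\cdots y\cdots)$ from below separately. In the $x$-side ratio, the upper Gibbs bound controls the numerator and the lower Gibbs bound controls the denominator; the Bowen property rewrites $\int_{-n}^{t_i}\phi(g_rz_i)\,dr$ as $\int_{-n}^0\phi(g_rx)\,dr + \int_0^{t_i}\phi(g_r\pi_{x,y}z_i)\,dr + O(L)$, and the $i$-independent factor $e^{\int_{-n}^0\phi(g_rx)\,dr}$ then appears in both sums and cancels. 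The same cancellation happens on the $y$-side with $e^{\int_{-n}^0\phi(g_ry)\,dr}$, so the divergent backward integrals never have to be compared to one another. Your argument could be patched by observing that the problematic factor in $\mu(P_i\text{-piece})/\mu(\pi_{x,y}P_i\text{-piece})$ is independent of $i$ and hence drops out after normalization, but that observation is missing, and the explicit claim you make in its place is incorrect.
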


\begin{proof}
Let $\psi : \mathcal{B}(n_0,\eps)\to (0,\infty)$ be uniformly continuous, and let $\alpha > 0$ be small enough so that if $d_{GS}(u,u') \leq 3\alpha$, then both $\frac{1}{2}\psi(u') \leq \psi(u) \leq 2\psi(u')$, and $\frac{1}{2}\psi(\pi_yu') \leq \psi(\pi_y u) \leq 2\psi(\pi_y u')$. Now let $R'\subset \mathcal{B}(n_0,\eps)$ be as in Proposition \ref{prop: integral_limit}, and let $x,y\in R'\cap g_{\left(-\frac{1}{n_0},\frac{1}{n_0}\right)}R(\eps)\cap G_{\mu}$. Observe that this is a full measure set, from applying Proposition \ref{prop: integral_limit} and Lemma~\ref{lem:zero msr bdry}, as we chose $n_0$ so that $\frac{1}{n_0}$ is less than one quarter the injectivity radius of $S$. Now, both $\psi\circ\pi_{x,y}$ and $\psi$ are continuous functions, and so we can apply Proposition \ref{prop: integral_limit} to estimate both
	$$\int_{V^u_x}\psi\circ\pi_{x,y}\,d\mu_x^u = \int_{V^u_y}\psi\,d(\pi_{x,y})_*\mu_x^u \quad \text{ and } \quad \int_{V^u_y}\psi\,d\mu_y^u.$$
	To do so, fix $x_0\in R(\eps)$ and $c\in \left(0,\frac{\lVert\lambda\rVert_{\infty}}{4}\right)$, and let $\{\xi_n\}_{n\in\mathbb{N}}$ be the sequence of partitions given by Proposition \ref{prop: integral_limit}. We will now proceed with our proof by using Propositions \ref{prop: Creation of compact} and \ref{prop: partition construction} to partition most of $V_{x_0}^u$ (and then, in turn $V_x^u$ and $V_y^u$) into well-understood sets, for which we have nice upper and lower bounds from the Gibbs property.

Let $0 < \beta < \min\{1,\lVert\psi\rVert^{-1}\}$. For each $n\in\mathbb{N}$, recall that by Proposition \ref{prop: integral_limit}, there exists $k_{x,n},k_{y,n}\in \left[-\frac{2^n}{n_0},\frac{2^n}{n_0} - 1\right]$, $t_{x,n},t_{y,n}\geq 0$ and $z_{x,n},z_{y,n}\in V^s_{x_0}$ such that
	$$g_{\left(\frac{k_{x,n}}{2^n},\frac{k_{x,n}+1}{2^n}\right)}\langle V^u_{x_0},B^s_{t_{x,n}}(z_{x,n},\frac{\eps}{2})\rangle\subset \xi_n(x) \subset g_{\left(\frac{k_{x,n}}{2^n},\frac{k_{x,n}+1}{2^n}\right)}\langle V^u_{x_0},B^s_{t_{x,n}}(z_{x,n},\eps)\rangle,$$
	and similarly for $y$. Going forward, we will often omit the dependence on $n$ from the notation since we will primarily work with a fixed choice of $n$. Now, let $K_n\subset V_{x_0}^u$ be as in Proposition \ref{prop: Creation of compact} for $c$ with
	$$\mu\left(g_{\left(\frac{k_{x,n}}{2^n},\frac{k_{x,n} + 1}{2^n}\right)}\langle K_n,B^s_{t_{x,n}}(z_{x,n},\eps)\rangle\right)\geq (1-\beta^2\mu(\xi_n(x))\mu\left(g_{\left(\frac{k_{x,n}}{2^n},\frac{k_{x,n} + 1}{2^n}\right)}\langle V_{x_0}^u,B^s_{t_{x,n}}(z_{x,n},\eps)\rangle\right),$$
	and similarly replacing every $x$ with $y$. Then, use Proposition \ref{prop: partition construction} and Corollary \ref{cor: fill out partition} to define a partition $\omega_n := \{A_0,A_1,\cdots, A_m\}$ of $V^u_{x_0}$, such that:
	\begin{itemize}
		\item For each $A_i\in \omega_n$, $i\neq 0$, there exists $w_i\in A_i$ and $t_i > t_R + \frac{2}{n_0}$ such that $B_{t_i}^u(w_i,\frac{\eps}{2}) \subset A_i\subset B_{t_i}^u(w_i,2\eps)$
		\item For $i\neq 0$, $\lambda(g_{t_i + r} w_i) \geq c$ for all $r\in \left[-\frac{4}{n_0},\frac{4}{n_0}\right]$
		\item For $i\neq 0$, $\max\{\diam A_i \mid A_i\in\omega_n\} \leq \min\{\alpha,d_{\GS}(K_n,\partial \mathcal{B}(n_0,\eps))\}$
		\item $A_0\subset V^u_{x_0}\setminus K_n$
	\end{itemize}
	The second condition follows from choosing $n_0$ large enough so that $\frac{4}{n_0} \leq \kappa$ in the statement of Proposition \ref{prop: partition construction}, which in turn relies only on the choice of $\kappa$ in Proposition \ref{prop: Creation of compact}. The condition on the diameter of $\omega_n$ follows from Corollary \ref{cor: small diameter partition}, as $K_n\subset \Reg$, while $\NE(\eps)\subset \Sing$ due to \cite[Lemma 2.16]{CCESW}.

Take $n$ large enough so that $\frac{1}{2^n} \leq \frac{\alpha}{2}$ and so that both $\diam B^s_{t_{n,x}}(z_{n,x},\eps) \leq \alpha$ and $\diam B_{t_{n,y}}^s(z_{n,y},\eps) \leq \alpha$. Then for $A_i\in \omega_{n}$ with $i\neq 0$, we have
$$\diam g_{\left(\frac{k_{n,x}}{2^n},\frac{k_{n,x} + 1}{2^n}\right)}\langle A_i,B_{t_{n,x}}^s(z_{n,x},\eps)\rangle \leq \frac{2}{2^n} + \diam A_i + \diam B_{t_{n,z}}^s(z_{n,x},\eps) \leq 3\alpha,$$
and similarly replacing $x$ with $y$. Now, we will find an upper bound on $\int_{V_x^u}\psi\circ\pi_{x,y}\,d\mu_{x}^u$. Using the fact that $\psi(\pi_y z) = \psi(\pi_y g_tz)$ for $z\in R(\eps)$ and $|t|\leq \frac{1}{n_0}$, it follows that for all such $n$, we have
 	\begin{align*}
		\int_{\xi_n(x)}\psi\circ \pi_y\,d\mu &\leq \int_{g_{\left(\frac{k_x}{2^n},\frac{k_x+1}{2^n}\right)}\langle V^u_{x_0},B_{t_x}^s(z_x,\eps)\rangle}\psi\circ\pi_y\,d\mu
		\\
		&= \sum_{i=0}^{m}\int_{g_{\left(\frac{k_x}{2^n},\frac{k_x+1}{2^n}\right)}\langle A_i,B_{t_x}^s(z_x,\eps)\rangle}\psi\circ\pi_y\,d\mu
		\\
		&\leq \beta^2\lVert\psi\circ\pi_y\rVert_{\infty}{\mu(\xi_n(x))} + \sum_{i=1}^{m}\int_{g_{\left(\frac{k_x}{2^n},\frac{k_x+1}{2^n}\right)}\langle A_i,B_{t_x}^s(z_x,\eps)\rangle}\psi\circ\pi_y\,d\mu
		\\
		&\leq \beta{\mu(\xi_n(x))} +  \sum_{i=1}^{m}\int_{g_{\left(\frac{k_x}{2^n},\frac{k_x+1}{2^n}\right)}\langle A_i,B_{t_x}^s(z_x,\eps)\rangle}2\psi(\pi_y\langle w_i,z_x\rangle)\,d\mu
		\\
		&= \beta{\mu(\xi_n(x))} + \sum_{i=1}^{m}2\psi(\pi_y\langle w_i,z_x\rangle)\mu\left(g_{\left(\frac{k_x}{2^n},\frac{k_x+1}{2^n}\right)}\langle A_i,B_{t_x}^s(z_x,\eps)\rangle\right)
		\\
		&\leq \beta{\mu(\xi_n(x))} + \sum_{i=1}^{m}2\psi(\pi_y\langle w_i,z_x\rangle)\mu\left(g_{\left(\frac{k_x}{2^n},\frac{k_x+1}{2^n}\right)}\langle B_{t_i}^u(w_i,2\eps),B_{t_x}^s(z_x,\eps)\rangle\right)
		\\
		&\leq \beta{\mu(\xi_n(x))} + \sum_{i=1}^{m}2\psi(\pi_y\langle w_i,z_x\rangle)Q_1n_02^{-(n+1)}e^{-(t_i+t_x)P(\phi) + \int_{-t_x}^{t_i}\phi(g_r\langle w_i,z_x\rangle)\,dr}.
	\end{align*}
The final step uses Lemma \ref{lemma: upper Gibbs estimate}. We will also need the following lower bound:
\begin{align*}
\mu(\xi_n(x))&\geq \mu\left(g_{\left(\frac{k_{x}}{2^n}, \frac{k_{x}+1}{2^n}\right)}\left\langle V_{x_0}^u, B^s_{t_{x}}\left(z_{x},\frac{\varepsilon}{2}\right)\right\rangle\right)\\&\geq\sum\limits_{i=1}^m\mu\left(g_{\left(\frac{k_{x}}{2^n}, \frac{k_{x}+1}{2^n}\right)}\left\langle A_i, B^s_{t_{x}}\left(z_{x},\frac{\varepsilon}{2}\right)\right\rangle\right)\\&\geq \sum\limits_{i=1}^m\mu\left(g_{\left(\frac{k_{x}}{2^n}, \frac{k_{x}+1}{2^n}\right)}\left\langle B_{t_i}^u\left(w_i,\frac{\varepsilon}{2}\right), B^s_{t_{x}}\left(z_{x},\frac{\varepsilon}{2}\right)\right\rangle\right).
\end{align*}

From this, using Lemma \ref{lemma: lower Gibbs estimate}, it follows that

$$\mu\left(\xi_n(x)\right) \geq \frac{n_02^{-n}}{n_0\eps+2}\sum\limits_{i=1}^m Q_2\left(\frac{\eps}{2},\lambda(g_{-t_{x}}z_x),\lambda(g_{t_i} w_i)\right)e^{-(t_i+t_x)P(\phi) + \int_{-t_x}^{t_i}\phi(g_s\langle w_i,z_x\rangle)\,ds}.$$

	Finally, we will need the following computations using the Bowen property to make use of these results. Recall that $\phi$ has the Bowen property at scale $\eps_0$, and that $\diam \mathcal{B}(n_0,\eps) < \eps_0$. Now, observe that $d_{GS}(g_r\langle w,z\rangle, g_r z)$ decreases monotonically to $0$ as $r\to-\infty$ for all $w,z\in R(\eps)$, while $d_{GS}(g_r\langle w,z\rangle, g_rw)$ decreases monotonically to $0$ as $r\to\infty$. Therefore, we have $\langle w,z\rangle \in B_{[-n,0]}(z,\eps_0)$ and $\langle w,z\rangle \in B_{[0,n]}(w,\eps_0)$ for all $n\geq 0$. Applying the Bowen property via Proposition \ref{proposition: global Bowen property}, there exists $L > 0$ such that for all $n_1,n_2 \geq 0$,
$$\left|\int_{-n_1}^{n_2}\phi(g_r\langle w,z\rangle)\,dr - \int_{-n_1}^{0}\phi(g_rz)\,dr - \int_0^{n_2}\phi(g_rw)\,dr\right| \leq 2L.$$

Therefore, recalling that $\lambda(g_{-t_x}z_x)\geq c$ by construction and writing $Q_2 := Q_2(\frac{\eps}{2},c,c)$, we have that: 

\begin{align*}
		\frac{\int_{\xi_n(x)}\psi\circ \pi_y\,d\mu}{\mu(\xi_n(x))} &\leq \beta+\frac{\sum_{i=1}^{m}2\psi(\pi_y\langle w_i,z_x\rangle)Q_1n_02^{-(n+1)}e^{-(t_i+t_x)P(\phi) + \int_{-t_x}^{t_i}\phi(g_r\langle w_i,z_x\rangle)\,dr}}{\frac{n_02^{-n}}{n_0\eps+2}\sum\limits_{i=1}^m Q_2\left(\frac{\eps}{2},\lambda(g_{-t_{x}}z_x),\lambda(g_{t_i} w_i)\right)e^{-(t_i+t_x)P(\phi) + \int_{-t_x}^{t_i}\phi(g_s\langle w_i,z_x\rangle)\,ds}}
  \\
  &\leq \beta+\frac{Q_1(n_0\eps+2)}{Q_2}\cdot\frac{e^{2L+\int_{-t_x}^0\phi(g_rz_x)\,dr}\sum\limits_{i=1}^m \psi(\pi_y\langle w_i,z_x\rangle)e^{-t_iP(\phi)+\int_0^{t_i}\phi(g_rw_i)\,dr}}{e^{-2L+\int_{-t_x}^0\phi(g_rz_x)\,dr}\sum\limits_{i=1}^m e^{-t_iP(\phi)+\int_0^{t_i}\phi(g_s w_i)\,ds}}
		\\
  &\leq \beta +\frac{Q_1e^{4L}(n_0\eps+2)}{Q_2}\cdot\frac{\sum\limits_{i=1}^m \psi(\pi_y \langle w_i,z_x\rangle)e^{-t_iP(\phi)+\int_0^{t_i}\phi(g_r w_i)\,dr}}{\sum\limits_{i=1}^m e^{-t_iP(\phi)+\int_0^{t_i}\phi(g_s w_i)\,ds}}.
	\end{align*}
Note that $\beta$ does not depend on our choice of $n$, although the rest of the computation does, as our collection of $z_i$ and $t_i$ depends on our choice of $K_n$.

Now, we will carry out similar computations to find a lower bound for the corresponding estimates of $\int_{V^u_y}\psi\,d\mu_y^u$. These mirror those carried out above, except using the lower Gibbs bound instead of the upper Gibbs bound and vice versa.

	Recalling that we work with $n$ large enough so that $\diam g_{\left[\frac{k_y}{2^n},\frac{k_y+1}{2^n}\right]}\langle A_i,B_{t_y}^s(z_y,\eps)\rangle \leq 3\alpha$, and that $t_y$ is chosen so that $\lambda(g_{-t_y}z_y) \geq c$, we obtain a lower bound as follows. First, observe that $V^u_y\subset g_{\left(\frac{k_y}{2^n},\frac{k_y+1}{2^n}\right)}\langle V^u_{x_0},B^s_{t_y}(z_y,\eps)\rangle$. Therefore, for every $w_i\in V^u_{x_0}$ we have
 $\pi_y\langle w,z_y\rangle \in g_{\left(\frac{k_y}{2^n},\frac{k_y+1}{2^n}\right)}\langle V^u_{x_0},B^s_{t_y}(z_y,\eps)\rangle$, and consequently,
 $$\psi|_{g_{\left(\frac{k_y}{2^n},\frac{k_y+1}{2^n}\right)}\langle V^u_{x_0},B^s_{t_y}(z_y,\eps)\rangle} \geq \frac{\psi(\pi_y\langle w_i,z_y\rangle)}{2}.$$
 Using this, we see
\begin{align*}
\int_{\xi_n(y)}\psi\,d\mu &\geq \int_{g_{\left(\frac{k_y}{2^n},\frac{k_y+1}{2^n}\right)}\left\langle V^u_{x_0},B_{t_y}^s\left(z_y,\frac{\eps}{2}\right)\right\rangle}\psi\,d\mu
\\
&= \sum_{i=0}^m \int_{g_{\left(\frac{k_y}{2^n},\frac{k_y+1}{2^n}\right)}\left\langle A_i,B_{t_y}^s\left(z_y,\frac{\eps}{2}\right)\right\rangle}\psi\,d\mu
\\
&\geq \sum_{i=1}^{m}\int_{g_{\left(\frac{k_y}{2^n},\frac{k_y+1}{2^n}\right)}\left\langle B_{t_i}^u\left(w_i,\frac{\eps}{2}\right),B_{t_y}^s\left(z_y,\frac{\eps}{2}\right)\right\rangle}\psi\,d\mu
\\
&\geq \sum_{i=1}^{m}\int_{g_{\left(\frac{k_y}{2^n},\frac{k_y+1}{2^n}\right)}\left\langle B_{t_i}^u\left(w_i,\frac{\eps}{2}\right),B_{t_y}^s\left(z_y,\frac{\eps}{2}\right)\right\rangle}\frac{\psi(\pi_y\langle w_i,z_y\rangle)}{2}\,d\mu
\\
&= \sum_{i=1}^{m}\frac{\psi(\pi_y\langle w_i,z_y\rangle)}{2}\mu\left(g_{\left(\frac{k_y}{2^n},\frac{k_y+1}{2^n}\right)}\left\langle B_{t_i}^u\left(w_i,\frac{\eps}{2}\right),B_{t_y}^s\left(z_y,\frac{\eps}{2}\right)\right\rangle\right)
\\
&\geq \frac{n_02^{-n}}{2\left(n_0\frac{\eps}{2} + 1\right)}\sum_{i=1}^mQ_2\left(\frac{\eps}{2},\lambda(g_{-t_y}z_y),\lambda(g_{t_i}w_i)\right)e^{-(t_i+t_y)P(\phi) + \int_{-t_y}^{t_i}\phi(g_r\langle w_i,z_y\rangle)\,dr}\frac{\psi(\pi_y\langle w_i,z_y\rangle)}{2}
\\
&\geq Q_2\frac{n_02^{-n}}{n_0\eps + 2}e^{-L}e^{\int_{-t_y}^{0}\phi(g_rz_y)\,dr}e^{-t_yP(\phi)}\sum_{i=1}^m \frac{\psi(\pi_y\langle w_i,z_y\rangle)}{2}e^{-t_iP(\phi) + \int_{0}^{t_i}\phi(g_rw_i)\,dr}.
\end{align*}
	where the last inequality utilizes the Bowen property as we did in our previous estimates.

We also have the following upper bound:
	\begin{align*}
	\mu(\xi_n(y)) &\leq \mu(g_{\left[\frac{k_y}{2^n},\frac{k_y+1}{2^{n}}\right]}\langle V^u_{x_0},B_{t_y}^s(z_y,\eps)\rangle)
	\\
	&\leq \frac{1}{1-\beta^2}\sum_{i=1}^{m} \mu(g_{\left[\frac{k_y}{2^n},\frac{k_y+1}{2^{n}}\right]}\langle A_i,B_{t_y}^s(z_y,\eps)\rangle)
	\\
	&\leq \frac{1}{1-\beta^2} \sum_{i=1}^{m}\mu(g_{\left[\frac{k_y}{2^n},\frac{k_y+1}{2^{n}}\right]}\langle B_{t_i}^u(w_i,2\eps),B_{t_y}^s(z_y,\eps)\rangle)
	\\
	&\leq \frac{1}{1-\beta^2}\sum_{i=1}^{m}Q_1n_02^{-(n+1)}e^{-(t_i+t_y)P(\phi) + \int_{-t_y}^{t_i}\phi(g_r\langle w_i,z_y\rangle)}\,dr
	\\
	&\leq\frac{1}{1-\beta^2}Q_1n_02^{-(n+1)}e^{2L}e^{\int_{-t_y}^0\phi(g_rz_y)\,dr}\sum_{i=1}^{m}e^{-(t_i+t_y)P(\phi) + \int_{0}^{t_i}\phi(g_r w_i)\,dr}
	\end{align*}

Combining these two estimates, we have that for sufficiently large $n$,
	
\begin{align*}
\frac{\int_{\xi_n(y)}\psi\,d\mu}{\mu(\xi_n(y))} &\geq (1-\beta^2)\frac{Q_2\frac{n_02^{-n}}{n_0\eps + 2}e^{-L}e^{\int_{-t_y}^{0}\phi(g_rz_y)\,dr}e^{-t_yP(\phi)}\sum_{i=1}^m \frac{\psi(\pi_y\langle w_i,z_y\rangle)}{2}e^{-t_iP(\phi) + \int_{0}^{t_i}\phi(g_rw_i)\,dr}}{Q_1n_02^{-(n+1)}e^{2L}e^{\int_{-t_y}^0\phi(g_rz_y)\,dr}\sum_{i=1}^{m}e^{-(t_i+t_y)P(\phi) + \int_{0}^{t_i}\phi(g_r w_i)\,dr}}
\\
&\geq (1-\beta^2)\frac{Q_2}{Q_1(n_0\eps+2)e^{4L}}\frac{\sum_{i=1}^{m}\psi(\pi_y\langle w_i,z_y\rangle) e^{-t_iP(\phi) + \int_0^{t_i}\phi(g_rw_i)\,dr}}{\sum_{i=1}^{m}e^{-t_iP(\phi) + \int_0^{t_i}\phi(g_rw_i)\,dr}}.
\end{align*}
	
Therefore, for all large $n$ we can directly compare our estimates for $\int_{V_x^u}\psi\circ\pi_{x,y}\,d\mu_x^u$ and a lower bound for $\int_{V_y^u}\psi\,d\mu_y^u$. Recall that we have already shown that 

$$\frac{\sum\limits_{i=1}^m \psi(\pi_y \langle w_i,z_x\rangle)e^{-t_iP(\phi)+\int_0^{t_i}\phi(g_r w_i)\,dr}}{\sum\limits_{i=1}^m e^{-t_iP(\phi)+\int_0^{t_i}\phi(g_s w_i)\,ds}} \geq \frac{Q_2}{Q_1e^{4L}(n_0\eps+2)}\left(\frac{\int_{\xi_n(x)}\psi\circ\pi_y\,d\mu}{\mu(\xi_n(x))} - \beta\right).$$	
Further, $\pi_y\langle w_i,z_x\rangle = \pi_y\langle w_i,z_y\rangle$. Consequently, we have:

\begin{align*}
\frac{\int_{\xi_n(y)}\psi\,d\mu}{\mu(\xi_n(y))} &\geq (1-\beta^2)\frac{Q_2}{Q_1(n_0\eps+2)e^{4L}}\frac{\sum_{i=1}^{m}\psi(\pi_y\langle w_i,z_y\rangle) e^{-t_iP(\phi) + \int_0^{t_i}\phi(g_rw_i)\,dr}}{\sum_{i=1}^{m}e^{-t_iP(\phi) + \int_0^{t_i}\phi(g_rw_i)\,dr}}
\\
&\geq (1-\beta^2)\left(\frac{Q_2}{Q_1(n_0\eps+2)e^{4L}}\right)\left(\frac{Q_2}{Q_1e^{4L}(n_0\eps+2)}\right)\left(\frac{\int_{\xi_n(x)}\psi\circ\pi_y\,d\mu}{\mu(\xi_n(x))} - \beta\right).
\end{align*}
We can choose $\beta$ arbitrarily small, independent of $n$. Thus, we have shown that
	
	\begin{align*}
		\frac{\int_{V_x^u}\psi\circ\pi_{x,y}\,d\mu_x^u}{\int_{V_y^u}\psi\,d\mu_y^u} &= \lim_{n\to\infty} \frac{\frac{\int_{\xi_n(x)}\psi\circ \pi_y\,d\mu}{\mu(\xi_n(x))}}{\frac{\int_{\xi_n(y)}\psi\,d\mu}{\mu(\xi_n(y))}}
		\\
		&\leq \left(\frac{e^{4L}(n_0\eps+2)Q_1}{Q_2}\right)^2.
	\end{align*}

\end{proof}

Using Proposition \ref{prop: main prop}, we now complete our proof of local product structure. This proof is standard, and follows \cite[\S3]{ClimenhagaGibbs}.

\begin{proposition}\label{prop: LPS}
Given $\mathcal{B}(n_0,\eps) \subset \Reg(c)$ for some $c > 0$, there exists $K := K(c) > 0$ such that for almost every $x,y\in \mathcal{B}(n_0,\eps)$,
$$K^{-1} \leq \frac{d(\pi_{x,y})_*\mu_x^u}{d\mu_y^u} \leq K.$$
\end{proposition}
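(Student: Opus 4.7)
The plan is to upgrade the integral inequality in Proposition~\ref{prop: main prop} to a pointwise bound on Radon--Nikodym derivatives, and then obtain the two-sided bound via symmetry in $x$ and $y$. By Fubini, there is a full-measure set of pairs $(x,y)\in \mathcal{B}(n_0,\eps)^2$ for which the conclusion of Proposition~\ref{prop: main prop} holds with either ordering of the two points; I fix such a generic pair.

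The first step is to pass from uniformly continuous positive test functions $\psi$ to indicators of Borel sets $A\subset V_y^u$. Because $\mathcal{B}(n_0,\eps)$ is a compact metric space and $(\pi_{x,y})_*\mu_x^u$ and $\mu_y^u$ are finite Borel measures on $V_y^u$, regularity lets me approximate $\mathbf{1}_A$ from above by a decreasing sequence of continuous functions $g_n : V_y^u \to [0,1]$ with $\int g_n \, d\mu_y^u \to \mu_y^u(A)$ and $\int g_n \, d(\pi_{x,y})_*\mu_x^u \to (\pi_{x,y})_*\mu_x^u(A)$. Extending each $g_n$ to a continuous function on $\mathcal{B}(n_0,\eps)$ via Tietze's theorem and setting $\psi_n := g_n + 1/n$ yields uniformly continuous functions $\psi_n : \mathcal{B}(n_0,\eps) \to (0,\infty)$ (uniform continuity is automatic by compactness). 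Applying Proposition~\ref{prop: main prop} to each $\psi_n$ and passing to the limit by dominated convergence gives $(\pi_{x,y})_*\mu_x^u(A) \leq K\,\mu_y^u(A)$ for every Borel $A\subset V_y^u$.

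By the Radon--Nikodym theorem this shows $(\pi_{x,y})_*\mu_x^u \ll \mu_y^u$ with $\frac{d(\pi_{x,y})_*\mu_x^u}{d\mu_y^u} \leq K$ almost everywhere. By Lemma~\ref{lem: continuity of bracket}, $\pi_{y,x} = \pi_{x,y}^{-1}$ as maps between the local unstables, so the inequality obtained by swapping $x$ and $y$ reads $(\pi_{y,x})_*\mu_y^u \leq K\,\mu_x^u$ on $V_x^u$; pushing this forward under $\pi_{x,y}$ turns it into $\mu_y^u \leq K\,(\pi_{x,y})_*\mu_x^u$, which gives the lower bound $K^{-1} \leq \frac{d(\pi_{x,y})_*\mu_x^u}{d\mu_y^u}$ almost everywhere.

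The main technical subtlety is the approximation step, since Proposition~\ref{prop: main prop} demands $\psi$ to be uniformly continuous on the entire flow box and strictly positive. Strict positivity is supplied by the additive constant $1/n$, uniform continuity is automatic from compactness of $\mathcal{B}(n_0,\eps)$, and the extension of $g_n$ from the closed set $V_y^u$ to $\mathcal{B}(n_0,\eps)$ is handled by Tietze. One must also verify that the full-measure set on which Proposition~\ref{prop: main prop} applies can be chosen symmetrically in $x$ and $y$, which follows from Fubini applied to $\mu|_{\mathcal{B}(n_0,\eps)} \otimes \mu|_{\mathcal{B}(n_0,\eps)}$.
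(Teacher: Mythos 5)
Your overall strategy — pass from the integral inequality of Proposition~\ref{prop: main prop} to a measure inequality by approximating indicator functions with continuous positive $\psi$, then invoke Radon--Nikodym and symmetry — is exactly the route the paper takes, and the Fubini observation ensuring a symmetric full-measure set of pairs $(x,y)$ is a correct clarification of the paper's terse ``as $x$ and $y$ are interchangeable.'' However, the approximation step as you have written it has a genuine gap. You claim that for an arbitrary Borel set $A\subset V_y^u$, regularity produces a decreasing sequence of continuous functions $g_n: V_y^u\to[0,1]$ with $g_n\geq \mathbf{1}_A$ pointwise and $\int g_n\,d\mu_y^u\to\mu_y^u(A)$. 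This is false in general: if $g_n$ is continuous and $g_n\geq\mathbf{1}_A$, then $\{g_n\geq 1\}$ is closed and contains $A$, hence contains $\overline{A}$, so $g_n\geq\mathbf{1}_{\overline{A}}$ and $\int g_n\,d\mu_y^u\geq\mu_y^u(\overline{A})$. Whenever $\mu_y^u(\overline{A})>\mu_y^u(A)$ (e.g.\ $A$ dense with small measure), the integrals cannot converge to $\mu_y^u(A)$. Approximation from above by continuous functions only works for closed $A$, or for $A$ with negligible boundary.

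The correct order of operations is what the paper does: first establish $(\pi_{x,y})_*\mu_x^u(U)\leq K\,\mu_y^u(U)$ for \emph{open} $U\subset V_y^u$, since $\mathbf{1}_U$ is lower semicontinuous and admits an increasing sequence of continuous functions $h_k\uparrow\mathbf{1}_U$ (take $\psi_k=h_k+1/k$, extended by Tietze, to satisfy the positivity and uniform continuity hypotheses, as you correctly anticipate), and pass to the limit by dominated (or monotone) convergence. Then extend to arbitrary Borel $E$ by outer regularity of both finite Borel measures on the compact metric space $V_y^u$, taking the infimum over open $U\supset E$ on both sides. The rest of your argument — Radon--Nikodym for the upper bound, and $\pi_{y,x}=\pi_{x,y}^{-1}$ (which follows from Lemma~\ref{lem: continuity of bracket} with opposite time shift $-\rho$) pushed forward under $\pi_{x,y}$ for the lower bound — is correct and matches the paper's reasoning.
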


\begin{proof}
Let $U\subset V_y^u$ be an arbitrary open set, and observe that there exists a sequence of continuous functions $\psi_n : V_x^u\to (0,1]$ which converges pointwise to $\mathbf{1}_U$. For each such $\psi_n$ we have by Proposition \ref{prop: main prop} that
$$\int_{V^u_y}\psi_n\,d(\pi_{x,y})_*\mu_x^u \leq K\int_{V_y^u}\psi_n\,d\mu_y^u.$$
Then, by dominated convergence, this implies that $(\pi_{x,y})_*\mu_x^u(U) \leq K\mu_y^u(U)$. Letting $E\subset V_y^u$ be an arbitrary measurable set, by outer regularity, we have that
$$\mu_x^u(\pi_{x,y}^{-1}E) = \inf\{\mu_x^u(\pi_{x,y}^{-1}U\mid U\supset E \text{ is open }\} \leq \inf\{K\mu_y^u(U\mid U\supset E \text{ is open }\} = K\mu_y^u(E).$$
As $x$ and $y$ are interchangeable, we also have the reverse inequality, and so our proof is complete.
\end{proof}

\begin{corollary}\label{cor: ae lps}
For almost every $\gamma\in GS$, there exists a flow box $\mathcal{B}(n_0,\eps) \ni \gamma$ and some $K := K(\gamma) > 0$ such that for $\mu$-almost every $x,y\in \mathcal{B}(n_0,\eps)$,
$$K^{-1} \leq \frac{d(\pi_{x,y})_*\mu_x^u}{d\mu_y^u} \leq K.$$
\end{corollary}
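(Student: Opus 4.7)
The plan is to reduce Corollary \ref{cor: ae lps} to Proposition \ref{prop: LPS} by producing, for $\mu$-almost every $\gamma\in\GS$, a flow box $\mathcal B(n_0,\eps)$ which contains $\gamma$ and is itself contained in $\Reg(c)$ for some $c=c(\gamma)>0$. The constant $K(\gamma)$ then arises as the constant $K(c(\gamma))$ supplied by Proposition \ref{prop: LPS}.

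First I would identify the full-measure set of ``good'' centers. By Corollary \ref{cor: zero measure Sing}, $\mu(\lambda^{-1}(0))=0$, and flow-invariance of $\mu$ gives $\mu(g_{\pm s_0}^{-1}\lambda^{-1}(0))=0$ as well. Hence
$$A:=\{\gamma\in\GS : \lambda(\gamma)>0 \text{ and } \lambda(g_{\pm s_0}\gamma)>0\}$$
has full $\mu$-measure. For each $\gamma\in A$ I set
$c(\gamma):=\tfrac{1}{2}\min\{\lambda(\gamma),\lambda(g_{s_0}\gamma),\lambda(g_{-s_0}\gamma)\}>0$.

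Next I would construct the flow box around $\gamma$. Since $\lambda$ is lower semicontinuous and $g_{\pm s_0}$ are continuous, the set
$$U:=\{y\in\GS : \lambda(y)>c(\gamma),\ \lambda(g_{s_0}y)>c(\gamma),\ \lambda(g_{-s_0}y)>c(\gamma)\}$$
is an open neighborhood of $\gamma$. Because $\lambda(g_{\pm s_0}\gamma)>c(\gamma)$, Lemma \ref{lemma: bracket_su geometry} allows us to take $x_0:=\gamma$ as the center of an $su$-rectangle $R(\eps)$ for any $\eps<\delta(c(\gamma))/4$. Since $\gamma=\langle\gamma,\gamma\rangle\in R(\eps)$, the associated flow box $\mathcal B(n_0,\eps)=g_{[-1/n_0,1/n_0]}R(\eps)$ contains $\gamma$. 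By continuity of the bracket and of the flow (together with the small-diameter estimates from Section \ref{sec:rectangles}), after shrinking $\eps$ and enlarging $n_0$ we may arrange that $\mathcal B(n_0,\eps)\subset U$, so in particular $\mathcal B(n_0,\eps)\subset\Reg(c(\gamma))$.

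Finally, applying Proposition \ref{prop: LPS} to this flow box yields the constant $K=K(c(\gamma))$ for which the desired Radon--Nikodym bounds hold $\mu$-a.e.\ on $\mathcal B(n_0,\eps)$, completing the proof. There is no substantial obstacle: the argument is the standard pattern for passing from a ``uniformly regular'' statement whose constants depend on the level $c$ to an almost-everywhere statement, using that $\mu$ does not charge the singular set and that $\{\lambda>c\}$ is open. The one point requiring minor care is ensuring that the chosen $\eps$ is small enough that both the bracket construction of Lemma \ref{lemma: bracket_su geometry} is valid and the flow box actually fits inside the open set $U$; both are controlled by the continuity estimates in Sections \ref{sec:rectangles} and \ref{subsec: relations bowen}.
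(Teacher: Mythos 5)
Your proof is correct and follows essentially the same route as the paper: use Corollary \ref{cor: zero measure Sing} to identify a full-measure set of geodesics around which Lemma \ref{lemma: bracket_su geometry} provides a flow box, then invoke Proposition \ref{prop: LPS}. You are somewhat more careful than the paper in two minor respects — using flow-invariance of $\mu$ directly to control $\lambda(g_{\pm s_0}\gamma)$ rather than invoking [CCESW, Prop.\ 3.4] to get $\lambda(g_t\gamma)>0$ for all $t$, and explicitly shrinking the flow box to lie inside the open set $U\subset\Reg(c(\gamma))$ — but both steps are implicit in the paper's argument, so the approaches are essentially identical.
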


\begin{proof}
Recall that if $\lambda(\gamma) = 0$, then $\lambda(g_t\gamma) = 0$ for all $t\geq 0$ or all $t\leq 0$ by \cite[Proposition 3.4]{CCESW}. As $\mu(\lambda^{-1}(0)) = 0$ by Corollary \ref{cor: zero measure Sing}, this implies that for $\mu$-a.e. $\gamma\in GS$, $\lambda(g_t\gamma) > 0$ for all $t\in\mathbb{R}$. Consequently, we can construct a rectangle at $\mu$-a.e. $\gamma$ via Lemma \ref{lemma: bracket_su geometry}, and in turn, a flow box $\mathcal{B}(n_0,\eps)$ for some appropriate choice of $n_0$ and $\eps$. Applying Proposition \ref{prop: LPS} completes the proof.
\end{proof}

%
\section{The Bernoulli property}\label{sec:Bernoulli}

The proof of the Bernoulli property (Theorem \ref{mainthm:Bernoulli}) follows an argument due to Ornstein and Weiss \cite{OW}. A presentation of their argument with somewhat more detail can be found in \cite{CH}; we follow this presentation most closely.

After some preliminary definitions and lemmas, we provide an outline of the argument before beginning the proof.

%
\subsection{Definitions and lemmas}\label{subsec:defs2}

We begin by recording a few definitions and lemmas needed in this section alone.

\begin{definition}
Given a measurable set $A\subset G\SSS$ and $x\in A$, $V_x^{u,A}$ is the connected component of $W^u(x) \cap A$ containing $x$.
\end{definition}

\begin{lemma}\label{lem:agree}
Let $c>0$ and $B>0$ be given. Then there exists some $\delta$, depending only on $c$ and $B$, such that the following holds:

Let $q \in G\SSS$ be such that $\lambda(g_{\hat t}q)>c$ for some $\hat t \in \left[\frac{\theta_0}{2c}+2,B\right]$. If $x,y\in B(q,\delta)$ and $y\in V^{u,B(q,\delta)}_x$, then $x(-\infty,t_0]=y(-\infty,t_0]$ for some $t_0\geq0$.
\end{lemma}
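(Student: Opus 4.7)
My strategy is to use the hypothesis on $\lambda$ to locate a cone point through which $q$ passes with a large turning angle, use geometric rigidity to force $x$ and $y$ to traverse the same cone point at close times, and then combine the strong unstable relation with CAT(0) uniqueness of rays to get agreement on an initial segment.

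First, I would apply Lemma~\ref{lemma: properties of lambda} to $g_{\hat t}q$: in either case of the conclusion, $q$ passes through a cone point $p$ at some time $t^* \in [\hat t - \tfrac{\theta_0}{2c}, \hat t + \tfrac{\theta_0}{2c}]$ with turning angle at least $cs_0$ away from $\pi$. The lower bound $\hat t \geq \tfrac{\theta_0}{2c}+2$ ensures $t^* \geq 2$, and the upper bound $\hat t \leq B$ gives $t^* \leq B + \tfrac{\theta_0}{2c}$, so $t^*$ lies in a bounded interval of strictly positive times depending only on $c$ and $B$.

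Next, I would show that for $\delta$ sufficiently small (depending only on $c$ and $B$), every $z \in B(q,\delta)$ passes through the cone point $p$ at some time $t_z$ with $|t_z - t^*|$ small. Since the turning angle at $p$ is uniformly bounded below by $cs_0$ and $t^*$ lies in a bounded interval, a geodesic too close to $q$ cannot ``unfold'' the turn at $p$ without diverging substantially from $q$ shortly after time $t^*$. This is the same type of rigidity used in Case 2 of the proof of Proposition~\ref{prop:continuity of lambda on unstable}: Lemma~\ref{lem:closeness in S and GS} converts $\GS$-closeness into $\SSS$-closeness on a bounded interval containing $t^*$, and the lower bound on the turning angle pins down the cone point. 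Applied to $x$ and $y$, this yields lifts $\tilde x, \tilde y$ and parameters $t_x, t_y$ close to $t^*$ with $\tilde x(t_x) = \tilde y(t_y) = p$.

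Finally, since $y \in V^{u,B(q,\delta)}_x \subseteq W^u(x)$, I can choose the lifts so that in addition $\tilde x(-\infty) = \tilde y(-\infty) =: \xi$ and $B_{-\tilde x}(\tilde y(0)) = 0$. The geodesic rays from $p$ to $\xi$ along $\tilde x$ and $\tilde y$ then coincide by CAT(0) uniqueness of rays (\cite[Proposition~II.8.2]{bh}, as in Lemma~\ref{lemma: geometry of u and cs}), so $\tilde y(u) = \tilde x(u + (t_x - t_y))$ for all $u \leq t_y$. In particular $\tilde y(0) = \tilde x(t_x - t_y)$, and using the standard identity $B_{-\tilde x}(\tilde x(v)) = v$ the normalization forces $t_x - t_y = 0$, so $t_x = t_y =: t_0$. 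For $\delta$ small we have $t_0$ close to $t^* \geq 2$, hence $t_0 \geq 0$ and $x(-\infty, t_0] = y(-\infty, t_0]$. The main obstacle is making the rigidity step uniform: choosing $\delta = \delta(c, B)$, independent of $q$, so that every $z \in B(q,\delta)$ hits $p$ at a time close to $t^*$. The ingredients are present in the paper's continuity-of-$\lambda$ argument, but must be extended from a single unstable leaf to the full ball and made uniform over all admissible $q$ using only the bound $\hat t \leq B$.
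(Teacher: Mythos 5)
Your proposal follows essentially the same route as the paper's proof: apply Lemma~\ref{lemma: properties of lambda} to locate a cone point $p=q(t_1)$ with $t_1\in[\hat t-\tfrac{\theta_0}{2c},\hat t+\tfrac{\theta_0}{2c}]\subset[2,B+\tfrac{\theta_0}{2c}]$ and turning angle bounded away from $\pm\pi$, choose $\delta$ via uniform continuity of the flow on this bounded time window so that every $z\in B(q,\delta)$ is forced through $p$, and then use $\tilde x(-\infty)=\tilde y(-\infty)$ together with uniqueness of the ray from $p$ to $\xi$ to conclude agreement on an initial segment. Two remarks on where you and the paper diverge in emphasis: your use of the Busemann normalization $B_{-\tilde x}(\tilde x(s))=s$ to pin down $t_x=t_y$ actually fills in a step the paper leaves implicit (the paper simply asserts $\tilde x(t_0)=\tilde y(t_0)$), whereas the rigidity step you flag as ``the main obstacle'' is exactly what the paper carries out explicitly — Lemma~\ref{lem:closeness in S and GS} at times $0$ and $t_1+2$, the CAT(0) comparison, and the Euclidean trigonometry showing the concatenation $[x(0),q(t_1)]\cup[q(t_1),x(t_1+2)]$ is geodesic because both adjacent angles at $p$ exceed $\pi$. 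So your plan is sound, but a complete write-up would need to supply that comparison-geometry computation rather than defer to the continuity-of-$\lambda$ argument, which addresses only a single unstable leaf rather than the whole ball $B(q,\delta)$.
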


\begin{proof}
Suppose $\lambda(g_{\hat t}q)>c>0$ for some $\hat t \in \left[\frac{\theta_0}{2 c}+2,B\right]$. By \cite[Prop 3.9]{CCESW}, there exists $t_1 \in \left[\hat t-\frac{\theta_0}{2c}, \hat t+\frac{\theta_0}{2c}\right] \subset \left[2,B+\frac{\theta_0}{2c}\right]$ such that $q(t_1)$ is a cone point at which $q$ turns by angle at least $s_0c$ away from $\pm \pi$.

Using the uniform continuity of the geodesic flow over $t\in\left[0,B+\frac{\theta_0}{2c}+2\right]$, choose $\delta$ such that
\begin{itemize}
	\item $\delta < \frac{s_0c}{16}$, and
	\item if $d_{GX}(z,z') < \delta$, then $d_{GX}(g_t z, g_t z')< \frac{s_0 c}{16}$ for all $t\in \left[0,B+\frac{\theta_0}{2c}+2\right].$
\end{itemize}
Note that $\delta$ depends only on $c$ and $B$.

By Lemma \ref{lem:closeness in S and GS}, since $d_{G\SSS}(x,q)<\delta<\frac{s_0 c}{16}$ and $d_{G\SSS}(g_{t_1+2}x,g_{t_1+2}q)<\delta<\frac{s_0 c}{16}$, $d_\SSS(x(0), q(0))<\frac{s c}{8}$ and $d_\SSS(x(t_1+2), q(t_1+2))<\frac{s c}{8}$. By the CAT(0) condition and some simple Euclidean trigonometry, the angle at $q(t_1)$ between $q(0,t_1)$ and the segment $[x(0),q(t_1)]$ is $<\frac{s_0 c}{2}$. Similarly, the angle at $q(t_1)$ between $q(t_1, t_1+1)$ and the segment $[q(t_1), x(t_1+2)]$ is also $<\frac{s_0 c}{2}$.

The angle that $q$ turns with at $q(t_1)$ is at least $s_0 c$ away from $\pm \pi$. Therefore, the angle between the segments $[x(0),q(t_1)]$ and $[q(t_1),x(t_1+2)]$ is $>\pi$ on both sides, and so the concatenation of these geodesic segments is itself a geodesic segment. Therefore, the cone point $q(t_1)$ belongs to the geodesic $x$. Say it is $x(t_0)$. By choosing  $\delta<<1$, we can ensure that $t_0>0$.

The argument above holds for $y$ as well, so $q(t_1)$ belongs to $y$. Lifting $x$ and $y$ to nearby geodesics in $\tilde \SSS$, since $y\in V^{u,B(q,\delta)}_x$, $\tilde x(-\infty)=\tilde y(-\infty)$. Then $\tilde x(t_0)=\tilde y(t_0)$, and the Lemma follows immediately.
\end{proof}

The following definitions relate to partitions, the main objects of study in this section. For the remainder of this section, let $(X,\mu)$ and $(Y,\nu)$ be general probability measure spaces.

\begin{definition}\label{defn:eps almost every}
A property $\mathcal{P}$ holds for $\eps$-almost every set in a partition $\{ C_j\}$ of $(X,\mu)$ if 
\[ \mu\left(\bigcup_{j'\in J'} C_{j'}\right)<\eps, \  \mbox{ where } J' = \{ j\in J: \mathcal{P} \mbox{ does not hold for } C_j\}. \] 
\end{definition}

A distance between finite partitions can be defined as follows. (See \cite{PV} or \cite {CH}.) Let $J(\mu,\nu)$ be the set of joinings of $\mu$ and $\nu$. That is, $J(\mu, \nu)$ is the set of probability measures $\lambda$ on $X\times Y$ such that $(\pi_1)_*\lambda = \mu$ and $(\pi_2)_*\lambda = \nu$, where $\pi_i$ are the usual projections onto the factors or $X\times Y$.  Given measurable partitions $\xi=\{C_1, \ldots, C_k\}$ of $X$ and $\eta = \{ D_1, \cdots, D_k\}$ of $Y$, for each $x\in X$, we write $\xi(x)$ for the index $i$ such that $x\in C_i$. Similarly $\eta(y)$ is the index $j$ such that $y\in D_j$.

\begin{definition}\label{defn:dbar1}
The $\bar d$-distance between $\xi$ and $\eta$ is
\[ \bar d(\xi,\eta):= \inf_{\lambda\in J(\mu,\nu)} \lambda(\{ (x,y): \xi(x)\neq \eta(y) \}).\]
\end{definition}

This distance can be extended to sequences of partitions $\{\xi_i\}_1^n$ and $\{ \eta_i\}_i^n$ as well.

\begin{definition}\label{defn:h}
Given two sequences of partitions $\{\xi_i\}_1^n$ and $\{\eta_i\}_{1}^n$, we define
\[h(x,y) := \frac{1}{n}\sum_{i=1}^n\delta\{\xi_i(x) \neq \eta_i(y)\},\]
where $\delta$ is the indicator function.
\end{definition}

\begin{definition}\label{defn:dbar sequence}
The $\bar d$-distance between sequences of partitions $\{\xi_i\}_1^n$ and $\{\eta_i\}_1^n$ is 
\[ \bar d(\{\xi_i\}_1^n, \{\eta_i\}_1^n):= \inf_{\lambda\in J(\mu,\nu)} \int h(x,y) d\lambda(x,y).\]
\end{definition}

\begin{definition}\label{defn:conditional}
For a measurable $E\subset X$ with $\mu(E)>0$, the conditional measure induced by $\mu$ on $E$ is 
\[ \mu|_E(S) := \frac{\mu(S\cap E)}{\mu(E)}.\]
If $\xi=\{C_1, \ldots, C_k\}$ is a partition of $(X,\mu)$, it induces the partition
\[\xi|_E :=\{ C_1\cap E, \ldots, C_k\cap E\}\]
on $(E, \mu|_E)$.
\end{definition}

A central tool in the argument below will be Very Weak Bernoulli partitions, introduced by Ornstein in \cite{Ornstein_imbedding}. Let $f$ be a measure-preserving, invertible map on $(X,\mu).$

\begin{definition}\label{defn:VWB}
A finite partition $\xi$ is \emph{Very Weak Bernoulli (VWB)} if for any $\epsilon > 0$, there exists $N\in \mathbb{N}$ such that for all $N_1\geq N_0 \geq N$, for all $n > 0$, and for $\epsilon$-almost every $A\in\bigvee_{k=N_0}^{N_1}f^k\xi$, we have
\[\bar{d}( \{f^{-i}\xi|_A\}_1^n, \{f^{-i}\xi\}_1^n) \leq \epsilon\]
with respect to $(A,\mu|_A)$ and $(X,\mu)$.
\end{definition}

%
\subsection{An outline of the argument}\label{subsec:outline}

Our main task in this section is to construct a sequence of VWB partitions with arbitrarily small diameter. To do so, we want to show that we can satisfy the conditions of the following:

\begin{lemma}[\cite{OW} Lemma 1.3, or \cite{CH} Lemma 4.3]\label{lemma:psi}
    Let $(X,\mu)$ and $(Y,\nu)$ be two non-atomic, Lebesgue probability spaces. Let $\{\alpha_i\}, \{\beta_i\}$ for $1\leq i\leq n$, be two sequences of partitions of $X,Y$, respectively. Suppose there is a map $\psi:X\to Y$ such that
    \begin{enumerate}
        \item There is a set $E_1\subset X$ with $\mu(E_1)<\eps$ such that for all $x\notin E_1$,
        \[h(x,\psi x)<\eps;\]
        \item There is a set $E_2\subset X$ with $\mu(E_2)<\eps$ such that for any measurable $B\subset X\setminus E_2$, 
        \[\left| \frac{\mu(B)}{\nu(\psi B)}-1\right|<\eps.\]
    \end{enumerate}
    Then $\bar d(\{\alpha_i\}, \{\beta_i\})<c\eps$ (where $c$ is some constant depending only on our underlying system.)
\end{lemma}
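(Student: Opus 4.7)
The plan is to construct an explicit joining $\lambda \in J(\mu,\nu)$ by starting from the graph of $\psi$ restricted to the good set $X \setminus (E_1 \cup E_2)$, and then correcting for the fact that $\psi$ is not exactly measure preserving. Bounding $\int h\,d\lambda$ on each piece will then yield the desired estimate on $\bar d$ directly from the definition.

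First, I would set $X' := X \setminus (E_1 \cup E_2)$, so that $\mu(X') > 1 - 2\eps$, and define a positive measure $\lambda_0$ on $X \times Y$ supported on the graph of $\psi|_{X'}$ by
\[\lambda_0(A \times B) := \mu\bigl(A \cap X' \cap \psi^{-1}(B)\bigr)\]
for measurable $A \subset X$, $B \subset Y$. Its first marginal is $\mu|_{X'}$ and its second marginal is $\psi_*(\mu|_{X'})$. Applying condition (2) to any $B \subset X'$ gives $\nu(\psi(B)) = \mu(B)/(1 \pm \eps)$, so $\psi_*(\mu|_{X'})$ agrees with $\nu|_{\psi(X')}$ up to a multiplicative factor $1 \pm \eps$; in particular the total mass of $\nu - (\pi_2)_*\lambda_0$ is $O(\eps)$. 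The total mass of $\mu - (\pi_1)_*\lambda_0$ is just $\mu(E_1 \cup E_2) < 2\eps$, and both residual measures must have equal mass since $\mu(X) = \nu(Y) = 1$.

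Next, I would extend $\lambda_0$ to a genuine joining $\lambda$ of $\mu$ and $\nu$ by adding an arbitrary coupling $\lambda_{\mathrm{corr}}$ of the residual measures $\mu - (\pi_1)_*\lambda_0$ and $\nu - (\pi_2)_*\lambda_0$ (for instance, the normalized product of these two measures). Then $\lambda := \lambda_0 + \lambda_{\mathrm{corr}}$ lies in $J(\mu,\nu)$, and
\[\int h\,d\lambda = \int h\,d\lambda_0 + \int h\,d\lambda_{\mathrm{corr}}.\]
On the graph piece, $h(x,\psi x) < \eps$ for $x \in X' \setminus E_1$ by condition (1), while $h \leq 1$ everywhere, so $\int h\,d\lambda_0 \leq \eps \cdot \mu(X') + \mu(E_1) < 2\eps$. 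On the correction piece, $h \leq 1$ and $\lambda_{\mathrm{corr}}$ has total mass $O(\eps)$, so $\int h\,d\lambda_{\mathrm{corr}} = O(\eps)$. Summing and taking the infimum over joinings in Definition~\ref{defn:dbar sequence} yields $\bar d(\{\alpha_i\},\{\beta_i\}) \leq c\eps$ for an absolute constant $c$.

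The main obstacle will be the bookkeeping required to convert the multiplicative error in condition (2) into an honest total-variation control of $\psi_*(\mu|_{X'})$ against $\nu|_{\psi(X')}$, particularly if $\psi$ is not injective (in which case $\psi_*(\mu|_{X'})$ need not equal $\nu$ on $\psi(X')$ exactly). This is handled by noting that condition (2) applies to \emph{every} measurable $B \subset X'$, so one can test it against sub-level sets of the Radon-Nikodym derivative $d\psi_*(\mu|_{X'})/d\nu$ restricted to $\psi(X')$ to force this derivative to lie in $[1-O(\eps), 1+O(\eps)]$ outside a set of $\nu$-measure $O(\eps)$. Once this is in place, the joining estimate above goes through cleanly, and the lemma follows with $c$ a small absolute constant independent of the dynamics.
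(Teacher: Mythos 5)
The paper does not prove this lemma; it cites Ornstein--Weiss and Chernov--Haskell and uses the statement as a black box, so your blind proof cannot be compared with an argument in the paper itself. That said, constructing an explicit joining concentrated on the graph of $\psi$ and then correcting the defect is precisely the standard route to this lemma, so your strategy is the right one.

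There is, however, a genuine gap in the correction step. You write down $\lambda_0$ with marginals $\mu|_{X'}$ and $\mu_0 := \psi_*(\mu|_{X'})$, and then propose to glue on ``an arbitrary coupling of the residual measures $\mu - (\pi_1)_*\lambda_0$ and $\nu - (\pi_2)_*\lambda_0$.'' The first residual is $\mu|_{E_1\cup E_2}\geq 0$, but the second, $\nu - \mu_0$, is in general a \emph{signed} measure: establishing that the Radon--Nikodym derivative $d\mu_0/d\nu$ lies in $[1-\eps,1+\eps]$ on $\psi(X')$ (which you do correctly describe) still permits $\mu_0 > \nu$ on a set of positive measure, so ``normalized product of the residuals'' is not a well-defined positive measure and $\lambda_0+\lambda_{\mathrm{corr}}$ is not a joining. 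The repair is small but necessary: replace $\lambda_0$ by $\lambda_0/(1+\eps)$. Then condition (2), applied with $B=\psi^{-1}(C)\cap X'$ and using $\psi B\subset C$, gives $\mu_0(C)\leq(1+\eps)\nu(C)$ for \emph{every} measurable $C$, so $(\pi_2)_*\lambda_0/(1+\eps)\leq\nu$ and both residuals are honestly nonnegative of equal total mass $1-\mu(X')/(1+\eps)<3\eps$; the rest of your estimate then closes with a universal constant. Note also that this derivation of $\mu_0\leq(1+\eps)\nu$ (and likewise $\mu_0\geq(1-\eps)\nu$ on $\psi(X')$, by taking $C\subset\psi(X')$ so that $\psi B=C$) requires no injectivity assumption on $\psi$ whatsoever, so the worry you raise in your final paragraph about the non-injective case is already resolved automatically by condition (2) --- what actually needs care is the sign of the residual, not injectivity.
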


We now revert from the general notation above to our specific situation as outlined in Theorem \ref{mainthm:LPS}, replacing the general probability measure space $(X,\mu)$ with $(\GS, \mu)$, and letting $f=g_1$ be the time-one map for the geodesic flow.

Specifically, we will apply Lemma \ref{lemma:psi} with:
\begin{itemize}
    \item $(X,\mu,\{\alpha_i\}) = (A, \mu|_A, \{f^{-i}\xi|_A\})$
    \item $(Y,\nu,\{\beta_i\}) = (\GS, \mu, \{f^{-i}\xi\})$
\end{itemize}
for $i=1+m, \ldots, n+m$, for some $m$ to be chosen below. $G\SSS$ is a complete, separable metric space, so these are indeed Lebesgue spaces. Since $\mu$ is flow invariant, $\mu$ and $\mu|_A$ can have no atoms. Note that to satisfy Definition \ref{defn:VWB} we only need to apply this for $\eps$-almost every $A\in\bigvee_{k=N_0}^{N_1}f^k\xi$. The result will be that 
\[\bar d\left( \{f^{-i}\xi|_A\}_{i=1+m}^{n+m} ,\{f^{-i}\xi\}_{i=1+m}^{n+m} \right)<c\eps\]
where $c$ depends only on our space $\GS$. That is, $f^{-m}\xi$ is VWB.

A rough outline of the argument for the construction of $\xi$ and $\psi$ as in Proposition \ref{lemma:psi} is as follows.

\begin{enumerate}
    \item Let $\xi$ be a partition of $\GS$ with a small amount of measure concentrated near the boundaries of its sets. 
    \item Define an auxiliary ``almost partition'' of $\GS$ into dynamical rectangles, where on each rectangle, $\mu$ is equivalent or absolutely continuous with respect to a product measure supported on the local unstable and center-stable manifolds.
    \item Show that we can choose $M$ so that for $m_2\geq m_1 \geq M$, ``most'' partition elements $A\in \bigvee_{m_1}^{m_2} f^i\xi$ have a subset of relatively large measure $S_A\subset A$ which stretches completely across the rectangle.
    \item Given a set $S_A$ which ``stretches across'' a rectangle $R$ in the unstable direction, show that we can define a map $\psi_R : S_A\cap R\to R$ which is product measure-preserving, and maps points along their local center stable sets.
    \item After identifying a small-measure collection of `bad' atoms from $\bigvee_{k=N_0}^{N_1}f^k\xi$, given a `good' atom $A$, glue together $\psi : A\to \GS$ from the $\psi_R$'s, and then show that it meets the necessary criteria from Lemma \ref{lemma:psi}.
\end{enumerate}

Of these steps, only (2), (3), and (5) require hyperbolicity of some kind. Step (1) is an entirely general construction for which we quote a Lemma from \cite{OW_some_hyperbolic}. The main work for Step (2) was accomplished in Section \ref{subsec:lps}. We discuss the first two steps in Section \ref{subsec:1-2} below. In Section \ref{subsec:3} we give a precise definition and prove the desired `layerwise' intersection property of Step (3). The argument is completed with Steps (4) and (5) in Sections \ref{subsec:4} and \ref{subsec:5}, primarily by recalling the argument of \cite{OW}.

Once the argument above is complete, we finish the proof of Theorem \ref{mainthm:Bernoulli} by invoking a pair of theorems from \cite{OW}:

\begin{theorem}[\cite{OW}, Theorems A and B]\label{thm:vwB to B}\
\begin{itemize}
    \item[(A)] If $\xi$ is a VWB partition of the system $(X,\mu,f)$, then $(X, \bigvee_{i=-\infty}^\infty f^i\xi, \mu, f)$ is Bernoulli.
    \item[(B)] If $\mathcal{A}_1 \subseteq \mathcal{A}_2 \subseteq \cdots$ is an increasing sequence of $f$-invariant $\sigma$-algebras with $\mathcal{B} = \bigvee_{n=1}^\infty \mathcal{A}_n$, and each $(X,\mathcal{A}_n,\mu,f)$ is Bernoulli, then $(X,\mathcal{B},\mu,f)$ is Bernoulli.
\end{itemize}    
\end{theorem}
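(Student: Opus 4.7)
The plan is to treat both parts as classical results from Ornstein theory, where the guiding idea is to connect the combinatorial VWB criterion to Ornstein's analytic ``finitely determined'' (FD) characterization of Bernoulli processes, and then to invoke the Ornstein isomorphism theorem. A genuinely self-contained proof is substantial; the role of a sketch here is to identify the conceptual steps and point to where the real content lives.

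For part (A), the first step is to deduce FD from VWB. The FD property requires that any stationary process whose entropy and $N$-block distributions are close (in total variation) to those of $(f^{-i}\xi)_{i \in \mathbb Z}$ is in fact $\bar d$-close to it. The VWB hypothesis is precisely what lets one couple, on $\eps$-almost every atom $A$ of the remote past $\bigvee_{k=N_0}^{N_1} f^k\xi$, the conditional future process $(f^{-i}\xi|_A)$ with the unconditional future process, with small expected per-coordinate disagreement; integrating these conditional joinings against the distribution on atoms yields a global joining that witnesses the $\bar d$-estimate demanded by FD. Once FD is in hand, the Ornstein isomorphism theorem applies directly to produce a measurable isomorphism between $(X,\bigvee_{i \in \mathbb Z} f^i\xi,\mu,f)$ and an i.i.d.\ shift of the appropriate entropy. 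The principal technical obstacle is the coupling construction underpinning VWB $\Rightarrow$ FD, which is the heart of the arguments in \cite{Ornstein_imbedding} and \cite{OW} and is not something I would attempt to redo from scratch.

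For part (B), the approach is an inverse-limit argument that leverages (A). Each $(X,\mathcal{A}_n,\mu,f)$ is Bernoulli, so by the Sinai--Ornstein generator theorem it admits a (possibly countable) VWB generator $\xi_n$. Given an arbitrary finite $\mathcal{B}$-measurable partition $\zeta$ and $\eps>0$, martingale convergence lets me approximate $\zeta$ in the partition metric by a finite partition $\zeta'$ measurable in some $\mathcal{A}_{n(\eps)}$. Using that VWB is stable under small partition perturbations inside a Bernoulli factor, I conclude that $\zeta$ itself is VWB as a partition of $(X,\mathcal{B},\mu,f)$. A diagonalization along an exhausting sequence of finite $\mathcal B$-measurable partitions whose join generates $\mathcal B$ produces a VWB generator of $\mathcal B$, at which point part (A) closes the argument. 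The delicate step is the $\bar d$-stability of VWB across the refinement tower, which is again carried out carefully in \cite{OW}; in the present paper I would simply cite these two results and use them to package the VWB partitions produced in Sections \ref{subsec:4}--\ref{subsec:5} into the Bernoulli conclusion of Theorem \ref{mainthm:Bernoulli}.
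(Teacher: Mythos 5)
The paper does not prove this statement: Theorem~\ref{thm:vwB to B} is cited verbatim as Theorems A and B of Ornstein--Weiss \cite{OW}, and the body of the paper simply invokes it inside the short proof of Theorem~\ref{mainthm:Bernoulli}. So there is no in-paper proof to measure your sketch against, and the appropriate comparison is between your reconstruction and the standard Ornstein--Weiss argument.

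Your sketch of part (A) follows the classical chain faithfully: VWB $\Rightarrow$ finitely determined $\Rightarrow$ isomorphic to a Bernoulli shift via the Ornstein isomorphism theorem, with the coupling on atoms of the remote-past partition being the substance of the first implication. That is indeed the route taken in \cite{OW} and \cite{Ornstein_imbedding}, and your identification of where the real technical work lies is accurate. For part (B) you take a slightly different packaging than the usual one. The standard formulation is that an increasing sequence of Bernoulli factor $\sigma$-algebras has Bernoulli join because the class of finitely determined processes is closed under $\bar d$-limits (this is the ``inverse limits of Bernoulli systems are Bernoulli'' theorem). Your version instead approximates an arbitrary finite $\mathcal{B}$-measurable partition in the partition metric by an $\mathcal{A}_{n}$-measurable one and appeals to stability of VWB under $\bar d$-perturbation. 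These are morally equivalent — both rest on $\bar d$-closedness of the Bernoulli class — but two points in your sketch are glossed: (i) you invoke ``the Sinai--Ornstein generator theorem'' to produce a VWB generator of each $\mathcal{A}_n$, but the theorem you want is that any finite (or Krieger-type countable) generator of a Bernoulli system is automatically VWB (a consequence of the isomorphism theorem), not Sinai's factor theorem; (ii) passing from ``each approximating partition is VWB'' to ``$\mathcal{B}$ is Bernoulli'' still needs the diagonalization to produce a single VWB generator of $\mathcal{B}$, and the FD route avoids this bookkeeping. Since the paper treats this entirely as a black-box citation, your level of detail is more than what the paper attempts, and the essential architecture you describe is the correct one.
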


\begin{proof}[Proof of Theorem \ref{mainthm:Bernoulli}]
    We apply Theorem \ref{thm:vwB to B} to $(\GS, \mathcal{B}, \mu, f)$ with $f=g_1$ and $\mathcal{B}$ the Borel $\sigma$-algebra. Our argument in the following sections will show that for any partition $\xi$ with a small amount of measure concentrated near its boundary, $f^{-m}\xi$ is VWB. Proposition \ref{prop:good boundary} below ensures that such $\xi$ exist. Indeed, it ensures that such $\xi$ with arbitrarily small diameter can be constructed. Applying this Proposition inductively, we can find a sequence $\xi_1 \subseteq \xi_2 \subseteq \cdots$ with diameters going to zero. $\bigvee_{n=1}^\infty \xi_n$ certainly generates $\mathcal{B}$.

    Applying \ref{thm:vwB to B}(A), we find that $(\GS,\bigvee_{i=-\infty}^\infty f^i\xi_n,\mu,f)$ is Bernoulli. Since $\mathcal A_n:=\bigvee_{i=-\infty}^\infty f^i\xi_n$ are $f$-invariant, and $\mathcal{B} = \bigvee_{n=1}^\infty \mathcal{A}_n$, by Theorem \ref{thm:vwB to B}(B), $(\GS,\mathcal{B},\mu,f)$ is Bernoulli.
\end{proof}

%
\subsection{Steps 1 \& 2: Good partitions}\label{subsec:1-2}

The first step toward using Proposition \ref{lemma:psi} is to note the existence of a partition of $G\SSS$ such that the measure near its boundary is controlled. This is given by the following general fact.

\begin{proposition}[See \cite{OW_some_hyperbolic}, Lemma 4.1 and following remarks]\label{prop:good boundary}
Let $(X,\mu)$ be a compact metric space, $\mu$ a probability measure. For all $\epsilon > 0$, there exists a partition $\xi$ of diameter less than $\epsilon$ such that there exists $C > 0$ such that for all $r > 0$,
$$\mu(B(\partial \xi,r)) \leq Cr$$
where $B(\partial \xi,r)$ is the $r$-neighborhood of the union of the boundaries of the sets in $\xi$.
\end{proposition}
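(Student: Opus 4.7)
The plan is to construct $\xi$ from a finite open cover by metric balls whose radii are chosen so that thin spherical shells around their centers have $\mu$-measure linear in the shell's thickness. By compactness of $X$, I would first fix a finite collection of centers $x_1,\dots,x_k \in X$ together with radii $\rho_i \in (\eps/4,\eps/2)$ such that the balls $B(x_i,\rho_i)$ cover $X$, and then form $\xi$ by the usual disjointification $A_i := B(x_i,\rho_i)\setminus\bigcup_{j<i}B(x_j,\rho_j)$. By construction $\diam \xi < \eps$ and $\partial \xi \subset \bigcup_{i=1}^k \partial B(x_i,\rho_i)$, which reduces the problem to bounding $\mu$ of a uniform neighborhood of each sphere $\partial B(x_i,\rho_i)$.

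The heart of the argument is the careful choice of the radii. For each $i$, the function $\phi_i(s) := \mu(B(x_i,s))$ is monotone nondecreasing on $(0,\infty)$, hence differentiable at Lebesgue-almost every $s$; I would pick $\rho_i \in (\eps/4,\eps/2)$ from the full-measure subset where $\phi_i'(\rho_i)$ exists and is finite. Writing $a_i := \phi_i'(\rho_i)$, the identity
\[
\mu\bigl(\{y : |d(y,x_i) - \rho_i| < r\}\bigr) \;=\; \phi_i(\rho_i + r) - \phi_i(\rho_i - r)
\]
shows that the left-hand side is bounded by $(2a_i + 1)\,r$ for all sufficiently small $r$, while the trivial bound $\mu(X) = 1$ handles large $r$ at linear cost. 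Pasting the two regimes together produces a constant $C_i < \infty$ with
\[
\mu\bigl(\{y : |d(y,x_i) - \rho_i| < r\}\bigr) \leq C_i\, r \quad \text{for every } r>0.
\]

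Since the $r$-neighborhood of $\partial B(x_i,\rho_i)$ is contained in $\{y : |d(y,x_i) - \rho_i| < r\}$, summing over the finite cover gives $\mu(B(\partial\xi,r)) \leq \bigl(\sum_i C_i\bigr)\,r$, so $C := \sum_i C_i$ works. The only mildly delicate point is ensuring that the linear bound holds uniformly in $r>0$ rather than only in a small range near $0$: I expect this to be the main (though modest) obstacle, handled by the two-regime splitting described above. Everything else is a routine construction using compactness and a.e.\ differentiability of monotone functions.
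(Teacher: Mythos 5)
Your proof is correct, and since the paper cites the result from Ornstein--Weiss without reproducing the argument, a self-contained proof is a useful thing to have checked. The two-stage structure is exactly right: disjointifying a cover by balls with radii in $(\eps/4,\eps/2)$ guarantees the diameter bound and reduces $\partial\xi$ to finitely many metric spheres, and the remaining work is to choose the radii well. Your use of a.e.\ differentiability of the monotone function $\phi_i(s)=\mu(B(x_i,s))$ is a clean way to get the linear bound for small $r$; the one point worth stating explicitly (you flag it, but it deserves a sentence) is that differentiability at $\rho_i$ gives a threshold $\delta_i>0$ below which $\phi_i(\rho_i+r)-\phi_i(\rho_i-r)\leq (2a_i+1)r$, and for $r\geq\delta_i$ one uses $\phi_i(\rho_i+r)-\phi_i(\rho_i-r)\leq 1\leq r/\delta_i$, so $C_i=\max\{2a_i+1,\,1/\delta_i\}$ works for all $r>0$. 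Summing over the finitely many $i$ then gives the global constant. This is essentially the standard Ornstein--Weiss argument; some presentations instead obtain the uniform-in-$r$ bound directly via the Hardy--Littlewood maximal function applied to the pushforward measure $\nu_i=(d(\cdot,x_i))_*\mu$, which avoids the two-regime splitting, but your version is equally valid and arguably more elementary.
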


The second step is to find an ``almost partition" of $\GS$ using rectangular subsets of the flow boxes constructed in Section \ref{subsec:lps}. We will denote these rectangles by $R$ or $R_i$. This matches the presentation of \cite{CH}, for example, but these $R$'s should not be confused with the rectangle transversals considered earlier.

Towards that end, note that we may assume (by decreasing $\eps$ and increasing $n$ as necessary) that the bracket structure provided by $[-,-]$ is defined at scale $\bar\delta$ which is at least twice the diameter of any such $\mathcal{B}(n_0,\eps).$

There are product coordinates on $\mathcal{B}(n,\eps)$, defined as follows.  Let $z\in \mathcal{B}(n,\eps)$. We assign it coordinates
\[ z \mapsto (z^u, z^s, z^c) \in W^u(x,\eps) \times W^s(x,\eps) \times \left[-\frac{1}{n},\frac{1}{n}\right]\]
where 
\[z^u = W^u(x,\eps) \cap W^{cs}(z,\bar\delta)\]
\[z^s = W^s(x,\eps) \cap W^{cu}(z,\bar\delta)\]
\[g_{z^c}\langle z^u, z^s\rangle = z.\]
(See Figure \ref{fig:product coords}.)

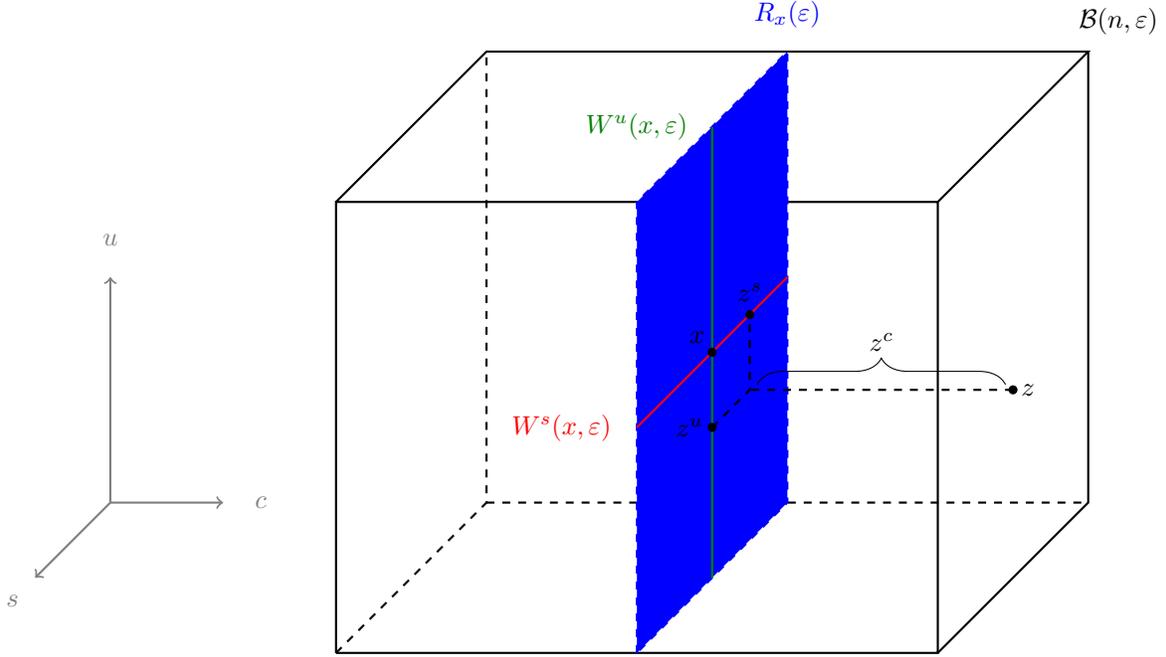
\begin{figure}[h]
\centering
\begin{tikzpicture}[scale=1]

\draw[thick] (-2,-2) -- (6,-2) -- (6,4) -- (-2,4) -- (-2,-2);
\draw[thick] (-2,4) -- (0,6) -- (8,6) -- (6,4);
\draw[thick] (8,6) -- (8,0) -- (6,-2);
\draw[thick, dashed] (-2,-2) -- (0,0) -- (0,6);
\draw[thick, dashed] (0,0) -- (8,0);

\fill[blue, opacity=.3] (2,-2) -- (2,4) -- (4,6) -- (4,0);
\draw[thick, dashed, blue] (2,-2) -- (2,4) -- (4,6) -- (4,0) -- (2,-2);

\draw[thick, green!50!black] (3,-1) -- (3,5);
\draw[thick, red] (2,1) -- (4,3);

\fill (3,2) circle(.06);
\fill (7,1.5) circle(.06);
\fill (3,1) circle(.06);
\fill (3.5,2.5) circle(.06);

\draw[thick, dashed] (7,1.5) -- (3.5,1.5) ;
\draw[thick, dashed] (3,1) -- (3.5,1.5) -- (3.5,2.5) ;

\node at (2.8,2.2) {$x$};
\node[blue] at (4,6.5) {$R_x(\eps)$};
\node[red] at (1,1) {$W^s(x,\eps)$};
\node[green!50!black] at (2,5) {$W^u(x,\eps)$};
\node at (8.4,6.4) {$\mathcal{B}(n,\eps)$};
\node at (7.2,1.5) {$z$};
\node at (3.5,2.8) {$z^s$};
\node at (2.7,1) {$z^u$};

\draw [decorate,decoration={brace,amplitude=10pt},yshift=2pt] (3.6,1.5) -- (6.9,1.5) node [black,midway,yshift=16pt] {$z^c$};

\draw[thick, gray, ->] (-5,0) -- (-5,3) ;
\draw[thick, gray, ->] (-5,0) -- (-3.5,0) ;
\draw[thick, gray, ->] (-5,0) -- (-6,-1) ;

\node[gray] at (-5, 3.5) {$u$} ;
\node[gray] at (-6.3, -1.3) {$s$} ;
\node[gray] at (-3, 0) {$c$} ;

\end{tikzpicture}
\caption{The product coordinates on a flow box.}\label{fig:product coords}
\end{figure}

We note that topologically, $W^u(x,\eps)$ and $W^s(x,\eps)$ are intervals -- they can be identified via projecting to the forward (resp. backward) endpoints at infinity with subsets of the circle $\partial_\infty \tilde \SSS.$

\begin{definition}\label{defn: rectangle definition}
A subset $A\subset \mathcal{B}(n,\eps)$ is a \emph{rectangle} if there are intervals (open, closed, or half open) $I^u \subset W^u(x,\eps)$, $I^s\subset W^s(x,\eps)$, and $I^c\subset [-\frac{1}{n}, \frac{1}{n}]$ such that
\[A= \{z: z^u \in I^u, z^s\in I^s, z^c\in I^c\}\]
or, equivalently, under the map to the coordinates,
\[A \mapsto I^u \times I^s \times I^c.\]
\end{definition}

Suppose that $B_1:=\mathcal{B}(n_1, \eps_1)$ centered at $x_1$ and $B_2:=\mathcal{B}(n_2,\eps_2)$ centered at $x_2$ intersect. Since the diameters of $B_1$ and $B_2$ are less than half the scale on which the bracket $[-,-]$ is defined, we can relate the coordinates for $B_1$ and $B_2$.

Let $z\in B_1\cap B_2$ with coordinates $(z_1^u, z_1^s, z_1^c)$ for $B_1$ and $(z_2^u, z_2^s, z_2^c)$ for $B_2$.
\begin{itemize}
    \item \textbf{Unstable coordinates:} $z_i^u$ both lie on $W^{cs}(z,\bar\delta)$. Hence,
    \[z^u_1 = W^u(x_1,\eps_1) \cap W^{cs}(z_2^u, \bar\delta).\]
    \item \textbf{Stable coordinates:} $z_i^s$ both lie on $W^{cu}(z,\bar\delta)$. Hence,
    \[z_1^s = W^s(x_1,\eps_1)\cap W^{cu}(z_2^s,\bar\delta).\]
    \item \textbf{Flow coordinates:} $z_1^c-z_2^c = t^*$ where $g_{t^*}\langle z_1^u, z_1^s\rangle = \langle z_2^u, z_2^s\rangle$. Note that $t^*$ is the distance along the flow through $z$ between $R_1(\eps_1)$ and $R_2(\eps_2)$. This distance is constant in $z$ as $R_i(\eps_i)$ are connected and foliated by stable and unstable leaves, and the flow takes (un)stable leaves to (un)stable leaves. Hence,
    \[z_1^c = t^*+z_2^c.\]
\end{itemize}

The transformation of the flow coordinate obviously maps intervals to intervals. The same is true for the other foliations. To see that these maps preserve the ordering of points on the (un)stable leaves they map between, one can either draw a geometric picture of the maps in $\tilde \SSS$ or note that these maps are given by holonomies along the leaves of the center (un)stable foliation. Since distinct leaves cannot intersect, ordering must be preserved.

The work above allows us to prove the following.

\begin{lemma}\label{lem:rectangle intersection}
If $A$ is a rectangle in $B_2$, then $A\cap B_1$ is a rectangle in $B_1$.
\end{lemma}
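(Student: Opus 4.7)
The plan is to leverage the diagonal structure of the coordinate change between $B_1$ and $B_2$ described in the paragraphs preceding the lemma: for $z \in B_1 \cap B_2$, the coordinate $z_1^u$ depends only on $z_2^u$ (via a center-stable holonomy), $z_1^s$ depends only on $z_2^s$ (via a center-unstable holonomy), and $z_1^c$ is a translate of $z_2^c$. The proof then proceeds by packaging these as three independent monotone maps between intervals and assembling them coordinate-by-coordinate.

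Concretely, I would formalize the three coordinate maps. Let $D^u \subset W^u(x_2,\eps_2)$ be the set of $z_2^u$ whose center-stable holonomy meets $W^u(x_1,\eps_1)$, and define $D^s \subset W^s(x_2,\eps_2)$ and $D^c \subset [-1/n_2, 1/n_2]$ analogously. As noted just before the lemma, each of the maps $\Phi^u: D^u \to W^u(x_1,\eps_1)$, $\Phi^s: D^s \to W^s(x_1,\eps_1)$, and $\Phi^c: D^c \to [-1/n_1, 1/n_1]$ is continuous and order-preserving---for $\Phi^c$ because it is a translation, and for $\Phi^u, \Phi^s$ because leaves of the center-(un)stable foliation cannot cross. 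Hence each $D^\bullet$ is itself an interval, and each $\Phi^\bullet$ sends subintervals of $D^\bullet$ to subintervals of the appropriate leaf/interval of $B_1$.

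Because of this diagonality, $B_1 \cap B_2$, viewed in $B_2$-coordinates, is a rectangle: namely, $(D^u \times D^s \times D^c) \cap B_2$. If $A$ has $B_2$-coordinates $I_2^u \times I_2^s \times I_2^c$, then in $B_2$-coordinates
$$A \cap B_1 \;=\; (I_2^u \cap D^u) \times (I_2^s \cap D^s) \times (I_2^c \cap D^c),$$
and applying the product bijection $\Phi^u \times \Phi^s \times \Phi^c$ transports this set to a product of intervals in $B_1$-coordinates, which by Definition~\ref{defn: rectangle definition} is a rectangle in $B_1$. (Empty factors just give $A \cap B_1 = \emptyset$, which is vacuously a rectangle.)

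The only real work is verifying that $\Phi^u$ and $\Phi^s$ are genuine order-preserving bijections whose images are subintervals of $W^u(x_1, \eps_1)$ and $W^s(x_1, \eps_1)$; this follows from the disjointness of distinct leaves of the center-(un)stable foliation together with the comparison of scales ($\mathrm{diam}\,B_i$ is less than half of $\bar\delta$) ensuring the holonomy is defined throughout $B_1 \cap B_2$. Both points are already essentially recorded in the discussion immediately preceding the lemma, so the proof should be short.
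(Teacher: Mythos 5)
Your proof is correct and follows essentially the same approach as the paper: both exploit the diagonal structure of the coordinate change (each $z_1^\bullet$ depends only on $z_2^\bullet$) together with the fact that the three coordinate maps send intervals to intervals. Your version is slightly more explicit about the domains $D^u, D^s, D^c$ of the holonomy maps, which makes the step from ``$A$ is a rectangle in $B_2$'' to ``$A\cap B_1$ is a rectangle in $B_1$'' a bit cleaner than the paper's phrasing, but it is not a different argument.
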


\begin{proof}
Suppose that $A$ has coordinates $I^u_2 \times I^s_2 \times I^c_2$ in $B_2$. Then in $B_1$ it has the coordinates $I^u_1 \times I^s_1 \times I^c_1$ with
\[I^u_1 = \{ z_1^u: z_2^u\in I^u_2\} = \{W^u(x_1,\eps_1) \cap W^{cs}(z_2^u,\bar\delta): z_2^u\in I_2^u \}\]
\[I^s_1 = \{ z_1^s: z_2^s\in I^s_2\} = \{W^s(x_1,\eps_1) \cap W^{cu}(z_2^s,\bar\delta): z_2^s\in I_2^s \}\]
\[I^c_1 = \{ z_1^c: z_s^c \in I^c_2 \}.\]
Since the maps $z_2^*\to z_1^*$ map intervals to intervals, $A$ is a rectangle in $B_1$.
\end{proof}

\begin{proposition}\label{prop:rectangle intersection and complement}
If $A_1$ and $A_2$ are rectangles in $B_1$ and $B_2$, then $A_1\cap A_2$ is a rectangle in $B_1$ and $A_1\setminus A_2$ is a finite union of rectangles in $B_1$. 
\end{proposition}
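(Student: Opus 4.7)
The strategy is to reduce everything to the product-coordinate description of rectangles inside a single flow box and then work coordinate-by-coordinate with intervals on the real line; no new dynamics is needed beyond what Lemma~\ref{lem:rectangle intersection} and the preceding discussion already supply.

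First, I would use Lemma~\ref{lem:rectangle intersection} to replace $A_2$ with $A_2' := A_2 \cap B_1$, which is itself a rectangle in $B_1$. Because $A_1 \subset B_1$, we have $A_1 \cap A_2 = A_1 \cap A_2'$ and $A_1 \setminus A_2 = A_1 \setminus A_2'$, so the proposition reduces to the statement that the intersection (resp.\ set-theoretic difference) of two rectangles in the \emph{same} flow box $B_1$ is again a rectangle (resp.\ a finite union of rectangles). In the product coordinates of $B_1$, I would write
\[ A_1 = I_1^u \times I_1^s \times I_1^c, \qquad A_2' = I_2^u \times I_2^s \times I_2^c, \]
where each $I_j^\ast$ is an interval (open, closed, or half-open) in the appropriate factor.

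For the intersection, the identity $A_1 \cap A_2' = (I_1^u \cap I_2^u) \times (I_1^s \cap I_2^s) \times (I_1^c \cap I_2^c)$ does the job, since the intersection of two intervals on the line is again an interval (possibly empty).

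For the complement, the key elementary fact is that for any two intervals $I,J \subset \mathbb{R}$ one can write $I = (I \cap J) \sqcup L_1 \sqcup L_2$, where $L_1,L_2$ are (possibly empty) intervals comprising $I \setminus J$. Applying this on each of the three coordinate axes partitions $A_1$ into at most $3^3 = 27$ pairwise-disjoint rectangles of $B_1$, indexed by a choice, in each coordinate, between ``lie in $I_1^\ast \cap I_2^\ast$'' and ``lie in one of the two pieces of $I_1^\ast \setminus I_2^\ast$''. Exactly one of these 27 cells equals $A_1 \cap A_2'$; the union of the remaining nonempty cells realizes $A_1 \setminus A_2'$ as a finite union of rectangles in $B_1$, completing the argument. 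The only ``obstacle'' is purely bookkeeping, and is handled once one notes (as the paragraph preceding Lemma~\ref{lem:rectangle intersection} establishes) that the changes of product coordinates send intervals to intervals and preserve their order.
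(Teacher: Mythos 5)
Your proposal is correct and follows essentially the same route as the paper: reduce via Lemma~\ref{lem:rectangle intersection} to two rectangles in the same flow box, pass to product coordinates, and invoke the corresponding fact about boxes in $\mathbb{R}^3$. The only difference is that you spell out the $\mathbb{R}^3$ step (the $3^3$ cell decomposition), which the paper leaves as ``the analogous fact about rectangles in $\mathbb{R}^3$.''
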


\begin{proof}
By Lemma \ref{lem:rectangle intersection}, $A_2\cap B_1$ is a rectangle in $B_1$. $A_1 \cap A_2 = A_1 \cap (A_2 \cap B_1)$ and $A_1 \setminus A_2 = A_1 \setminus (A_2\cap B_1)$ so we can without loss of generality assume $A_1$ and $A_2$ are both rectangles in the same flow box $B_1$.

In the stable/unstable/flow coordinates $A_1$ and $A_2$ are rectangles in $W^u(x_1,\eps_1)\times W^s(x_2,\eps_2)\times \left[-\frac{1}{n_1},\frac{1}{n_1}\right]$, itself a rectangle in $\mathbb{R}^3$ after identifying these (un)stable leaves with intervals. The result now follows from the analogous fact about rectangles in $\mathbb{R}^3$.
\end{proof}

\begin{lemma}\label{lem: rectangles have lps}
For any rectangle $A$ with $\mu(A)>0$ in a flow box $\mathcal{B}(n_0,\eps)$ on which $\mu$ has local product structure, the conditional measure $\mu|_A$ has local product structure.
\end{lemma}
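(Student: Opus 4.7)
The plan is to pick a reference point $y\in A$ and show that the bracket-based product structure $\Pi_y\colon V_y^u\times V_y^{cs}\to\mathcal B(n_0,\eps)$, defined by $\Pi_y(u,cs):=[u,cs]$, pulls the rectangle $A$ back to a product set. First I would establish the key geometric calculation: in the $(z^u,z^s,z^c)$-coordinates introduced in Section~\ref{subsec:1-2}, $[u,cs]$ inherits the $u$-coordinate of $u$ (because $[u,cs]\in W^{cs}(u,\delta)$ forces them onto the same $W^{cs}$-leaf, hence to share their $W^u$-coordinate) and inherits the $(s,c)$-coordinates of $cs$ (because $[u,cs]\in W^u(cs,\delta)$, combined with the flow-equivariance $[u,g_r z]=g_r[u,z]$ and the identity $[u,z]=\langle u,z\rangle$ for $z\in W^s(x,\eps)$, the latter following from $W^s(u)\subset W^{cs}(u)$). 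Consequently, for $A=I^u\times I^s\times I^c$, one has $\Pi_y^{-1}(A)=A_y^u\times A_y^{cs}$, where $A_y^u:=V_y^u\cap A$ and $A_y^{cs}:=V_y^{cs}\cap A$ are nonempty since $y\in A$.

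Second, I would use the hypothesis: there exist $x\in\mathcal B(n_0,\eps)$ and measures $\nu^u,\nu^{cs}$ on $V_x^u,V_x^{cs}$ with $\mu|_{\mathcal B(n_0,\eps)}\ll\Pi_{x*}(\nu^u\times\nu^{cs})$. To shift the reference point from $x$ to $y$, I would transport these measures via the unstable holonomy $\pi_{x,y}$ of Lemma~\ref{lem: continuity of bracket} and its center-stable analogue $cs\mapsto[y,cs]$, obtaining measures $\nu^u_y$ and $\nu^{cs}_y$ on $V_y^u$ and $V_y^{cs}$. By Proposition~\ref{prop: main prop}---the heart of the work in Section~\ref{subsec:lps}---the unstable holonomy preserves absolute continuity with bounded density, while the flow-invariance of $\mu$ controls the center-stable direction (which combines stable and flow). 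Thus $\mu|_{\mathcal B(n_0,\eps)}\ll\Pi_{y*}(\nu^u_y\times\nu^{cs}_y)$ as well.

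Finally, I would restrict to $A$. The first step gives
$$\Pi_{y*}(\nu^u_y\times\nu^{cs}_y)\big|_A=\Pi_{y*}\bigl(\nu^u_y|_{A_y^u}\times\nu^{cs}_y|_{A_y^{cs}}\bigr),$$
so after normalizing by $\mu(A)$ one concludes $\mu|_A\ll\Pi_{y*}\bigl(\nu^u_y|_{A_y^u}\times\nu^{cs}_y|_{A_y^{cs}}\bigr)$, which is precisely the local product structure for $\mu|_A$ with reference point $y\in A$.

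The main obstacle is the coordinate bookkeeping in the first step: verifying that $\Pi_y$ genuinely acts as a product map in the $(z^u,z^s,z^c)$-coordinates requires a careful but elementary use of the Busemann function and flow-equivariance. Once this compatibility is in place, the remaining steps are direct applications of the measure-theoretic infrastructure (holonomy transport and restriction) already developed in Section~\ref{subsec:lps}, and no new hyperbolic estimates are needed.
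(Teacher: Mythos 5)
Your proof is correct but takes a genuinely different, more direct route than the paper's. The paper argues via the holonomy estimates: it recalls that local product structure is deduced from Proposition~\ref{prop: LPS} and Corollary~\ref{cor: ae lps} (following the scheme of \cite{ClimenhagaGibbs}), and then observes that conditioning on $A$ simply renormalizes the leafwise conditional measures while leaving the holonomy maps unchanged, so the same estimates persist and the deduction can be rerun for $\mu|_A$. You instead work directly with Definition~\ref{defn:lps}: your Step~1 observation---that $[u,cs]$ inherits the $u$-coordinate of $u$ and the $(s,c)$-coordinates of $cs$, so a rectangle $A$ pulls back under $\Pi_y$ to the product $(A\cap V_y^u)\times(A\cap V_y^{cs})$---lets you restrict the dominating product measure and conclude without re-deriving the ClimenhagaGibbs argument. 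This buys self-containedness: the conclusion follows from the bracket geometry alone. One correction to your Step~2, which actually strengthens your claim: you do not need Proposition~\ref{prop: main prop} or flow-invariance to move the reference point from $x$ to $y$. The same coordinate bookkeeping from Step~1 gives the exact compatibility $\Pi_x = \Pi_y\circ(\pi_{x,y}\times\sigma_{x,y})$, where $\sigma_{x,y}(cs)=[y,cs]$ is the center-stable holonomy, hence $\Pi_{x*}(\nu^u\times\nu^{cs}) = \Pi_{y*}\bigl((\pi_{x,y})_*\nu^u\times(\sigma_{x,y})_*\nu^{cs}\bigr)$ as an identity of measures, with no absolute-continuity estimate needed. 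Proposition~\ref{prop: main prop} concerns the conditional measures $\mu^u_x,\mu^u_y$ specifically, whereas the $\nu^{u},\nu^{cs}$ appearing in Definition~\ref{defn:lps} are arbitrary, so invoking it here is both unnecessary and slightly off-target. With that simplification, your argument requires no hyperbolic input at all beyond what is already packaged into the existence of the flow box coordinates.
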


\begin{proof}
As noted at the start of Section \ref{sec:lps}, local product structure can be deduced from the behavior of conditional measures under holonomy maps described in Proposition \ref{prop: LPS} and Corollary \ref{cor: ae lps}. When $\mu$ is conditioned on $A$ (using the fact that $\mu(A)>0$), the resulting conditional measures are renormalized versions of the measures for the full flow box and the holonomy maps are the same. Hence, these results hold for $A$, and $\mu|_A$ has local product structure. 
\end{proof}

The conditions on the partition we want are described in the following definition.

\begin{definition}\label{defn: good rectangles}
For a given $\delta>0$, a partition $\pi = \{ R_0, R_1, \ldots, R_k\}$ is a \emph{good almost partition of $\GS$ into rectangles} if
\begin{itemize}
    \item Each $R_i$ for $i=1, \ldots, k$ is a rectangle as in Definition \ref{defn: rectangle definition} with diameter $<\delta/C$ where $C$ is from Proposition \ref{prop:good boundary}.
    \item $R_0$ is some measurable set with $\mu(R_0)<\delta$.
    \item If $x,y\in R_i$ ($i=1, \ldots, k$) are on the same local unstable, then there is a curve in that local unstable of length $<8\diam(R_i)<8\delta/C$ joining them.
    \item For every $R_i$ $(i=1, \ldots, k)$
    \[\left| \frac{\mu^p_{R_i}(R_i)}{\mu(R_i)} -1\right| < \delta\]
    where $\mu^p_{R_i}$ is the local product structure measure on $R_i$.
    Each $R_i$ ($i=1, \ldots, k$) contains a subset $G_i$ with $\mu(G_i)>(1-\delta)\mu(R_i)$ such that at all points $x\in G_i$,
    \[ \left| \frac{d\mu^p_{R_i}}{d\mu}(x) -1 \right|<\delta.\]
    \item If $x,y\in R_i$ $(i=1, \ldots, k)$ are on the same local stable leaf, the distance between them contracts exponentially fast under the geodesic flow.
    \end{itemize}
\end{definition}

\begin{proposition}\label{prop: good rectangles exist}
Given any $\delta>0$ there exists a good almost partition of $\GS$ into dynamical rectangles.
\end{proposition}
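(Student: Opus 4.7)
The plan is to build $\pi$ by carving up a finite union of ``good'' flow boxes (on which LPS has been quantified) into small rectangles with controlled boundaries, sweeping the leftovers into $R_0$. By Corollary~\ref{cor: ae lps} and inner regularity of $\mu$, I would first select finitely many flow boxes $B_1,\ldots,B_N$ centered at regular geodesics, each satisfying the uniform LPS bound of Proposition~\ref{prop: LPS} with some constant $K_i$, and whose union covers a set of $\mu$-measure at least $1-\delta/4$. Shrinking the scales, I would arrange that each $B_i$ lies in some $\Reg(c_0)$ for a common $c_0>0$ (possible because $\mu(\lambda^{-1}(0))=0$ by Corollary~\ref{cor: zero measure Sing}), that $\diam B_i<\delta/C$, and that the temporal sides of each $B_i$ lie in $\mu$-null flow-transverse slices.

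Inside each $B_i$ I would use the product coordinates of Section~\ref{sec:rectangles} to cut $B_i$ by finitely many walls, one family of cuts in each of the $u$, $s$, and $c$ factors, into sub-rectangles of diameter less than $\delta/C$. Using Proposition~\ref{prop:good boundary} applied to the factor measures $\nu^u$, $\nu^{cs}$ on the unstable and stable leaves together with Lebesgue measure along the flow interval, the cut positions can be chosen so that the union of their traces in $B_i$ has $\nu^u\otimes\nu^{cs}$-measure, and hence $\mu$-measure, less than $\delta/(4N)$. To make the output globally disjoint I would process the boxes in order: after step $i$, invoke Proposition~\ref{prop:rectangle intersection and complement} to replace each rectangle in $B_{i+1}$ by finitely many sub-rectangles disjoint from all rectangles chosen before. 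The resulting finite collection is $\{R_1,\ldots,R_k\}$, and $R_0$ is defined to be everything else.

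The density conditions in Definition~\ref{defn: good rectangles} would come from Lebesgue differentiation: by Proposition~\ref{prop: LPS}, on each $B_i$ the measure $\mu$ has a bounded density against $\nu^u\otimes\nu^{cs}$, so for $\mu$-a.e.\ $x$ there is a neighborhood basis of product rectangles on which that density is nearly constant. A Vitali argument then produces a subfamily of the $R_i$ of total $\mu$-measure at least $1-\delta/4$ for which $|\mu^p_{R_i}(R_i)/\mu(R_i)-1|<\delta$ holds and on which a subset $G_i\subset R_i$ of relative measure $>1-\delta$ exists with $|d\mu^p_{R_i}/d\mu-1|<\delta$; the rest I discard into $R_0$. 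The unstable-curve condition is automatic, because on $\Reg(c_0)$ a local unstable leaf is a cone-point-asymptotic ray (Lemma~\ref{lemma: geometry of u and cs}) so unstable arclength and $d_{GS}$-distance are comparable within a factor much smaller than $8$ at our scales; exponential stable contraction follows directly from the symmetric counterpart of Lemma~\ref{lemma: expansion u} applied to $+$-cone-point-asymptotic leaves inside $\Reg(c_0)$. The main obstacle is orchestrating all of these choices---of boxes, of cuts, and of the surviving good rectangles---so that the total $\mu$-measure absorbed into $R_0$ stays below $\delta$; I expect this to be handled by fixing thresholds ($\delta/4$ each for the initial uncovered mass, the boundary strips, the density failures, and final slack) before any choices are made and then checking the sum.
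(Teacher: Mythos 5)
Your construction is sound and reaches the same conclusion, but it takes a more hands-on route than the paper. The paper's own proof is shorter and more abstract: it establishes (via Proposition~\ref{prop:rectangle intersection and complement}) that the good rectangles of arbitrarily small diameter form a generating semi-ring for the $\sigma$-algebra restricted to a full-measure set $E$, and then simply invokes the classical approximation theorem for semi-rings to produce a finite disjoint family approximating $E$ to within measure $\delta$. This avoids ever fixing a scale globally, because each point in $E$ gets its own small rectangle with the density property, and the semi-ring machinery handles the bookkeeping. By contrast, you fix a scale up front, cut each flow box explicitly into sub-rectangles of diameter $<\delta/C$ with boundary traces of small measure, disjointify them in sequence, and then discard the ones where the density conditions fail. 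That works, but the step you label a ``Vitali argument'' is not really a Vitali cover argument (those concern shrinking covers around each point, not selecting from a pre-cut fixed partition); what you actually need is a Lusin/Egorov-type uniformization, which converts the pointwise ``nearly constant density on small neighborhoods'' statement (the same Lebesgue-density fact the paper uses) into a single scale that works for all but a $\delta/4$-measure set of points, and hence for all but a $\delta/4$-measure collection of your fixed-size rectangles. This is a routine patch but should be named correctly. Two small notes on the other bullet points: your citation of ``the symmetric counterpart of Lemma~\ref{lemma: expansion u}'' for exponential stable contraction is exactly Lemma~\ref{lemma: boundedness cs} with zero time shift, which is what the paper cites; and your argument for the unstable-curve bound via cone-point asymptotics (Lemma~\ref{lemma: geometry of u and cs}) is fine, though the paper gets the factor $8$ more directly from the flow-box construction in Definitions~\ref{definition: su-rectangle} and~\ref{definition: flow box}.
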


\begin{proof}
In Corollary \ref{cor: ae lps} we proved that almost every $\gamma \in \GS$ -- specifically, any $\gamma$ with $\lambda(g_t \gamma)>0$ for all $t\in \mathbb{R}$ -- belongs to a flow box $\mathcal{B}(n,\eps)$ on which $\mu$ has local product structure. In Lemma \ref{lem: rectangles have lps} we showed that on any rectangle $R$ with $\mu(R)>0$ in such a flow box, $\mu$ also has local product structure. We will create our good almost partition from such rectangles. They are clearly measurable, and hence $R_0$ will be measurable.

First, note that we can choose the rectangles $R$ with diameter smaller than $\delta/C$ where $C$ is from Proposition \ref{prop:good boundary}..

Second, it is clear from the construction of the flow boxes (see Definitions \ref{definition: su-rectangle} and \ref{definition: flow box}, and take $n$ sufficiently large) that the unstable segments in each flow box, and hence each rectangle within it are of length $<8\diam(R)$. Also by construction (see Lemma \ref{lemma: bracket_su geometry}) if $x,y \in R$ are on the same stable leaf, then $x(t)=y(t)$ for all $t \geq 0$. Lemma \ref{lemma: boundedness cs} gives the desired exponential contraction.

We know from Lemma \ref{lem: rectangles have lps} that $\frac{d\mu}{d\mu^p_{R}} \in [\bar K^{-1}, \bar K]$ on each such rectangle. Hence, $\mu^p_{R} \ll \mu$ as well. $\frac{d\mu^p_{R}}{d\mu}$ is measurable, so at almost every point, on a sufficiently small neighborhood, it is nearly constant off a small measure set. Therefore, shrinking $R$ as necessary, and rescaling $\mu^p_R$ as necessary, we can ensure that $|\frac{d\mu^p_R}{d\mu}-1|<\delta$ on a set $G\subset R$ with $\mu(G)>(1-\delta)\mu(R)$. Combining this with our bounds on $\frac{d\mu}{d\mu^p_{R}}$ we can show that $|\frac{\mu^p_R(R)}{\mu(R)}-1|<\delta$.

Therefore, around almost every point in $\GS$ we can construct a rectangle answering the requirements of Definition \ref{defn: good rectangles} of arbitrarily small diameter. Let $E$ be the full measure set of such points. The fact that we can make our rectangles of arbitrarily small diameter and the results of Proposition \ref{prop:rectangle intersection and complement}, show that these rectangles form a generating semi-ring for the restriction of the $\sigma$-algebra of measurable sets to $E$. Therefore, every measurable set in this $\sigma$-algebra, including $E$ itself, can be approximated up to measure $<\delta$ by a finite, disjoint, union of our rectangles. These rectangles, together with their complement of measure $<\delta$, is the partition $\pi$.
\end{proof}

%
\subsection{Step 3: Layerwise intersection}\label{subsec:3}

The next step is to prove (Proposition \ref{prop:layerwise}, below) that given a partition $\xi$ as provided in Proposition \ref{prop:good boundary} and a collection of small rectangles, there is a time $M$ such that, after pushing $\xi$ forward by times $\geq M$, almost all of its subsets intersect each rectangle in our collection `layerwise.'

\begin{definition}\label{defn:layerwise}
A measurable set $S$ intersects a rectangle $R$ \emph{layerwise} if for all $x\in S\cap R$, $V_x^{u,S} \supseteq V_x^{u,R}$.
\end{definition}

First, we record a geometric lemma.

\begin{proposition}\label{prop:good points}
Let $\epsilon>0$ and $t_0\geq 0$ be given. Then there exists a set $E\subset GX$ (depending on $\eps$ and $t_0$), an $\eta>0$ (depending only on $\eps$), and $A,B\geq t_0$ (depending on $\eps$ and $t_0$) such that:
\begin{itemize}
	\item $\mu(E)<\frac{\eps^2}{2}$
	\item for each $\gamma \in E^c$, there exists a $t'_\gamma \in [A,B]$ such that $|\theta(\gamma, t'_\gamma)-\pi| \geq c$ for some $c>0$.
\end{itemize} 
\end{proposition}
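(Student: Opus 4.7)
The plan is to exploit Corollary \ref{cor: zero measure Sing} ($\mu(\lambda^{-1}(0)) = 0$) together with Lemma \ref{lemma: properties of lambda}, which pinpoints a significant turn within a bounded window around any point where $\lambda$ is large. The strategy is to flow a typical geodesic by a fixed amount $T$ so that it lands in a regular set, place the window guaranteed by Lemma \ref{lemma: properties of lambda} inside the interval $[A,B]$, and read off the turn.

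First I would pick $\eta = \eta(\eps) > 0$ so that
\[
U_\eta := \{\gamma \in \GS : \lambda(\gamma) > \eta\}
\]
has measure $\mu(U_\eta) > 1 - \eps^2/2$. This is possible because $U_\eta \nearrow \{\lambda > 0\}$ as $\eta \searrow 0$ and $\mu(\{\lambda > 0\}) = 1$ by Corollary \ref{cor: zero measure Sing}. Then I would set
\[
c := \eta s_0, \quad A := t_0, \quad B := t_0 + \theta_0/\eta, \quad T := (A+B)/2, \quad E := g_{-T}(U_\eta^c).
\]
Flow-invariance of $\mu$ gives $\mu(E) = \mu(U_\eta^c) < \eps^2/2$; by construction $A, B \geq t_0$ depend only on $\eps$ and $t_0$, while $\eta$ and $c$ depend only on $\eps$.

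For any $\gamma \notin E$ we have $\lambda(g_T\gamma) > \eta$, so Lemma \ref{lemma: properties of lambda} applied to $x := g_T\gamma$ produces a time $t \in (-\theta_0/(2\eta),\, \theta_0/(2\eta))$ at which $g_T\gamma$ turns by angle at least $\eta s_0 = c$ away from $\pi$. Translating by $T$, $\gamma$ turns by at least $c$ at $t'_\gamma := T + t$, and since $|t| < (B-A)/2$ by our choice of $T$, we have $t'_\gamma \in [A,B]$, as required.

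No step is technically difficult; the only point worth checking when writing the full proof is the correspondence between the phrase "turns at $t$ with angle at least $cs_0$" in Lemma \ref{lemma: properties of lambda} and the quantitative inequality $|\theta(\cdot, t) - \pi| \geq cs_0$ (up to sign conventions on $\theta$). This is immediate from the definition of $\lambda$ in \cite[\S 3]{CCESW}, so the bookkeeping is the only thing standing between the plan above and a complete proof.
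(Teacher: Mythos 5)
Your proof is correct and reaches the conclusion by essentially the same route as the paper (identify a nearly full-measure set on which $\lambda$ is bounded away from zero somewhere along the orbit, shift by a fixed time to place the window inside $[A,B]$, then read off the turn via Lemma~\ref{lemma: properties of lambda}). Where you differ is in the construction of the good set. The paper works with $F = \bigcap_{t\in[-N_1,N_1]} g_t(\{\lambda < c^*\})$ and uses $\mu(\Sing) = 0$ to pick $c^*$ first and then $N_1$; you instead directly invoke the stronger half of Corollary~\ref{cor: zero measure Sing}, namely $\mu(\lambda^{-1}(0))=0$, and take $U_\eta^c = \{\lambda \le \eta\}$ as your bad set at a single time. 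This is genuinely simpler: you get a clean shift amount $T = t_0 + \theta_0/(2\eta)$ and transparent bounds $[A,B] = [t_0, t_0 + \theta_0/\eta]$, whereas the paper's version tracks two independent parameters $c^*$ and $N_1$ and ends up with bookkeeping that is harder to audit. What the paper's construction buys is robustness: it only needs $\lambda$ to be large at \emph{some} time near $T$, not exactly at $T$, which is a weaker demand on the orbit. But since both sets already have measure $>1-\eps^2/2$, that extra slack isn't needed here, and your version is preferable.

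One small gap to close before this is airtight: in Lemma~\ref{lemma: properties of lambda}, case (1) places the turn in $(-s_0, s_0)$, not in $(-\theta_0/(2\eta),\theta_0/(2\eta))$. Your windowing argument needs $s_0 \leq \theta_0/(2\eta)$, i.e.\ $\eta \leq \theta_0/(2s_0)$, to cover case (1). This is harmless --- shrinking $\eta$ only increases $\mu(U_\eta)$ --- but should be stated when you pick $\eta$.
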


\begin{proof}
Recall that $\Sing = \bigcap_{t\in \mathbb{R}} g_t(\{ \lambda = 0\})$. (Here, $\{\lambda=0\}$ is shorthand for $\{\gamma \in GX : \lambda(\gamma) = 0\}$.) Thus, 
\[ \Sing = \bigcap_{c'>0} \bigcap_{t\in\mathbb{R}} g_t\left(\left\{ \lambda< c' \right\} \right).\]
By Corollary \ref{cor: zero measure Sing}, $\mu(\Sing)=0$ for our equilibrium states, so given $\eps>0$, there exists $c^*>0$ such that 
\[ \mu\left(  \bigcap_{t\in \mathbb{R}} g_t(\{ \lambda<c^*\}) \right) < \frac{\eps^2}{4}.\]

Now,
\[ \bigcap_{t\in \mathbb{R}} g_t(\{ \lambda<c^*\}) = \bigcap_{n\in\mathbb{N}} \bigcap_{t\in [-n,n]} g_t\left(\left\{ \lambda<c^*\right\}\right).\]
Therefore, there exists $N_1\in \mathbb{N}$ such that 
\[ \mu \left( \bigcap_{t\in[-N_1,N_1]} g_t\left(\left\{ \lambda< c^* \right \}\right)\right) < \frac{\eps^2}{2}.\]
Let 
\[ F= \bigcap_{t\in[-N_1,N_1]} g_t\left(\left\{ \lambda< c^* \right \}\right).\]
Then
\[ F^c = \left\{ \gamma \in GX : \exists t \mbox{ with } |t|\leq N_1 \mbox{ such that } \lambda(g_t\gamma)\geq c^*)\right\}\]
and $\mu(F^c)\geq 1-\frac{\eps^2}{2}$.

Let $E=g_{-N_1-t_0-\frac{\theta_0}{2\eta}} F$; $E^c=g_{-N_1-t_0-\frac{\theta_0}{2\eta}} F^c$. We still have $\mu(E^c) \geq 1-\frac{\eps^2}{2}$. If $\gamma \in E^c$, then there exists a time $t_\gamma \in \left[t_0+\frac{\theta_0}{2c^*}, t_0+\frac{\theta_0}{2c^*}+2N_1\right]$ such that $\lambda(g_t \gamma) \geq c^*$. Let $c = \frac{c^*}{2s}$. Then for each $\gamma \in E^c$, there exists a $t'_\gamma \in [t_0, t_0+2N_1]=:[A,B]$ such that $|\theta(\gamma, t'_\gamma)-\pi| \geq c$.
\end{proof}

\begin{definition}\label{defn:good rectangle}
Let $E$ be as provided by Proposition \ref{prop:good points}. Given a collection $\mathcal{R}$ of rectangles, we decompose it as $\mathcal{R}= \mathcal{R}_E \cup \mathcal{R}_{E^c}$, where $\mathcal{R}_E = \{ R\in \mathcal{R}: R\subset E\}$ and $\mathcal{R}_{E^c} = \{ R\in\mathcal{R}: R\cap E^c\neq \emptyset\}$.
\end{definition}

Directly from Proposition \ref{prop:good points} we have

\begin{corollary}\label{cor:bad small}
For any collection $\mathcal{R}$ of rectangles,
\[ \mu\left(\bigcup_{R\in \mathcal{R}_E} R \right)<\frac{\eps^2}{2}.\]
\end{corollary}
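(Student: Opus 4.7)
The plan is very short because this corollary is essentially a direct unpacking of definitions. Each rectangle $R \in \mathcal{R}_E$ is by definition a subset of $E$ (see Definition \ref{defn:good rectangle}). Therefore the union $\bigcup_{R \in \mathcal{R}_E} R$ is contained in $E$ as well, and monotonicity of $\mu$ yields
\[
\mu\left(\bigcup_{R \in \mathcal{R}_E} R\right) \leq \mu(E).
\]
The conclusion then follows by invoking the estimate $\mu(E) < \tfrac{\eps^2}{2}$ supplied by Proposition \ref{prop:good points}.

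There is no real obstacle here; the substantive work was done in establishing Proposition \ref{prop:good points}, which in turn used the fact that $\mu(\Sing) = 0$ (Corollary \ref{cor: zero measure Sing}) together with lower semicontinuity of $\lambda$ to pull a small-measure trapping set out of the nested intersection $\bigcap_{t \in \mathbb{R}} g_t(\{\lambda < c^*\}) = \Sing$. Once that measure estimate is in hand, the decomposition $\mathcal{R} = \mathcal{R}_E \cup \mathcal{R}_{E^c}$ is tailored precisely so that $\mathcal{R}_E$ collects only those rectangles that sit inside the bad set, so the bound is automatic. I would write the proof as a two-line observation rather than a formal argument.
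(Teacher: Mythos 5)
Your proof is correct and is exactly the argument the paper intends — the paper even introduces the corollary with "Directly from Proposition \ref{prop:good points} we have," signaling that the bound follows immediately from $\bigcup_{R\in\mathcal{R}_E} R \subset E$ and $\mu(E) < \eps^2/2$. No further comment is needed.
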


In all that follows, $\xi = \{ F_j\}$ is a measurable partition of $G\SSS$. $B(\partial \xi, \eps)$ is the $\eps$-neighborhood of the union of the boundaries of the sets in $\xi$. We assume, using Proposition \ref{prop:good boundary}, that there exists $C>0$ so that for all $r>0$, $\mu(B(\partial\xi,r))<Cr$.

Fix $\eps>0$ and set $t_0=2$. Let $c>0$, $E\subset G\SSS$, and $A,B \geq t_0$ be as described in Proposition \ref{prop:good points} (for our choice of $\epsilon$ and $t_0$). Recall that each $\gamma\in E^c$ has a time $t'_\gamma\in [A,B]$ associated to it for which $\theta(\gamma, t'_\gamma)\geq \eta$.

Given $c$ and $B$, let $\delta>0$ be as given in Lemma \ref{lem:agree}. By Lemma \ref{lem:agree}, if $q\in E^c$ and $x,y\in B(q,\delta)$ with $y\in V^{u,B(q,\delta)}_x$, then $x$ and $y$ agree over times in $(-\infty, t'_q]$. Therefore, Lemma \ref{lemma: expansion u} applies for such $x,y$.

We are now ready for the main proposition of Step 4:

\begin{proposition}\label{prop:layerwise}
Let $\epsilon>0$ be given. Let $\mathcal{R}$ be a finite, pairwise disjoint collection of rectangles with diameters $<\delta$.

If $\xi$ is a partition as described above, then there exists $M$ (depending on the $C$ mentioned in the description of $\xi$ as well as on $\eps$) such that for all $m_2 \geq m_1 \geq M$ and for $\eps$-almost every $F \in \bigvee_{k=m_1}^{m_2} f^k \xi$ there is a set $S_F\subset F$ with $\mu|_F(S_F) > 1-\eps$ which intersects each $R\in \mathcal{R}$ layerwise.
\end{proposition}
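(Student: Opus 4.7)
The plan is to reduce the layerwise failure to the event that an unstable curve $V_x^{u,R}$ crosses some translate $f^k(\partial\xi)$, and to bound the measure of this event by a summable geometric series obtained from (a) the boundary estimate $\mu(B(\partial\xi, r)) < Cr$ and (b) clean backward exponential contraction on local unstable leaves inside rectangles meeting $E^c$. First, I apply Proposition \ref{prop:good points} with the given $\epsilon$ and $t_0 = 2$ to obtain the exceptional set $E$ with $\mu(E) < \epsilon^2/2$, and split $\mathcal{R} = \mathcal{R}_E \cup \mathcal{R}_{E^c}$ as in Definition \ref{defn:good rectangle}; Corollary \ref{cor:bad small} absorbs the rectangles in $\mathcal{R}_E$ into the bad set. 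For each $R \in \mathcal{R}_{E^c}$ I pick $q_R \in R \cap E^c$, and since $\mathrm{diam}(R) < \delta$ we have $R \subset B(q_R, \delta)$, so Lemma \ref{lem:agree} gives, for any $x \in R$ and $y \in V_x^{u,R}$, agreement on a ray $(-\infty, t_0]$ with $t_0 \geq 0$. Lemma \ref{lemma: expansion u} (with $r = t_0 \geq 0$, so the multiplicative constant is $1$) then yields the uniform bound
\[
d_{GS}(f^{-k}x, f^{-k}y) \leq e^{-2k}\, d_{GS}(x,y) \leq e^{-2k}\delta \qquad \text{for all } k \geq 0.
\]

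Next I define
\[
BAD := \bigcup_{R \in \mathcal{R}_E} R \ \cup\ \Big\{ x \in \bigcup_{R \in \mathcal{R}_{E^c}} R \ :\ V_x^{u,R} \cap f^k(\partial\xi) \neq \emptyset \text{ for some } k \in [m_1, m_2] \Big\}.
\]
If $V_x^{u,R}$ meets $f^k(\partial\xi)$, the contraction above forces $f^{-k}x \in B(\partial\xi, e^{-2k}\delta)$; by $f$-invariance of $\mu$ and the boundary estimate, the $k$-th layer contributes at most $C e^{-2k}\delta$ to $\mu(BAD)$. Summing the geometric series from $k = m_1$ and choosing $M$ large enough that $\tfrac{C\delta}{1-e^{-2}}e^{-2M} < \epsilon^2/2$ gives $\mu(BAD) < \epsilon^2$ for every $m_2 \geq m_1 \geq M$. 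Setting $S_F := F \setminus BAD$ and using $\sum_F \mu(F \cap BAD) = \mu(BAD) < \epsilon^2$, Markov's inequality shows that the partition elements with $\mu(F \cap BAD)/\mu(F) > \epsilon$ have total measure less than $\epsilon$, so $\mu|_F(S_F) > 1-\epsilon$ holds for $\epsilon$-almost every $F$.

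The layerwise conclusion then follows from the fact that $BAD$ is saturated by local unstable leaves inside each $R$: the condition $V_x^{u,R} \cap f^k(\partial\xi) \neq \emptyset$ depends only on the leaf, not on the representative $x$. Thus for any $x \in S_F \cap R$, the whole leaf $V_x^{u,R}$ is disjoint from $BAD$. A standard connectedness argument finishes: for each $k \in [m_1, m_2]$, $f^{-k}(V_x^{u,R})$ is connected, meets $F_{j_k}$ at $f^{-k}x$, and misses $\partial F_{j_k} \subset \partial\xi$, forcing $f^{-k}(V_x^{u,R}) \subset F_{j_k}$; intersecting over $k$ yields $V_x^{u,R} \subset F$, hence $V_x^{u,R} \subset S_F$, so $V_x^{u,S_F} \supseteq V_x^{u,R}$.

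The main obstacle is securing a contraction rate on unstable curves that is uniform across the collection $\mathcal{R}_{E^c}$. Corollary \ref{cor:u expansion} alone carries a multiplicative constant depending on the regularity level of the base point, which would vary from rectangle to rectangle and destroy the uniform summability of $\sum_k e^{-2k}\delta$ across all $R$ at once. The role of the set $E$ and of Lemma \ref{lem:agree} is precisely to bypass this: on $E^c$, the geodesics turn by a definite angle in a bounded forward time window, forcing nearby unstably-related geodesics to coincide on a full negative ray, after which Lemma \ref{lemma: expansion u} gives pure contraction with constant $1$, uniformly over $R \in \mathcal{R}_{E^c}$.
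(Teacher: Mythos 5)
Your proposal is correct and follows essentially the same route as the paper: obtain $E$ from Proposition \ref{prop:good points}, split $\mathcal{R}$ into $\mathcal{R}_E$ (absorbed via Corollary \ref{cor:bad small}) and $\mathcal{R}_{E^c}$ (where Lemma \ref{lem:agree} gives coincidence on a negative ray, so Lemma \ref{lemma: expansion u} contracts with constant $1$), sum the geometric series against $\mu(B(\partial\xi,r))<Cr$, and apply the Markov/Chebyshev inequality to pass from $\mu(Z)<\eps^2$ to $\eps$-almost every atom. The only differences are organizational: you define a single $BAD$ set and verify the layerwise property afterward by a connectedness argument, whereas the paper builds the layerwise property directly into the definition of $S_F$ and bounds its complement; and you use the sharper contraction rate $e^{-2k}$ where the paper writes the coarser but still summable $e^{-k}$.
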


\begin{proof}
Let $\eps>0$ be given; set $t_0=2$. Let $E$ be as provided by Proposition \ref{prop:good points}; it depends on $\epsilon$ and $t_0$.

Let $\mathcal{R}$ be a collection of rectangles with diameters $<\delta$. Decompose $\mathcal{R} = \mathcal{R}_E \cup \mathcal{R}_{E^c}$ as per Definition \ref{defn:good rectangle}. Let $q_R\in R \in \mathcal{R}_{E^c}$ for all $R \in \mathcal{R}_{E^c}$.

Recall that $\xi$ is a partition such that $\mu(B(\partial\xi,r'))<Cr'$ for all $r'>0$. Let $M$ be so large that $C\delta \sum_{k=M}^{\infty} e^{-k}<\frac{\eps^2}{2}$.

Let $F \in \bigvee_{k=m_1}^{m_2} f^k \xi.$ Then if $\xi = \{ F_j\}$, 
\[ F = \bigcap_{k=m_1}^{m_2} f^k F_{j_k}.\]
A point $x$ is in $F$ if and only if for all $k \in [m_1, m_2]$, $f^{-k}x \in F_{j_k}$.

Let $S_1= F \setminus (\cup_{R\in\mathcal{R}} R)$. Let 
\[ S_2 = \bigcup_{R \in \mathcal{R}_{E^c}} \{ x\in F \cap R : V_x^{u,R} \subseteq V_x^{u,F}\}.\] 
Let $S_F = S_1 \cup S_2$. Our goal is to prove that $\mu|_F(S_F^c)\leq \eps.$

Suppose that $x\notin S_F$. Then $x$ belongs to some rectangle $R\in\mathcal{R}$. If that rectangle is in $\mathcal{R}_E$, then $x\in S^c_F\cap (\cup_{R\in \mathcal{R}_{E}}R)$. By Corollary \ref{cor:bad small}, $\mu(S^c_F\cap (\cup_{R\in \mathcal{R}_{E}}R))<\frac{\eps^2}{2}$. 

If, on the other hand, that rectangle is in $R_{E^c}$, then there exists some $y \in V_x^{u,R}$ such that $y\notin F$. Since $y\notin F$, there exists some $k\in [m_1,m_2]$ such that $y\notin f^k F_{j_k}$. As they are both in $R$, $d_{GX}(x,y)<\delta$. Since $y\in V_x^{u,R}$ and $R$ is a good rectangle, by Lemma \ref{lemma: expansion u},
\[ d_{GX}(g_{-k}x,g_{-k}y) < \delta e^{-k}. \]
Therefore, $g_{-k}x \in B(\partial \xi, \delta e^{-k})$.

Let 
\[X_k = g_kB(\partial \xi, \delta e^{-k}).\]
By the choice of $\xi$, $\mu(X_k) < C\delta e^{-k}$.

We have shown that for each $F \in \bigvee_{k=m_1}^{m_2} f^k \xi$, 
\[F\setminus S_F \subset \left(\bigcup_{R\in \mathcal{R}_{E}}R\right)\cup \bigcup_{k=m_1}^{m_2} X_k.\] 
Therefore, letting $Z=\bigcup_{F\in \xi} (F\setminus S_F)$,
\[\mu(Z) \leq   \mu\left(\bigcup_{R\in \mathcal{R}_{E}}R\right)+\sum_{k=m_1}^{m_2} \mu(X_k) < \frac{\eps^2}{2}+C\delta \sum_{k=M}^\infty e^{-k}  < \frac{\eps^2}{2}+\frac{\eps^2}{2} = \eps^2.\]

Writing $\bigvee_{k=m_1}^{m_2} f^k \xi = \{ F_j: j\in J\}$, let $J' = \{ j\in J : \mu|_{F_j}(F_j\setminus S_{F_j})\geq \eps \}.$ Suppose that 
\[ \mu(\bigcup_{j\in J'} F_j) = \sum_{j\in J'} \mu(F_j) \geq \eps.\]
Then
\begin{align}
	\mu(Z) & = \sum_{j\in J} \mu(Z \cap F_j) = \sum_{j\in J} \mu(F_j) \mu|_{F_j}(Z) \nonumber \\
			& \geq \sum_{j\in J'} \mu(F_j) \mu|_{F_j}(Z) \geq \sum_{j\in J'} \mu(F_j) \eps  \geq \eps^2. \nonumber
\end{align}
This contradicts the fact that $\mu(Z)<\eps^2$. Therefore, for $\eps$-almost every $F\in \bigvee_{k=m_1}^{m_2} f^k \xi$, namely for $\{F_j: j\notin J'\}$, $\mu|_F(F\setminus S_F)< \eps$. The sets $S_F$ are the ones we want.
\end{proof}

%
\subsection{Step 4: Local definition of \texorpdfstring{$\psi$}{psi}}\label{subsec:4}

We now turn to the construction of $\psi$ satisfying the conditions of Lemma \ref{lemma:psi}. At this point, the argument entirely follows \cite[\S3]{OW} or \cite[\S6.2]{CH}, so we summarize the main points here, clarifying how their argument unfolds in our setting.

We begin with Step (4), defining $\psi_i: S_A \cap R_i \to R_i$ for each rectangle from our auxiliary partition for which $\mu(S_A \cap R)>0$. These $\psi_i$ will be combined to form $\psi$ in Step (5).

Consider those rectangles $R_i$ in $\pi$ ($i=1, \ldots, n$), such that $S_A\cap R_i$ has positive measure, where $S_A$ is given by Proposition \ref{prop:layerwise}. Recall that for any $y\in S_A\cap R_i$, $V_{x_i}^{u,R_i} \subset V_{x_i}^{u, A}$; that is, the connected component of $y$'s unstable leaf in $A$ stretches fully across $R_i$.

For each such $i$, pick a point $x_i \in S_A\cap R_i$.  Consider $V_{x_i}^{cs,S_A\cap R_i}$ and $V_{x_i}^{cs, R_i}$. These can each be equipped with factor measures $\mu^{cs}_{x_i}|_{V_{x_i}^{cs,S_A\cap R_i}}$ and $\mu^{cs}_{x_i}|_{V_{x_i}^{cs, R_i}}$ from the local product structure on $R_i$. Each of these is a non-atomic, Lebesgue, probability measure space (see the Remark just before Proposition \ref{prop: integral_limit}), so there is a bijective, measure-preserving map 
\[ \psi_i: \left(V_{x_i}^{cs,S_A\cap R_i}, \mu^{cs}_{x_i}|_{V_{x_i}^{cs,S_A\cap R_i}}\right) \to \left(V_{x_i}^{cs, R_i},\mu^{cs}_{x_i}|_{V_{x_i}^{cs, R_i}}\right).\]
Define $\psi$ on $V_{x_i}^{cs,S_A\cap R_i}$ by $\psi_i$.

Let $y\in S_A\cap R_i$. Define $\psi_i(y)$ with this composition:
\[y \mapsto  W_y^u \cap W_{x_i}^{cs} \mapsto \psi_i(W_y^u \cap W_{x_i}^{cs}) \mapsto W^u_{\psi_i(W_y^u \cap W_{x_i}^{cs})} \cap W_y^{cs}.\]
That is, map $y$ along the strong unstable to the weak stable of $x_i$. Since $R_i$ is a rectangle, $W_y^u \cap W_{x_i}^{cs}$ is still in $R_i$, and since $y$ is in $S_A$, $W_y^u \cap W_{x_i}^{cs}$ is still in $S_A$. Hence, $\psi_i(W_y^u \cap W_{x_i}^{cs})$ is defined. Then, using the rectangular structure of $R_i$ once again, we push $\psi_i(W_y^u \cap W_{x_i}^{cs})$ back to the weak stable of $y$. (See Figure \ref{fig:psi i}.)

\begin{figure}[h]
\centering
\begin{tikzpicture}[scale=1]

\draw[thick] (-2,-2) -- (6,-2) -- (6,4) -- (-2,4) -- (-2,-2);
\draw[thick] (-2,4) -- (0,6) -- (8,6) -- (6,4);
\draw[thick] (8,6) -- (8,0) -- (6,-2);
\draw[thick, dashed] (-2,-2) -- (0,0) -- (0,6);
\draw[thick, dashed] (0,0) -- (8,0);

\fill[blue, opacity=.3] (-2,1) -- (0,3) -- (8,3) -- (6,1);
\draw[thick, dashed, blue] (-2,1) -- (0,3) -- (8,3) -- (6,1) -- (-2,1);

\fill[cyan, opacity=.3] (-2,2.7) -- (0,4.7) -- (8,4.7) -- (6,2.7);
\draw[thick, dashed, cyan] (-2,2.7) -- (0,4.7) -- (8,4.7) -- (6,2.7) -- (-2,2.7);

\fill[blue, opacity=.6] (3,2) ellipse (1.5cm and .5cm);
\draw[thick, red] (3,2) ellipse (1.5cm and .5cm);
\draw[thick, red] (3,5) ellipse (1.5cm and .5cm);
\draw[thick, red] (3,-1) ellipse (1.5cm and .5cm);
\draw[thick, red] (1.5,-2.5) -- (1.5,6.5) ;
\draw[thick, red] (4.5,-2.5) -- (4.5,6.5) ;

\draw[thick, green!80!black, dashed] (3,-1) -- (3,5) ;
\draw[thick, green!80!black, dashed] (2,-1.2) -- (2,4.8) ;
\draw[thick, green!80!black, dashed] (-.5,-1.7) -- (-.5,4.3) ;

\fill (3,2) circle(.06);
\fill (2,1.8) circle(.06);
\fill (-.5,1.3) circle(.06);
\fill (-.5,3) circle(.06);
\fill (2,3.5) circle(.06);

\node at (3.3,2.1) {$x_i$};
\node[red] at (4.8,6.4) {$S_A$};
\node[cyan] at (8.4,4.5) {$W_y^{cs}$};
\node[blue] at (8.4,3) {$W_{x_i}^{cs}$};
\node at (8.4,6.4) {$R_i$};
\node[green!80!black] at (3.4,-1) {$W_{x_i}^{u}$};
\node[green!80!black] at (2.4,-1.2) {$W_{y}^{u}$};
\node[green!80!black] at (-.1,-1.5) {$W_{\psi y}^{u}$};
\node at (2.2,3.6) {$y$};
\node at (2.2,2) {$y'$};
\node at (-.2,1.5) {$y''$};
\node at (-.2,3.3) {$\psi y$};

\draw[thick, gray, ->] (-5,0) -- (-5,3) ;
\draw[thick, gray, ->] (-5,0) -- (-3.5,0) ;
\draw[thick, gray, ->] (-5,0) -- (-6,-1) ;

\node[gray] at (-5, 3.5) {$u$} ;
\node[gray] at (-6.3, -1.3) {$s$} ;
\node[gray] at (-3, 0) {$c$} ;

\end{tikzpicture}
\caption{The definition of $\psi_i$. $y' = W_y^u \cap W_{x_i}^{cs}$, $y''=\psi_i(W_y^u \cap W_{x_i}^{cs})$. $\psi_i$ spreads out the dark blue region to the light blue region in a (conditional) measure-preserving way.}\label{fig:psi i}
\end{figure}
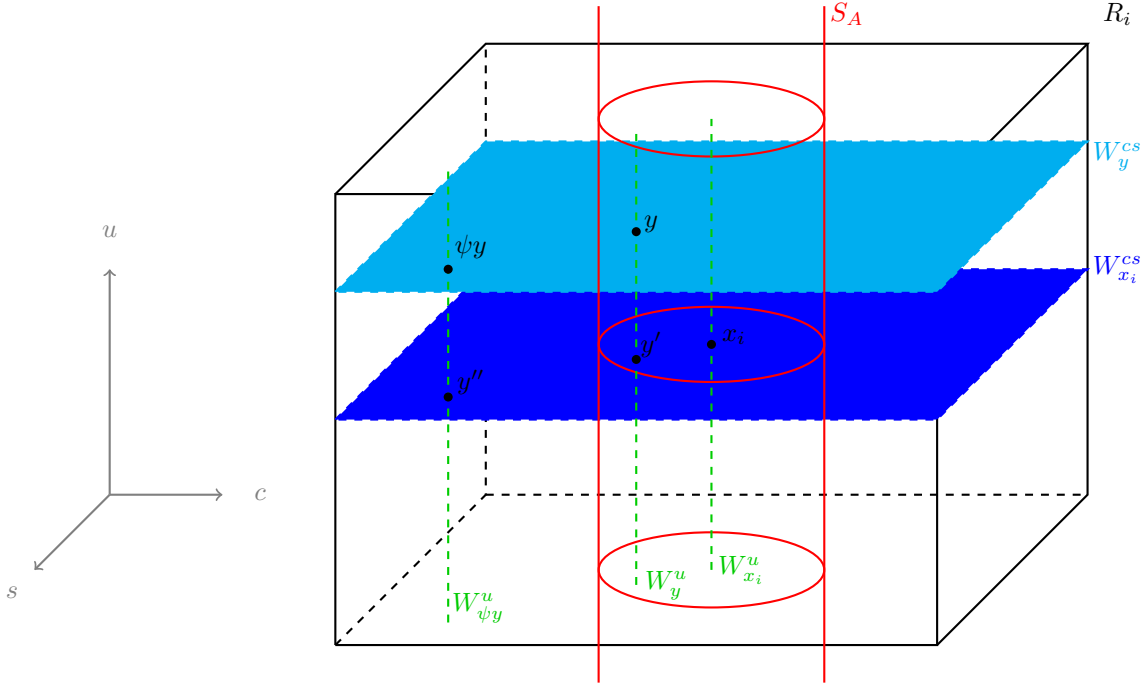

The construction of $\psi_i$ involves translations along the strong unstable factor of the product composed with a measure preserving map in the weak stable leaf. Therefore, $\psi_i:(S_A\cap R_i, \mu^p_{R_i}|_{S_A}) \to (R_i, \mu^p_{R_i})$ is measure-preserving for the (correctly conditioned) product measures. This completes Step (4).

%

\subsection{Step 5: Completing the argument}\label{subsec:5}

Turning to Step (5), recall that by Step (1) (i.e., Proposition \ref{prop:good boundary}) $\xi$ is a partition of $(\GS,\mu)$ with $\mu(B(\partial \alpha, r))< Cr$ for any $r>0$. Let $\eps$ be given, and set $\delta=\eps^4$. Recall the partition $\pi=\{R_0, R_1, \ldots, R_k\}$ where $R_i$ ($i=1, \ldots, k$) are the rectangles provided by Step (2) and $\mu(R_0)<\delta$. For $i=1, \ldots, k$, let $G_i\subset R_i$ be the set described in Step (2).

By the $K$-property for $(\GS, \mu, g_t)$ (\cite[Theorem A]{CCESW}), there exists an integer $m$ such that, setting $N=2m$, for any integers $N_1> N_0>N$, $\delta$-a.e. atom $A$ of $\bigvee_{N_0-m}^{N_1-m} f^i \alpha$ satisfies, for every $R\in \pi$,
\begin{equation}\label{eqn:rectangle measure}
    \left| \frac{\mu|_A(R)}{\mu(R)} - 1 \right| < \delta.   
\end{equation}

By Proposition \ref{prop:layerwise} (with $\epsilon$ in that Proposition taken to be our current $\delta$), we can also take $N=2m$ so large that whenever $N_1>N_0>N$, $\delta$-a.e. atom $A$ of $\bigvee_{N_0-m}^{N_1-m} f^i \alpha$ contains a set $S_A\subset A$ with $\mu|_A(S_A)>1-\delta$ intersecting $R$ layerwise.

Let $m, N, N_0, N_1$ as above and $n>0$ be given. Let $\omega = \bigvee_{i=N_0-m}^{N_1-m} f^i\alpha$. We want to prove, for $c\eps$-a.e. atom $A$ of $\omega$ that 
\[\bar d\left( \{ f^{-i}\alpha\}_{i=1+m}^{n+m}, \{ f^{-i}\alpha|_A\}_{i=1+m}^{n+m}\right)<c\eps\]
holds.

At this point, we can state the definition of $\psi$. For $A\in \omega$, if $y\in S_A\cap R_i$ ($i=1, \ldots, n$) where $\mu(S_A\cap R_i)>0$, set $\psi(y) = \psi_i(y)$. For all other $y$, set $\psi(y) = y$.

It remains to verify the conditions of Lemma \ref{lemma:psi} for $c\eps$-almost every $A\in\bigvee_{k=N_0}^{N_1}f^k\xi$. We begin by identifying the `bad' atoms in $\bigvee_{k=N_0}^{N_1}f^k\xi$, and ensuring that their total measure is less than $c\eps$.

Let 
\[\hat F_1 = \bigcup_{A\in \omega : A \mbox{ \emph{\scriptsize{fails}} }   \eqref{eqn:rectangle measure}} A.\]
By our choice of $N$, $\mu(\hat F_1)<\delta.$

Let $F_2 = \bigcup_{i=1}^k (R_i \setminus G_i).$ By Step (2), $\mu(F_2)<\delta$. In addition, a brief computation using facts from Step (2) implies
\begin{equation}\label{eqn:sum bound}
\sum_{i=1}^k \mu^p_{R_i}(F_2\cap R_i)  < c \delta
\end{equation}
for some universal $c$. Let 
\[\hat F_2 = \bigcup_{A\in \omega: \mbox{\emph{\scriptsize{ either }}} \mu|_A(F_2)>\delta^{1/2} \mbox{\emph{\scriptsize{ or }}} \sum_{i=1}^k\frac{\mu^p_{R_i}(A\cap F_2)}{\mu(A)}>\delta^{1/2}}A.\]
Some computation shows that $\mu(\hat F_2)<c\delta^{1/2}$ for some $c$. We provide part of the argument here, as it is illustrative of a general strategy for bounding the measures $\mu(\hat F_i)$ throughout (see also \cite{CH}). Recall that $\mu(F_2)<\delta.$ Then
$\mu|_A(F_2):=\frac{\mu(F_2\cap A)}{\mu(A)}>\delta^{1/2}$
implies that for such $A$, $\mu(A)<\delta^{-1/2}\mu(F_2\cap A).$
Hence,
\[ \mu\left( \bigcup_{A\in \omega: \mu|_A(F_2)>\delta^{1/2}} A \right) < \sum_{\mbox{\scriptsize{\emph{all }}} A\in \omega} \delta^{-1/2}\mu(F_2\cap A) = \delta^{-1/2} \mu(F_2) < \delta^{1/2}.
\]
A similar computation using \eqref{eqn:sum bound} bounds the other part of the union for $\hat F_2$.

Let 
\[F_3 = \{ x\in \GS\setminus R_0 :  x\notin S_A\}\]
where $S_A$ is provided by applying Proposition \ref{prop:layerwise} to $A$ and the collection $\mathcal{R}=\{R_1, \ldots, R_k \}$.
Let
\[\hat F_3 = \bigcup_{A\in \omega: \mu|_A(F_3)>\delta^{1/2}} A.\]
By similar arguments, using the fact that $\mu|_A(S_A)>1-\delta$, we find that $\mu(\hat F_3)<c \delta^{1/2}$ for some $c$. 

Finally, let 
\[F_4 = \{ x\in \GS\setminus R_0 : \exists y\in V_x^{cs,\pi(x)} \mbox{ such that } h(x,y) > \delta^{1/2} \}.\]
If $x\in F_4$, then $x$ and $y$, two points on the same weak stable which are $<\diam \pi(x)< \delta/C$ apart are, with relative frequency $>\delta^{1/2}$ over the integers $1+m \leq i \leq n+m$, in different atoms of $f^{-i}\alpha$. Hence, for all $x\in F_4$,
\[\#\{ 1+m \leq i \leq n+m : x\in B((\partial f^{-i}\alpha,\delta/C)) \} > n\delta^{1/2}.\]
That is, points in $F_4$ are in at least $n\delta^{1/2}$ of the sets $\{ f^{-i}B(\partial \alpha, \delta/C)\}_{i=1+m}^{n+m}$, a sequence of $n$ sets each with measure at most $C \delta/C = \delta$ by our condition on the measure of neighborhoods of the boundary of $\alpha$. Therefore, some $f^{-i^*}B(\partial \alpha, \delta/C)$ satisfies
\[ \delta > \mu(f^{-i^*}B(\partial \alpha, \delta/C)) \geq \mu(F_4 \cap f^{-i^*}B(\partial \alpha, \delta/C)) \geq \frac{n \delta^{1/2}}{n} \mu(F_4) = \delta^{1/2} \mu(F_4).\]
Hence, $\mu(F_4)<\delta^{1/2}.$ Let 
\[ \hat F_4 =\bigcup_{A\in \omega: \mu|_A(F_4)>\delta^{1/4}} A.\]
Again, by a similar argument to those above, $\mu(\hat F_4)<\delta^{1/4}$.

Assembling the full collection $\hat F= \hat F_1 \cup \hat F_2 \cup \hat F_3 \cup \hat F_4$ of `bad' atoms, for some constant $c$ we have $\mu(\hat F)< c\delta^{1/4}=c\eps$, as desired.

We now describe the construction of the sets $E_1$ and $E_2$ in Lemma \ref{lemma:psi}. Consider an atom A of $\omega$ in the complement of $\hat F$. Recall, by the definition of $E_1$ that for $x\in E_1$, $h(x,\psi x) > \delta^{1/4} = \varepsilon$, so $x \in F_4$, and therefore $E_1 \subset F_4$. Thus, since $A \in \hat F_4^c$, $\mu|_A(E_1) < \delta^{1/4} = \varepsilon
$ as required.

To construct $E_2$, we will consider the union of various `bad' subsets of the atom $A$. We first consider $A \cap R_0$; since $A \in \hat F_1^c$ and $\mu(R_0) < \delta$, we obtain $\mu|_A(R_0) < c\delta$. We next consider the sets $A \cap F_2$ and $A \cap F_3$; as $A$ is in the complement of both $\hat F_2$ and $\hat F_3$ we readily obtain that $\mu|_A(F_2) < \delta^{1/2}$ and $\mu|_A(F_3) < \delta^{1/2}$.

The next sets consist of all the rectangles in which $A$ contains too large a proportion of either $F_2$ or $F_3$. We denote by $D_2$ the set that contains too large a proportion of $F_2$, measured either with respect to the product measure or the invariant measure.

$$D_2 = \bigcup_{R_i : \text{ either } \mu|_{A\cap R_i}(F_2) > \delta^{1/4} \text{ or } \frac{\mu^p_{R_i}(A\cap F_2)}{\mu(A \cap R_i)} > \delta^{1/4}}R_i.$$
Using the fact that $A$ is in the complement of $\hat F_2$, some computation, similar to the arguments given for bounding the measures $\mu(\hat F_i)$, shows that $\mu|_A(D_2) < c\delta^{1/4}$. Similarly we define

    $$D_3 = \bigcup_{R_i : \mu|_{A\cap R_i}(F_3) > \delta^{1/4}}R_i,$$
and again, using the fact that $A$ is in the complement of $\hat F_3$, some computation shows that $\mu|_A(D_3) < c\delta^{1/4}$.

The final bad subset is $\psi^{-1}(F_2)$. As we have already considered the intersection of $A$ with $R_0$, $F_2$, $F_3$, and $D_2$ we will only need to estimate the measure of the part of the subset that lives in the complement of these sets. Following the argument given in \cite[\S6.2]{CH}, which estimates this measure for a particular $R_i$ in the complement of $R_0 \cup D_2$ and the sums over all rectangles, we find that
$$\mu|_A(\psi^{-1}(F_2) \cap (R_0 \cup F_2 \cup F_3 \cup D_2)^c) \leq c\delta.$$
We then define
$$E_2 = A\cap(R_0 \cup F_2 \cup F_3 \cup D_2 \cup D_3 \cup \psi^{-1}F_2).$$

Combining all the above estimates we see $\mu|_A(E_2) < c\delta^{1/4} = c\varepsilon$.

We need the following fact, proven in section 6 of \cite{CH}, to complete the proof that $g_t$ is very weak Bernoulli. 
\begin{proposition}[Section 6, \cite{CH}]\label{prop:good measures}
    If $A$ is an atom with $A\subset \hat{F_2}^c\cap\hat{F_3}^c$ and $B\subset (A\cap R_i\cap E_2)^c$ for some rectangle $R_i$, then
    $$\left\lvert \frac{\mu|_{A\cap R_i}(B)}{\mu|_{R_i}(\psi B)} -1 \right\rvert <c\delta^{1/4}. $$
\end{proposition}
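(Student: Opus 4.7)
The plan is to rewrite the ratio in terms of the product measure $\mu^p_{R_i}$, for which $\psi = \psi_i$ is exactly measure preserving, and then control the errors incurred by switching between $\mu$ and $\mu^p_{R_i}$. I begin with the algebraic identity
\[ \frac{\mu|_{A \cap R_i}(B)}{\mu|_{R_i}(\psi B)} \;=\; \frac{\mu(B)}{\mu(\psi B)} \cdot \frac{\mu(R_i)}{\mu(A \cap R_i)}, \]
and recall that by the construction in Section~\ref{subsec:4}, $\psi_i$ preserves the normalized product measures on $S_A \cap R_i$ and on $R_i$, giving the exact identity $\mu^p_{R_i}(B)/\mu^p_{R_i}(\psi B) = \mu^p_{R_i}(S_A \cap R_i)/\mu^p_{R_i}(R_i)$. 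Proving the proposition then reduces to showing that each factor on the right-hand side of the displayed equation agrees with its $\mu^p_{R_i}$-analogue up to a factor $1 \pm O(\delta^{1/4})$.

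For the first factor, the hypothesis $B \subset E_2^c$ excludes both $F_2$ and $\psi^{-1} F_2$, so $B$ and $\psi B$ both lie in the good set $G_i$; on $G_i$ the Radon--Nikodym derivative $d\mu^p_{R_i}/d\mu$ lies in $[1-\delta, 1+\delta]$ by Definition~\ref{defn: good rectangles}, so $\mu(B)/\mu(\psi B)$ agrees with $\mu^p_{R_i}(B)/\mu^p_{R_i}(\psi B)$ up to a factor $1 \pm O(\delta)$. For the second factor, Step 2 gives $\mu^p_{R_i}(R_i) = (1 \pm \delta)\mu(R_i)$, so it suffices to show $\mu^p_{R_i}(S_A \cap R_i) = (1 \pm O(\delta^{1/4}))\,\mu(A \cap R_i)$. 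Three inputs drive this estimate: (a) since $B \subset E_2^c$ avoids $D_2 \cup D_3$ and these sets are unions of full rectangles from $\pi$, the rectangle $R_i$ must itself lie outside $D_2 \cup D_3$, yielding $\mu|_{A \cap R_i}(F_2), \mu|_{A \cap R_i}(F_3) < \delta^{1/4}$ and $\mu^p_{R_i}(A \cap F_2)/\mu(A \cap R_i) < \delta^{1/4}$; (b) the inclusion $(A \cap R_i) \setminus S_A \subseteq F_3$, valid because $R_i \neq R_0$ and $S_A \subseteq A$; and (c) on $A \cap R_i \cap G_i$ the two measures agree up to a factor $1 \pm \delta$. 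Decomposing $A \cap R_i$ and $S_A \cap R_i$ each into their parts in $G_i$ and in $R_i \setminus G_i$, and applying (a)--(c), produces the required approximation.

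The main obstacle is the second factor, specifically the comparison $\mu^p_{R_i}(S_A \cap R_i) \approx \mu(A \cap R_i)$: since $S_A \cap R_i$ is typically not contained in $G_i$, one cannot directly invoke the pointwise Radon--Nikodym bound and is forced to estimate the ``bad slab'' $R_i \setminus G_i$ separately under two different measures. The sets $D_2$ and $D_3$ in the definition of $E_2$ were introduced precisely so that the $\mu^p_{R_i}$-mass of $R_i \setminus G_i$ on $A$ can be compared to its $\mu$-mass on $A$; this is the content of (a) above. Once these estimates are assembled, the remaining arithmetic is routine bookkeeping.
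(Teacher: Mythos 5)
The paper does not give its own proof of this proposition; it is cited verbatim from Section 6 of \cite{CH}, so there is no internal argument to compare against. Your reconstruction is correct and matches the standard Chernov--Haskell scheme: factor the ratio as $\frac{\mu(B)}{\mu(\psi B)}\cdot\frac{\mu(R_i)}{\mu(A\cap R_i)}$, replace each factor by its $\mu^p_{R_i}$-analogue, use that $\psi_i$ is exactly product-measure-preserving to evaluate the first, and estimate the discrepancies via the Radon--Nikodym control on $G_i$ and the refined $\delta^{1/4}$-bounds for $F_2,F_3$ on $R_i$ coming from $R_i\not\subset D_2\cup D_3$. Your observation in (a) that $R_i$ must lie wholly outside $D_2\cup D_3$ because those sets are unions of whole rectangles and $B$ avoids $E_2$ is exactly the step that unlocks the $\delta^{1/4}$ bounds, and your (b) correctly reduces $A\cap R_i\setminus S_A$ to $F_3$, ensuring in particular that $\psi B$ is defined. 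One mild remark: as you implicitly discovered, the hypothesis $A\subset \hat F_2^c\cap\hat F_3^c$ is not actually invoked in the estimate itself --- the local bounds through $D_2,D_3$ suffice --- and that hypothesis is really carried along for the surrounding Step 5 bookkeeping.
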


We now show how to use the above proposition to satisfy the conditions of Lemma \ref{lemma:psi}. Consider a set $B\subset A\setminus E_2$. We will use Proposition \ref{prop:good measures} to bound 
$\left\lvert \frac{\mu|_A(B)}{\mu(\psi B)} -1\right\rvert.$

First, we overestimate the term by breaking the sum up over rectangles contained in $(R_0\cup D_2 \cup D_3)^c$:

\begin{align*}
    \left\lvert \frac{\mu|_A(B)}{\mu(\psi B)} -1\right\rvert & = \frac{1}{\mu(\psi B)}\big\lvert \mu|_A (B) - \mu (\psi B) \big\rvert \\
    & \leq \sum_{R_i \subset (R_0\cup D_2 \cup D_3)^c} \mu|_{\psi B }(R_i)
    \left\lvert 
   \underbrace{ \frac{\mu|_{A\cap R_i} (B)}{\mu|_{R_i}(\psi B )}}_{1st} \cdot \underbrace{\frac{\mu|_A (R_i)}{\mu(R_i)}}_{2nd} - 1
    \right\rvert
\end{align*}
We know that for the second term, 
\begin{align*}\label{ineq1_step5}
    \left\lvert \frac{\mu|_{A} (R_i)}{\mu(R_i)} - 1\right\rvert <\delta
\end{align*}
since $\hat{F_1}$ is defined as the set where (\ref{eqn:rectangle measure}) fails and $A$ is in the complement of $\hat{F_1}$.
As for the first term, directly from Proposition \ref{prop:good measures}, we know that 
$$\left\lvert\frac{\mu|_{A \cap R_i} (B)}{\mu|_{R_i}(\psi B)} - 1\right\rvert < c\delta^{1/4}.$$
Combining these,
\[ 1-(2c+1)\delta^{1/4}<(1-c\delta^{1/4})(1-\delta) <
   \frac{\mu|_{A\cap R_i} (B)}{\mu|_{R_i}(\psi B )} \cdot \frac{\mu|_A (R_i)}{\mu(R_i)} < (1+c\delta^{1/4})(1+\delta) < 1-(2c+1)\delta^{1/4},
\]
since $\delta$ is small. Finally,
\begin{align*}
    \left\lvert \frac{\mu|_A(B)}{\mu(\psi B)} -1\right\rvert & 
    \leq \sum_{R_i \subset (R_0\cup D_2 \cup D_3)^c} \mu|_{\psi{B}}(R_i)
    \left\lvert 
   \frac{\mu|_{A\cap R_i} (B)}{\mu|_{R_i}(\psi(B))} \cdot \frac{\mu|_A (R_i)}{\mu(R_i)} - 1
    \right\rvert\\
    & < (2c+1)\delta^{1/4}\sum_{R_i \subset (R_0\cup D_2 \cup D_3)^c} \mu|_{\psi{B}}(R_i)
    \\
    & \leq (2c+1) \delta^\frac{1}{4}.
\end{align*}
Recalling that $\varepsilon=\delta^\frac{1}{4}$, we have completed our verification of condition (2) of Lemma \ref{lemma:psi}. Having satisfied the conditions of Lemma \ref{lemma:psi}, we now have that $$\bar d\left( \{ f^{-i}\alpha\}_{i=1+m}^{n+m}, \{ f^{-i}\alpha|_A\}_{i=1+m}^{n+m}\right)<c'\eps,$$
for some $c'$ depending only on $\GS$, as desired.

\appendix
\section{Local product structure of equlibrium states for geodesic flows on rank 1 nonpositively curved manifolds}\label{section: Appendix}

Consider a closed, connected, $C^\infty$ Riemannian manifold $(M,\sigma)$ of rank 1 with nonpositive sectional curvature and dimension $m_0$. Let $(g_t)_{t\in\mathbb R}$ be the geodesic flow on the unit tangent bundle $T^1M$. We recall that there are two continuous, invariant subbundles $E^s$ and $E^u$ of $TT^1M$, each of dimension $m_0-1$, which are orthogonal to the flow direction $E^c$ in the natural Sasaki metric. In negative curvature, the geodesic flow is Anosov with Anosov splitting $TT^1M=E^s\oplus E^c\oplus E^u$ with uniform contraction and expansion on $E^s$ and $E^u$, respectively. In nonpositive curvature, $E^s_v$ and $E^u_v$ may intersect nontrivially, and the rank of a vector $v\in T^1M$ is defined to be $1+\dim (E^s_v\cap E^u_v)$. Then, $M$ has rank $1$ if the minimum rank over all vectors in $T^1M$ is equal to $1$. The set of vectors whose rank is larger than $1$ will be called the singular set, and denoted by $\Sing$, while the set of rank $1$ vectors will be denoted $\Reg$.

It was shown in \cite[Theorem A and B]{BCFT} that if a H\"older continuous function $\phi\colon T^1M\rightarrow \mathbb R$ is locally constant on a neighborhood of $\Sing$ then there exists a unique equilibrium state $\mu$ for the potential $\phi$. We will refine this result by showing that $\mu$ has a local product structure (see Definition~\ref{defn:lps} in combination with Definition~\ref{defn: flow box nonpositive}). This, in conjunction with the results of \cite{CT19}, give an alternate proof of the Bernoulli property for these equilibrium states.

%
\subsection{Geometric preliminaries}\label{sec:geometric preliminaries}

Let $(M,\sigma)$ be as above. For each tangent vector $v\in T^1M$, there exists a unique constant speed geodesic denoted by $\gamma_v$ with $\dot\gamma_v(0)=v$. The geodesic flow is defined by $g_t(v)=\dot\gamma_v(t)$. 

Let $d$ be the distance function on $M$. The Knieper metric, $d_K$, on $T^1M$ is defined as follows \cite{Knieper}:
$$d_K(v,w)=\max\{d(\gamma_v(t),\gamma_w(t))\,|\, t\in[0,1]\}.$$ The Knieper metric is uniformly equivalent to the Sasaki metric on $TM$ which is the lift of the Riemannian metric on $M$.

The geodesic flow generates natural invariant foliations that we now define. First, we describe precisely the $(g_t)_{t \in\mathbb R}$-invariant subbundles $E^c$, $E^s$, and $E^u$ of $TT^1M$, where the bundle $E^c$ is spanned by the vector field that generates the geodesic flow.

Let $\mathcal J(\gamma)$ be the space of orthogonal Jacobi fields for a geodesic $\gamma$. Then, $J\in \mathcal J(\gamma)$ is called stable if $\|J(t)\|$ is bounded for $t\geq 0$, and unstable if it is bounded for $t\leq 0$. For each $v\in T^1M$, there is a natural isomorphism between $TT^1M$ and $\mathcal J(\gamma_v)$ so that if $\xi\mapsto J_{\xi}$ then $$\|dg_t(\xi)\|^2=\|J_\xi(t)\|^2+\|J'_\xi(t)\|^2.$$
The stable and unstable subbundles of $TT^1M$ are defined as 
$$E^s(v)=\left\{\xi\in T_vT^1M\,|\, J_\xi\text{ is stable}\right\}\text{ and }E^u(v)=\left\{\xi\in T_vT^1M\,|\, J_\xi\text{ is unstable}\right\},$$
respectively. We denote $E^{cs}=E^c\oplus E^s$. By \cite{Eberlein}, $E^u$, $E^s$, and $E^{cs}$ are integrable to the foliations $W^u$ (unstable), $W^s$ (stable), and $W^{cs}$ (center stable), respectively. For more details and properties of the bundles and foliations, see \cite{Eberlein} and \cite{Ballmann82}.

For each $v\in T^1M$, we can consider the intrinsic metrics on the leafs of the defined foliations. The intrinsic metric on $W^s(v)$ is given by
\begin{equation*}
    d^s(u,w)=\inf\{\ell(\pi\zeta)\,|\, \zeta\colon [0,1]\rightarrow W^s(v)\text{ is } C^1\text{ map}, \, \zeta(0)=u,\,\zeta(1)=w\},
\end{equation*}
where $\pi\colon T^1M\rightarrow M$ is a natural projection and $\ell$ is the the length of the curve in $M$. Similarly, we define the intrinsic metric $d^u$ on $W^u(v)$. The intrinsic metric on $W^{cs}(v)$ is given locally by $$d^{cs}(u,w)=|t|+d^s(g_tu,w),$$
where $t$ is the unique value such that $g_tu\in W^s(w)$. The local definition extends to the whole leaf $W^{cs}(v)$. The local stable leaf through $v$ of size $\eps$ is 
$$W^s(v,\eps)=\left\{w\in W^s(v)\,|\, d^s(v,w)\leq\eps\right\}.$$
Similarly, we can define the local unstable leaf $W^u(v,\eps)$ and the local central stable leaf $W^{cs}(v,\eps)$.

%
\subsection{Bowen Balls, rectangles, and Bowen brackets}

As before, we introduce the notion of stable and unstable Bowen balls.

\begin{definition}
Given $x\in T^1M$ and $\eps, t > 0$, the unstable Bowen ball is 
\begin{align}
    B^u_t(x,\eps) :&= \{y\in W^u(x,\eps) \mid d^u(g_rx,g_ry) \leq \eps \mbox{ for } 0\leq r \leq t\} \nonumber \\
        &= \{y\in W^u(x,\eps) \mid d^u(g_rx,g_ry) \leq \eps \mbox{ for } -\infty < r \leq t\}.\nonumber 
\end{align}
The stable Bowen ball is defined similarly, with $W^s(x,\eps)$ replacing $W^u(x,\eps)$ and taking $-t \leq r < \infty$.
\end{definition}

We will also use two-sided Bowen balls, defined as follows

\begin{definition}
Given $x\in T^1M$ and $\eps, t_1,t_2 > 0$, the two-sided Bowen ball is given by
$$B_{[-t_1,t_2]}(x,\eps) := \{y\in T^1M \mid d_K(g_rx,g_ry) \leq \eps \text{ for } -t_1 \leq r \leq t_2\}.$$
We will also sometimes write $B_{[0,t]}(x,\eps) = B_t(x,\eps)$ in the interest of simplification of notation.
\end{definition}

As is already evident, we will often be forced to work with multiple metrics at once. Luckily, we have direct relationships between them.
\begin{lemma}\cite[Lemma 2.13]{BCFT}\label{lemma: metric comparison}
There exists $\Lambda > 0$ such that for all $v\in T^1M$, $w\in W^{cs}(v)$ and $z\in W^{u}(v)$, we have
$$d_K(v,w) \leq d^{cs}(v,w) \quad \text{ and }\quad d_K(v,z) \leq e^{\Lambda}d^u(v,z).$$
\end{lemma}

In \cite[Definition 2.10]{BCFT}, the authors defined a continuous function $\lambda: T^1M \to [0,\infty)$ which played an essential role in studying the dynamics of the geodesic flow. For the details of the definition of $\lambda$ and its properties, see \cite[Sections 2.4.2 and 3.2]{BCFT}. 

Everywhere below, we will use that the complement of $\Sing$ is $\Reg$. Similarly to Definitions~\ref{defn: regular} and \ref{defn: good start and end}, we have

\begin{definition}\label{defn: regular non-positive}
Let $c > 0$. We define $$\operatorname{Reg}(c) := \{x\in T^1M\mid \lambda(x) \geq c\}.$$    
and
$$\CCC(c) = \{(x,t) \in T^1M \times \mathbb{R} \mid \lambda(x) \geq c \text{ and } \lambda(g_tx) \geq c \}.$$
\end{definition}

In order to show product structure of the measure, we first recall the topological local product structure.

\begin{lemma}\cite[Lemma 4.4]{BCFT}\label{lemma: topological LPS}
For every $\eta > 0$, there exists $\rho > 0$, $\chi \geq 1$ such that for all $v\in \Reg(\eta)$, all $\eps\in (0,\rho]$, and all $w_1,w_2\in B(v,\eps)$, the intersection $[w_1,w_2] := W^{cs}(w_1,\chi\eps)\cap W^{u}(w_2,\chi\eps)$ is unique. Furthermore, 
$$d^{cs}(w_1,[w_1,w_2]) \leq \chi d_{K}(w_1,w_2),$$
$$d^{u}(w_2,[w_1,w_2]) \leq \chi d_{K}(w_1,w_2).$$
\end{lemma}

Note that from Lemma \ref{lemma: topological LPS} we have
\[d^{cs}(v,w)=d^{cs}(v,[v,w])\leq \chi d_K(v,w) \ \mbox{ and }\  d^u(v,z)=d^u(v,[z,v])\leq \chi d_K(v,z)\]
when $d_K(v,w)$ and $d_K(v,z)<\rho$.

With Lemma \ref{lemma: topological LPS}, we can extend the bracket operation to be defined on sets when appropriate. Consequently, we have the following result comparing the bracket of stable and unstable Bowen balls with two-sided Bowen balls.

\begin{lemma}\label{lemma: NPC Bowen balls containment}
Let $\eta > 0$, and let $\rho,\chi$ be as in Lemma \ref{lemma: topological LPS}. Let $v\in \Reg(\eta)$, $\eps\in (0,\frac{\rho}{4e^{\Lambda}}]$, and $s \in (\eps,2\eps)$. Then, there exists $T_i := T_i(\chi) > 0$ for $i=1,2$ such that for all $x,y\in B(v,\eps)$ and $n,m\geq 0$ with $\lambda(g_{-m}y),\lambda(g_{n}x) \geq \eta$, we have
$$B_{[-m,n]}([x,y],T_1\eps) \subset g_{[-s,s]}[B_{n}^{u}(x,\eps),B_m^{s}(y,\eps)] \subset B_{[-m,n]}([x,y],T_2\eps + (e^\Lambda\chi^2+1) s).$$
\end{lemma}

\begin{proof}
We begin with the second inclusion. First, as $\eps \leq \frac{\rho}{4}$, we know that $[B_{n}^{u}(x,\eps),B_m^{s}(y,\eps)]$ is well-defined. Let $w_1\in B_n^u(x,\eps)$, $w_2\in B_m^{s}(y,\eps)$, and $r\in [-s,s]$. 

For $0\leq t\leq n,$
\begin{align}
    d_K(g_tg_r[w_1,w_2],g_t[x,y]) & \leq d_K(g_t[w_1,g_rw_2],g_tw_1)+d_K(g_tw_1,g_tx)+d_K(g_tx,g_t[x,y]) \nonumber \\
    & \leq d_K([w_1,g_rw_2],w_1)+e^\Lambda d^u(g_tw_1,g_tx)+d_K(x,[x,y]) \nonumber \\
    & \leq d^{cs}([w_1,g_rw_2],w_1)+e^\Lambda d^u(g_tw_1,g_tx)+d^{cs}(x,[x,y]) \nonumber \\
    & \leq \chi d_K(w_1,g_rw_2)+e^\Lambda\eps+\chi d_K(x,y) \nonumber \\
    & \leq \chi [d_K(w_1, w_2)+d_K(w_2, g_rw_2)] + e^\Lambda\eps + \chi 2 \eps \nonumber \\
    & \leq \chi [d_K(w_1, x) + d_K(x,y) + d_K(y,w_2)] + \chi s + e^\Lambda\eps + \chi 2 \eps \nonumber \\
    & \leq (5+e^\Lambda + e^\Lambda \chi^{-1})\chi \eps + \chi s. \nonumber
\end{align}
For $-m\leq t \leq 0,$
\begin{align}
    d_K(g_tg_r[w_1,w_2],g_t[x,y]) & \leq d_K(g_t[w_1,g_rw_2],g_tg_rw_2)+ d_K(g_tg_rw_2,g_ty)+d_K(g_ty,g_t[x,y]) \nonumber \\
    & \leq d_K([w_1,g_rw_2],g_rw_2)+d_K(g_tg_rw_2,g_tw_2)+d_K(g_tw_2,g_ty)+d_K(y,[x,y]) \nonumber \\
    & \leq d_K([w_1,g_rw_2],g_rw_2)+s+d_K(w_2,y)+ e^\Lambda\chi d_K(x,y) \nonumber \\
    & \leq e^\Lambda \chi d_K(w_1,g_rw_2) + s+ e^\Lambda \eps + e^\Lambda \chi 2 \eps \nonumber \\
    & \leq e^\Lambda \chi [\eps(3+e^\Lambda)+\chi s] + s + e^\Lambda\eps + e^\Lambda \chi 2\eps \nonumber \\
    & = [5e^\Lambda\chi+e^{2\Lambda}\chi + e^\Lambda] \eps + (e^\Lambda\chi^2+1)s. \nonumber
\end{align}
Letting $T_2 = (5e^\Lambda+e^{2\Lambda}+e^\Lambda\chi^{-1})\chi$ we get
\[g_{[-s,s]}[B_{n}^{u}(x,\eps),B_m^{s}(y,\eps)] \subset B_{[-m,n]}([x,y],T_2\eps + (e^\Lambda\chi^2+1) s).\]
Note that if $r=0$, the terms involving $s$ are unnecessary. That is, 
\[ [B^u_n(x,\eps),B^s_m(y,\eps)] \subseteq B_{[-m,n]}([x,y],T_2\eps). \]

We now turn to the first inclusion. Let $w \in B_{[-m,n]}([x,y],T_1\eps)$ with $T_1$ to be determined. We claim that $[y,w]\in B_m^{cs}(y,\eps)$ and $[w,x]\in B_n^u(x,\eps).$ Given this claim, $w=[[w,x],[y,w]] \in g_{[-s,s]}[B_n^u(x,\eps),B_m^s(y,\eps)]$ as desired.

To prove the claim, we start with $[w,x].$ For any $0\leq t\leq n,$ $d_K(g_t w, g_t[x,y])\leq T_1\eps.$ Consider $[g_nw, g_ny] = [g_nw, g_n[x,y]].$ Then
    \[d^u(g_n[x,y],[g_nw,g_n[x,y]]) \leq \chi d_K(g_nw, g_n[x,y]) \leq \chi T_1\eps\]
since $w\in B_{[-m,n]}([x,y],T_1\eps)$ for the second inequality. The first inequality uses the fact that $\lambda(g_nx)\geq \eta$ and
\[d_K(g_nx,g_nw) \leq T_1\eps + d_K(g_n[x,y],g_nx) \leq T_1\eps + d_K([x,y],x) \leq T_1\eps + \chi d_K(x,y) \leq \rho.\]
Therefore,
\[[w,y] \in B_n^u([x,y], \chi T_1 \eps).\]
Of course, $x\in B_m^s(x,\eps/T_2)$. Therefore, by the second inclusion, as long as $T_1 < \frac{1}{\chi^2T_2}$,
\[[w,x] = [[w,y],x] \in [B_n^u([x,y],\chi T_1 \eps),B^s_m(x,\eps/T_2)] \subseteq B_{[-m,n]}(x,\eps/\chi).\]
Therefore, $d_K(g_n[w,x],g_nx) < \frac{\eps}{\chi}$, and so $d^u(g_n[w,x],g_nx) < \eps$. Thus, for all $0 \leq t \leq n$, $d^u(g_t[w,x],g_tx)<\eps$. Hence, $[w,x]\in B^u_n(x,\eps)$ as desired.

Finally, we turn to $[y,w]$. For all $-m\leq t \leq 0$, $d_K(g_tw, g_t[x,y])\leq T_1\eps$, for $T_1$ to be determined.  Consider $[g_tx, g_tw] = [g_t[x,y],g_tw].$ Using the fact that $\lambda(g_{-m}y)\geq \eta$,
\[d^{cs}(g_{-m}[x,w],g_{-m}[x,y]) = d^{cs}([g_{-m}[x,y],g_{-m}w],g_{-m}[x,y]) \leq \chi d_K(g_t[x,y],g_tw) \leq \chi T_1\eps.\]
Therefore, for some $r<\chi T_1\eps$,
\[g_r[x,w] \in B^s_m([x,y],\chi T_1 \eps).\]
Applying the second inclusion again,
\begin{align}
    g_r[y,w]=[y,g_r[x,w]] & \in \left[B^u_n\left(y,\frac{\eps}{2T_2}\right),B^s_m([x,y],\chi T_1\eps)\right] \nonumber \\
    &\subseteq B_{[-m,n]}([y,[x,y]],T_2\chi T_1\eps) \nonumber \\
    &\subseteq B_{[-m,n]}\left(y,\frac{\eps}{2\chi}\right) \nonumber
\end{align}
provided $T_1<\frac{1}{2\chi^2 T_2}$.  Note also that with this choice, $r<\frac{\eps}{2}$. Therefore, for all $-m\leq t\leq 0$,
\[d_K(g_tg_r[y,w],g_ty)<\frac{\eps}{2\chi}\] and hence
\[d^{cs}(g_t[y,w],g_ty)<\eps.\]
Therefore $[y,w]\in B^{cs}_m(y,\eps)$ as desired.
\end{proof}

\begin{definition}
Let $x\in T^1M$ with $\lambda(x) = c > 0$, and  take $\rho := \rho(c)$ and $\chi := \chi(c)$ as in Lemma \ref{lemma: topological LPS}. For all $\eps \in \left(0,\frac{\rho}{4\chi(1+\chi)(1+e^{\Lambda})}\right)$, we define the rectangle $R(x,\eps)$ centered at $x$ of radius $\eps$
$$R(x,\eps) := [W^u_{\eps}(x), W^{s}_{\eps}(x)].$$
\end{definition}

This is well-defined, since given $w\in W^u_{\eps}(x)$ and $v\in W^s_{\eps}(x)$, we have
$$d_K(w,x) \leq e^{\Lambda}d^u(w,x) < \frac{\rho}{2},$$
and
$$d_K(v,x) \leq d^{cs}(v,x) = d^s(v,x) < \frac{\rho}{2}.$$

The $d_K$ diameter of $R(x,\eps)$ scales well with $\eps$.

\begin{lemma}\label{lemma: NPC rectangle diameter}
Given $y\in R(x,\eps)$, $d_K(y,x) \leq (1 + \chi)\eps$.
\end{lemma}
\begin{proof}
Let $y = [v_1,v_2]$. Then we have
$$d_K([v_1,v_2],x) \leq d_K([v_1,v_2],v_1) + d_K(v_1,x) \leq d^{cs}([v_1,v_2],v_1) + d_K(v_1,x) < (1+\chi)\eps.$$
\end{proof}

\begin{proposition}\label{prop: NPC disjoint rectangles}
Let $s_0$ be the minimum length of a closed geodesic. Given $R=R(x,\eps)$ as above, with $\lambda(x) = c > 0$, then for all distinct $t,s\in \left(-\frac{s_0}{2},\frac{s_0}{2}\right)$, $g_tR\cap g_sR = \emptyset$.
\end{proposition}

\begin{proof}
It suffices to show that for all $|t|\in (0,s_0)$, $g_tR\cap R = \emptyset$. Assume for contradiction that $y\in g_tR\cap R$. Then $y = [v_1,v_2] = g_t[w_1,w_2]$ for $v_1,w_1\in W^u_{\eps}(x)$ and $v_2,w_2\in W^s_{\eps}(x)$. Thus, we see $\tilde{y}(-\infty) = \tilde{w_2}(-\infty) = \tilde{v_2}(-\infty)$, and $\tilde{v_1}(\infty) = \tilde{w_1}(\infty)$. As $v_2,w_2\in W^s_{\eps}(x)$, this implies that $v_2=w_2$, and similarly, $v_1=w_1$, since $v_1,w_1\in W^u_{\eps}(x)$. However, this implies that $g_{-t}y = y$, a contradiction.
\end{proof}

Using this, we can now define a flow box.

\begin{definition}\label{defn: flow box nonpositive}
Given a rectangle $R(x,\eps)$ as above and $n > 2(s_0)^{-1}$, define a flow box $\mathcal{B}_x(n,\eps) := g_{[-\frac{1}{n},\frac{1}{n}]}R(x,\eps)$.
\end{definition}

By Proposition \ref{prop: NPC disjoint rectangles}, we see that we can uniquely express every $y\in \mathcal{B}_x(n,\eps)$ in the form $g_t[v,w]$.

\begin{proposition}
If we take $\eps$ as above and $\frac{1}{n} < \min\left\{\frac{s_0}{2},\frac{\rho(c)}{2}\right\}$, then $[\cdot,\cdot]$ is defined on $\mathcal{B}_x(n,\eps)$.
\end{proposition}

\begin{proof}
Let $y_1,y_2\in\mathcal{B}_x(n,\eps)$. Then $y_i = g_{t_i}[v_i,w_i]$ for $v_i\in W^u_{\eps}(x)$ and $w_i\in W^s_{\eps}(x)$. Then
$$d_K(y_i,x) \leq |t_i| + d_K([v_i,w_i],x) \leq \frac{\rho}{2} +  (1+\chi)\eps < \rho$$
using Lemma \ref{lemma: NPC rectangle diameter}. Therefore, by Lemma \ref{lemma: topological LPS}, $[y_1,y_2]$ is defined and consists of a unique point.
\end{proof}

Henceforth, we will write $\mathcal{B}(n_0,\eps)$, omitting the basepoint $x$ and fixing a time scale $n_0 > \max\{\frac{2}{s_0},\frac{2}{\rho(\lambda(x))}\}$. We will also fix $\eta > 0$ such that $\lambda(x)\geq \eta$ and assume that $\eps < \frac{\rho(\eta)}{4\chi(\eta)(1+\chi(\eta))(1 + e^{\Lambda})}$, so that we can use all of the previous results. Additionally, we will use the same notations for the local center stable and unstable leaves in $\mathcal B(n_0,\eps)$ as in Definition~\ref{defn: local leaves}.

Going forward, we will sometimes need variations of results that we have already shown, such as Lemma~\ref{lem:ratio of flow boxes}. Unless otherwise noted, the proofs provided for the geodesic flows on flat surfaces work for the geodesic flows on non-positively curved manifolds with minor appropriate changes, e.g., $\GS$ being replaced by $T^1M$ and $\langle\cdot,\cdot\rangle$ by $[\cdot, \cdot]$.

\begin{lemma}\label{lem: NPC ratio of flow boxes}(cf. Lemma~\ref{lem:ratio of flow boxes})
For all $-\frac{s_0}{2} \leq a<b\leq \frac{s_0}{2}$ and  for any $n > 2(s_0)^{-1}$, for any measurable $A\subset V^u_x$ and $B\subset V^s_x$,
$$\frac{\mu\left(g_{\left[-\frac{1}{n},\frac{1}{n}\right]}\left[ A,B\right]\right)}{\mu\left(g_{[a,b]}\left[ A,B\right]\right)} = \frac{2}{n(b-a)}.$$
\end{lemma}


\subsection{The Bowen property}\label{sec: NPC Bowen}

In this section we establish the Bowen property (see Definition~\ref{defn:global bowen} with $\GS$ replaced by $T^1M$ and $d_{GS}$ by $d_K$) for H\"older continuous potentials that are locally constant on $\Sing$. 

\begin{theorem}\label{thm: NPC global Bowen}
Let $M$ be a closed connected $C^\infty$ Riemannian manifold of rank 1 with nonpositive sectional curvature. Assume $\phi\colon T^1M\rightarrow \mathbb R$ is H\"older continuous and is locally constant on a $\delta$-neighborhood, $B(\Sing,\delta)$, of $\Sing$ with respect to $d_K$ metric for some $\delta>0$. Then  $\phi$ has the global Bowen property.
\end{theorem}

\begin{proof}
By Proposition \ref{proposition: global Bowen property}, using the fact that the setting of \cite{BCFT} is a $\lambda$-decomposition, it suffices to show that $\phi$ has the Bowen property on $\CCC(\eta) := \{(v,t)\in T^1M \mid \lambda(v),\lambda(g_tv) \geq \eta\}.$
Thus, this theorem will follow from Proposition~\ref{prop: Holder to Bowen} and Lemma~\ref{lem: NPC Bowen} below.
\end{proof}

\begin{corollary}\label{cor: NPC Gibbs}
Let $\eta > 0$. There exists $\bar\delta := \bar\delta(\eta) \in \left(0,\frac{s_0}{4}\right)$ such that the following holds. Let $v\in \Reg(\eta)$, $\eps\in (0,\bar\delta]$ and $x,y\in B(v,\eps)$. Then for all $m_1,m_2\geq 0$ with $\lambda(g_{m_1}x)\geq\eta$, $\lambda(g_{-m_2}y)\geq \eta$, and $-2\eps<a<b<2\eps$, there exists $Q_1 := Q_1(\eps) > 0$ and $Q_2 := Q_2(\eps, \min\{\lambda(g_{m_1}[x,y]),\lambda(g_{-m_2}[x,y])\}) > 0$ such that:
\begin{align}
    \frac{b-a}{3\eps}Q_2e^{-(m_1+m_2)P(\phi) + \int_{-m_2}^{m_1}\phi(g_s[x,y])\,ds} \nonumber & \leq  \mu(g_{[a,b]}[B_{m_1}^u(x,\eps),B_{m_2}^s(y,\eps)]) \nonumber \\
        & \leq \frac{b-a}{3\eps}Q_1e^{-(m_1+m_2)P(\phi) + \int_{-m_2}^{m_1}\phi(g_s[x,y])\,ds}. \nonumber
\end{align}

\end{corollary}

\begin{proof}
First, observe that the upper and lower Gibbs properties described in Corollary \ref{cor: unif Lower Gibbs} and Proposition \ref{prop: Upper Gibbs}, respectively, apply in this setting, because specification is proved in \cite[Theorem 4.1]{BCFT} for the set of orbit segments $(x,t)$ such that $\lambda(x)\geq \eta$ and $\lambda(g_tx)\geq \eta$ for some scale $\eta$, and $\phi$ has the global Bowen property by Theorem~\ref{thm: NPC global Bowen}. As the upper Gibbs property only holds for sufficiently small radii, fix $\delta' > 0$ so that both the upper and lower Gibbs properties hold at all scales less than $\delta'$.

Now, by our choice of $a$,$b$, and the fact that $2\eps < \frac{s_0}{2}$, we can apply Lemma \ref{lem: NPC ratio of flow boxes} to show
$$\mu(g_{[a,b]}[B_{m_1}^u(x,\eps),B_{m_2}^s(y,\eps)]) = \frac{\frac{2}{3\eps}(b-a)}{2}\mu(g_{\left[-\frac{3\eps}{2},\frac{3\eps}{2}\right]}[B_{m_1}^u(x,\eps),B_{m_2}^s(y,\eps)]).$$
The statement now follows immediately from Lemma \ref{lemma: NPC Bowen balls containment}, taking $\bar\delta$ small enough so that $\max\{T_1\bar\delta, (T_2+2e^{\Lambda}\chi^2+2)\bar\delta\} < \min\left\{\delta',\frac{\rho}{4e^\Lambda}\right\}$ where $\Lambda$ as in Lemma~\ref{lemma: metric comparison} and $\chi$ as in Lemma~\ref{lemma: topological LPS}.
\end{proof}

Towards the proofs of Proposition \ref{prop: Holder to Bowen} and Lemma \ref{lem: NPC Bowen}, we recall some definitions for the setting of geodesic flow on nonpositively curved manifolds and results in \cite{BCFT}.

\begin{definition}
A potential $\phi\colon T^1M\to \mathbb{R}$ is H\"older along stable leaves if there exist constants $C,\theta,\eps> 0$ such that for any $v\in T^1M$ and $w\in W^s(v,\eps)$, then $|\phi(v) - \phi(w)| \leq Cd^s(v,w)^{\theta}$. A similar definition can be made for unstable leaves.
\end{definition}

\begin{definition}
A potential $\phi\colon T^1M\to \mathbb{R}$ has the Bowen property along stable leaves with respect to a collection of orbit segments $\CCC$ if there exist $\delta, K > 0$ such that 
$$\sup\left\{\left|\int_0^t\phi(g_rv) - \phi(g_rw)\,dr\right|\mid (v,t)\in \CCC, w\in W^s(v,\delta)\right\}\leq K.$$
\end{definition}

Using $\lambda$, one can show definite contraction and expansion rates on the stable and unstable leaves in the following manner.

\begin{lemma}\cite[Lemma 3.10]{BCFT}\label{lemma: BCFT contraction lemma}
	Given $\eta > 0$, fix $\bar\rho :=\bar\rho(\eta) > 0$ small enough so that if $d_K(v,w)\leq \bar\rho e^{\Lambda}$, then $|\lambda(v) - \lambda(w)|\leq \frac{\eta}{2}$. Now, for any $v\in T^1M$ and $w,w'\in W^s(v,\bar\rho)$, for all $t\geq 0$ we have
	$$d^s(g_tw,g_tw') \leq d^s(w,w')e^{-\int_0^t\max\{\lambda(g_sv) - \frac{\eta}{2},0\}\,ds}.$$
	Similarly, if $w,w'\in W^u(v,\bar\rho)$, for all $t\geq 0$,
	$$d^u(g_{-t}w,g_{-t}w') \leq d^u(w,w')e^{-\int_0^t\max\{\lambda(g_{-s}v) - \frac{\eta}{2},0\}\,ds}.$$
\end{lemma}

In order to use this lemma, we need to prove a technical result about the growth of $\int_0^t\lambda(g_sx)\,ds$ when we know that $\lambda(g_sx)$ is greater than $\eta$ with a set frequency.

\begin{lemma}\label{lemma: lambda estimate}
Let $(x,t)\in T^1M\times [0,\infty)$ and suppose that there exists $\eta, T > 0$ such that $\lambda(x) \geq \eta$ and $\sup_{r\in [s-T,s+T]}\lambda(g_rx) \geq \eta$ for all $s\in [0,t]$. Then there exists $\eta' := \eta'(\eta, T) > 0$ such that for all $r\in [0,t]$,
$\int_{0}^{r}\max\{\lambda(g_sx) - \frac{\eta}{2},0\}\,ds \geq r\eta'$.
\end{lemma}

\begin{proof}
Take $\bar\rho := \bar\rho(\frac{\eta}{2}) < 2T$ as above (using uniform continuity of $\lambda$). Then, if $r\in [-\bar\rho,\bar\rho]$, it follows that $d_K(g_rv,v) \leq \bar\rho \leq \bar\rho e^{\Lambda}$. So, for all $v\in T^1M$, if $\lambda(v) \geq \eta$, then $\lambda(g_rv) - \frac{\eta}{2} \geq \lambda(v) - \frac{\eta}{4} - \frac{\eta}{2} \geq \frac{\eta}{4}$ for $r\in [-\bar\rho,\bar\rho]$. Now, let $r\leq 3T$. Then
$$\int_{0}^{r}\max\left\{\lambda(g_sx) - \frac{\eta}{2},0\right\}\,ds \geq \min\{\bar\rho,r\}\frac{\eta}{4} = r\min\left\{1,\frac{\bar\rho}{r}\right\}\frac{\eta}{4}\geq r\frac{\bar\rho}{3T}\frac{\eta}{4}.$$
On the other hand, if $r \geq 3T > \bar\rho$, we have
\begin{align*}
\int_0^r\max\left\{\lambda(g_sx) - \frac{\eta}{2},0\right\}\,ds &\geq \sum_{k=0}^{\left\lfloor \frac{r}{2T}\right\rfloor - 1} \int_{2Tk}^{2T(k+1)}\max\left\{\lambda(g_sx) - \frac{\eta}{2},0\right\}\,ds
\\
&\geq \left\lfloor \frac{r}{2T}\right\rfloor \bar\rho \frac{\eta}{4}
\\
&\geq \left(\frac{r}{2T} - 1\right)\frac{\eta\bar\rho}{4}
\\
&\geq r\left(\frac{1}{2T} - \frac{1}{3T}\right)\frac{\eta\bar\rho}{4}
\\
&= r\frac{\eta\bar\rho}{24T}.	
\end{align*}
Thus, taking $\eta' = \frac{\eta\bar\rho}{24T}$, we complete our proof.
\end{proof}

We also prove the following proposition, which will be used in place of \cite[Lemma 7.3]{BCFT}. The proof requires that we make use of Lemma \ref{lemma: lambda estimate} to upgrade the result to $\CCC(\eta)$.

\begin{proposition}\label{prop: Holder to Bowen}
Suppose $\phi : T^1M\to \mathbb{R}$ is H\"older along (un)stable leaves and is locally constant on $B(\Sing,2\delta)$. Then $\phi$ has the Bowen property on (un)stable leaves with respect to $\CCC(\eta)$ for any $\eta > 0$.
\end{proposition}

\begin{proof}
Let $\eta > 0$. By \cite[Proposition 3.4]{BCFT}, as well as Lemma \ref{lem: general version of BCFT 3.4}, there exists $T,\eta_0 > 0$ such that for all $v\in T^1M$, if $\lambda(g_sv) \leq \eta_0$ for all $s\in [-T,T]$, then $v\in B(\Sing,\delta)$. As for all $\eta \geq \eta_0$, we have $\CCC(\eta_0) \supset \CCC(\eta)$, we will assume $\eta \leq \eta_0$. Now, given $(v,t)\in \CCC(\eta)$, we define a sequence of times $r_k,s_k$ so that, taking $s_1 = 0$, we have
$$r_k = \inf\{s\in [s_k,t]\mid \lambda(g_rv) > \eta \text{ for some }r\in [s - T, s + T]\},$$
and
$$s_k = \inf\{s\in (r_{k-1},t]\mid \lambda(g_rv)\leq \eta \text{ for all }r\in [s - T, s + T]\}.$$
If $s_k$ is defined and $r_k$ is not (e.g., if it would be the case that $r_k > t$), we take $r_k = t$. Defined in this manner, we have a sequence of intervals $\{[s_k,r_k]\}_{k=1}^n$ for which $g_sv\in B(\Sing,\delta)$ for all $s\in [s_k,r_k]$. Observe that from construction, for $k > 1$, we have $\lambda(g_{r_k+T}v) = \eta$, unless $r_k = t$, and additionally, $s_{k+1} \geq r_k + 2T$. Now, we use Lemma \ref{lemma: lambda estimate} to estimate $\int_{0}^{s}\max\left\{\lambda(g_rv) - \frac{\eta}{2},0\right\}\,dr$ when $s\notin [s_k,r_k]$. For this, observe that for any $s \in [r_i + T,s_{i+1}]$, we have
\begin{align*}
	\int_{0}^{s}\max\left\{\lambda(g_rv) - \frac{\eta}{2},0\right\}\,dr &\geq  \sum_{k=0}^{i-1}\int_{r_k+T}^{s_{k+1}}\max\left\{\lambda(g_rv) - \frac{\eta}{2},0\right\}\,dr + \int_{r_i+T}^{s}\max\left\{\lambda(g_rv) - \frac{\eta}{2},0\right\}\,dr
	\\
	&\geq\sum_{k=0}^{i-1}(s_{k+1} - (r_k + T))\eta' + (s - (r_i + T))\eta'.
\end{align*}
Now, let $\bar\rho :=\bar\rho(\eta)$ be as in Lemma \ref{lemma: BCFT contraction lemma} and $\delta'$ as in the definition of the Bowen property along stable leaves. Assume without loss of generality that $\bar\rho \leq \min\{\delta,\delta'\}$. Then for any $w\in W^s_{\bar\rho}(v)$, by Lemma \ref{lemma: BCFT contraction lemma}, we have that for any $s\in [r_i + T,s_{i+1}]$, we have
$$d^s(g_sw,g_sv) \leq d^s(w,v)e^{-\sum_{k=0}^{i-1}(s_{k+1} - (r_k + T))\eta' - (s - (r_i + T))\eta'}.$$
Therefore, using the fact that $\phi$ is H\"older on stable leaves, it follows that for any such $s$, we have
$$|\phi(g_sw) - \phi(g_sv)| \leq Ce^{-\theta\eta'\left(s -(r_i + T) + \sum_{k=0}^{i-1}(s_{k+1} - (r_k + T))\right)}$$
while for $s\in [r_i,r_i+T]$, using the fact that $s_{k+1} > r_{k} + 2T$ for $k > 1$, we just have
$$|\phi(g_sv) - \phi(g_sw)| \leq Ce^{-\theta\eta' \sum_{k=0}^{i-1}(s_{k+1} - (r_k + T))} \leq Ce^{-(i-1)\theta\eta'T}.$$
Finally, if $s\in [s_i,r_i]$, then $v\in B(\Sing,\delta)$, and so $w\in B(\Sing,2\delta)$. Therefore, $\phi(g_sw) = \phi(g_sv)$. Combining all of these results, we have that

\begin{align*}
\left| \int_{0}^{t}\phi(g_sw) - \phi(g_sv)\,ds\right| &\leq \sum_{i=0}^{n} \int_{r_i}^{r_i + T}\left|\phi(g_sw) - \phi(g_sv)\right|\,ds + \int_{r_i+T}^{s_{i+1}}\left|\phi(g_sw) - \phi(g_sv)\right|\,ds
\\
&\leq \sum_{i=0}^{n} TCe^{-\theta\eta'\sum_{k=0}^{i-1}(s_{k+1}-(r_k+T))} + \int_{r_i+T}^{s_{i+1}}\left|\phi(g_sw) - \phi(g_sv)\right|\,ds
\\
&\leq \sum_{i=0}^{n}TCe^{-(i-1)\theta\eta'T} + \int_{r_i+T}^{s_{i+1}}Ce^{-\theta\eta'\left(s -(r_i + T) + \sum_{k=0}^{i-1}(s_{k+1} - (r_k + T))\right)}\,ds
\\
&\leq  TC\sum_{i=0}^{\infty}e^{-(i-1)\theta\eta'T} + \int_{0}^{\infty}Ce^{-s\theta\eta'}\,ds
\\
&<\infty.
\end{align*}
As this is constant and independent of our choice of $v$, we are done.
\end{proof}

\begin{lemma}\label{lem: NPC Bowen}
Suppose $\phi$ has the Bowen property on $\CCC(\eta/2)$ with respect to stable and unstable leaves for some $\eta > 0$. Then $\phi$ has the Bowen property on $\CCC(\eta)$.
\end{lemma}

\begin{proof}
The proof is almost exactly the same as that of \cite[Lemma 7.4]{BCFT}, which shows the same result, just working with orbit segments in the collection $\GGG(\eta) \subset \CCC(\eta)$. The only change needed is that they use an earlier lemma \cite[Lemma 3.11]{BCFT} to show that there exists $\delta_2 > 0$ such that if $w\in B_t(v,\delta)$ for some $(v,t)\in\GGG(\eta)$, then $(w,T)\in\GGG(\eta/2)$. For us, replacing $\GGG$ with $\CCC$, this follows immediately from uniform continuity of $\lambda$.
\end{proof}





%
\subsection{Proof of the local product structure}\label{sec:proof of LPS rank 1}

As shown at the end of Section \ref{sec:lps} (specifically in Proposition \ref{prop: LPS} and Corollary \ref{cor: ae lps}) to prove Theorem \ref{thm:rank 1}, it is sufficient to prove an analogue of Proposition \ref{prop: main prop}. We accomplish this with Proposition \ref{prop: NPC main prop} below.

Throughout this section we use the following notations. Let $\eps_0$ be the scale at which $\phi$ has the Bowen property. Let $z_0\in T^1M$ be such that $\lambda(z_0) \geq \frac{3}{4}\lVert\lambda\rVert_{\infty}$, and, using continuity of $\lambda$, let $\kappa > 0$ be chosen so that $\lambda|_{B(z_0,3\kappa)} \geq \frac{2}{3}\lambda(z_0)$. Fix $c_0\in \left(0,\frac{\lVert\lambda\rVert_{\infty}}{4}\right)$ and $\bar\rho := \bar\rho(c_0)$ as in Lemma \ref{lemma: BCFT contraction lemma}. Let $x_0\in \Reg\cap G_{\mu}$ and write $\eta_0 := \lambda(x_0)$. Further, take $\rho:= \min\{\rho(\eta_0),\bar\rho\}$ and $\chi := \chi(\eta_0/2)$ as in Lemma~\ref{lemma: topological LPS}. Now, choose $n_0:=\max\{\frac{4}{s_0},\frac{4}{\rho},\frac{4}{\kappa},\frac{4}{\eps_0}\}$ and $\eps < \min\{\bar\delta(\eta_0),\frac{\rho}{4\chi(1+\chi)(1+e^{\Lambda})},\frac{\eps_0}{4(1+\chi)}\}$ with $\mu(\mathcal B(n_0,\eps)\setminus\operatorname{Int}\mathcal B(n_0,\eps)) = 0$ which holds for almost every small $\eps$ by the same argument as in Lemma~\ref{lem:zero msr bdry}. Recall $\mathcal B(n_0,\eps)$ is the corresponding $(n_0,\eps)$-flow box (see Definition~\ref{defn: flow box nonpositive}).

As in Section~\ref{sec:lps}, we first show how to obtain a compact subset of $V_x^u$ which satisfies the necessary properties to apply the results of \S\ref{sec: Partitions}. In particular, we need the appropriate versions of Lemma ~\ref{lem: generic times} and Propositions~\ref{prop: Creation of compact} and \ref{prop: integral_limit}. The proofs provided for the geodesic flows on flat surfaces work for the geodesic flows on non-positively curved manifolds with minor changes. If a more significant change is needed, we will discuss it next to the statement.

Before we proceed, we note that for any $y\in \mathcal{B}(n_0,\eps)$ let $\pi_y: \mathcal{B}(n_0,\eps) \to V_y^u$ and $\pi_{x,y}: V^u_x \to V^u_y$ be as in Definition~\ref{defn:pi maps}. Moreover, those maps are continuous as the foliations that define them are (see, e.g., \cite{Eberlein}).

One complication that arises in this setting is the fact that $\lambda(g_t[w,z])$ is no longer equal to $\lambda(g_tw)$ or $\lambda(g_tz)$ for sufficiently large $t$. In this setting, we leverage uniform continuity of $\lambda$ and the contraction along local strong (un)stable manifolds to obtain a statement similar to Lemma \ref{lem: bracket and flow interaction}.

\begin{lemma}\label{lemma: NPC contraction of stables}\label{lemma: NPC unstable contraction}
Let $c\in (0,\lVert\lambda\rVert_{\infty})$, and let $\bar\rho := \bar\rho(c) > 0$ be as in Lemma \ref{lemma: BCFT contraction lemma}. Then for all $x\in T^1M$ and $t\geq 0$, if $\lambda(g_tx)\geq c$ and $y\in W^{cs}(x,\bar\rho)$, then $\lambda(g_ty)\geq \frac{c}{2}$. Alternatively, $\lambda(g_{-t}x) \geq c$, then for all $z\in W^u(x,\bar\rho)$, we have $\lambda(g_{-t}z)\geq \frac{c}{2}$.
\end{lemma}

\begin{proof}
Suppose $\lambda(g_tx)\geq c$ and $y\in W^{cs}(x,\bar\rho)$. Then $d_K(g_ty,g_tx) \leq d_K(x,y) \leq d^{cs}(x,y) \leq \bar\rho \leq \bar\rho e^{\Lambda}$. Therefore, by the definition of $\bar\rho$, it follows that
$$\lambda(g_ty) \geq \lambda(g_tx) - |\lambda(g_tx) - \lambda(g_ty)| \geq c - \frac{c}{2} = \frac{c}{2}.$$
For the second half of the statement, observe that $$d_K(g_{-t}z,g_{-t}x) \leq d_K(z,x) \leq e^{\Lambda}d^u(z,x) \leq e^{\Lambda}\bar\rho.$$
The remainder of the proof is identical.
\end{proof}

\begin{lemma}\label{lem: NPC generic times}(cf. Lemma~\ref{lem: generic times})
Let $c_0\in \left(0,\frac{\|\lambda\|_{\infty}}{4}\right)$, and $z_0\in T^1M$ and $\kappa$ be as above. For any $z\in G_{\mu}$, there exists an increasing sequence $\{t_k\}_{k\in\mathbb{Z}}$ which tends to $\pm\infty$ as $k\to\pm\infty$, respectively, such that $\lambda(g_{t_k+s}z)\geq 2c$ for all $|s|\leq \kappa$.
\end{lemma}

\begin{proposition}\label{prop: NPC Creation of compact} (cf. Proposition~\ref{prop: Creation of compact})
Let $c_0 \in \left(0,\frac{\|\lambda\|_{\infty}}{4}\right)$, and $z_0\in T^1M$ and $\kappa$ be as above. Let $x\in\mathcal{B}(n_0,\eps)\cap G_{\mu}$. Then for all $|a|,|b| \leq \frac{1}{n_0}$ and $t\geq 0$, and for all $\delta > 0$, we can find a compact subset $K\subset V^u_x\cap \operatorname{Int}\mathcal{B}(n_0,\eps)$ such that:
 \begin{itemize} 
 \item[(a)] for all $z\in K$, there exists a sequence $t_{k}\to\infty$ such that $\lambda(g_{t_k+s}z)\geq 2c$ for all $|s|\leq \kappa$;
 \item[(b)] $\mu(g_{[a,b]}\left[K, B_{t}^s(x,\eps)\right])\geq (1-\delta)\mu(g_{[a,b]}\left[ V_x^u, B_{t}^s(x,\eps)\right])$.
\end{itemize}
\end{proposition}

\begin{proposition}\label{prop: NPC integral_limit}(cf. Proposition~\ref{prop: integral_limit})
Let $c_0$ and $x_0$ be as above. There exists $R'\subset \mathcal B(n_0,\eps)$ with $\mu(B(n_0,\eps)\setminus R') = 0$ such that for all $x,y\in R'$, there exists a sequence of partitions of $\mathcal{B}(n_0,\eps)$, denoted $\{\xi_n\}_{n\in\mathbb{N}}$, such that for every continuous $\psi:\mathcal{B}(n_0,\eps) \to \mathbb{R}$,
$$\int_{V^u_x} \psi\,d\mu^u_x = \lim_{n\to\infty}\frac{1}{\mu(\xi_n(x))}\int_{\xi_n(x)}\psi\,d\mu,$$
and similarly for $y$. Furthermore, for all sufficiently large $n$, we have that
$$g_{\left(\frac{k}{2^n},\frac{k+1}{2^n}\right)}\left[V^u_{x_0},B^s_{t_z}\left(z,\frac{\eps}{2}\right)\right]\subset \xi_n(x) \subset g_{\left(\frac{k}{2^n},\frac{k+1}{2^n}\right)}\left[ V^u_{x_0},B^s_{t_z}(z,\eps)\right]$$
for some $z\in V^s_{x_0}$, $\frac{-2^n}{n_0}\leq k \leq \frac{2^n}{n_0}-1$, and $t_z\geq 0$ such that for all $z'\in W^u(z,\rho)$, $\lambda(g_{-t_z}z')\geq \frac{c_0}{2}$.
We have a similar statement for $\xi_n(y)$.
\end{proposition}

\begin{proof}
This proof proceeds in the same manner as Proposition \ref{prop: integral_limit}. The main difference is that instead of simply showing that $\lambda(g_{-t_z}z)\geq c_0$ as we did previously, we use Lemma \ref{lemma: NPC contraction of stables} and the fact that $\rho \leq \bar\rho$. Thus, the additional statement that for all $z'\in W^u(z,\rho)$ we have $\lambda(g_{-t_z}z')\geq \frac{c_0}{2}$ follows.
\end{proof}

\begin{proposition}\label{prop: NPC main prop}(cf. Proposition~\ref{prop: main prop})
There exists $K > 0$ such that for almost every $x,y\in \mathcal B(n_0,\eps)$ and for every uniformly continuous $\psi\colon \mathcal{B}(n_0,\eps) \to (0,\infty)$,
$$\int_{V^u_y}\psi\,d(\pi_{x,y})_*\mu_x^u \leq K\int_{V^u_y}\psi\,d\mu_y^u.$$
\end{proposition}

\begin{proof}

 	Let $\psi : \mathcal{B}(n_0,\eps)\to (0,\infty)$ be uniformly continuous, and let $\alpha > 0$ be small enough so that if $d_K(u,u') \leq 3\alpha$, then both $\frac{1}{2}\psi(u') \leq \psi(u) \leq 2\psi(u')$, and $\frac{1}{2}\psi(\pi_yu') \leq \psi(\pi_y u) \leq 2\psi(\pi_y u')$. Now let $R'\subset \mathcal{B}(n_0,\eps)$ be as in Proposition \ref{prop: NPC integral_limit}, and let $x,y\in R'\cap \operatorname{Int}\mathcal B(n_0,\eps)\cap G_{\mu}$. Observe that this is a full measure set, from applying Proposition \ref{prop: NPC integral_limit} and the fact that $\mu(\mathcal B(n_0,\eps)\setminus\operatorname{Int}\mathcal B(n_0,\eps)) = 0$ by the choice of $\eps$. Now, both $\psi\circ\pi_{x,y}$ and $\psi$ are continuous functions, and so we can apply Proposition \ref{prop: NPC integral_limit} to estimate both
	$$\int_{V^u_x}\psi\circ\pi_{x,y}\,d\mu_x^u = \int_{V^u_y}\psi\,d(\pi_{x,y})_*\mu_x^u \quad \text{ and } \quad \int_{V^u_y}\psi\,d\mu_y^u.$$

 To do so, recall that we have fixed $x_0\in \mathcal B(n_0,\eps)$ and $c_0\in \left(0,\frac{\lVert\lambda\rVert_{\infty}}{4}\right)$, and let $\{\xi_n\}_{n\in\mathbb{N}}$ be the sequence of partitions given by Proposition \ref{prop: NPC integral_limit}. We will now proceed with our proof by using Propositions \ref{prop: NPC Creation of compact} and \ref{prop: partition construction} to partition most of $V_{x_0}^u$ (and then, in turn $V_x^u$ and $V_y^u$) into well-understood sets, for which we have nice upper and lower bounds from the Gibbs property.

 Let $0 < \beta < \min\{1,\lVert\psi\rVert_\infty^{-1}\}$. For each $n\in\mathbb{N}$, recall that by Proposition \ref{prop: NPC integral_limit}, there exists $k_{x,n},k_{y,n}\in [-\frac{2^n}{n_0},\frac{2^n}{n_0} - 1]$, $t_{x,n},t_{y,n}\geq 0$ and $z_{x,n},z_{y,n}\in V^s_{x_0}$ such that
	$$g_{\left(\frac{k_{x,n}}{2^n},\frac{k_{x,n}+1}{2^n}\right)}\left[ V^u_{x_0},B^s_{t_{x,n}}\left(z_{x,n},\frac{\eps}{2}\right)\right]\subset \xi_n(x) \subset g_{\left(\frac{k_{x,n}}{2^n},\frac{k_{x,n}+1}{2^n}\right)}\left[V^u_{x_0},B^s_{t_{x,n}}(z_{x,n},\eps)\right],$$
	and similarly for $y$. Going forward, we will often omit the dependence on $n$ from the notation since we will primarily work with a fixed choice of $n$. Now, let $K_n\subset V_{x_0}^u$ be as in Proposition \ref{prop: NPC Creation of compact} for $c_0$ with
	$$\mu\left(g_{\left(\frac{k_{x,n}}{2^n},\frac{k_{x,n} + 1}{2^n}\right)}\left[ K_n,B^s_{t_{x,n}}(z_{x,n},\eps)\right]\right)\geq (1-\beta^2\mu(\xi_n(x))\mu\left(g_{\left(\frac{k_{x,n}}{2^n},\frac{k_{x,n} + 1}{2^n}\right)}\left[ V_{x_0}^u,B^s_{t_{x,n}}(z_{x,n},\eps)\right]\right),$$
	and similarly replacing every $x$ with $y$. Then, use Proposition \ref{prop: partition construction} and Corollary \ref{cor: fill out partition} along with Lemma \ref{lemma: NPC contraction of stables} to define a partition $\omega_n := \{A_0,A_1,\cdots, A_m\}$ of $V^u_{x_0}$, such that:
	\begin{itemize}
		\item For each $A_i\in \omega_n$, $i\neq 0$, there exists $w_i\in A_i$ and $t_i > t_R + \frac{2}{n_0}$ such that $B_{t_i}^u(w_i,\frac{\eps}{2}) \subset A_i\subset B_{t_i}^u(w_i,2\eps)$
		\item For $i\neq 0$, $\lambda(g_{t_i} w') \geq \frac{c_0}{2}$ for all $w'\in W^{cs}(w_i,\rho)$
            \item $A_0\subset V^u_{x_0}\setminus K_n$
		\item For $i\neq 0$, $\max\{\diam A_i \mid A_i\in\omega_n\} \leq \min\{\hat\delta,d_K(K_n,\partial \mathcal{B}(n_0,\eps))\}$ where $\hat\delta$ is defined as follows.

   Having fixed $x_0$ and $\epsilon$ already, consider the map $V_{x_0}^u \times V_{x_0}^s \to T^1M$ by $(a,b) \mapsto \pi_b(a) = [a,b].$ By continuity of the foliations defining the bracket, this is a continuous map, and over this domain it is uniformly continuous. So, given any $\alpha>0$, there exists a $\hat\delta>0$ such that for all $a_1, a_2\in V_{x_0}^u$ and all $b_1, b_2\in V_{x_0}^s$, $d_K(a_1, a_2)<\hat\delta$ and $d_K(b_1, b_2)<\hat\delta$ implies $d_K([a_1,b_1],[a_2,b_2])<2\alpha$.
 	\end{itemize}
	The second condition follows from the fact that $\rho\leq\bar\rho$ and using Lemma \ref{lemma: NPC contraction of stables}.

Going forward, take $n$ so large that $\diam B^s_{t_{n,x}}(z_{n,x},\eps), \diam B^s_{t_{n,y}}(z_{n,y},\eps)< \hat \delta$. Below, still taking $n$ large enough that $\frac{1}{2^n}\leq \alpha$, this will guarantee that $\diam g_{\left(\frac{k_{n,x}}{2^n},\frac{k_{n,x} + 1}{2^n}\right)}\left[A_i,B_{t_{n,x}}^s(z_{n,x},\eps)\right]\leq 3\alpha$, and similarly with $y$ in place of $x$.

For $t\leq 3\alpha$, $\frac{1}{2}\psi(\pi_y g_tz)\leq \psi(\pi_y z) \leq 2\psi(\pi_y g_tz)$ for $z\in \mathcal B(n_0,\eps)$. For all such $n$, using the fact that $\pi_y = \pi_y\circ g_{\tau}$ for $\tau\in \left[-\frac{1}{n_0},\frac{1}{n_0}\right]$, we have
 	\begin{align*}
		\int_{\xi_n(x)}\psi\circ \pi_y\,d\mu &\leq \int_{g_{\left(\frac{k_x}{2^n},\frac{k_x+1}{2^n}\right)}\left[V^u_{x_0},B_{t_x}^s(z_x,\eps)\right]}\psi\circ\pi_y\,d\mu
		\\
		&\leq \beta^2\lVert\psi\circ\pi_y\rVert_{\infty}{\mu(\xi_n(x))} + \sum_{i=1}^{m}\int_{g_{\left(\frac{k_x}{2^n},\frac{k_x+1}{2^n}\right)}\left[ A_i,B_{t_x}^s(z_x,\eps)\right]}\psi\circ\pi_y\,d\mu
		\\
		&\leq \beta{\mu(\xi_n(x))} + \sum_{i=1}^{m}2\psi(\pi_y[ w_i,z_x])\frac{Q_1}{6\eps\cdot 2^n}e^{-(t_i+t_x)P(\phi) + \int_{-t_x}^{t_i}\phi(g_r[ w_i,z_x])\,dr}.
	\end{align*}

	The final step uses Corollary \ref{cor: NPC Gibbs}, taking $Q_1 := Q_1(2\eps)$. 
 We will also need the following lower bound:
\begin{align*}
    \mu(\xi_n(x))&\geq \mu\left(g_{\left(\frac{k_{x}}{2^n}, \frac{k_{x}+1}{2^n}\right)}\left[ V_{x_0}^u, B^s_{t_{x}}\left(z_{x},\frac{\varepsilon}{2}\right)\right]\right)\\
    &\geq \sum\limits_{i=1}^m\mu\left(g_{\left(\frac{k_{x}}{2^n}, \frac{k_{x}+1}{2^n}\right)}\left[ B_{t_i}^u\left(w_i,\frac{\varepsilon}{2}\right), B^s_{t_{x}}\left(z_{x},\frac{\varepsilon}{2}\right)\right]\right).
\end{align*}

From this, using Corollary \ref{cor: NPC Gibbs}, it follows that

$$\mu\left(\xi_n(x)\right) \geq \frac{2}{3\eps\cdot 2^n}\sum\limits_{i=1}^m Q_2\left(\frac{\eps}{2},\lambda(g_{-t_{x}}[w_i,z_x]),\lambda(g_{t_i} [w_i,z_x])\right)e^{-(t_i+t_x)P(\phi) + \int_{-t_x}^{t_i}\phi(g_s[ w_i,z_x])\,ds}.$$
Note that, since $[w_i,z_x] = W^{cs}(w_i,\rho)\cap W^{u}(z_x,\rho)$, we have by construction that $\lambda(g_{-t_x}[w_i,z_x])\geq \frac{c}{2}$ and $\lambda(g_{t_i}[w_i,z_x])\geq \frac{c}{2}$.

Finally, we will need the following computations using the Bowen property to make use of these results. Recall that $\phi$ has the Bowen property at scale $\eps_0$, and by the choice of $\eps$ and $n_0$ that $\diam \mathcal{B}(n_0,\eps) < \eps_0$. Now, observe that $d_{K}(g_r[w,z], g_r z)\leq d_K([w,z],z) \leq \eps_0$ for $r\leq 0$ for all $w,z\in \mathcal B(n_0,\eps)$ using the fact that $[w,z]\in \mathcal{B}(n_0,\eps)$, while $d_{K}(g_r[w,z], g_rw)\leq d_K([w,z],w) \leq\eps_0$  for $r\geq 0$. Therefore, we have $[w,z] \in B_{[-n,0]}(z,\eps_0)$ and $[ w,z] \in B_{[0,n]}(w,\eps_0)$ for all $n\geq 0$. By Theorem~\ref{thm: NPC global Bowen}, there exists $L > 0$ such that for all $n_1,n_2 \geq 0$,
$$\left|\int_{-n_1}^{n_2}\phi(g_r[ w,z])\,dr - \int_{-n_1}^{0}\phi(g_rz)\,dr - \int_0^{n_2}\phi(g_rw)\,dr\right| \leq 2L.$$

Therefore, recalling that $\lambda(g_{-t_x}z_x)\geq c$ by construction and writing $Q_2 := Q_2\left(\frac{\eps}{2},\frac{c}{2},\frac{c}{2}\right)$, we have that: 
\[\frac{\int_{\xi_n(x)}\psi\circ \pi_y\,d\mu}{\mu(\xi_n(x))} \leq \beta +\frac{Q_1e^{4L}}{2Q_2}\cdot\frac{\sum\limits_{i=1}^m \psi(\pi_y [w_i,z_x])e^{-t_iP(\phi)+\int_0^{t_i}\phi(g_r  w_i)\,dr}}{\sum\limits_{i=1}^m e^{-t_iP(\phi)+\int_0^{t_i}\phi(g_s w_i)\,ds}}.\]
Note that $\beta$ does not depend on our choice of $n$, although the rest of the computation does, as our collection of $z_i$ and $t_i$ depends on our choice of $K_n$.

Now, we will carry out similar computations to find a lower bound for the corresponding estimates of $\int_{V^u_y}\psi\,d\mu_y^u$. These mirror those carried out above, except using the lower Gibbs bound instead of the upper Gibbs bound and vice versa. 	

	Recalling that we work with $n$ large enough so that $\diam g_{\left[\frac{k_y}{2^n},\frac{k_y+1}{2^n}\right]}[ A_i,B_{t_y}^s(z_y,\eps)] \leq 3\alpha$, and that $t_y$ is chosen so that $\lambda(g_{-t_y}z_y) \geq c$, we obtain a lower bound as follows. First, observe that $V^u_y\subset g_{\left(\frac{k_y}{2^n},\frac{k_y+1}{2^n}\right)}\left[ V^u_{x_0},B^s_{t_y}(z_y,\eps)\right]$. Therefore, for every $w_i\in V^u_{x_0}$ we have
 $\pi_y[w_i,z_y] \in g_{\left(\frac{k_y}{2^n},\frac{k_y+1}{2^n}\right)}\left[ V^u_{x_0},B^s_{t_y}(z_y,\eps)\right]$, and consequently,
 $$\psi|_{g_{\left(\frac{k_y}{2^n},\frac{k_y+1}{2^n}\right)}\left[V^u_{x_0},B^s_{t_y}(z_y,\eps)\right]} \geq \frac{\psi(\pi_y[ w_i,z_y])}{2}.$$
 Using this, we see

\begin{align*}
\int_{\xi_n(y)}\psi\,d\mu &\geq \int_{g_{\left(\frac{k_y}{2^n},\frac{k_y+1}{2^n}\right)}\left[ V^u_{x_0},B_{t_y}^s(z_y,\frac{\eps}{2})\right]}\psi\,d\mu
\\
&\geq \sum_{i=1}^{m}\int_{g_{\left(\frac{k_y}{2^n},\frac{k_y+1}{2^n}\right)}\left[ B_{t_i}^u(w_i,\frac{\eps}{2}),B_{t_y}^s(z_y,\frac{\eps}{2})\right]}\psi\,d\mu
\\
&\geq \frac{2Q_2}{3\eps\cdot 2^n}e^{-L}e^{\int_{-t_y}^{0}\phi(g_rz_y)\,dr}e^{-t_yP(\phi)}\sum_{i=1}^m \frac{\psi(\pi_y[w_i,z_y])}{2}e^{-t_iP(\phi) + \int_{0}^{t_i}\phi(g_rw_i)\,dr}.
\end{align*}
where the last inequality utilizes the Bowen property as we did in our previous estimates.

We also have the following upper bound:
	\begin{align*}
	\mu(\xi_n(y)) &\leq \mu\left(g_{\left[\frac{k_y}{2^n},\frac{k_y+1}{2^{n}}\right]}\left [ V^u_{x_0},B_{t_y}^s(z_y,\eps)\right]\right)
	\\
	&\leq \frac{1}{1-\beta^2}\sum_{i=1}^{m} \mu\left(g_{\left[\frac{k_y}{2^n},\frac{k_y+1}{2^{n}}\right]}\left[ A_i,B_{t_y}^s(z_y,\eps)\right]\right)
	\\
	&\leq\frac{1}{1-\beta^2}Q_1(6\eps\cdot 2^n)^{-1}e^{2L}e^{\int_{-t_y}^0\phi(g_rz_y)\,dr}\sum_{i=1}^{m}e^{-(t_i+t_y)P(\phi) + \int_{0}^{t_i}\phi(g_r w_i)\,dr}
	\end{align*}

Combining these two estimates, we have that for sufficiently large $n$,
	
\begin{align*}
\frac{\int_{\xi_n(y)}\psi\,d\mu}{\mu(\xi_n(y))} &\geq (1-\beta^2)\frac{2Q_2(3\eps\cdot 2^n)^{-1}e^{-L}e^{\int_{-t_y}^{0}\phi(g_rz_y)\,dr}e^{-t_yP(\phi)}\sum_{i=1}^m \frac{\psi(\pi_y[ w_i,z_y])}{2}e^{-t_iP(\phi) + \int_{0}^{t_i}\phi(g_rw_i)\,dr}}{Q_1(6\eps\cdot 2^n)^{-1}e^{2L}e^{\int_{-t_y}^0\phi(g_rz_y)\,dr}\sum_{i=1}^{m}e^{-(t_i+t_y)P(\phi) + \int_{0}^{t_i}\phi(g_r w_i)\,dr}}
\\
&\geq (1-\beta^2)\frac{2Q_2}{Q_1e^{4L}}\frac{\sum_{i=1}^{m}\psi(\pi_y[w_i,z_y]) e^{-t_iP(\phi) + \int_0^{t_i}\phi(g_rw_i)\,dr}}{\sum_{i=1}^{m}e^{-t_iP(\phi) + \int_0^{t_i}\phi(g_rw_i)\,dr}}.
\end{align*}
	
Therefore, for all large $n$ we can directly compare our estimates for $\int_{V_x^u}\psi\circ\pi_{x,y}\,d\mu_x^u$ and a lower bound for $\int_{V_y^u}\psi\,d\mu_y^u$. Recall that we have already shown that

$$\frac{\sum\limits_{i=1}^m \psi(\pi_y [ w_i,z_x])e^{-t_iP(\phi)+\int_0^{t_i}\phi(g_r w_i)\,dr}}{\sum\limits_{i=1}^m e^{-t_iP(\phi)+\int_0^{t_i}\phi(g_s w_i)\,ds}} \geq \frac{2Q_2}{Q_1e^{4L}}\left(\frac{\int_{\xi_n(x)}\psi\circ\pi_y\,d\mu}{\mu(\xi_n(x))} - \beta\right).$$
	
Further, $\pi_y[w_i,z_x]=[w_i,y] = \pi_y[w_i,z_y]$. Consequently, we have:
\begin{align*}
\frac{\int_{\xi_n(y)}\psi\,d\mu}{\mu(\xi_n(y))} &\geq (1-\beta^2)\frac{2Q_2}{Q_1e^{4L}}\frac{\sum_{i=1}^{m}\psi(\pi_y[ w_i,z_y]) e^{-t_iP(\phi) + \int_0^{t_i}\phi(g_rw_i)\,dr}}{\sum_{i=1}^{m}e^{-t_iP(\phi) + \int_0^{t_i}\phi(g_rw_i)\,dr}}
\\
&\geq (1-\beta^2)\left(\frac{2Q_2}{Q_1e^{4L}}\right)\left(\frac{2Q_2}{Q_1e^{4L}}\right)\left(\frac{\int_{\xi_n(x)}\psi\circ\pi_y\,d\mu}{\mu(\xi_n(x))} - \beta\right).
\end{align*}
	
We can choose $\beta$ arbitrarily small, independent of $n$. Thus, we have shown that
	
	\begin{align*}
		\frac{\int_{V_x^u}\psi\circ\pi_{x,y}\,d\mu_x^u}{\int_{V_y^u}\psi\,d\mu_y^u} &= \lim_{n\to\infty} \frac{\frac{\int_{\xi_n(x)}\psi\circ \pi_y\,d\mu}{\mu(\xi_n(x))}}{\frac{\int_{\xi_n(y)}\psi\,d\mu}{\mu(\xi_n(y))}}
		\\
		&\leq \left(\frac{e^{4L}Q_1}{2Q_2}\right)^2.
	\end{align*}
\end{proof}

\bibliography{bibliography}
\bibliographystyle{alpha}

\Addresses
\end{document}